\newcommand{\lvt}{\left|\kern-1.35pt\left|\kern-1.3pt\left|}
\newcommand{\rvt}{\right|\kern-1.3pt\right|\kern-1.35pt\right|}
\newtheorem{thm}{Theorem}[section]
\newtheorem{cor}[thm]{Corollary}
\newtheorem{lem}[thm]{Lemma}
\newtheorem{prop}[thm]{Proposition}
\newtheorem{defn}[thm]{Definition}
\theoremstyle{remark}
\newtheorem{rem}{Remark}[section]
 \def\la{{\langle}}
 \def\ra{{\rangle}}
 \def\sph{{\mathbb{S}^{d-1}}}
 \def\sC{{\mathsf C}}
 \def\sH{{\mathsf H}}
 \def\sJ{{\mathsf J}}
 \def\sK{{\mathsf K}}
 \def\sL{{\mathsf L}}
 \def\sP{{\mathsf P}}
 \def\sQ{{\mathsf Q}}
 \def\sS{{\mathsf S}}
 \def\d{{\mathrm{d}}}
 \def\a{{\alpha}}
 \def\b{{\beta}}
 \def\g{{\gamma}}
 \def\k{{\kappa}}
 \def\t{{\theta}}
 \def\l{{\lambda}}
 \def\s{\sigma}
 \def\la{{\langle}}
 \def\ra{{\rangle}}
 \def\kb{{\mathbf k}}
 \def\ub{{\mathbf u}}
 \def\xb{{\mathbf x}}
 \def\yb{{\mathbf y}}
 \def\Cb{{\mathbf C}}
 \def\Hb{{\mathbf H}}
 \def\Jb{{\mathbf J}}
 \def\Qb{{\mathbf Q}}
 \def\Pb{{\mathbf P}}
 \def\Kb{{\mathbf K}}
 \def\Lb{{\mathbf L}}
 \def\Sb{{\mathbf S}}
 \def\CH{{\mathcal H}}
 \def\CO{{\mathcal O}}
 \def\CV{{\mathcal V}}
 \def\BB{{\mathbb B}}
 \def\NN{{\mathbb N}}
 \def\RR{{\mathbb R}}
 \def\SS{{\mathbb S}}
 \def\VV{{\mathbb V}}
 \def\UU{{\mathbb U}}
 \def\ZZ{{\mathbb Z}}
      \def\proj{\operatorname{proj}}
\def\lla{\langle{\kern-2.5pt}\langle}      
\def\rra{\rangle{\kern-2.5pt}\rangle}
\newcommand{\wh}{\widehat}
\def\f{\frac}
\begin{document}

\title[Orthogonal structure and orthogonal series on hyperboloid]
{Orthogonal structure and orthogonal series in and on a double cone or a hyperboloid}
 
\author{Yuan Xu}
\address{Department of Mathematics\\ University of Oregon\\
    Eugene, Oregon 97403-1222.}\email{yuan@uoregon.edu}

 
\date{\today}
\keywords{Orthogonal polynomials, cone, hyperboloid, PDE, addition formula}
\subjclass{42C05, 42C10, 33C50}

\begin{abstract} 
We consider orthogonal polynomials on the surface of a double cone or a hyperboloid of revolution, either
finite or infinite in axis direction, and on the solid domain bounded by such a surface and, when the surface 
is finite, by hyperplanes at the two ends. On each domain a family of orthogonal polynomials, 
related to the Gegebauer polynomials, is study and shown to share two characteristic properties 
of spherical harmonics: they are eigenfunctions of a second order linear differential operator with eigenvalues
depending only on the polynomial degree, and they satisfy an addition formula that provides a closed form 
formula for the reproducing kernel of the orthogonal projection operator. The addition formula leads to a 
convolution structure, which provides a powerful tool for studying the Fourier orthogonal series on these 
domains. Furthermore, another family of orthogonal polynomials, related to the Hermite polynomials, is 
defined and shown to be the limit of the first family, and their properties are derived accordingly. 
\end{abstract}

\maketitle

\section{Introduction}
\setcounter{equation}{0}

The study of orthogonal polynomials and the Fourier orthogonal series in several variables has seen 
substantial progress in recent years (cf. \cite{DX}). The most useful and the most well studied families 
of multivariable orthogonal polynomials are those on regular domains, such as cubes and other tensor 
product domains, spheres, balls and simplexes, especially those that can be regarded as analogues of 
classical orthogonal polynomials of one variable. There have been, however, few works beyond the 
regular domains. Recently we start to examine orthogonal structure on a quadratic surface of revolution
or in the domain bounded by such a surface, taking a cue from spherical harmonics on the unit sphere 
and classical orthogonal polynomials on the unit ball. 

Spherical harmonics serve as our quintessential example on quadratic surfaces. They are orthogonal with
respect to the surface measure on the unit sphere. Among their numerous properties, we single out two
characteristics ones. Let $\CH_n^d$ be the space of spherical harmonics of degree $n$ in $d$ variables. 
Then
\begin{enumerate}[\quad    (I)]
 \item Spherical harmonics are eigenfunctions of the Laplace--Beltrami operator $\Delta_0$ for the unit 
 sphere $\sph$, 
\begin{equation*} 
   \Delta_0 Y = - n (n+d-2) Y, \qquad \forall Y \in \CH_n^d, \quad n =0,1,2,\ldots.
\end{equation*}
\item Spherical harmonics satisfy an addition formula:  Let $\{Y_\ell\}$ be an orthonormal basis of $\CH_n^d$;
then  
\begin{equation*} 
   \sum_{\ell} Y_\ell (\xi) Y_\ell (\eta) = Z_n^{\f{d-2}{2}}(\la \xi, \eta\ra), \qquad \xi,\eta \in \sph, 
\end{equation*}
where $Z_n^\l(t)= \frac{n+\l}{\l} C_n^\l(t)$ and $C_n^\l$ is the Gegenbauer polynomial of degree $n$.
\end{enumerate}
These two properties are fundamental for approximation theory and harmonic analysis on the unit sphere;
see, for example, \cite{DaiX, DX, FS, SW} and references therein. While the structure of eigenfunctions can 
be used to describe smoothness of functions on the sphere, the addition formula provides a closed form 
formula for the reproducing kernel of $\CH_n^d$ that possesses a structure of one-dimension. 

For the unit ball $\BB^d$, bounded by the surface $\sph$, the classical orthogonal polynomials are orthogonal
with respect to the weight function $\varpi_\mu(x)=(1-\|x\|^2)^{\mu-\f12}$, $\mu \ge 0$; they also possess 
analogues of the two characteristic properties. Let $\CV_n^d(\varpi_\mu)$ denote the space of orthogonal 
polynomials of degree $n$ on the ball. The polynomials in this space are eigenfunctions of a second order 
linear differential operator,
\begin{equation*} 
  \left( \Delta  - \la x,\nabla \ra^2  - (2\mu+d-1) \la x ,\nabla \ra \right)u = - n(n+2\mu+ d-1) u, \quad u \in \CV_n^d(\varpi_\mu), 
\end{equation*}
which plays the role of the Laplace--Beltrami operator. Furthermore, let $\{P_\kb: |\kb| = n\}$ be an 
orthonormal basis of $\CV_n^d(\varpi_\mu)$; then the addition formula for $\CV_n^d(\varpi_\mu)$ on 
the unit ball takes the form
$$
  \sum_{|\kb| = n} P_\kb(x) P_\kb(y) = c_\mu \int_{-1}^1 Z_n^{\mu+\f{d-1}{2}}\left(\la x, y\ra + u\sqrt{1-\|x\|^2} 
      \sqrt{1-\|y\|^2}\right) (1-u^2)^{\mu-1}\d u. 
$$
This provides a closed form formula for the reproducing kernel of $\CV_n(\varpi_\mu)$ on $\BB^d$.
While the differential equation on the ball was known to Hermite, at least for $d =2$ (cf. \cite{AF}), the 
addition formula on the ball was discovered more recently in \cite{X99} and it is instrumental for recent 
advances of analysis on the unit ball; see, for example, \cite{OC, DaiX, DaiX10, KePX, KL, KPX, PX, SS, W, WZ} 
and the references there. 

Recently in \cite{X19} we considered orthogonal structure on the surface of the cone of revolution,  
$\VV_0 = \{(x,t): \|x\| = t, x \in \RR^d, \, t \in [0, b]\}$, where $b$ can be $+\infty$, and in the solid cone 
$\VV^{d+1}$ bounded by $\VV_0^{d+1}$ and by the hyperplane $t = b$ when $b$ is finite, and studied two 
families of orthogonal polynomials, which can be called Jacobi polynomials and Laguerre polynomials, 
on the surface and in the solid cone. In both domains, our main result shows that these two families 
are eigenfunctions of a second order linear differential operator and, furthermore, the Jacobi polynomials
on the cone also satisfy an addition formula. This shows, in particular, that the Jacobi polynomials on the 
cone satisfy both characteristic properties. The study uses an orthogonal basis that is explicit constructed. 
The construction of the basis is shown in \cite{OX1} to be possible for orthogonal polynomials in and on other 
quadratic surfaces of revolution. 

The purpose of this paper is to study orthogonal structure on the quadratic surface 
$$
 {}_\varrho\VV_0^{d+1} = \left \{(x,t): \|x\|^2 = c^2 (t^2 - \varrho^2), \quad x\in \RR^d, \quad \varrho \le |t| \le b\right\},
$$
where $\varrho$ and $b$ are nonnegative real numbers and $b$ could be infinity, and on the solid 
domain $\VV^{d+1}$ bounded by $\VV_0^{d+1}$ and the hyperplanes $t = \pm b$ if $b$ is finite. 
The surface is a double hyperboloid when $\varrho >  0$ and it degenerates to a double cone when 
$\varrho =0$. Examples of these surfaces are depicted in Figure 1. 
\begin{figure}[ht]
  \begin{center}
  \includegraphics[width=0.36\textwidth]{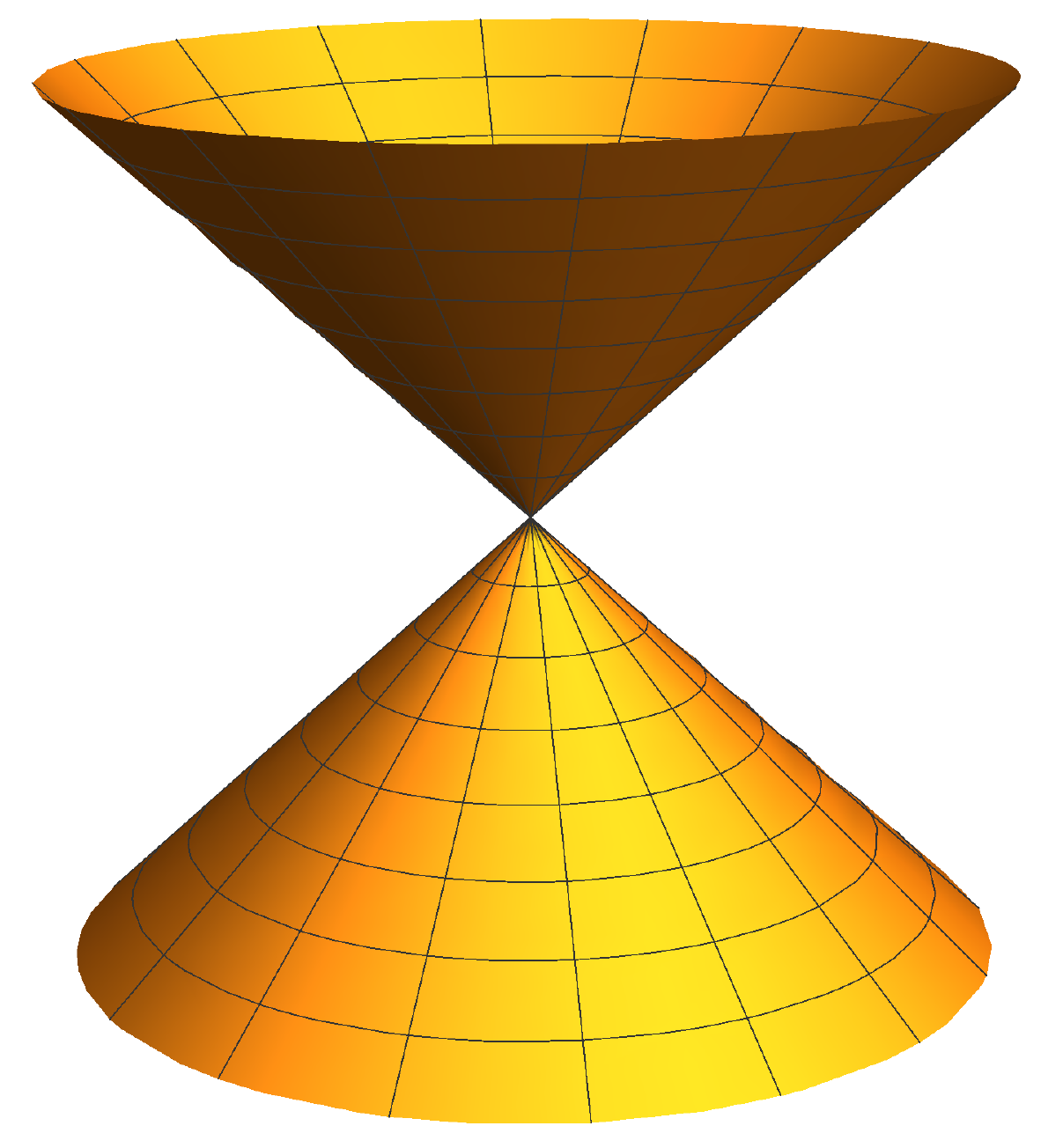}\qquad\quad \includegraphics[width=0.35\textwidth]{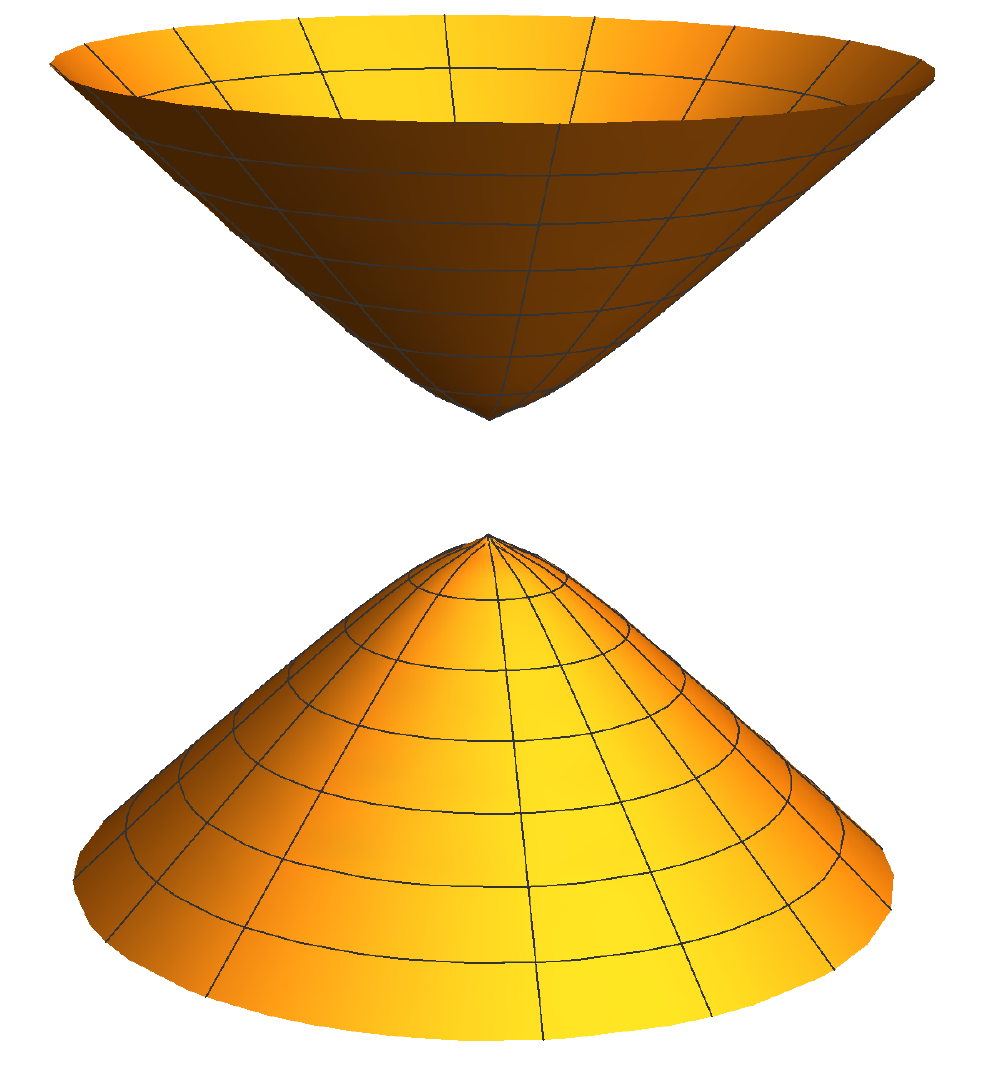} 
  \end{center}
  \caption{Double cone for $\varrho = 0$, double hyperboloid for $\varrho= 1$; $c=1$.}
\end{figure}
The paper can be regarded as a sequel of \cite{X19} and the main goal is to see if it is possible to establish 
the two characteristic properties for orthogonal polynomials on these domains. In this setting it is most 
natural to study orthogonality with respect to weight functions that are even in $t$ variable. For the surface 
$\VV_0^{d+1}$, what we call the Gegenbauler polynomials on the hyperboloid are orthogonal with respect to 
$w_{\b,\g}(t)= |t|(t^2-\varrho^2)^{\b-\f12}(1+\varrho^2 - t^2)^{\g-\f12}$ with $\b =0$, which becomes 
$(1-t^2)^{\g-\f12}$ on the double cone, and the Hermite polynomials on the hyperboloid are orthogonal with 
respect to $w_{\b}(t) = |t|(t^2-\varrho^2)^{\b-\f12}e^{-t^2}$ with $\b =0$, which becomes $e^{-t^2}$ on the double cone, 
whereas for the solid $\VV_0^{d+1}$ we can multiply these weight functions in $t$ by 
$(t^2 - \varrho^2 - \|x\|^2)^{\mu-\f12}$.

Our main effort lies in the studying of the Gegenbauer polynomials in/on the double cone and the 
hyperboloid. It turns out that these polynomials divide naturally into two groups according to the 
parity of the orthogonal polynomials in $t$ variable, and we bestow the names, Gegenbauer, only for 
orthogonal polynomials that are even in $t$ variable when working with hyperboloids. Our main result 
shows that, for each domain, the Gegenbauer polynomials, even in $t$ variable, are eigenfunctions of a 
second order linear differential operator and they also satisfy an addition formula, which holds more generally
for all permissible parameter $\b$. Moreover, for the double cone, $\varrho =0$,  orthogonal polynomials that 
are odd in $t$ variable also satisfy these two characteristic properties, but for different $\b$ in the 
Gegenbauer weight. There is no single differential operator that has all orthogonal polynomials of the 
same degree as eigenfunctions. These results, restricted to polynomials with the same parity, are still 
invaluable for studying the Fourier orthogonal series on the hyperboloid. Indeed, if a function $f \in L^2$ 
on the surface or on the solid hyperboloid is even (or odd) in $t$ variable,  then its Fourier orthogonal series 
contains only orthogonal polynomials that are even (or odd) in $t$ variable, just like the Fourier cosine 
(or sine) series for even (or odd) functions in classical Fourier series. 

The Hermite polynomials in or on the hyperboloid turn out to be limits of the corresponding family of
the Gegenbauer polynomials, which holds more generally for all permissible parameter $\b$. As a 
consequence, we deduce that the Hermite polynomials on the double cone and on the hyperboloid, 
even in $t$ variable, are also eigenfunctions of a second order linear differential operator. However, 
these polynomials no longer satisfy an addition formula, but their Poisson kernel satisfies a closed
form of a Mehler-type formula, just like the product Hermite polynomials on $\RR^d$. Such formulas 
are instrumental for studying Hermtie and Laguerre series in $\RR^d$ \cite{Tha}; see, for example, 
\cite{BRT, Tha2} for more recent work and references. 

For the half cone, our study in this paper and in \cite{X19}, together, provide four families of orthogonal 
polynomials associated with the classical weight functions. Among those,  the Jacobi polynomials and
the Laguerre polynomials on the cone are treated as a whole of all orthogonal polynomials of degree $n$, 
whereas the Hermite and the Gegenbauer polynomials on the cone need to be treated separately according 
to the parity of the polynomials in $t$ variable. The two on infinite domains resemble product 
Lagurerre on $\RR_+^d$ and product Hermite polynomials on $\RR^d$, whereas the two on compact
domains resemble the Jacobi polynomials on the simplex and the Gegenbauer polynomials on the 
unit ball. In this regard, the orthogonal structures on the cone is surprisingly rich and provides fertile 
ground for further analysis on the cones.

The paper is organized as follows. In the next section, we recall necessary background on orthogonal
polynomials and review basics for spherical harmonics and classical orthogonal polynomials on the unit
ball. The orthogonal structure for a generic even weight function on the surface or solid hyperboloid 
and double cone is studied in Section 3, which provides foundation for further study of two families of
orthogonal polynomials in the paper.  The Gegenbauer polynomials on the double cone and on the 
hyperboloid are treated in Section 4, where they are shown to be eigenfunctions of a second order 
differential operator. Addition formulas for the Gegenbauer polynomials on hyperboloids are derived in 
Section 5, where we establish the main result for a more general family of polynomials, called generalized 
Gegenbauer polynomials on hyperboloids, for a generic $\b$ in the weight function. In Section 6, 
we show how addition formula leads to a convolution structure and used it for studying the Fourier 
orthogonal series. In Section 7, we treat the Hermite polynomials and their 
generalizations for a generic $\b \ge 0$ on the hyperboloid as limits of the Gegenbauer polynomials
and their generalizations. Finally, in Section 8, we discuss further extensions of our results to the
Dunkl setting that permits an additional reflection invariant weight function of $x$ variables. In Appendix A 
we list properties of the generalized Gegenbauer polynomials on $[-1,1]$ are the generalized Hermite 
polynomials on $\RR$ that are needed in our development. 

\section{Preliminary}
\setcounter{equation}{0}

In the first subsection, we introduce necessary notations for orthogonal polynomials of one variable that
will be used throughout this paper. We review spherical harmonics and classical orthogonal polynomials 
on the unit ball, as well as the Fourier orthogonal series in terms of them, in the second and the third subsection. 
These will serve as building blocks of our results on hyperboloids and provide a general guideline for our study. 

\subsection{Orthogonal polynomials of one variable}
By a weight function $w$ on $\RR$, we mean a nonnegative function with infinite support and 
finite moments, which warrants the existence of orthogonal polynomials. We denote by
$p_n(w)$ the orthogonal polynomials of degree $n$ with respect to $w$. Then 
$$
    c_w \int_{\RR} p_n(w;x) p_m(w;x) w(x) \d x = h_n(w) \delta_{m,n}, \qquad m,n = 0,1,2,\ldots, 
$$ 
where $h_n(w)$ is the $L^2$ norm of $p_n(w)$ and $c_w$ is the normalization constant of $w$ so that 
$h_0(w) =1$. The Forurier orthogonal series of $f \in L^2(w)$ is defined by 
$$
  f = \sum_{n=0}^\infty \wh f_n p_n(w), \qquad \wh f_n =  \frac{1}{h_n(w)} c_w\int_{\RR} f(x) p_n(w;x) w(x) \d x.
$$
We denote its $n$-th partial sum by $s_n (w;f)$, which can be written as an integral 
$$
  s_n(w;f) = \sum_{k=0}^n \wh f_k p_k(w) =  c_w\int_{\RR} f(y) k_n(w;x,y) w(y) \d y
$$
in terms of the kernel $k_n(w)$ defined by 
\begin{equation} \label{eq:1dkernel}
     k_n(w;x,y) = \sum_{k=0}^n\frac{p_k(w;x) p_k(w;y)}{h_k(w)}. 
\end{equation}

We will need the classical orthogonal polynomials, which can be given explicitly in terms of
hypergeometric functions defined by 
$$
  {}_2 F_1\left(\begin{matrix} a, b \\ c \end{matrix}; x\right) := \sum_{n=0}^\infty \frac{(a)_n (b)_n}{(c)_n n!} x^n,
$$ 
where $(a)_n = a(a+1)\cdots(a+n-1)$ denotes the Pochhammer symbol. 
These are

\medskip\noindent
{\it Hermite polynomials $H_n$}:  
$$
  H_n(x) = (-1)^n e^{x^2} \Big(\frac{\d}{\d x}\Big)^n e^{-x^2} = \sum_{j=0}^{\lfloor \f{n}2 \rfloor} \frac{n!}{j! (n-2j)!} (2x)^{n-2j},
$$ 
orthogonal with respect to $w(x) = e^{-x^2}$ on $\RR$ with $c_w = \pi^{-\f12}$ and $h_n(w) = 2^n n!$. 

\medskip\noindent{\it Laguerre polynomials $L_n^\a$}: 
$$
  L_n^\a (x) =  \frac{(\a+1)_n}{n!} \sum_{j=0}^n \frac{(-n)_j}{j! (\a+1)!} x^j =  {}_1 F_1\left(\begin{matrix} -n \\ \a+1 \end{matrix}; x\right),
$$ 
orthogonal with respect to $w(x) = x^\a e^{-x}$, $\a > -1$, on $\RR_+ := [0,\infty)$ with $c_w = \frac{1}{\Gamma(\a+1)}$
and $h_n(w) = (\a+1)_n /n!$. 

\medskip\noindent
{\it Gegenbauer polynomials $C_n^\l$}: 
$$
 C_n^\l(x) = \frac{(\l)_n2^n}{n!} {}_2 F_1\left(\begin{matrix} - \frac{n}{2}, \frac{1-n}{2} \\ 1-n-\l \end{matrix}; 
       \frac{1}{x^2}\right),
$$
orthogonal with respect to $w(x) = (1-x^2)^{\l-\f12}$, $\l > -\f12$, on $[-1,1]$ with $c_w$ given by 
\begin{equation}\label{eq:c_l}
    c_{\l} := \frac{\Gamma(\l+1)}{\Gamma(\f12)\Gamma(\l+\f12)}
\end{equation}
and $h_n(w) = \frac{\l}{n+\l}C_n^\l(1)$ and $C_n^\l(1) = (2 \l)_n /n!$.
 
\medskip\noindent
{\it Jacobi polynomials $P_n^{(\a,\b)}$}: 
$$
  P_n^{(\a,\b)}(x) = \frac{(\a+1)_n}{n!} {}_2F_1 \left (\begin{matrix} -n, n+\a+\b+1 \\
      \a+1 \end{matrix}; \frac{1-x}{2} \right),
$$
orthogonal with respect to $w(x) = (1-x)^\a(1+x)^b$, $\a,\b > -1$, on $[-1,1]$ with $c_w = 2^{-\a-\b-1} c_{\a,\b}$
and 
\begin{equation}\label{eq:c_ab}
   c_{\a,\b} := \frac{\Gamma(\a+\b+2)}{\Gamma(\a+1)\Gamma(\b+1)}
\end{equation}
and $h_n(w)$ is given in, say, \cite[p. 21]{DX}. 

\medskip

We will need
two more families of orthogonal polynomials, generalized Gegenbauer polynomials orthogonal with respect 
to $|x|^{2\mu} (1-x^2)^{\l-\f12}$ on $[-1,1]$ and generalized Hermite polynomials orthogonal with respect to 
$|x|^{2\mu} e^{-x^2}$ on $\RR$. These polynomials and their properties will be given in Appendix \ref{sect:append}

We reserve the notation $c_\l$ in \eqref{eq:c_l} and $c_{\a,\b}$ in \eqref{eq:c_ab} and will use them in multiple
places throughout this paper. 

\subsection{Spherical harmonics}
Let $\Delta$ be the Laplace operator of $\RR^d$. A polynomial $Y$ of $d$-variables is called harmonic
if $\Delta Y = 0$. Spherical harmonics are homogeneous harmonic polynomials restricted on the unit 
sphere. Let $\CH_n^d$ be the space of spherical harmonics of degree $n$. If $Y \in \CH_n^d$, then 
$Y(x) = \|x\|^n Y(x')$, $x' = x/\|x\| \in \sph$, so that it is completely determined by its restriction on the
unit sphere. It is known that 
\begin{equation} \label{eq:sphHn-dim}
 \dim \CH_n^d =\binom{n+d-1}{n} - \binom{n+d-3}{n-2} = \binom{n+d-2}{n} +\binom{n+d-3}{n-1}.
 \end{equation}   
Spherical harmonics of different degrees are orthogonal on the sphere: If $ n \ne m$, then  
$$
   \frac{1}{\s_d} \int_\sph Y_n (\xi) Y_m (\xi) d\s(\xi) = 0, \qquad Y_n \in \CH_n^d, \quad Y_m \in \CH_m^d, 
$$
where $\s_d$ denotes the surface area $\s_d = 2 \pi^{\f{d}{2}}/\Gamma(\f{d}{2})$ of $\sph$. In spherical
polar coordinates $x = r x'$, $r> 0$ and $x' \in\sph$, the Laplace operator satisfies
$$
  \Delta  = \frac{d}{dr^2} +\frac{d-1}{r}\frac{d}{dr} + \frac{1}{r^2} \Delta_0,
$$
where $\Delta_0$ is the Laplace-Beltrami operator of the unit sphere $\sph$. Spherical harmonics are
eigenfunctions of the latter operator \cite[(1.4.9)]{DaiX}, 
\begin{equation} \label{eq:sph-harmonics}
     \Delta_0 Y(\xi) = -n(n+d-2) Y(\xi), \qquad Y \in \CH_n^d,
\end{equation}
which is the property (I) in the previous section. Let $\{Y_\ell^n: 1 \le \ell \le \dim\CH_n^d\}$ 
be an orthonormal basis of $\CH_n^d$ with respect to the normalized surface measure. Then the
the addition formula of the spherical harmonics, which is the property (II) in the previous section, 
states \cite[(1.2.3) and (1.2.7)]{DaiX},
\begin{equation} \label{eq:additionF}
  \sP_n(x,y):= \sum_{\ell =1}^{\dim \CH_n^d} Y_\ell^n (x) Y_\ell^n(y) = Z_n^{\f{d-2}{2}} (\la x,y\ra), \quad x, y \in \sph, 
\end{equation}
where $Z_n^\l$ is a polynomial of one variable defined by 
\begin{equation} \label{eq:Zn}
   Z_n^\l (t) = \frac{n+\l}{\l} C_n^\l (t), \quad \l > 0, \quad\hbox{and}\quad Z_n^0(t) := \begin{cases} 2 T_n(t), & n \ge 1 \\ 1, & n =0 \end{cases}
\end{equation}
with $C_n^\l$ being the Gegenbauer polynomial and $T_n$ being the Chebyshev polynomial of the first kind.
The function $\sP_n$ is the reproducing kernel of $\CH_n^d$ and  the kernel for the orthogonal projection operator $\proj_n: L^2(\sph) \mapsto \CH_n^d$, 
$$
   \proj_n f(x) = \frac{1}{\s_d}\int_{\sph} f(y) \sP_n(x,y) \d\s(y).
$$
In particular, it is independent of the choices of orthonormal basis of $\CH_n^d$. For $f \in L^2(\sph)$, its 
Fourier orthogonal series in spherical harmonics is defined by 
$$
f = \sum_{n=0}^\infty \proj_n f = \sum_{n=0}^\infty \sum_{\ell=0}^{ \dim \CH_n^d} \wh f_\ell^n Y_\ell^n,    
   \qquad  \wh f_\ell^n = \frac{1}{\s_d} \int_\sph f(y) Y_\ell^n(y) \d \s. 
$$
Thus, the addition formula \eqref{eq:additionF} shows that the kernel possesses a one-dimensional 
structure and satisfies a closed-form formula. Consequently, it plays a fundamentally role for the Fourier
analysis on the sphere. 

\subsection{Orthogonal polynomials on the unit ball} 
On the unit ball $\BB^d$ of $\RR^d$, let $\varpi_\mu$ denote the weight function 
\begin{equation}\label{eq:weightB}
 \varpi_\mu(x):= (1-\|x\|^2)^{\mu-\f12},  \quad \mu > -\tfrac12. 
\end{equation}
Classical orthogonal polynomials on the unit ball are orthogonal with respect to 
$$
  \la f,g\ra_{\mu} =b_\mu^\BB \int_{\BB^d} f(x) g(x) \varpi_\mu(x) dx, \qquad 
     b_{\mu}^\BB =  \frac{\Gamma(\mu+\f{d+1}{2})}{\pi^{\f{d}{2}}\Gamma(\mu+\f12)},
$$
which is an inner product normalized so that $\la 1,1\ra_\mu =1$. These polynomials are closely 
related to spherical harmonics and also possess the two characteristic properties.

Let $\CV_n^d(\varpi_\mu)$ be the space of orthogonal polynomials of degree $n$ with respect to 
$\varpi_\mu$. It is well--known that 
$$
  \dim \CV_n^d(\varpi_\mu) = \binom{n+d-1}{n}. 
$$
Several explicit orthogonal bases of $\CV_n^d(\varpi)$ can be explicitly given in terms of classical orthogonal
polynomials of one variable; see \cite[Chapter 5]{DX}. A basis of $\CV_n^d(\varpi_\mu)$ can be conveniently
indexed by $\{P_\kb^n: |\kb| = n,\, \kb \in \NN_0^d\}$. The classical orthogonal polynomials are eigenfunctions
of a second order linear differential operator, 
\begin{equation}\label{eq:diffBall}
  \left( \Delta  - \la x,\nabla \ra^2  - (2\mu+d-1) \la x ,\nabla \ra \right)u = - n(n+2\mu+ d-1) u, \quad u \in \CV_n^d(\varpi_\mu), 
\end{equation}
which is the analog of the property (I) for spherical harmonics. Furthermore, let $\{P_{\kb}^n: |\kb| =n\}$ be an 
orthonormal basis of $\CV_n^d(\varpi_\mu)$; then an analog of the property II, the addition formula, holds for 
$\mu \ge 0$ in the form of \cite{X99} 
\begin{align}\label{eq:PnBall}
 & \Pb_n(\varpi_\mu;x,y) :=   \sum_{|\kb| = n}P_{\kb}^n(x)P_{\kb}^n(y) \\
   &  \qquad = c_{\mu-\f12} \int_{-1}^1 Z_n^{\mu+\f{d-1}{2}}  \left(\la x,y\ra+ t \sqrt{1-\|x\|^2}\sqrt{1-\|y\|^2} \right) 
        (1-t^2)^{\mu-1} \d t \notag
\end{align}
where $c_{\mu-\f12}$ is defined in \eqref{eq:c_l} and the identity holds when $\mu =0$ under the limit 
\begin{equation}\label{eq:limit-int}
  \lim_{\mu \to 0}  c_{\mu-\f12} \int_{-1}^1 f(t) (1-t^2)^{\mu-1} dt = \frac{f(1) + f(-1)}{2}.  
\end{equation}

As in the case of the spherical harmonics, the kernel $\Pb_n$ is the reproducing kernel of $\CV_n^d(\varpi_\mu)$ 
and the kernel of the projection operator $\proj_n^\mu: L^2(\BB^d,\varpi_\mu) \mapsto \CV_n^d(\varpi_\mu)$, 
$$
   \proj^\mu_n f(x) =  b_\mu^{\BB} \int_{\BB^d} f(y) \Pb_n(\varpi_\mu; x,y) \varpi_\mu (y) \d y.
$$
For $f \in L^2(\BB^d,\varpi_\mu)$, its Fourier orthogonal series is defined by 
$$
f = \sum_{n=0}^\infty \proj_n^\mu f = \sum_{n=0}^\infty \sum_{|\kb|=n} \wh f_\kb^n P_\kb^n,    
     \qquad  \wh f_\kb^n = b_\mu^\BB \int_{\BB^d} f(y) P_\kb^n(y)  \varpi_\mu (y) \d y.
$$
Again, the addition formula \eqref{eq:PnBall} shows that the kernel possess a one-dimensional structure
and satisfies a closed-form formula. Likewise, it plays a fundamentally role for the Fourier analysis on 
the ball as we mentioned in the introduction. 
 
\section{Orthogonal structure in and on a hyperboloid}
\setcounter{equation}{0}

The orthogonal structures on the surface of a hyperboloid will be consider in the first subsection and the
structure in the solid hyperboloid will be considered in the second subsection.  

\subsection{On the surface of a hyperboloid}\label{sec:sfOP} 
Let $\varrho \ge 0$, $c > 0$ and $b > \varrho$. We consider the surface of revolution 
$$
{}_\varrho\VV_0^{d+1}: =\left \{(x,t): \|x\|^2 = c^2 ( t^2- \varrho^2), \,\,  \varrho \le |t| \le b, \,\, x \in \RR^d \right \},
$$ 
where $b$ is either finite or $\infty$. When $\varrho > 0$, this is a double hyperboloid of revolution. 
When $\varrho = 0$, it degenerates to the double cone
$$
 \VV_0^{d+1}: = {}_0 \VV_0^{d+1} = \left \{(x,t): \|x\| = c |t|, \,\,    |t| \le b, \,\, x \in \RR^d \right \}.
$$
To make notations less overwhelming, we shall adopt the convention of using ${}_\varrho\VV_0^{d+1}$ only
when we want to make a distinction between $\varrho \ne 0$ and $\varrho =0$, and will use $\VV_0^{d+1}$
across the board otherwise. The surface consist of two parts,
$$
   \VV_{0,+}^{d+1} = \{(x,t) \in    \VV_0^{d+1}: t \ge \varrho\} \quad \hbox{and}\quad 
     \VV_{0,-}^{d+1} = \{(x,t) \in   \VV_0^{d+1}: t \le -\varrho\},
$$
which we call the upper and the lower part. It is evident that $\VV_0^{d+1} = \VV_{0,+}^{d+1} \cup \VV_{0,-}^{d+1}$. 

\subsubsection{Orthogonal polynomials} 
Let $w$ be a weight function defined on $(-\infty, \varrho ] \cup [\varrho, + \infty)$ in the real line. 
On $\VV_0^{d+1}$ we consider the inner product 
$$
  \la f,g\ra_{w} = b_{w} \int_{\VV_0^{d+1}} f(x,t) g(x,t) w(t) \d\s(x,t),
$$
where $\d \s$ denotes the surface measure on $\VV_0^{d+1}$. Let $\CV_n(\VV_0^{d+1}, w)$ be 
the space of orthogonal polynomials with respect to the inner product $\la \cdot,\cdot \ra_w$, which is well 
defined on the space of polynomials of two variables modulo the polynomial idea generated by 
$\|x\|^2 - t^2+\varrho^2$. The dimension of this space is the same as that of $\CH_n^{d+1}$ of $\SS^d$, 
$$
  \dim \CV_n(\VV_0^{d+1},w) = \binom{n+d-1}{n} +  \binom{n+d-2}{n-1}.
$$  
The integral on the surface of the hyperboloid can be decomposed as 
\begin{align*}
  \int_{\VV_0^{d+1} } f(x,t) \d \s(x,t) &  = \int_{ \varrho \le |t| \le b} \int_{\|x\| = c \sqrt{t^2-\varrho^2}} f(x,t) \d\s(x,t) \\
     & =  \int_{ \varrho \le |t| \le b} \big (c^2 (t^2-  \varrho^2)\big)^{\f{d-1}{2}}
         \int_{\sph} f\left(c \sqrt{t^2- \varrho^2} \, \xi, t\right) \d\s(\xi)\d t.
\end{align*}
An orthogonal basis of $\CV_n(\VV_0^{d+1}, w)$ can be given in terms of orthogonal polynomials of 
one variable and spherical harmonics. 

\begin{prop}\label{prop:sfOPbasis}
For a fixed $m \in \NN_0$, let $q_n^{(m)}$ be the orthogonal polynomial of degree $n$ with respect to the weight 
function $(t^2-\varrho^2)^{m + \f{d-1}{2}} w(t)$ on $(-\infty, -  \varrho \,] \cup [ \varrho,\infty)$. Let 
$\{Y_\ell^m: 1 \le \ell \le \dim \CH_n^d\}$ denote an orthonormal basis of $\CH_m^d$. We define 
\begin{equation} \label{eq:sfOPbasis}
  \sQ_{m, \ell}^n (x,y) = q_{n-m}^{(m)} (t)  Y_\ell^m \left (\frac{x}{c}\right), 
      \quad 0 \le m \le n, \,\, 1 \le \ell \le \dim \CH_m^d.
\end{equation}
Then $\sQ_{m,\ell}^n$ form an orthogonal basis of $\CV_n(\VV_0^{d+1},w)$. 
\end{prop}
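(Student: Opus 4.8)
The plan is to verify two things: that the proposed functions $\sQ_{m,\ell}^n$ are genuinely orthogonal to one another with respect to $\la\cdot,\cdot\ra_w$, and that they are linearly independent and number exactly $\dim\CV_n(\VV_0^{d+1},w)$, so that they span the whole space. The key computational engine will be the decomposition of the surface integral already recorded above, namely
\begin{equation*}
  \int_{\VV_0^{d+1}} f\,\d\s
  = \int_{\varrho\le|t|\le b}\bigl(c^2(t^2-\varrho^2)\bigr)^{\f{d-1}{2}}
    \int_{\sph} f\!\left(c\sqrt{t^2-\varrho^2}\,\xi,\,t\right)\d\s(\xi)\,\d t.
\end{equation*}
This separates the inner product into a radial/$t$ part and a spherical part, which is exactly what matches the tensor structure of the ansatz \eqref{eq:sfOPbasis}.

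First I would substitute two basis elements $\sQ_{m,\ell}^n$ and $\sQ_{m',\ell'}^{n'}$ into the inner product and apply the decomposition. On the surface we have $\|x\|=c\sqrt{t^2-\varrho^2}$, so $x/c=\sqrt{t^2-\varrho^2}\,\xi$ with $\xi\in\sph$; using homogeneity of spherical harmonics, $Y_\ell^m(x/c)=(t^2-\varrho^2)^{m/2}Y_\ell^m(\xi)$. The integrand therefore factors as
\begin{equation*}
  q_{n-m}^{(m)}(t)\,q_{n'-m'}^{(m')}(t)\,(t^2-\varrho^2)^{\frac{m+m'}{2}}\,
  Y_\ell^m(\xi)\,Y_{\ell'}^{m'}(\xi).
\end{equation*}
The spherical integral $\int_{\sph}Y_\ell^m Y_{\ell'}^{m'}\d\s$ vanishes unless $m=m'$ (orthogonality of spherical harmonics of different degrees) and, within a fixed degree, unless $\ell=\ell'$ (orthonormality of the chosen basis). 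So I would dispose of the cases $m\ne m'$ and $(m,\ell)\ne(m',\ell')$ immediately via spherical-harmonic orthogonality.

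Once $m=m'$, the surviving $t$-integral carries the factor $(t^2-\varrho^2)^{m+\frac{d-1}{2}}w(t)$ — the exponent $\frac{d-1}{2}$ coming from the Jacobian $(c^2(t^2-\varrho^2))^{\frac{d-1}{2}}$ and the extra $m$ from $(t^2-\varrho^2)^{m/2}$ squared — which is precisely the weight defining $q_{n-m}^{(m)}$ in the proposition. Orthogonality of $q_{n-m}^{(m)}$ and $q_{n'-m}^{(m)}$ for $n-m\ne n'-m$ then finishes the orthogonality proof, and in particular $\la\sQ_{m,\ell}^n,\sQ_{m',\ell'}^{n'}\ra_w=0$ whenever $n\ne n'$. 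To confirm that these are polynomials of total degree $n$ in $\CV_n(\VV_0^{d+1},w)$, I would note that $q_{n-m}^{(m)}(t)$ has degree $n-m$ and $Y_\ell^m(x/c)$ is homogeneous of degree $m$, so the product has degree $n$; I should also check that modulo the ideal generated by $\|x\|^2-t^2+\varrho^2$ these representatives are well defined, which is immediate since the basis is written in the ambient variables and the weight only sees the surface. Finally, for each $n$ the count is $\sum_{m=0}^{n}\dim\CH_m^d = \sum_{m=0}^{n}\bigl(\binom{m+d-1}{m}-\binom{m+d-3}{m-2}\bigr)$, which telescopes to $\binom{n+d-1}{n}+\binom{n+d-2}{n-1}$, matching the stated $\dim\CV_n(\VV_0^{d+1},w)$; together with orthogonality (hence linear independence) this proves the $\sQ_{m,\ell}^n$ form a basis.

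**The main obstacle** I anticipate is not any single computation but the bookkeeping at the degree/dimension level: ensuring that ``orthogonal polynomial of degree $n-m$'' indices correctly against total degree $n$, that the $q_{n-m}^{(m)}$ with the $t$-dependent weight are themselves well defined (the weight $(t^2-\varrho^2)^{m+\frac{d-1}{2}}w(t)$ must have finite moments and infinite support on $(-\infty,-\varrho]\cup[\varrho,\infty)$, which needs the permissibility hypotheses on $w$), and above all that the telescoping dimension count exactly reproduces $\dim\CV_n(\VV_0^{d+1},w)$ so that orthogonality upgrades to a spanning statement rather than merely an independence statement. The analytic content is light; the care lies entirely in aligning the tensor-product indexing with the ambient polynomial grading on the quotient by $\|x\|^2-t^2+\varrho^2$.
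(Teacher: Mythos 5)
Your proposal is correct and follows essentially the same route as the paper: reduce the surface inner product to a product of a spherical integral and a $t$-integral via the homogeneity $Y_\ell^m(x/c)=(t^2-\varrho^2)^{m/2}Y_\ell^m(\xi)$, kill the off-diagonal terms with spherical-harmonic orthogonality, recognize the surviving weight $(t^2-\varrho^2)^{m+\frac{d-1}{2}}w(t)$ as the one defining $q^{(m)}$, and conclude by the dimension count $\sum_{m=0}^n\dim\CH_m^d=\dim\CV_n(\VV_0^{d+1},w)$. The paper's proof is just a terser version of the same argument.
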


\begin{proof}
It is easy to see that the number of $\sQ_{m,\ell}^n$ is equal to the dimension of $\CV_n(\VV_0^{d+1},w)$. Since
$\sQ_{m,\ell}^n$ are evidently polynomials of degree $n$, it is sufficient to show that they are orthogonal
with respect to $\la\cdot,\cdot\ra_w$. Using $Y_\ell^m \left (\frac{x}{c}\right) = (t^2-\varrho^2)^{\frac{m}{2}} 
Y_\ell^m(\xi)$, $\xi\in \sph$, and the orthogonality of $Y_\ell^m$ on the unit sphere, it follows that
$$
\la \sQ_{m,\ell}^n, \sQ_{m',\ell'}^{n'} \ra = \delta_{m,m'} \delta_{\ell,\ell'}  
    c^{d-1} b_w \s_d \int_{ \varrho  \le |t| \le b} \big ( t^2-  \varrho^2)^{m + \f{d-1}{2}}
      q_{n-m}^{(m)}(t)q_{n'-m}^{(m)}(t)w(t) \d t,
$$
which is zero if $n \ne n'$ by the orthogonality of $q_n$. 
\end{proof}

Without loss of generality we can, and will, assume $c = 1$ from now on. Furthermore, by assuming 
that $w$ is supported on the set $(-b, - \varrho\,] \cup [ \varrho, b)$ we can assume the integral over
$\VV_0^{d+1}$ is over $t \in \RR$.  We make the following observation: 

\begin{prop} \label{prop:parity}
If $w$ is an even function, then $\sQ_{m,\ell}^n$ in \eqref{eq:sfOPbasis} is even in $t$ 
if $n-m$ is even, and odd in $t$ if $n-m$ is odd. 
\end{prop}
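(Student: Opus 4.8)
The plan is to reduce the assertion to the classical fact that orthogonal polynomials for an even weight on a symmetric set alternate in parity, after first checking that the spherical-harmonic factor carries no dependence on $t$. As a polynomial representative, $\sQ_{m,\ell}^n(x,t) = q_{n-m}^{(m)}(t)\,Y_\ell^m(x/c)$ is a product of a polynomial in $t$ alone and a homogeneous polynomial in $x$ alone. Under the reflection $(x,t)\mapsto (x,-t)$, which maps $\VV_0^{d+1}$ to itself since $\|x\|^2 = t^2-\varrho^2$ is even in $t$, the factor $Y_\ell^m(x/c)$ is unchanged. Hence the parity of $\sQ_{m,\ell}^n$ in $t$ coincides exactly with the parity of the one-variable polynomial $q_{n-m}^{(m)}$.

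It therefore suffices to prove that $q_k^{(m)}$ has parity $(-1)^k$, where $k=n-m$ and $q_k^{(m)}$ is orthogonal with respect to $W_m(t):=(t^2-\varrho^2)^{m+\f{d-1}{2}}w(t)$ on $(-\infty,-\varrho\,]\cup[\varrho,\infty)$. The key step is that $W_m$ is even: it is the product of the even function $(t^2-\varrho^2)^{m+\f{d-1}{2}}$ with the even weight $w$, and its support is symmetric about the origin. I would then run the standard uniqueness argument. Set $\wt q_k(t):=(-1)^k q_k^{(m)}(-t)$, a polynomial of degree $k$ with the same leading coefficient as $q_k^{(m)}$. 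For any $0\le j<k$, substituting $t\mapsto -t$ and using the evenness of $W_m$ together with the symmetry of the support gives
\begin{align*}
  \int \wt q_k(t)\, t^j\, W_m(t)\,\d t
    &= (-1)^{k+j}\int q_k^{(m)}(t)\, t^j\, W_m(t)\,\d t = 0,
\end{align*}
since $q_k^{(m)}$ is orthogonal to every polynomial of degree below $k$. Thus $\wt q_k$ is again the degree-$k$ orthogonal polynomial for $W_m$ with the prescribed leading coefficient, so by uniqueness $\wt q_k = q_k^{(m)}$, that is, $q_k^{(m)}(-t)=(-1)^k q_k^{(m)}(t)$.

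Combining the two observations, $\sQ_{m,\ell}^n$ inherits the parity $(-1)^{n-m}$ in $t$, which is precisely the claim. I do not anticipate a serious obstacle; the only point requiring care is the bookkeeping that $Y_\ell^m(x/c)$ genuinely contributes no $t$-parity, which becomes clear once $\sQ_{m,\ell}^n$ is treated as an honest polynomial in $(x,t)$ rather than through the surface restriction $Y_\ell^m(x/c)=(t^2-\varrho^2)^{m/2}Y_\ell^m(\xi)$ used in Proposition~\ref{prop:sfOPbasis}.
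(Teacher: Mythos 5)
Your proof is correct and follows essentially the same route as the paper's: both reduce the claim to the parity of the one-variable polynomial $q_{n-m}^{(m)}$ for the even weight $(t^2-\varrho^2)^{m+\f{d-1}{2}}w(t)$, and both observe that the spherical-harmonic factor contributes no $t$-parity (the paper via the surface parametrization $Y_\ell^m(x)=(t^2-\varrho^2)^{m/2}Y_\ell^m(\xi)$, you via the direct observation that $Y_\ell^m(x/c)$ is a polynomial in $x$ alone). The only difference is that you spell out the classical uniqueness argument for the parity of $q_k^{(m)}$, which the paper simply cites as known.
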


\begin{proof}
Since $w$ is even, $q_n^{(m)}$ has the same parity as $n$. Since $Y_\ell^m$ is homogeneous, 
$$
Y_\ell^m(x) = (t^2-\varrho^2)^{\tfrac{m}2} Y_\ell^m(\xi), \qquad  \xi \in \sph, \,
    x = \sqrt{t^2-\varrho^2}\, \xi, 
$$
and the factor $ (t^2-\varrho^2)^{\tfrac{m}2}$ becomes $|t|^{m}$ when $\varrho = 0$, so that it is always
even in $t$. Consequently, $\sQ_{m,\ell}^n$ has the same parity as $n-m$. 
\end{proof}

In particular, the proposition prompts the following definition.

\begin{defn}
Let $w$ be an even weight function. We denote by $\CV_n^E(\VV_0^{d+1},w)$ the subspace of 
$\CV_n(\VV_0^{d+1},w)$ that consists of polynomials even in $t$ variable. Similarly, 
$\CV_n^O(\VV_0^{d+1}, w)$ denotes the subspace that consists of polynomials odd in $t$ variable. 
\end{defn}

In terms of the basis in \eqref{eq:sfOPbasis}, Proposition \ref{prop:parity} implies
\begin{align*}
\CV_n^E(\VV_0^{d+1},w) &\, = \mathrm{span} 
    \left \{\sQ_{n-2k,\ell}^n: 1 \le \ell \le \dim \CH_{n-2k}^d, \, 0 \le k \le \tfrac{n}{2} \right\}, \\ 
 \CV_n^O(\VV_0^{d+1},w) &\, = \mathrm{span} 
    \left \{\sQ_{n-2k-1,\ell}^n: 1 \le \ell \le \dim \CH_{n-2k}^d, \, 0 \le k \le \tfrac{n-1}{2} \right\}. 
\end{align*}
Hence, by Proposition \ref{prop:parity}, we immediately deduce the following corollary. 
 
\begin{cor}
Let $w$ be an even weight function. Then for $n= 1,2, 3, \ldots$,
$$
  \CV_n(\VV_0^{d+1}, w) = \CV_n^E(\VV_0^{d+1}, w) \bigoplus \CV_n^O(\VV_0^{d+1}, w).
$$
Furthermore, 
\begin{equation}\label{eq:dimVnE}
  \dim \CV_n^E(\VV_0^{d+1},w) =\binom{n+d-1}{n}, \quad \quad  
    \dim \CV_n^O(\VV_0^{d+1},w) =\binom{n+d-2}{n-1}. 
\end{equation}
\end{cor}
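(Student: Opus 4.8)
The plan is to read everything off the orthogonal basis of Proposition~\ref{prop:sfOPbasis} together with the parity bookkeeping of Proposition~\ref{prop:parity}, so that the only genuine computation is a telescoping sum of spherical-harmonic dimensions.

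First I would dispose of the direct sum. The collection $\{\sQ_{m,\ell}^n : 0 \le m \le n,\ 1 \le \ell \le \dim\CH_m^d\}$ is an orthogonal, hence linearly independent, basis of $\CV_n(\VV_0^{d+1},w)$. Proposition~\ref{prop:parity} partitions this basis into the elements with $n-m$ even, which span $\CV_n^E(\VV_0^{d+1},w)$, and those with $n-m$ odd, which span $\CV_n^O(\VV_0^{d+1},w)$. Since these two index sets are disjoint with union the whole index set, the two subspaces together span $\CV_n$; and because a polynomial that is simultaneously even and odd in $t$ must vanish, $\CV_n^E\cap\CV_n^O=\{0\}$, so the sum is direct. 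Equivalently, the operation of splitting a polynomial into its even and odd parts in $t$ is a projection realizing the decomposition, which is well defined precisely because $w$ is even and because the parity operation preserves both degree and orthogonality.

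For the dimensions I would simply count basis elements. From the span descriptions preceding the corollary,
$$
\dim\CV_n^E = \sum_{0\le k\le n/2}\dim\CH_{n-2k}^d = \sum_{\substack{0\le m\le n\\ m\equiv n\,(2)}}\dim\CH_m^d,
$$
and likewise $\dim\CV_n^O$ equals the same sum taken over those $m$ of parity opposite to $n$. Writing $a_m:=\binom{m+d-1}{m}$, equation~\eqref{eq:sphHn-dim} reads $\dim\CH_m^d = a_m - a_{m-2}$, with $a_{-1}=a_{-2}=0$ by the usual binomial convention. Within a fixed parity class the terms $a_m$ and $a_{m-2}$ cancel in consecutive pairs, so the same-parity sum collapses to its top term $a_n=\binom{n+d-1}{n}$ and the opposite-parity sum collapses to $a_{n-1}=\binom{n+d-2}{n-1}$, in both cases independently of whether $n$ is even or odd. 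This is exactly \eqref{eq:dimVnE}.

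There is no real obstacle here; the one point needing a little care is checking that the telescoping lands on the correct top term in each parity case ($n$ even versus $n$ odd) and that the bottom terms $a_{-1},a_{-2}$ vanish, so that no boundary correction survives. As a consistency check, adding the two dimensions recovers $\dim\CV_n(\VV_0^{d+1},w)=\binom{n+d-1}{n}+\binom{n+d-2}{n-1}$, matching the value recorded after Proposition~\ref{prop:sfOPbasis}.
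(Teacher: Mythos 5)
Your proposal is correct and follows essentially the same route as the paper: the direct sum is read off from the parity partition of the orthogonal basis $\{\sQ_{m,\ell}^n\}$, and the dimensions come from telescoping $\sum_k \dim\CH_{n-2k}^d$ via the first equality in \eqref{eq:sphHn-dim}. The paper simply states the decomposition as obvious and records the telescoping in one line, so your version only adds detail, not a new idea.
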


\begin{proof}
The decomposition of the space is obvious. The dimension of $\CV_n^E(\VV_0^{d+1},w)$
is equal to $\sum_{0 \le k \le \lfloor\frac{n}{2}\rfloor} \dim \CH_{n-2k}^d$, which simplifies by
the first equality in \eqref{eq:sphHn-dim}. 
\end{proof}

Assume now that $w$ is of the form $w(t) = |t| w_0(t^2-\varrho^2)$ for some function $w_0$ defined on $\RR_+$. 
Then $w$ is even. The orthogonal polynomial $q_k^{(m)}$ in Proposition \ref{prop:sfOPbasis} can be deduced with
the help of the following proposition. 

\begin{prop} \label{prop:op-1d}
Let $\rho > 0$ and let $w_0$ be a weight function defined on $[0, \infty)$. The orthogonal polynomials 
$q_n$ with respect to the weight function $|t| w_0(t^2-\varrho^2)$, defined on $(-\infty, \varrho ] \cup [ \varrho, b)$, 
are given by 
\begin{align*}
   q_{2k}(t) = \, & p_k \left(w_0; t^2-\varrho^2\right) \\
   q_{2k+1}(t)= \, & \frac{1}{t} \left[p_{k+1} \left(w_0; t^2-\varrho^2\right)
       p_k \left(w_0;-\varrho^2\right) - p_k\left(w_0; t^2-\varrho^2\right) p_{k+1}\left(w_0;-\varrho^2\right) \right].
\end{align*} 
\end{prop}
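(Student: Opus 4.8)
The plan is to exploit that $w(t)=|t|\,w_0(t^2-\varrho^2)$ is even on the symmetric domain $(-\infty,-\varrho\,]\cup[\varrho,b)$. As already observed, an even weight forces the orthogonal polynomial $q_n$ to have the parity of $n$, and the inner product of an even polynomial against an odd one vanishes by symmetry. I would therefore split into the even-degree and odd-degree cases and, in each, fold the two symmetric halves of the domain together and apply the substitution $s=t^2-\varrho^2$, $\d s=2t\,\d t$, under which $|t|\,\d t=\tfrac12\,\d s$ and $s$ ranges over $[0,\infty)$ (the finite-$b$ case being identical).

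In the even case, a polynomial even in $t$ is a polynomial $\tilde F(s)$ in $s$, and the substitution turns $\langle \tilde F,\tilde G\rangle_w$ into $\int_0^\infty \tilde F(s)\tilde G(s)\,w_0(s)\,\d s$. Thus orthogonality among even polynomials with respect to $w$ is precisely orthogonality with respect to $w_0$ in the variable $s$, which gives $q_{2k}(t)=p_k(w_0;t^2-\varrho^2)$.

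In the odd case, a polynomial odd in $t$ has the form $t\,\tilde F(s)$, and since now $t^2=s+\varrho^2$ the substitution turns the inner product into $\int_0^\infty (s+\varrho^2)\tilde F(s)\tilde G(s)\,w_0(s)\,\d s$; note $s+\varrho^2=t^2\ge0$, so $(s+\varrho^2)w_0$ is a legitimate weight. Hence $q_{2k+1}(t)=t\,p_k\big((s+\varrho^2)w_0(s);s\big)$, and it remains to identify the degree-$k$ orthogonal polynomial for this modified weight with the Christoffel-type combination $Q(s):=N(s)/(s+\varrho^2)$, where $N(s):=p_{k+1}(w_0;s)\,p_k(w_0;-\varrho^2)-p_k(w_0;s)\,p_{k+1}(w_0;-\varrho^2)$. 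Once this is done, multiplying by $t$ and using $s+\varrho^2=t^2$ reproduces the claimed $\tfrac1t[\cdots]$.

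To identify $Q$ I would verify two points. First, $Q$ is a genuine polynomial of degree $k$: $N(-\varrho^2)=0$, so $s+\varrho^2$ divides $N$, and the quotient has leading coefficient a nonzero multiple of $p_k(w_0;-\varrho^2)$. Second, for every $u$ with $\deg u<k$,
\[
  \int_0^\infty Q(s)\,u(s)\,(s+\varrho^2)\,w_0(s)\,\d s=\int_0^\infty N(s)\,u(s)\,w_0(s)\,\d s=0,
\]
since $p_k(w_0;\cdot)$ and $p_{k+1}(w_0;\cdot)$ are both orthogonal to $u$ against $w_0$. I expect the only non-formal point --- the \emph{main obstacle} --- to be the claim that $Q$ has full degree $k$, i.e. that $p_k(w_0;-\varrho^2)\ne0$; this holds because the zeros of $p_k(w_0;\cdot)$ are all positive (they lie in the interior of the convex hull of the support $[0,\infty)$) while $-\varrho^2<0$, using $\varrho>0$. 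As an orthogonal polynomial is determined by its degree and orthogonality relations up to a scalar, this pins down $q_{2k+1}$, the displayed combination furnishing a convenient explicit representative.
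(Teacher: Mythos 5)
Your proof is correct and follows essentially the same route as the paper: parity forces $q_{2k}$ even and $q_{2k+1}$ odd, the substitution $s=t^2-\varrho^2$ reduces the even case to orthogonality for $w_0$ and the odd case to orthogonality for $(s+\varrho^2)w_0(s)$, and the displayed combination is the Christoffel formula for the latter weight. The only difference is that the paper simply cites Christoffel's theorem (Szeg\H{o}, p.~29) for the last step, whereas you verify it directly—including the needed observation that $p_k(w_0;-\varrho^2)\neq 0$ because the zeros of $p_k(w_0)$ lie in $(0,\infty)$—which is a harmless (indeed slightly more self-contained) elaboration.
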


\begin{proof}
Let $\la f,g\ra = \int_{|t| \ge \rho} f(t) g(t) |t| w_0(t^2-\varrho^2) \d t$. Because the weight function is even, 
the polynomial $q_{2k}$ must be even and $q_{2k+1}$ must be odd. 
In particular, $\la q_{2k}, q_{2j+1}\ra =0$ for all $k, j \in \NN_0$. Moreover, we can assume 
$q_{2k} (t) = p_k (t^2 -\varrho^2)$ for some polynomial $p_k$ or degree $k$. Then, changing variable 
$s = t^2-\varrho^2$ gives
$$
  \la q_{2k}, q_{2j}\ra = 2 \int_{\varrho}^\infty p_k \left(t^2-\varrho^2\right) p_j  \left(t^2-\varrho^2\right)
      t w_0  \left(t^2-\varrho^2\right)\d t 
      = \int_{0}^\infty p_k (s) p_j (s) w_0(s) \d s. 
$$
Consequently, $p_k = p_k(w_0)$. Furthermore, we can assume $q_{2k+1}(t) = t p_k(t^2-\varrho^2)$ 
for some polynomial $p_k$ of degree $k$. It then follows that 
\begin{align*}
\la q_{2k+1}, q_{2j+1}\ra &\, = 2 \int_\varrho^\infty t^2  p_k  \left(t^2-\varrho^2\right) p_j  \left(t^2-\varrho^2\right)
         t w_0  \left(t^2-\varrho^2\right)\d t \\
    &\,  = \int_{0}^\infty p_k(s) p_j (s)\left (s+\varrho^2 \right) w_0(s) \d s. 
\end{align*}
The orthogonal polynomials with respect to $(\varrho+s)w_0(s)$ can be written in terms of orthogonal 
polynomials $p_n(w_0)$, according to a theorem due to Christoffel \cite[p. 29]{Sz}, which shows that
$$
     p_k(t) =  \frac{ p_{k+1}(w_0; t) p_k(w_0;-\varrho^2) - p_k(w_0; t) p_{k+1}(w_0;-\varrho^2)} {t+\varrho^2}. 
$$
Now, $q_{2k+1}(t) = t p_k(t^2-\varrho^2)$ gives the stated result. This completes the proof. 
\end{proof}

Setting $w(t) = |t| w_0(t^2-\varrho^2)$ in Proposition \ref{prop:sfOPbasis}, the above proposition gives
the following corollary:

\begin{cor} \label{cor:sfOPeven}
Let $w(t) = |t| w_0(t^2-\varrho^2)$ and $w_0^{(m)}(t) = |t|^{m+\f{d-1}{2}} w_0(t)$. Then the orthogonal 
polynomials in \eqref{eq:sfOPbasis} that are even in $t$ variable are given by 
\begin{equation} \label{eq:sfOPeven}
  \sQ_{n-2k,\ell}^{n}(x,t) = p_k \big(w_0^{(n-2k)}; t^2-\varrho^2 \big) Y_\ell^{n-2k}(x), \quad 
      1 \le \ell \le \dim \CH_{n-2k}^d,\,0 \le k \le \tfrac{n}{2}.
\end{equation}
In particular, these polynomials consist of an orthogonal basis of $\CV_n^E(\VV_0^{d+1}, w)$. 
\end{cor}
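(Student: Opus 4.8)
The plan is to identify which basis elements from \eqref{eq:sfOPbasis} are even in $t$, compute the one-variable factor of each explicitly, and read off orthogonality from Proposition \ref{prop:sfOPbasis}. Since $w(t)=|t|w_0(t^2-\varrho^2)$ is even, Proposition \ref{prop:parity} tells me that $\sQ_{m,\ell}^n$ is even in $t$ precisely when $n-m$ is even; putting $m=n-2k$ with $0\le k\le\tfrac n2$ therefore enumerates exactly the even basis elements. With $c=1$, each such element is $\sQ_{n-2k,\ell}^n(x,t)=q_{2k}^{(n-2k)}(t)\,Y_\ell^{n-2k}(x)$, so the whole task reduces to computing the one-variable orthogonal polynomial $q_{2k}^{(n-2k)}$.

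First I would determine the weight that $q_{2k}^{(n-2k)}$ is orthogonal with respect to. By the definition in Proposition \ref{prop:sfOPbasis}, this weight is $(t^2-\varrho^2)^{(n-2k)+\f{d-1}{2}}w(t)$ on $(-\infty,-\varrho\,]\cup[\varrho,\infty)$. Substituting $w(t)=|t|w_0(t^2-\varrho^2)$ turns it into $|t|\,(t^2-\varrho^2)^{(n-2k)+\f{d-1}{2}}w_0(t^2-\varrho^2)$. Because the support forces $s:=t^2-\varrho^2\ge 0$, the factor $(t^2-\varrho^2)^{(n-2k)+\f{d-1}{2}}w_0(t^2-\varrho^2)$ is exactly $w_0^{(n-2k)}(t^2-\varrho^2)$ for the notation $w_0^{(m)}(t)=|t|^{m+\f{d-1}{2}}w_0(t)$. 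Thus $q_{2k}^{(n-2k)}$ is the degree-$2k$ orthogonal polynomial for the even weight $|t|\,w_0^{(n-2k)}(t^2-\varrho^2)$.

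I would then apply the even case of Proposition \ref{prop:op-1d} with $w_0$ replaced by $w_0^{(n-2k)}$, which yields $q_{2k}^{(n-2k)}(t)=p_k\big(w_0^{(n-2k)};t^2-\varrho^2\big)$ at once; substituting this into the expression for $\sQ_{n-2k,\ell}^n$ gives \eqref{eq:sfOPeven}. For the final assertion, the polynomials $\sQ_{n-2k,\ell}^n$ span $\CV_n^E(\VV_0^{d+1},w)$ by the span description recorded after Proposition \ref{prop:parity}, and they are mutually orthogonal as a subfamily of the orthogonal basis furnished by Proposition \ref{prop:sfOPbasis}, so they form an orthogonal basis. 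I do not expect a real obstacle: the argument is a short chain of substitutions, and the only point demanding care is the weight identification in the second step, namely checking that the exponent $(n-2k)+\f{d-1}{2}$ produced by Proposition \ref{prop:sfOPbasis} matches, after the change of variable $s=t^2-\varrho^2\ge 0$, the exponent in the definition of $w_0^{(n-2k)}$.
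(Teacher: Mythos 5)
Your proposal is correct and follows essentially the same route as the paper, which simply notes that the corollary results from substituting $w(t)=|t|w_0(t^2-\varrho^2)$ into Proposition \ref{prop:sfOPbasis} and invoking the even case of Proposition \ref{prop:op-1d}; your additional care in matching the exponent $(n-2k)+\f{d-1}{2}$ with the definition of $w_0^{(n-2k)}$ after the substitution $s=t^2-\varrho^2$ is exactly the (unstated) verification the paper relies on.
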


\subsubsection{Fourier orthogonal series}\label{sec:sfOPseries} 
Let $f \in L^2(\VV_0^{d+1}, w)$. With respect to the orthogonal basis $\{\sQ_{m,\ell}^n\}$, its Fourier 
orthogonal series is defined by 
\begin{equation}\label{eq:Fourier}
   f= \sum_{n=0}^\infty  \sum_{m=0}^n \sum_{\ell =1}^{\dim \CH_m^d}  \wh f_{m,\ell}^n \sQ_{m,\ell}^n,
   \qquad \wh f_{m,\ell}^n = \frac{\la f, \sQ_{m,\ell}^n\ra_w}{\la \sQ_{m,\ell}^n, \sQ_{m,\ell}^n\ra_w}. 
\end{equation}
Let $\sP_n(w)$ be the reproducing kernel of $\CV_n(\VV_0^{d+1},w)$. In terms of the basis $\{\sQ_{m,\ell}^n\}$,
\begin{equation}\label{eq:sfKernel}
\sP_n\big(w; (x,t),(y,s) \big) = \sum_{m=0}^n \sum_{\ell =1}^{\dim \CH_m^d} 
     \frac{\sQ_{m,\ell}^n(x,t) \sQ_{m,\ell}^n(y,s)} {\la \sQ_{m,\ell}^n, \sQ_{m,\ell}^n\ra_w}. 
\end{equation}
The kernel, however, is independent of the choice of orthogonal bases. The orthogonal projection operator
$\proj_n: L^2(\VV_0^{d+1}, w) \mapsto \CV_n(\VV_0^{d+1}, w)$ is defined by 
$$
\proj_n f :=  \sum_{k=0}^n  \sum_{m=0}^k \sum_{\ell =1}^{\dim \CH_m^d}  \wh f_{m,\ell}^k \sQ_{m,\ell}^k
  = b_w \int_{\VV_0^{d+1}} f(y,s) \sP_n (w; \cdot, (y,s))w(s) \d y \d s.
 $$
 
If $w$ is an even weight function on $\RR$, we denote by $\sP^E_n$ and $\sP^O_n$ the reproducing kernels
of $\CV_n^E(\VV_0^{d+1},w)$ and  $\CV_n^O(\VV_0^{d+1},w)$, respectively. These kernels can also be written
as sums in terms of their respective orthogonal bases. For example, we have
\begin{equation}\label{eq:sfKernelE}
\sP_n^E\big(w; (x,t),(y,s) \big) = \sum_{k=0}^{\lfloor \frac{n}{2} \rfloor} \sum_{\ell =1}^{\dim \CH_{n-2k}^d} 
     \frac{\sQ_{n-2k,\ell}^n(x,t) \sQ_{n-2k,\ell}^n(y,s)} {\la \sQ_{n-2k,\ell}^n, \sQ_{n-2k,\ell}^n\ra_w}. 
\end{equation}

\begin{lem} \label{lem:sfPbE+PbO}
Let $w$ be an even weight function on $\RR$. Then the reproducing kernel $\sP_n(w)$ can be decomposed as 
\begin{align}
\sP^E_n \big (w; (x,t),(y,s) \big) &\, = \frac12 \left[ \sP_n\big (w; (x,t),(y,s) \big) + \sP_n\big (w; (x,t),(y, -s) \big) \right],     \label{eq:sfPbE} \\
\sP^O_n \big (w; (x,t),(y,s) \big) &\, = \frac12 \left[ \sP_n\big (w; (x,t),(y,s) \big)-\sP_n\big (w; (x,t),(y, -s) \big) \right].    \label{eq:sfPbO}
\end{align}
\end{lem}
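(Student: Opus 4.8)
The plan is to run everything through the explicit orthogonal basis $\{\sQ_{m,\ell}^n\}$ of Proposition \ref{prop:sfOPbasis} and then exploit its parity in the $t$-variable. First I would note that, since the full collection $\{\sQ_{m,\ell}^n\}$ is orthogonal with respect to $\la\cdot,\cdot\ra_w$, so is each of its sub-collections; hence the sum \eqref{eq:sfKernel} defining $\sP_n(w)$ splits literally into the sub-sum over indices with $n-m$ even, which is precisely the kernel $\sP_n^E$ of \eqref{eq:sfKernelE}, and the analogous sub-sum over $n-m$ odd, which is $\sP_n^O$. This gives the decomposition
\begin{equation*}
\sP_n\big(w;(x,t),(y,s)\big)=\sP_n^E\big(w;(x,t),(y,s)\big)+\sP_n^O\big(w;(x,t),(y,s)\big).
\end{equation*}

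Next I would apply Proposition \ref{prop:parity}: in the chosen basis $\sQ_{m,\ell}^n(y,s)$ is even in $s$ when $n-m$ is even and odd in $s$ when $n-m$ is odd. Since $(y,-s)$ lies on $\VV_0^{d+1}$ whenever $(y,s)$ does (the defining relation $\|y\|^2=c^2(s^2-\varrho^2)$ depends only on $s^2$), I may replace $s$ by $-s$ in the second argument of \eqref{eq:sfKernel}. Every term with $n-m$ even is then unchanged and every term with $n-m$ odd changes sign, so
\begin{equation*}
\sP_n\big(w;(x,t),(y,-s)\big)=\sP_n^E\big(w;(x,t),(y,s)\big)-\sP_n^O\big(w;(x,t),(y,s)\big).
\end{equation*}
Adding and subtracting the two displayed identities and dividing by $2$ then yields \eqref{eq:sfPbE} and \eqref{eq:sfPbO}.

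There is no real obstacle here: the entire content is the parity statement of Proposition \ref{prop:parity}, already in hand, together with the observation that flipping the sign of $s$ keeps the point on the surface. Once those are in place the lemma is a one-line linear-algebra manipulation. The only points deserving a moment of care are that the norms $\la\sQ_{m,\ell}^n,\sQ_{m,\ell}^n\ra_w$ occurring in all three kernels are literally the same numbers (they are computed with the same inner product, so the two sub-sums are genuine restrictions of \eqref{eq:sfKernel}), and that the parity must be invoked in the $s$-variable of the \emph{second} argument rather than the $t$-variable of the first.
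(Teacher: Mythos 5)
Your argument is correct, but it takes a different route from the paper's. You split the basis expansion \eqref{eq:sfKernel} of $\sP_n(w)$ into the two sub-sums over $n-m$ even and $n-m$ odd, identify these with $\sP_n^E$ and $\sP_n^O$ via \eqref{eq:sfKernelE} and its odd analogue, and then read off both identities from the term-by-term sign change under $s\mapsto -s$ supplied by Proposition \ref{prop:parity}. The paper instead works basis-free after one initial use of \eqref{eq:sfKernel}: it names the right-hand side of \eqref{eq:sfPbE} as $\sQ_n$, checks that $\sQ_n$ is symmetric in its two arguments (using $\sP_n(w;(x,-t),(y,s))=\sP_n(w;(x,t),(y,-s))$), verifies that $\sQ_n$ reproduces every $P\in\CV_n^E$ and that $\sQ_n((x,t),\cdot)$ lies in $\CV_n^E$, and concludes $\sQ_n=\sP_n^E$ by the uniqueness of the reproducing kernel. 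Your version is more elementary and gives both formulas simultaneously from a single linear-algebra manipulation, at the cost of leaning on the explicit basis and on the assertion (stated in the paper just before the lemma) that the sub-sum over the even-parity basis elements is indeed $\sP_n^E$; the paper's version only needs the defining properties of a reproducing kernel and so would survive even without an explicit parity-adapted basis in hand. The two points you flag as needing care — that the same norms appear in all three kernels, and that the parity is applied in the second argument — are exactly the right ones, and your observation that $(y,-s)$ stays on the surface is needed for the substitution to make sense. Both proofs are sound.
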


\begin{proof}
Let $\sQ_n$ denote the righthand side of \eqref{eq:sfPbE} in this proof. By \eqref{eq:sfKernel} and Proposition \ref{prop:parity},  $\sP_n(w; (x,-t), (y,s))=\sP_n(w; (x,t), (y, -s))$. Hence, $\sQ_n$ is symmetric in $(x,t)$ and 
$(y,s)$. The reproducing property of $\sP_n(w)$ shows that if $P\in \CV_n^E(\VV_0^{d+1}, w)$, then 
$$
  \int_{\VV_0^{d+1}} \sQ_n\big ((x,t),(y,s) \big) P(y,s) w (s) \d s =  \tfrac12 \big(P(x,t)+ P(x,-t)\big) = P(x,t).
$$
Moreover, for each fixed $(x,t)$, $\sQ_n$ is clearly even in $s$ so that $\sQ_n( (x,t), \cdot)$ 
is an element of $\CV_n^E(\VV_0^{d+1}, w)$. By the symmetry of $\sQ_n$ in its variables, the same also 
holds if we fix $(y,s)$ instead. Consequently, $\sQ_n$ is the reproducing kernel of $\CV_n^E(\VV_0^{d+1}, w)$; 
that is, $\sQ_n = \sP^E (w)$. A similar proof works for $\sP_n^O (w)$.
\end{proof}
 
Assume that $w$ is an even weight function. If $f(x,t)$ is even in the $t$ variable, then its Fourier orthogonal 
series in \eqref{eq:Fourier} contains only orthogonal polynomials in $\CV_n^E(\VV_0^{d+1},w)$. Alternatively,
if we are given a function $f$ defined on the upper hyperboloid, we could extend it to the double hyperboloid 
by defining $f(x,-t) = f(x,t)$ so that the extended $f$ is even in $t$, which allows us to use the Fourier 
orthogonal expansion that contains only orthogonal polynomials even in $t$ variable. This is formulated 
in the next theorem. 

\begin{thm} \label{pop:FourierEven} 
Let $w(t) = t w_0(t^2-\varrho^2)$ on $\RR_+$. If $f \in L^2(\VV_{0,+}^{d+1}, w)$, then
\begin{equation}\label{eq:FourierEven}
 f= \sum_{n=0}^\infty  \sum_{k=0}^{\lfloor\frac{n}{2}\rfloor} \sum_{\ell =1}^{\dim \CH_{n-2k}^d} 
      \wh f_{n-2k,\ell}^n \sQ_{n-2k,\ell}^n
\end{equation}
in $L^2$ sense, where $\sQ_{n-2k,\ell}^n$ are defined in \eqref{eq:sfOPeven} and $ \wh f_{n-2k,\ell}^n$
are defined in \eqref{eq:Fourier} for $f$ evenly extended in $t$ variable. 
\end{thm}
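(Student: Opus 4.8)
The plan is to reduce the assertion to the full Fourier orthogonal expansion on the double hyperboloid $\VV_0^{d+1}$ via even reflection, and then to discard the odd-in-$t$ terms by a parity argument. First I would extend $f$ to an even function $F$ on the whole surface $\VV_0^{d+1}$ by setting $F(x,-t) = F(x,t) = f(x,t)$ for $(x,t) \in \VV_{0,+}^{d+1}$. Since the even extension of the weight is $w(t) = |t| w_0(t^2-\varrho^2)$ and the surface measure decomposes as in the integral formula of Section~\ref{sec:sfOP}, the upper and lower parts contribute equally, so $F \in L^2(\VV_0^{d+1}, w)$ and $\|F\|_{L^2(\VV_0^{d+1},w)}^2$ is a fixed positive multiple of $\|f\|_{L^2(\VV_{0,+}^{d+1},w)}^2$.

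Next I would invoke completeness: the orthogonal polynomials $\{\sQ_{m,\ell}^n\}$ of Proposition~\ref{prop:sfOPbasis} form a complete orthogonal system in $L^2(\VV_0^{d+1}, w)$, so the Fourier series \eqref{eq:Fourier} of $F$ converges to $F$ in the $L^2(\VV_0^{d+1}, w)$ norm. The parity argument then annihilates half of the coefficients: by Proposition~\ref{prop:parity}, $\sQ_{m,\ell}^n$ is odd in $t$ whenever $n-m$ is odd; since $F$ and $w$ are even in $t$ while the integration domain is symmetric about $t=0$, the integrand defining $\la F, \sQ_{m,\ell}^n\ra_w$ is odd in $t$, whence $\wh F_{m,\ell}^n = 0$ for $n-m$ odd. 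Writing the surviving indices as $m = n-2k$ with $0 \le k \le \lfloor n/2 \rfloor$ yields the expansion \eqref{eq:FourierEven} for $F$, whose coefficients are, by definition, the $\wh f_{n-2k,\ell}^n$ of the statement (the Fourier coefficients of the even extension).

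Finally I would transfer the convergence back to the upper part. Every partial sum of \eqref{eq:FourierEven} is even in $t$, as is $F$, so their difference is even; the norm identity from the first step then shows that $\|S_N - F\|_{L^2(\VV_{0,+}^{d+1},w)}^2$ is a fixed multiple of $\|S_N - F\|_{L^2(\VV_0^{d+1},w)}^2 \to 0$, where $S_N$ denotes the $N$-th partial sum. Since $F = f$ on $\VV_{0,+}^{d+1}$, this is precisely the asserted $L^2$ convergence.

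The main obstacle is the completeness of $\{\sQ_{m,\ell}^n\}$ in $L^2(\VV_0^{d+1}, w)$, equivalently the density of polynomials in that space. When $b < \infty$ the surface is compact and this follows from the Stone--Weierstrass theorem; when $b = \infty$ it requires $w_0$ to decay fast enough that the associated Stieltjes moment problem is determinate, so that polynomials remain dense in the unbounded radial direction. Everything else---the reflection, the parity cancellation, and the norm transfer---is routine bookkeeping.
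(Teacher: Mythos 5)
Your proof is correct and follows essentially the same route as the paper: even extension of $f$ to the double surface, the norm identity by symmetry, vanishing of the coefficients $\wh f_{m,\ell}^n$ with $n-m$ odd by parity, and transfer of the $L^2$ convergence back to the upper sheet. The only difference is that you explicitly flag completeness of the system $\{\sQ_{m,\ell}^n\}$ in $L^2(\VV_0^{d+1},w)$ (Stone--Weierstrass when $b<\infty$, determinacy of the moment problem when $b=\infty$), a hypothesis the paper's proof uses tacitly.
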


\begin{proof}
For $f$ defined on the upper surface $\VV_{0,+}^{d+1}$, we extend it to $\VV_0^{d+1}$ so that 
$f$ is even in $t$-variable; that is, considering $f(x,|t|)$ defined on $\VV_0^{d+1}$. By symmetry,
it follows readily that 
$$
  \int_{\VV_0^{d+1}} |f(x,|t|)|^2 w(t) \d\s(x,t) = 2  \int_{\VV_{0,+}^{d+1}} |f(x,t)|^2 w(t) \d\s(x,t).  
$$
Since $\sQ_{m,\ell}^n$ is odd in $t$ when $n-m$ is an odd integer, we see that 
$$
      \la f, \sQ_{m,\ell}^n\ra_w =0, \qquad  n - m = \mathrm{odd}. 
$$ 
Hence, the Fourier coefficients $\wh f_{m,\ell}^n$ are zero when $n - m =2k+1$, or $m = n-2k-1$. Thus,
the series for $f(x,|t|)$ contains only $\sQ_{n-2k,\ell}^n$ and, in particular, it is of the form \eqref{eq:FourierEven}
on $\VV_0^{d+1}$. The series convergence to $f(x,|t|)$ in $L^2(\VV_0^{d+1}, w)$ and converges, in particular, 
to $f \in L^2(\VV_{0,+}^{d+1}, w)$ by symmetry. 
\end{proof}

A couple of remarks are in order. To be more specific, our remarks below address only the Fourier orthogonal series 
on the cone, although the essence applies to that of hyperboloid as well. The proposition shows that, for 
a given function defined on the upper cone, we could expand it in terms of half as many orthogonal 
polynomials on the surface of the double cone. This should be compared with the Fourier orthogonal 
series on the upper cone studied in \cite{X19}, which uses all orthogonal polynomials on the upper cone 
$\VV_{0,+}^d$ defined with respect to the inner product 
$$
  \la f,g\ra = \int_{\VV_{0,+}^{d+1}} f(x,t) g(x,t) w(t) \d \s(x,t). 
$$
This is of course not surprising, the same phenomenon already appears in the classical Fourier series
when one expands an even function in the Fourier cosine series. 
It should be noted, however, that the orthogonal structure on the two series are different. This is best 
illustrated by considering the classical weight functions for which the orthogonal basis can be given in terms 
of classical orthogonal polynomials and spherical harmonics. For example, in \cite{X19}, such a basis 
is constructed for the Laguerre weight $w(t) = t^\a e^{-t}$, but not for the Hermite weight $w(t) = e^{-t^2}$. 
In the next section, we shall construct an orthogonal basis for the Hermite weight on the double cone.  

\subsection{On a solid hyperboloid}
Let $\varrho$ be a real number, $c > 0$ and $b > \varrho$. We consider the solid hyperboloid 
$$
{}_\varrho\VV^{d+1}: =\left \{(x,t): \|x\|^2 \le c^2 ( t^2- \varrho^2), \,\,  \varrho \le |t| \le b, \,\, x \in \RR^d \right \},
$$ 
where $b$ is either finite or $\infty$, which is bounded by the hyperboloid surface $\VV_0^{d+1}$ and, when
$b$ is finite, by the hyperplanes $t = b$ and $t = -b$. When $\varrho = 0$, it is degenerate to the solid double cone
$$ 
   \VV^{d+1} = \left\{(x,t): \|x\| \le c\, t, \, x \in \RR^d, \, t \in [-b, b]\right\}.
$$
We again write $ \VV^{d+1} = {}_\varrho\VV^{d+1}$ unless when we want to emphasis $\varrho > 0$, and
we define upper and lower parts of these domains as 
$$
\VV_{+}^{d+1} = \left \{(x,t) \in \VV_0^{d+1}: t \ge \varrho \right\} \quad \hbox{and}\quad 
\VV_{-}^{d+1}  = \left \{(x,t) \in \VV_0^{d+1}: t \le -\varrho \right\}.
$$
It is evident that $\VV^{d+1} = \VV_{+}^{d+1} \cup \VV_{-}^{d+1}$. The integral on the domain $\VV^{d+1}$
can be decomposed as 
\begin{align*}
  \int_{\VV^{d+1} } f(x,t) \d x \d t &  = \int_{\varrho \le |t| \le b} \int_{\|x\| \le c \sqrt{t^2-\varrho^2}} f(x,t) \d\s(x,t) \\
     & =  \int_{\varrho \le |t| \le b} \big (c^2 (t^2-  \varrho^2)\big)^{\f{d}{2}}
         \int_{\BB^d} f\left(c \sqrt{t^2- \varrho^2} \, y, t\right) \d y \d t.
\end{align*}

\subsubsection{Orthogonal polynomials}
Let $w(t)$ be a weight function on the real line. For $\mu > -\f12$, let  
$$
   W(x,t) = w(t) \left( c (t^2-\varrho^2) - \|x\|^2 \right)^{\mu-\f12}, \qquad (x,t) \in \VV^{d+1}, 
$$
and define the inner product 
$$
    \la f,g\ra_{W} = b_{W} \int_{\VV^{d+1}} f(x,t) g(x,t) W(x,t) \d x \d t.
$$
Let $\CV_n(\VV^{d+1}, W)$ be the space of orthogonal polynomials with respect to the inner
product $\la \cdot,\cdot \ra_W$. It follows from the general theory of orthogonal polynomials that 
$$
        \dim \CV_n(\VV^{d+1},W) = \binom{n+d}{n}.
$$  

Recall that the classical weight function on the unit ball $\BB^d$ is defined by $\varpi_\mu(x) = (1-\|x\|^2)^{\mu-\f12}$.
The weight function $W_\mu$ can be written as 
\begin{equation}\label{eq:W(x,t)}
 W(x,t) = w(t) \big(c(t^2-\varrho^2)\big)^{\mu-\f12} \varpi_\mu(y), \qquad y = \frac{x}{\sqrt{ c(t^2-\varrho^2)}} \in \BB^d.
\end{equation}
An orthogonal basis of $\CV_n(\VV^{d+1}, W)$ can be given in terms of orthogonal polynomials of 
one variable and orthogonal polynomials on the unit ball.  

\begin{prop}\label{prop:solidOPbasis}
For a fixed $m \in \NN_0$, let $q_n^{(m)}$ be the orthogonal polynomial of degree $n$ with respect to the weight 
function $(t^2-\varrho^2)^{m +\mu+ \frac{d-1}{2}} w(t)$ defined on $(-b, -\varrho] \cup [\varrho,b)$. Let 
$\{P_{\kb}^m: |\kb| = m\}$ denote an orthonormal basis of $\CV_m^d(\varpi_\mu)$. We define
\begin{equation} \label{eq:solidOPbasis}
  \Qb_{m, \kb}^n (x,t) = q_{n-m}^{(m)} (t) \big( c(t^2-\varrho^2) \big)^{\frac{m}{2}}
           P_\kb^m \bigg (\frac{x}{\sqrt{ c(t^2-\varrho^2)} }\bigg).
\end{equation}
Then $\{\Qb_{m,\kb}^n: 0 \le m \le n, \, |\kb| = m, \, \kb \in \NN_0^d\}$ is an orthogonal basis of $\CV_n(\VV^{d+1},W)$. 
\end{prop}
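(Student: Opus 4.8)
The plan is to mirror the proof of Proposition \ref{prop:sfOPbasis} almost verbatim, substituting orthogonal polynomials on the unit ball for spherical harmonics. First I would verify the counting: the index set $\{(m,\kb): 0 \le m \le n,\ |\kb| = m\}$ has cardinality $\sum_{m=0}^n \dim \CV_m^d(\varpi_\mu) = \sum_{m=0}^n \binom{m+d-1}{m} = \binom{n+d}{n}$, which matches $\dim \CV_n(\VV^{d+1},W)$. Since each $\Qb_{m,\kb}^n$ is manifestly a polynomial of degree $n$ in $(x,t)$ — the factor $(c(t^2-\varrho^2))^{m/2}$ cancels the denominators inside $P_\kb^m$ because $P_\kb^m$ has degree $m$, exactly as in Proposition \ref{prop:parity} — it suffices to establish pairwise orthogonality with respect to $\la\cdot,\cdot\ra_W$.

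For the orthogonality computation I would start from the decomposition of the integral over $\VV^{d+1}$ recorded just before the statement and from the factorization \eqref{eq:W(x,t)} of the weight $W$. Writing $y = x/\sqrt{c(t^2-\varrho^2)} \in \BB^d$ and using the homogeneity relation $(c(t^2-\varrho^2))^{m/2} P_\kb^m(y) = P_\kb^m(x)$ together with its analogue for the second index, the inner product $\la \Qb_{m,\kb}^n, \Qb_{m',\kb'}^{n'}\ra_W$ factors into a $t$-integral times a ball-integral. The key step is that the inner radial integral over $\BB^d$, against $\varpi_\mu$, produces the factor $\delta_{m,m'}\delta_{\kb,\kb'}$ by the orthonormality of $\{P_\kb^m\}$ in $\CV_m^d(\varpi_\mu)$; crucially this also collapses $m'$ to $m$. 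With $m=m'$ fixed, the remaining integral in $t$ is, up to the constant $b_W$ and a power of $c$,
\begin{equation*}
 \int_{\varrho \le |t| \le b} (t^2-\varrho^2)^{m+\mu+\frac{d-1}{2}}\, q_{n-m}^{(m)}(t)\, q_{n'-m}^{(m)}(t)\, w(t)\, \d t,
\end{equation*}
which vanishes for $n \ne n'$ by the defining orthogonality of $q_k^{(m)}$ against the weight $(t^2-\varrho^2)^{m+\mu+\frac{d-1}{2}} w(t)$ stated in the hypothesis.

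The one point requiring genuine care — and the only place this departs from the surface case — is bookkeeping the exponent of $(t^2-\varrho^2)$. On the surface the Jacobian power was $(t^2-\varrho^2)^{(d-1)/2}$ and the homogeneity factor contributed $(t^2-\varrho^2)^{m/2}$ from a single spherical harmonic; here the solid Jacobian contributes $(c(t^2-\varrho^2))^{d/2}$, the weight factorization contributes $(c(t^2-\varrho^2))^{\mu-1/2}$, and each of the two ball polynomials contributes $(c(t^2-\varrho^2))^{m/2}$. I expect these to sum to the exponent $m+\mu+\frac{d-1}{2}$ appearing in the radial weight once the $\d y$-normalization is separated out, but tracking the powers of $c$ and the $\frac{d}{2}$ versus $\frac{d-1}{2}$ discrepancy against the surface case is exactly the computation I would do carefully. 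Assembling these observations gives orthogonality for all index pairs, and since the $\Qb_{m,\kb}^n$ are degree-$n$ polynomials matching the dimension count, they form the claimed orthogonal basis of $\CV_n(\VV^{d+1},W)$.
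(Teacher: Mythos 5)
Your proposal is correct and follows the paper's proof essentially verbatim: change variables $y = x/\sqrt{c(t^2-\varrho^2)}$, factor $\la \Qb_{m,\kb}^n,\Qb_{m',\kb'}^{n'}\ra_W$ into a ball integral (which produces $\delta_{m,m'}\delta_{\kb,\kb'}$ by the orthonormality of the $P_\kb^m$) times a $t$-integral against $\big(c(t^2-\varrho^2)\big)^{m+\mu+\frac{d-1}{2}}w(t)$, and conclude from the orthogonality of $q_{n-m}^{(m)}$; your exponent bookkeeping $\frac{d}{2}+(\mu-\frac12)+\frac{m}{2}+\frac{m}{2}=m+\mu+\frac{d-1}{2}$ is exactly right. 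One small correction: the ``homogeneity relation'' $\big(c(t^2-\varrho^2)\big)^{m/2}P_\kb^m(y)=P_\kb^m(x)$ is false, since $P_\kb^m$, unlike a spherical harmonic, is not homogeneous; it is also unnecessary, because the factorization follows directly from substituting $y$ into the definition of $\Qb_{m,\kb}^n$, and the fact that $\Qb_{m,\kb}^n$ is a polynomial rests on the parity $P_\kb^m(-y)=(-1)^m P_\kb^m(y)$ (only powers $(c(t^2-\varrho^2))^{(m-j)/2}$ with $m-j$ even occur), not on the degree alone.
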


\begin{proof}
Using the orthonormality of $P_{\kb}^m$, it follows readily that 
$$
 \la  \Qb_{m, \kb}^n,  \Qb_{m', \kb'}^{n'} \ra_W = \delta_{\kb,\kb'}\delta_{m,m'}
   \frac{b_W}{b_\mu^\BB} \int_{\varrho \le |t| \le b} 
 q_{n-m}^{(m)} (t) q_{n'-m}^{(m)} (t) \big( c(t^2-\varrho^2) \big)^{m+ \mu+\frac{d-1}{2}} w(t) \d t    
$$ 
from which the orthogonality follows form that of $q_{n-m}^{(m)}$. 
\end{proof}
Without loss of generality, we can and will assume $c = 1$. We also absorb $b$ in the support of $w$ to assume
$b = +\infty$. If $w$ is even, then $q_n^{(m)}$ has the same parity as $n$. This leads to the following observation: 

\begin{prop} \label{prop:paritySolid}
If $w$ in \eqref{eq:W(x,t)} is an even function on $\RR$, then $\Qb_{m,\kb}^n$ in \eqref{eq:solidOPbasis} 
is even in $t$ if $n-m$ is even, and odd in $t$ if $n-m$ is odd. 
\end{prop}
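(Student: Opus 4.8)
The plan is to follow the same template as the proof of Proposition~\ref{prop:parity} for the surface case, splitting $\Qb_{m,\kb}^n$ into a purely radial-in-$t$ factor and a factor coming from the ball polynomial, and then tracking the parity of each. With the normalization $c=1$ I would write
$$
 \Qb_{m,\kb}^n(x,t) = q_{n-m}^{(m)}(t)\,(t^2-\varrho^2)^{\frac{m}{2}}
     P_\kb^m\!\Big(\tfrac{x}{\sqrt{t^2-\varrho^2}}\Big).
$$
The first factor is immediate from the observation recorded just before the statement: since $w$ is even, the weight $(t^2-\varrho^2)^{m+\mu+\frac{d-1}{2}}w(t)$ defining $q_{n-m}^{(m)}$ is even in $t$, so that $q_{n-m}^{(m)}(-t)=(-1)^{n-m}q_{n-m}^{(m)}(t)$.

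The crux, and the only place where the argument genuinely departs from the surface case, is the second factor $(t^2-\varrho^2)^{m/2}P_\kb^m(x/\sqrt{t^2-\varrho^2})$. On the surface the analogous factor collapsed because $Y_\ell^m$ is homogeneous, giving $(t^2-\varrho^2)^{m/2}Y_\ell^m(x/\sqrt{t^2-\varrho^2})=Y_\ell^m(x)$, independent of $t$. The ball polynomials $P_\kb^m$ are not homogeneous, so the factor really does depend on $t$; what I need is only that it is \emph{even} in $t$. I would get this from the parity of orthogonal polynomials on the ball: because $\varpi_\mu$ is centrally symmetric, $\varpi_\mu(-y)=\varpi_\mu(y)$, the substitution $y\mapsto -y$ maps $\CV_m^d(\varpi_\mu)$ to itself, and splitting any $P\in\CV_m^d(\varpi_\mu)$ into even and odd parts shows that the part of the wrong degree-parity lies in $\CV_m^d(\varpi_\mu)$ yet has degree $<m$, hence vanishes. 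Thus $P_\kb^m(-y)=(-1)^m P_\kb^m(y)$, so its monomial expansion $P_\kb^m(y)=\sum_\alpha c_\alpha y^\alpha$ involves only multi-indices with $|\alpha|\equiv m\pmod 2$.

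Substituting $y=x/\sqrt{t^2-\varrho^2}$ and multiplying by $(t^2-\varrho^2)^{m/2}$ then yields $\sum_\alpha c_\alpha x^\alpha (t^2-\varrho^2)^{(m-|\alpha|)/2}$; for every surviving term $m-|\alpha|$ is even, so $(m-|\alpha|)/2$ is a nonnegative integer and each summand is a genuine polynomial in $(x,t)$ that is even in $t$. Hence the whole second factor is even in $t$, and combining the two parities gives $\Qb_{m,\kb}^n(x,-t)=(-1)^{n-m}\Qb_{m,\kb}^n(x,t)$, which is exactly the assertion. I expect the only real obstacle to be this replacement of the homogeneity used on the surface by the central-symmetry (definite-parity) property of the ball polynomials; once that is in place, the remainder is bookkeeping on exponents.
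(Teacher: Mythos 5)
Your proof is correct and follows the same route the paper takes, which is simply to note that the evenness of $w$ forces $q_{n-m}^{(m)}$ to have the parity of $n-m$ while the remaining factor $(t^2-\varrho^2)^{m/2}P_\kb^m\big(x/\sqrt{t^2-\varrho^2}\big)$ is even in $t$; the paper in fact states the proposition without a written proof, leaving that second point implicit. Your justification of it --- central symmetry of $\varpi_\mu$ forces every element of $\CV_m^d(\varpi_\mu)$ to satisfy $P(-y)=(-1)^mP(y)$, so only monomials with $|\alpha|\equiv m\pmod 2$ occur and each term $x^\alpha(t^2-\varrho^2)^{(m-|\alpha|)/2}$ is a polynomial even in $t$ --- is exactly the missing detail and is sound.
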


Analogous to the surface of hyperboloid, we give the following definition. 

\begin{defn}
Let $W$ be an even weight function in $t$. We denote by $\CV_n^E(\VV^{d+1},W)$ the subspace of 
$\CV_n(\VV^{d+1},W)$ that consists of polynomials even in $t$ variable. Similarly, 
$\CV_n^O(\VV^{d+1}, W)$ dentoes the subspace that consists of polynomials odd in $t$ variable. 
\end{defn}

In terms of the basis in \eqref{eq:solidOPbasis},   Proposition \ref{prop:paritySolid} implies
\begin{align*}
\CV_n^E(\VV^{d+1},W) &\, = \mathrm{span} 
    \left \{\Qb_{n-2k,\kb}^n: |\kb| = n-2k, \, 0 \le k \le \tfrac{n}{2} \right\}, \\ 
 \CV_n^O(\VV^{d+1},W) &\, = \mathrm{span} 
    \left \{\Qb_{n-2k-1,\kb}^n: |\kb| = n-2k-1, \, 0 \le k \le \tfrac{n-1}{2} \right\}. 
\end{align*}
Hence, by Proposition \ref{prop:paritySolid}, we immediately deduce the following corollary. 
 
\begin{cor}
Let $W$ be an even weight function in $t$. Then for $n= 1,2, 3, \ldots$,
$$
  \CV_n(\VV^{d+1}, W) = \CV_n^E(\VV^{d+1}, W) \bigoplus \CV_n^O(\VV^{d+1}, W).
$$
\end{cor}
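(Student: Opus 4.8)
The plan is to realize the decomposition through the standard splitting of a polynomial into its even and odd parts in the $t$ variable, verifying that both parts remain orthogonal polynomials of degree $n$ because the reflection $(x,t)\mapsto(x,-t)$ is a symmetry of the weighted domain. First I would record that $\VV^{d+1}$ is invariant under this reflection, since its defining relations involve $t$ only through $t^2$, and that the weight $W(x,t)=w(t)\big(c(t^2-\varrho^2)-\|x\|^2\big)^{\mu-\f12}$ is even in $t$ by the hypothesis that $w$ is even. Consequently, for any polynomial $P$, writing $\wt P(x,t):=P(x,-t)$, the change of variables $t\mapsto -t$ in the integral defining $\la\cdot,\cdot\ra_W$ yields $\la \wt P, Q\ra_W = \la P, \wt Q\ra_W$ for every polynomial $Q$, where $\wt Q(x,t)=Q(x,-t)$ has the same degree as $Q$.

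Next I would use this reflection identity to show that $\CV_n(\VV^{d+1},W)$ is stable under $P\mapsto\wt P$. Indeed, if $P\in\CV_n(\VV^{d+1},W)$ and $Q$ is any polynomial of degree less than $n$, then $\wt Q$ also has degree less than $n$, so $\la\wt P,Q\ra_W=\la P,\wt Q\ra_W=0$; since $\wt P$ is again a polynomial of degree $n$, it follows that $\wt P\in\CV_n(\VV^{d+1},W)$. Therefore, for any $P\in\CV_n(\VV^{d+1},W)$, the even and odd parts
$$P_E(x,t)=\tfrac12\big(P(x,t)+P(x,-t)\big), \qquad P_O(x,t)=\tfrac12\big(P(x,t)-P(x,-t)\big)$$
both lie in $\CV_n(\VV^{d+1},W)$, with $P_E\in\CV_n^E(\VV^{d+1},W)$ and $P_O\in\CV_n^O(\VV^{d+1},W)$ by construction. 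Since $P=P_E+P_O$, this gives $\CV_n(\VV^{d+1},W)=\CV_n^E(\VV^{d+1},W)+\CV_n^O(\VV^{d+1},W)$.

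Finally I would check that the sum is direct. A polynomial lying in $\CV_n^E(\VV^{d+1},W)\cap\CV_n^O(\VV^{d+1},W)$ is simultaneously even and odd in $t$, hence satisfies $P(x,t)=-P(x,t)$ and so vanishes identically; thus the intersection is $\{0\}$, which together with the previous paragraph yields the claimed direct sum. I do not expect a genuine obstacle: as in the analogous surface statement, the decomposition is essentially automatic, and the only point requiring care is the reflection invariance of the pairing, which rests squarely on the evenness of $W$ established in the first step. As an alternative I could argue entirely through the explicit basis of Proposition~\ref{prop:solidOPbasis}: by Proposition~\ref{prop:paritySolid} its elements split into those even in $t$, spanning $\CV_n^E(\VV^{d+1},W)$, and those odd in $t$, spanning $\CV_n^O(\VV^{d+1},W)$, and since these constitute two disjoint subfamilies of a single orthogonal basis, their spans meet only in $\{0\}$.
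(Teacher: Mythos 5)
Your proof is correct. The paper itself gives no separate argument for this corollary: it is deduced ``immediately'' from the display just above it, namely that the orthogonal basis $\{\Qb_{m,\kb}^n\}$ of Proposition~\ref{prop:solidOPbasis} splits, by Proposition~\ref{prop:paritySolid}, into the even-in-$t$ subfamily spanning $\CV_n^E(\VV^{d+1},W)$ and the odd-in-$t$ subfamily spanning $\CV_n^O(\VV^{d+1},W)$; that is exactly the ``alternative'' argument you sketch in your last sentence. Your primary argument is a genuinely different, basis-free route: you first show that the reflection $(x,t)\mapsto(x,-t)$ preserves both the domain and the weight, hence that $\CV_n(\VV^{d+1},W)$ is stable under $P\mapsto\wt P$, and then split each $P$ into $P_E+P_O$ with trivial intersection because a polynomial both even and odd in $t$ vanishes. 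This version is more robust --- it needs no explicit orthogonal basis and would apply verbatim to any reflection-invariant weight on a reflection-invariant domain --- at the cost of being slightly longer; the paper's version is shorter but leans on the constructed basis and the fact that the spans of the two subfamilies exhaust the whole space (which is where the orthogonality of the full family is silently used, just as you note). One small point worth keeping in mind in your first route: $P_E$ need not have exact degree $n$, but with the standard definition of $\CV_n$ as the polynomials of degree at most $n$ orthogonal to all polynomials of degree less than $n$, membership of $P_E$ and $P_O$ in $\CV_n(\VV^{d+1},W)$ follows from your reflection identity regardless, so there is no gap.
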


For solid domains, the dimensions of the these spaces do not simplify. For example, 
\begin{equation*}
  \dim \CV_n^E(\VV^{d+1},W) = 
     \sum_{k=0}^{\lfloor \frac{n}{2} \rfloor} \dim \CV_{n-2k}^d(\varpi_\mu) = 
      \sum_{k=0}^{\lfloor \frac{n}{2} \rfloor} \binom{n-2k+d-1}{n-2k}.
\end{equation*}

Assume that $w$ is of the form $w(t) = |t| w_0(t^2-\varrho^2)$ for some function $w_0$ defined on $\RR_+$. 
By Proposition \ref{prop:op-1d}, the basis in $\CV_n^E(\VV^{d+1},W)$ can be written as the following: 

\begin{cor} \label{cor:solidOPeven}
Let $w(t) = |t| w_0(t^2-\varrho^2)$ in \eqref{eq:W(x,t)} and let $w_0^{(m)}(t) = |t|^{m+\mu+ \f{d-1}{2}} w_0(t)$. 
Then the orthogonal polynomials in \eqref{eq:solidOPbasis} that are even in $t$ variable are given by 
\begin{equation} \label{eq:solidOPeven}
  \Qb_{n-2k,\kb}^{n}(x,t) = p_k \big(w_0^{(n-2k)}; t^2-\varrho^2 \big) \big( t^2-\varrho^2\big)^{\frac{n-2k}{2}}
      P_\kb^{n-2k} \bigg (\frac{x}{\sqrt{t^2-\varrho^2} }\bigg), 
\end{equation}
where $|\kb| = n-2k$, $0\le k \le n/2$.  In particular, these polynomials consist of an orthogonal basis 
of $\CV_n^E(\VV^{d+1},W)$.
\end{cor}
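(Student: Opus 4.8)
The plan is to reduce everything to the one-dimensional computation already recorded in Proposition \ref{prop:op-1d}, exactly as was done on the surface in Corollary \ref{cor:sfOPeven}; the only new ingredients here are the extra parameter $\mu$ coming from the ball weight $\varpi_\mu$ and the replacement of spherical harmonics by the orthonormal basis $\{P_\kb^m\}$ of $\CV_m^d(\varpi_\mu)$. Since the factor $\big(c(t^2-\varrho^2)\big)^{\frac{m}{2}} P_\kb^m$ already appears verbatim in \eqref{eq:solidOPbasis}, the entire content of the corollary is the identification of the univariate factor $q_{n-m}^{(m)}$ in the even-in-$t$ case.

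First I would isolate the univariate weight appearing in Proposition \ref{prop:solidOPbasis}. There $q_n^{(m)}$ is orthogonal with respect to $(t^2-\varrho^2)^{m+\mu+\frac{d-1}{2}} w(t)$. Substituting $w(t)=|t|\,w_0(t^2-\varrho^2)$ and collecting the powers of $t^2-\varrho^2$, this weight becomes
$$
  |t|\,(t^2-\varrho^2)^{m+\mu+\frac{d-1}{2}} w_0(t^2-\varrho^2)=|t|\,w_0^{(m)}(t^2-\varrho^2),
$$
with $w_0^{(m)}(s)=s^{m+\mu+\frac{d-1}{2}}w_0(s)$ exactly as defined in the statement. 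Thus $q_n^{(m)}$ is orthogonal with respect to a weight of the precise form $|t|\,W(t^2-\varrho^2)$ treated in Proposition \ref{prop:op-1d}, now with the function $w_0$ there replaced by $w_0^{(m)}$.

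Next I would invoke the even part of Proposition \ref{prop:op-1d} (with $w_0\mapsto w_0^{(m)}$) to conclude that $q_{2k}^{(m)}(t)=p_k\big(w_0^{(m)};t^2-\varrho^2\big)$. By Proposition \ref{prop:paritySolid}, the basis element $\Qb_{m,\kb}^n$ is even in $t$ precisely when $n-m$ is even, so the even-in-$t$ basis corresponds to $m=n-2k$ and hence $n-m=2k$. Substituting $q_{n-m}^{(m)}=q_{2k}^{(n-2k)}=p_k\big(w_0^{(n-2k)};t^2-\varrho^2\big)$ into the basis formula \eqref{eq:solidOPbasis} then yields exactly the claimed expression \eqref{eq:solidOPeven}.

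Finally, the assertion that these polynomials form an orthogonal basis of $\CV_n^E(\VV^{d+1},W)$ needs no further argument: Proposition \ref{prop:solidOPbasis} already establishes that the full family $\{\Qb_{m,\kb}^n\}$ is an orthogonal basis of $\CV_n(\VV^{d+1},W)$, and the even--odd decomposition indexed by the parity of $n-m$ shows that retaining only those with $n-m=2k$ gives an orthogonal basis of the even subspace. There is in truth no genuine obstacle; the single point demanding care is the exponent bookkeeping in the first step, namely verifying that absorbing $(t^2-\varrho^2)^{m+\mu+\frac{d-1}{2}}$ into $w_0$ reproduces precisely $w_0^{(m)}$, so that Proposition \ref{prop:op-1d} applies without modification.
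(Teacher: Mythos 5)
Your argument is correct and is exactly the route the paper takes: the paper gives no separate proof of this corollary, but obtains it precisely by substituting $w(t)=|t|\,w_0(t^2-\varrho^2)$ into the univariate weight of Proposition \ref{prop:solidOPbasis}, recognizing the result as $|t|\,w_0^{(m)}(t^2-\varrho^2)$, and applying the even case of Proposition \ref{prop:op-1d} together with the parity statement of Proposition \ref{prop:paritySolid}. Your exponent bookkeeping in the first step checks out, so nothing further is needed.
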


\subsubsection{Fourier orthogonal series}
Let $f \in L^2(\VV^{d+1}, W)$. With respect to the orthogonal basis $\{\Qb_{m,\kb}^n\}$, its Fourier 
orthogonal series is defined by 
\begin{equation}\label{eq:FourierSolid}
   f= \sum_{n=0}^\infty  \sum_{m=0}^n \sum_{|\kb| =m} \wh f_{m,\kb}^n \Qb_{m,\kb}^n,
   \qquad \wh f_{m,\kb}^n = \frac{\la f, \Qb_{m,\kb}^n\ra_W}{\la \Qb_{m,\kb}^n, \Qb_{m,\kb}^n\ra_W}. 
\end{equation}
Let $\Pb_n(W)$ be the reproducing kernel of $\CV_n(\VV^{d+1},W)$. In terms of the basis $\{\Qb_{m,\kb}^n\}$,
\begin{equation}\label{eq:Kernel}
\Pb_n\big(W; (x,t),(y,s) \big) = \sum_{m=0}^n \sum_{|\kb| = m} 
     \frac{\Qb_{m,\kb}^n(x,t) \Qb_{m,\kb}^n(y,s)} {\la \Qb_{m,\kb}^n, \Qb_{m,\kb}^n\ra_W}. 
\end{equation}
The kernel is independent the choice of orthogonal basis. The orthogonal projection operator
$\proj_n: L^2(\VV^{d+1}, W) \mapsto \CV_n(\VV^{d+1}, W)$ is defined by 
$$
\proj_n f :=  \sum_{k=0}^n  \sum_{m=0}^k \sum_{|\kb| =m} \wh f_{m,\kb}^n \Qb_{m,\kb}^n
       = b_W \int_{\VV^{d+1}} f(y,s) \Pb_n \big(W; \cdot, (y,s)\big) W(y,s) \d y \d s.
 $$
 
If $W$ is an even weight function in $t$, we denote by $\Pb^E_n$ and $\Pb^O_n$ the reproducing kernels
of $\CV_n^E(\VV^{d+1},W)$ and  $\CV_n^O(\VV^{d+1},W)$, respectively. These kernels can also be written
as sums in terms of their respective orthogonal bases. For example,  
\begin{equation}\label{eq:KernelE}
\Pb_n^E\big(W; (x,t),(y,s) \big) = \sum_{k=0}^{\lfloor \frac{n}{2} \rfloor} \sum_{|\kb| = n-2k} 
     \frac{\Qb_{n-2k,\kb}^n(x,t) \Qb_{n-2k,\kb}^n(y,s)} {\la \Qb_{n-2k,\kb}^n, \Qb_{n-2k,\kb}^n\ra_W}. 
\end{equation}
As an analogue of the surface of hyperboloid, these kernels satisfy the following: 

\begin{lem} \label{lem:PbE+PbO}
Let $W$ be an even weight function in $t$ variable. Then the reproducing kernel $\Pb_n(w)$ can be decomposed as 
\begin{align}
\Pb^E_n \big (W; (x,t),(y,s) \big) &\, = \frac12 \left[ \Pb_n\big (W; (x,t),(y,s) \big) + \Pb_n\big (W; (x,t),(y, -s) \big) \right],     \label{eq:PbE} \\
\Pb^O_n \big (W; (x,t),(y,s) \big) &\, = \frac12 \left[ \Pb_n\big (W; (x,t),(y,s) \big)-\Pb_n\big (W; (x,t),(y, -s) \big) \right].    \label{eq:PbO}
\end{align}
\end{lem}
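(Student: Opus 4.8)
The plan is to mimic the argument already used for the surface case in Lemma~\ref{lem:sfPbE+PbO}, since the solid hyperboloid differs only in that spherical harmonics are replaced by orthogonal polynomials on the unit ball and the surface measure is replaced by the solid measure $W$. The two identities \eqref{eq:PbE} and \eqref{eq:PbO} are symmetric in structure, so it suffices to treat \eqref{eq:PbE} in detail and remark that the odd case follows by the same steps with a sign change. Denote by $\Qb_n\big((x,t),(y,s)\big)$ the right-hand side of \eqref{eq:PbE}; the goal is to show that $\Qb_n$ is the reproducing kernel of $\CV_n^E(\VV^{d+1},W)$, whence $\Qb_n=\Pb_n^E(W)$ by uniqueness of the reproducing kernel.

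First I would record the parity symmetry of the full kernel. From the basis formula \eqref{eq:Kernel} together with Proposition~\ref{prop:paritySolid}, each basis element $\Qb_{m,\kb}^n$ picks up a factor $(-1)^{n-m}$ under $t\mapsto -t$, so the product $\Qb_{m,\kb}^n(x,t)\Qb_{m,\kb}^n(y,s)$ is invariant under simultaneously replacing $t\mapsto -t$ and $s\mapsto -s$, and under either replacement alone it acquires $(-1)^{n-m}$. Summing over the basis this yields $\Pb_n\big(W;(x,-t),(y,s)\big)=\Pb_n\big(W;(x,t),(y,-s)\big)$. This identity makes $\Qb_n$ manifestly symmetric in its two arguments $(x,t)$ and $(y,s)$, exactly as in the surface proof.

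Next I would verify the reproducing property against $\CV_n^E(\VV^{d+1},W)$. Fix $P\in\CV_n^E(\VV^{d+1},W)$; since $W$ is even in $t$, integrating $\Qb_n\big((x,t),(y,s)\big)P(y,s)$ against $W(y,s)$ over $\VV^{d+1}$ and using the reproducing property of the full kernel $\Pb_n(W)$ on each of the two terms gives $\tfrac12\big(P(x,t)+P(x,-t)\big)$; because $P$ is even in $t$ this equals $P(x,t)$. Moreover, for fixed $(x,t)$ the kernel $\Qb_n\big((x,t),\cdot\big)$ is even in $s$ — this is again immediate from the parity symmetry just established — so it lies in $\CV_n^E(\VV^{d+1},W)$, and by symmetry the same holds in the first slot. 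These three facts together identify $\Qb_n$ as the reproducing kernel of the even subspace.

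I do not expect a genuine obstacle here: the argument is a transplant of Lemma~\ref{lem:sfPbE+PbO}, and the only point requiring a little care is the change of measure. One must check that evenness of $W$ in $t$ is what drives the cancellation, and that the substitution $s\mapsto -s$ in the integral over the solid domain is measure-preserving — this holds because $\VV^{d+1}$ is symmetric under $t\mapsto -t$ and $W(x,t)=W(x,-t)$, the latter following from $w$ being even and the factor $\big(c(t^2-\varrho^2)-\|x\|^2\big)^{\mu-\f12}$ depending on $t$ only through $t^2$. Once this is noted, the even and odd decompositions drop out precisely as stated.
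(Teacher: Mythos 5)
Your proposal is correct and follows exactly the route the paper intends: the paper states this lemma without proof as the direct analogue of Lemma~\ref{lem:sfPbE+PbO}, whose proof (parity symmetry of the kernel via Proposition~\ref{prop:parity}, the reproducing property yielding $\tfrac12(P(x,t)+P(x,-t))=P(x,t)$, and membership of the kernel in the even subspace in each slot) is precisely what you transplant. Your added remark that the substitution $s\mapsto -s$ is legitimate because $\VV^{d+1}$ and $W$ are symmetric in $t$ is a correct and worthwhile point of care, but it does not change the argument.
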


Assume that $W$ is even in $t$ variable. If $f(x,t)$ is even in the $t$ variable, then its Fourier orthogonal 
series in \eqref{eq:Fourier} contains only orthogonal polynomials in $\CV_n^E(\VV^{d+1},W)$. As in the 
case of surface of hyperboloid, we can extend any $f$ defined on the upper hyperboloid to the double 
hyperboloid by defining $f(x,-t) = f(x,t)$, so that $f$ is even in $t$. We can then consider the Fourier orthogonal 
expansion of $f$ by using only orthogonal polynomials even in $t$ variable. 

\begin{thm} \label{pop:FourierEvenSolid}
Let $w(t) = t w_0(t^2-1)$ on $\RR_+$ in \eqref{eq:W(x,t)}. If $f \in L^2(\VV_+^{d+1}, W)$, then
\begin{equation}\label{eq:FourierEvenSolid}
 f=  \sum_{n=0}^\infty  \sum_{k=0}^{\lfloor\frac{n}{2}\rfloor} \sum_{|\kb| = n-2k}  \wh f_{n-2k,\kb}^n 
     \Qb_{n-2k,\kb}^n
\end{equation}
in $L^2$ sense, where $\Qb_{n-2k,\kb}^n$ are defined in \eqref{eq:solidOPeven}.
\end{thm}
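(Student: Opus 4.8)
The plan is to follow the argument used for the surface in Theorem~\ref{pop:FourierEven} essentially verbatim, replacing the role of the surface basis $\{\sQ_{m,\ell}^n\}$ by the solid basis $\{\Qb_{m,\kb}^n\}$ and Proposition~\ref{prop:parity} by Proposition~\ref{prop:paritySolid}. First I would extend the given $f \in L^2(\VV_+^{d+1},W)$ to the full solid domain $\VV^{d+1}$ by even reflection in $t$, that is, by setting $f(x,-t) := f(x,t)$, so that the extended function is even in $t$. Since $w(t) = |t| w_0(t^2-\varrho^2)$ is even in $t$ and the ball factor $\big(c(t^2-\varrho^2)-\|x\|^2\big)^{\mu-\f12}$ depends on $t$ only through $t^2$, the weight $W$ in \eqref{eq:W(x,t)} is even in $t$; moreover the domain $\VV^{d+1}$ is invariant under $t \mapsto -t$. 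Consequently the even extension lies in $L^2(\VV^{d+1}, W)$, and the change of variables $t \mapsto -t$ yields the norm-doubling identity
$$
  \int_{\VV^{d+1}} |f(x,|t|)|^2 W(x,t) \d x \d t = 2 \int_{\VV_+^{d+1}} |f(x,t)|^2 W(x,t) \d x \d t,
$$
so that convergence of the series on $\VV^{d+1}$ is equivalent to convergence on $\VV_+^{d+1}$.

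The decisive step is to show that the Fourier coefficients $\wh f_{m,\kb}^n$ of the extended $f$ vanish whenever $n-m$ is odd. By Proposition~\ref{prop:paritySolid}, $\Qb_{m,\kb}^n$ is odd in $t$ precisely when $n-m$ is odd; since the extended $f$ is even in $t$ and $W$ is even in $t$, the integrand in $\la f, \Qb_{m,\kb}^n\ra_W$ is then odd in $t$ over the symmetric domain $\VV^{d+1}$, whence the inner product is zero. Therefore $\wh f_{m,\kb}^n = 0$ whenever $m = n-2k-1$, and only the terms with $m = n-2k$ survive in the expansion \eqref{eq:FourierSolid}.

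Finally I would invoke the completeness of $\{\Qb_{m,\kb}^n\}$ in $L^2(\VV^{d+1}, W)$ (Proposition~\ref{prop:solidOPbasis}) to write the $L^2$-convergent Fourier series of the extended $f$, and then drop the vanishing terms to obtain exactly the sum over $\Qb_{n-2k,\kb}^n$ with $|\kb|=n-2k$ and $0 \le k \le \lfloor n/2\rfloor$, which is \eqref{eq:FourierEvenSolid}; the surviving basis elements are precisely those listed in Corollary~\ref{cor:solidOPeven}. Restricting back to $\VV_+^{d+1}$ and using the norm identity above transfers the $L^2(\VV^{d+1},W)$ convergence to $L^2(\VV_+^{d+1},W)$ convergence to $f$. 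I do not anticipate a genuine obstacle here: the argument is purely the parity and even-extension mechanism already established on the surface, and the only points requiring care are confirming that $W$ is even in $t$ and that the solid domain is symmetric under $t \mapsto -t$, both of which are immediate from the form \eqref{eq:W(x,t)}.
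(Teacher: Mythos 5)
Your argument is correct and is essentially identical to the paper's proof, which likewise extends $f$ evenly in $t$, records the norm-doubling identity, observes that $\la f, \Qb_{m,\kb}^n\ra_W = 0$ when $n-m$ is odd by Proposition~\ref{prop:paritySolid}, and then defers to the surface case (Theorem~\ref{pop:FourierEven}) for the remaining $L^2$-convergence step. No gaps to report.
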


\begin{proof}
For $f$ defined on the upper surface $\VV_{+}^{d+1}$, we extend it to $\VV^{d+1}$ evenly in $t$ by
considering $f(x,|t|)$. By symmetry, it follows readily that 
$$
  \int_{\VV^{d+1}} |f(x,|t|)|^2 W(x,t) \d x \d t  = 2  \int_{\VV_{+}^{d+1}} | f(x,t)|^2 W(x,t) \d x \d t.  
$$
Since $\Qb_{m,\ell}^n$ is odd in $t$ when $n-m$ is an odd integer, we see that 
$$
      \la f, \Qb_{m,\ell}^n\ra_W =0, \qquad  n - m = \mathrm{odd}. 
$$ 
The rest of the proof follows exactly as in Proposition \ref{pop:FourierEven}. 
\end{proof}

Our remarks on the Fourier orthogonal series on the surface of the hyperboloid at the end of the previous 
subsection apply equally well for the solid hyperboloid. 
 
\section{Gegenbauer polynomials on a hyperboloid}\label{sec:Gegen}
\setcounter{equation}{0}

In this section we consider compact hyperboloids, for which $0< \varrho\le |t| \le b$ and $b$ is a finite positive
number. Without loss of generality, we assume $b = \sqrt{\varrho^2 + 1}$. We discuss orthogonal polynomials 
on the compact hyperboloid surface $\VV_0^{d+1}$ and the solid $\VV^{d+1}$. In both cases, the hyperboloid 
degenerates to the double cone when $\varrho =0$. Our weight function on the cone contains the Gegenbauer 
weight function $(1-t^2)^{\g-\f12}$ as a multiplicative factor.

\subsection{Gegenbauer polynomials on the surface of a hyperboloid}
We consider orthogonal polynomials on the bounded hyperboloid 
$$
  \VV_0^{d+1} = \left \{(x,t): \|x\|^2 = t^2 - \varrho^2, \, x \in \RR^d, \, \varrho \le |t| \le \sqrt{\varrho^2 +1}\right\},
$$
which is a double hyperboloid when $\varrho > 0$ and a double cone when $\varrho = 0$. We choose
the weight function $w$ as 
\begin{equation}\label{eq:sf6weight}
   w_{\b,\g}(t) = |t| (t^2-\varrho^2)^{\b-\f12}( \varrho^2+1 - t^2)^{\g-\f12},  
         \quad \b, \g > -\tfrac12, \quad \varrho \ge 0,
\end{equation}
defined for $\varrho \le |t| \le \sqrt{\varrho^2 +1}$. Correspondingly, the inner product becomes 
$$
  \la f,g\ra_{w_{\b,\g}} = b_{\b,\g}  \int_{\VV_0^{d+1}} f(x,t) g(x,t)  |t| (t^2-\varrho^2)^{\b-\f12}
       (\varrho^2+1-t^2)^{\g-\f12} \d \s(x,t),
$$
where $b_{\b,\g} = \Gamma(\b+\g+ \frac{d+1}2)/(\Gamma(\b+\frac{d}2) \Gamma(\g+\f12)  \s_d)$ and 
$\s_d$ is the surface area of $\sph$.  The constant is verified using the following identity,
\begin{align} \label{eq:intHyp}
  & \int_{\VV_0^{d+1}} f(t^2-\varrho^2)g(\xi) w_{\b,\g}(x,t) \d \s(x,t) \\
     & \qquad\qquad\qquad = \int_{0}^{1}  f(s)s^{\b+\frac{d-2}{2}} (1-s)^{\g-\f12} \d s \int_{\sph}  g(\xi) \d\s(\xi), \notag
\end{align}
which follows from symmetry, changing variable $t \to s^{\f12}$ and then $s\to s + \varrho^2$. 

\subsubsection{Double cone} We consider the cases $\varrho = 0$ first. In this case, 
$$
      w_{\b,\g}(t) = |t|^{2\b}(1-t^2)^{\g-\f12},  \qquad \b > \tfrac{d+1}{2},\, \g > -\tfrac12, \quad -1 \le t \le 1. 
$$
Since $w_{\b,\g}$ is supported on the set $[-1,1]$, the inner product $\la f,g\ra_w$ is defined on the 
surface of the finite cone 
$$
    \VV_0^{d+1} = \left\{(x,t): \|x\| = |t|, \, x\in \RR^d, \, - 1 \le t \le 1\right \}. 
$$
The polynomials $q_k^{(m)}$ in Proposition \ref{prop:sfOPbasis} are orthogonal with respect to 
$$
      |t|^{2m+d-1} w_{\b,\g}(t) = |t|^{2m +2\b + d-1} (1-t^2)^{\g-\f12}, \qquad - 1 \le t \le 1, 
$$ 
and hence are given by the generalized Gegenbauer polynomials $C_n^{(\g,\a)}$ in the Appendix A. More
precisely, $q_k^{(m)} = C_{n-m}^{(\g,\a)}$ with $\a = m + \b + \frac{d-1}{2}$. The polynomial $C_n^{(\g,\a)}$ 
is given explicitly in \eqref{eq:gGegen} and the square of its norm, denoted by $h_n^{(\g,\a)}$, is given in \eqref{eq:gGegenNorm}. Consequently, the orthogonal polynomials given in \eqref{eq:sfOPbasis} 
are now specialized as follows: 

\begin{prop}\label{prop:sfOPconeG}
Let $\{Y_\ell^m: 1 \le \ell \le \dim \CH_n^d\}$ denote an orthonormal basis of $\CH_m^d$.
Then the polynomials 
\begin{equation}\label{eq:sfOPconeG}
  \sC_{m,\ell}^n (x,t) = C_{n-m}^{(\g, m+\b + \frac{d-1}{2})}(t) Y_\ell^{m}(x), 
  \quad 1 \le \ell \le \dim \CH_m^d, \quad 0 \le m \le n.
\end{equation}
form an orthogonal basis of $\CV_n(\VV_{0}^{d+1}, w_{\b,\g})$. Moreover,  
\begin{equation}\label{eq:sfOPconeGnorm} 
  h_{m,n}^\sC: = \la \sC_{m,\ell}^n, \sC_{m,\ell}^n\ra_{w_{\b,\g}}=  
     \frac{(\b+\tfrac{d}2)_m}{(\b+\g+\tfrac{d+1}2)_m} h_{n-m}^{(\g,m+ \b + \frac{d-1}{2})}.
\end{equation}
\end{prop}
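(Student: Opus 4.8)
The plan is to apply the general basis construction of Proposition~\ref{prop:sfOPbasis} with the specific weight $w = w_{\b,\g}$ at $\varrho = 0$, and then identify the one-dimensional orthogonal polynomials $q_k^{(m)}$ explicitly. The preliminary reasoning already carried out in the text before the statement does most of the conceptual work: the weight governing $q_{n-m}^{(m)}$ is $|t|^{2m+d-1} w_{\b,\g}(t) = |t|^{2m+2\b+d-1}(1-t^2)^{\g-\f12}$ on $[-1,1]$, which is exactly the weight $|t|^{2\a}(1-t^2)^{\g-\f12}$ for the generalized Gegenbauer polynomials $C_n^{(\g,\a)}$ of Appendix~A with $\a = m + \b + \frac{d-1}{2}$. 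So the first step is simply to invoke Proposition~\ref{prop:sfOPbasis}, record that $q_{n-m}^{(m)} = C_{n-m}^{(\g,\,m+\b+\frac{d-1}{2})}$, and conclude that the $\sC_{m,\ell}^n$ in \eqref{eq:sfOPconeG} form an orthogonal basis of $\CV_n(\VV_0^{d+1}, w_{\b,\g})$. That the number of these polynomials matches $\dim\CV_n$ and that they have degree $n$ is inherited verbatim from Proposition~\ref{prop:sfOPbasis}.

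The substantive part is the norm formula \eqref{eq:sfOPconeGnorm}. Here I would start from the orthogonality computation displayed in the proof of Proposition~\ref{prop:sfOPbasis}, which (after setting $c=1$ and taking $m=m'$, $\ell=\ell'$, $n=n'$) reduces the surface integral to a one-dimensional integral:
\begin{equation*}
  h_{m,n}^\sC = c^{d-1} b_{w_{\b,\g}}\, \s_d \int_{-1}^{1} (t^2)^{m+\f{d-1}{2}} \big(C_{n-m}^{(\g,\a)}(t)\big)^2 w_{\b,\g}(t)\, \d t,
  \qquad \a = m+\b+\tfrac{d-1}{2}.
\end{equation*}
Substituting $w_{\b,\g}(t) = |t|^{2\b}(1-t^2)^{\g-\f12}$ collapses the integrand to $|t|^{2\a}(1-t^2)^{\g-\f12}\big(C_{n-m}^{(\g,\a)}\big)^2$, which is precisely $h_{n-m}^{(\g,\a)}$ up to the normalization constant $c_{\g,\a}$ of the generalized Gegenbauer weight recorded in Appendix~A. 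Thus $h_{m,n}^\sC$ equals $h_{n-m}^{(\g,\a)}$ multiplied by the ratio $b_{\b,\g}\s_d / c_{\g,\a}$, where $b_{\b,\g} = \Gamma(\b+\g+\frac{d+1}{2})/(\Gamma(\b+\frac{d}{2})\Gamma(\g+\f12)\s_d)$ is the explicit constant fixed just before \eqref{eq:intHyp}.

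The main obstacle, and the only place where care is needed, is verifying that this constant ratio simplifies to the Pochhammer quotient $(\b+\tfrac{d}{2})_m / (\b+\g+\tfrac{d+1}{2})_m$. I expect to do this by writing out $c_{\g,\a}$ from Appendix~A with $\a = m+\b+\frac{d-1}{2}$ as a ratio of Gamma functions, multiplying by $b_{\b,\g}$, and recognizing that the $m$-dependence separates: the factor $\Gamma(\a+\f12)/\Gamma(\a+\g+1) = \Gamma(\b+\frac{d}{2}+m)/\Gamma(\b+\g+\frac{d+1}{2}+m)$ combines with the $m$-independent pieces $\Gamma(\b+\g+\frac{d+1}{2})/\Gamma(\b+\frac{d}{2})$ from $b_{\b,\g}$ to produce exactly $(\b+\tfrac{d}{2})_m/(\b+\g+\tfrac{d+1}{2})_m$ via $(a)_m = \Gamma(a+m)/\Gamma(a)$. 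The remaining Gamma factors must cancel identically, and checking that cancellation is the one genuinely computational step; it is routine but is where an error would most likely hide, so I would confirm it against the known evaluation of $c_\g$ in \eqref{eq:c_l} when $\b=0$ and $\varrho=0$ as a consistency check.
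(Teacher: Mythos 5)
Your proposal is correct and follows essentially the same route as the paper: invoke Proposition~\ref{prop:sfOPbasis} for the basis claim, reduce the norm to the one-dimensional integral $\int_{-1}^1 |C_{n-m}^{(\g,\a)}(t)|^2\, |t|^{2m+2\b+d-1}(1-t^2)^{\g-\f12}\,\d t$ with $\a = m+\b+\frac{d-1}{2}$, and then match normalization constants. Your Gamma-function bookkeeping for the ratio $b_{\b,\g}\s_d/c_{\g,\a}$ is exactly the cancellation the paper performs to arrive at $(\b+\tfrac{d}{2})_m/(\b+\g+\tfrac{d+1}{2})_m$.
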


\begin{proof}
The polynomials in \eqref{eq:sfOPconeG} consist of an orthogonal basis by 
Proposition \ref{eq:sfOPbasis}. We compute their norm using 
$$
\int_{\VV_0^{d+1}} f(x,t) |t|^{2\b} (1- t^2)^{\g-\f12} \d x \d t = \int_{-\infty}^\infty t^{d-1} \int_{\sph} 
     f(t\xi, t) |t|^{2\b} (1-t^2)^{\g-\f12} \d \s(\xi) \d t      
$$ 
and $\sC_{m,\ell}^n(x,t) = C_{n-m}^{(\g,m+\b+\frac{d-1}{2})}(t) t^m Y_\ell^m(\xi)$. It follows that 
\begin{align*}
  h_{m,n}^\sC &\, = b_{\b,\g}  \int_{-1}^1  \left|C_{n-m}^{(\g,m+\b+\frac{d-1}{2})}(t)\right|^2 
        t^{2m+2\b+d-1}(1-t)^{\g-\f12} \d t  \int_{\sph} \left|Y_\ell^m(\xi)\right|^2 \d \s(\xi) \\
   & \, = \frac{\Gamma(m+\b+\frac{d}2)\Gamma(\b+\g+\f{d+1}{2})}{\Gamma(\b+\frac{d}2)\Gamma(m+\b+\g+\f{d+1}{2})}
    h_{n-m}^{(\g,m+ \b + \frac{d-1}{2})}
    = \frac{(\b+\tfrac{d}2)_m}{(\b+\g+\tfrac{d+1}2)_m} h_{n-m}^{(\g,m+ \b + \frac{d-1}{2})}, 
\end{align*}
after adjusting the normalization constant of the integral for $h_{n-m}^{(\g,m+\b+\frac{d-1}{2})}$. 
\end{proof}

We shall call these polynomials {\it Gegenbauer polynomials on the cone} when $\b = 0$ and 
{\it generalized Gegenbauer polynomials on the cone} when $\b \ne 0$. 

\begin{rem} If $n- m$ is odd, then the polynomials $\sC_{m,\ell}^n(x,t)$ contains a factor $t$ 
by \eqref{eq:gGegen}. Consequently, the space $\CV_n^O(\VV^{d+1},w_{\b,\g})$ that contains 
these polynomials are well defined for $\b > - \frac{d+1}{2}$. 
In particular, it is well defined for $\b = -1$ for all $d \ge 1$.
\end{rem}

In order to explore if these polynomials are eigenfunctions of a second order differential operators,
we need to consider polynomials that are even in $t$ and those are odd in $t$ separately. Our next 
theorem shows that the polynomials in $\CV_n^E(\VV_0^{d+1}, (1-t^2)^{\g-\f12})$ are 
eigenfunctions of a second order differential operator, whereas the polynomials in 
$\CV_n^O(\VV_0^{d+1}, |t|^{-1} (1-t^2)^{\g-\f12})$ are eigenfunctions of another differential operator.

\begin{thm}\label{thm:sfConeGdiff}
For $n=0,1,2,\ldots$, every $u \in \CV_n^E(\VV_0^{d+1},(1-t^2)^{\g-\f12})$ satisfies the differential 
equation 
\begin{align} \label{eq:sfConeGdiff}
  & \left[ (1-t^2) \partial_t^2 - (2 \g+d)  t \partial_t + \frac{d-1}{t} \partial_t + \frac{1}{t^2} \Delta_0^{(x)} \right] u = - n(n+2\g+d-1)u.
\end{align}
Furthermore,  every $u \in \CV_n^O(\VV_0^{d+1}, |t|^{-1} (1-t^2)^{\g-\f12})$ satisfies the differential equation 
\begin{align} \label{eq:sfConeGdiffO}
   & \left[ (1-t^2) \partial_t^2 - (2 \g +d-2)  t \partial_t + \frac{d-3}{t} \left(\partial_t - \frac{1}{t}\right) 
     + \frac{1}{t^2} \Delta_0^{(x)}   \right] u \\
   & \qquad\qquad\qquad\qquad\qquad\qquad \qquad\qquad\qquad =  - n(n+2\g+d-3)u, \notag
\end{align}
where $\Delta_0^{(x)}$ denotes the Laplace--Beltrami operator acting on the variable $x \in \sph$. 
\end{thm}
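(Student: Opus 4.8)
The plan is to reduce both identities to one-variable differential equations for the generalized Gegenbauer polynomials, using the explicit basis of Proposition~\ref{prop:sfOPconeG}. Both operators are linear and degree-preserving, and $\CV_n^E$ (resp.\ $\CV_n^O$) is spanned by the $\sC_{m,\ell}^n$ with $n-m$ even (resp.\ odd), so it suffices to treat a single basis element $u=\sC_{m,\ell}^n(x,t)=C_{n-m}^{(\g,\a)}(t)\,Y_\ell^m(x)$. The first and most delicate point is the reading of the operator: writing a point of the cone as $x=|t|\,\xi$ with $\xi\in\sph$, I interpret $\partial_t$ as differentiation at fixed $\xi$ and $\Delta_0^{(x)}$ as the Laplace--Beltrami operator in $\xi$. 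Then $Y_\ell^m(x)=|t|^m Y_\ell^m(\xi)$ and, since $Y_\ell^m$ is a spherical harmonic of degree $m$, one has $\Delta_0^{(x)}Y_\ell^m(\xi)=-m(m+d-2)Y_\ell^m(\xi)$ by \eqref{eq:sph-harmonics}. Hence, working on the upper nappe $t>0$ and extending by parity, each operator acts on $u$ as a second-order ordinary differential operator in $t$ applied to $g(t):=t^m C_{n-m}^{(\g,\a)}(t)$, with the $\Delta_0^{(x)}$-term contributing $-m(m+d-2)/t^2$.

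For the even equation \eqref{eq:sfConeGdiff} I would take $\a=m+\tfrac{d-1}2$ and substitute $g=t^m f$ with $f=C_{n-m}^{(\g,\a)}$ of even degree $k=n-m$. The even generalized Gegenbauer polynomial satisfies (Appendix~\ref{sect:append}, equivalently from its Jacobi representation) the equation $(1-t^2)f''+\big(\tfrac{2\a}{t}-(2\g+2\a+1)t\big)f'=-k(k+2\g+2\a)f$. Inserting this into $(1-t^2)g''+\big(\tfrac{d-1}{t}-(2\g+d)t\big)g'-\tfrac{m(m+d-2)}{t^2}g$ and collecting powers of $t$, the first-order terms cancel identically, the lowest-order ($t^{m-2}$) terms cancel because $m(m-1)+(d-1)m-m(m+d-2)=0$, and the surviving coefficient of $t^m f$ collapses, after substituting $n=m+k$ and $2\a=2m+d-1$, to $-n(n+2\g+d-1)$. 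This is exactly \eqref{eq:sfConeGdiff}.

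For the odd equation \eqref{eq:sfConeGdiffO} the efficient route is a conjugation identity. A direct computation — using that the term $\tfrac{d-3}{t}(\partial_t-\tfrac1t)$ is built precisely so that $(\partial_t-\tfrac1t)(tU)=t\,\partial_t U$ — yields the operator identity $\CL_O(t\,U)=t\big(\CL_E-(2\g+d-2)\big)U$, where $\CL_E,\CL_O$ are the operators of \eqref{eq:sfConeGdiff}, \eqref{eq:sfConeGdiffO}. On the other hand the classical factorization $C_{2j+1}^{(\g,\a)}(t)=c\,t\,C_{2j}^{(\g,\a+1)}(t)$ lets me write any odd basis element as $u=t\,U$ with $U=C_{n-1-m}^{(\g,\a+1)}(t)\,Y_\ell^m(x)$ an \emph{even} basis element of degree $n-1$, provided $\a+1=m+\tfrac{d-1}2$; this parameter constraint is exactly what fixes the exponent of $|t|$ in the odd Gegenbauer weight (cf.\ the Remark, which singles out this case). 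Applying the already-proved even case to $U$ gives $\CL_E U=-(n-1)(n+2\g+d-2)U$, and therefore $\CL_O u=\big[-(n-1)(n+2\g+d-2)-(2\g+d-2)\big]u=-n(n+2\g+d-3)u$, which is \eqref{eq:sfConeGdiffO}.

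I expect the main obstacle to be conceptual rather than computational: one must commit to the intrinsic reading of the operator, since differentiating at fixed $x$ instead of fixed $\xi$ destroys the eigenvalue structure (the $t^{-2}$ terms no longer combine, and a spurious $\g$-dependence survives). The second delicate point is the parameter bookkeeping in the generalized Gegenbauer equation, in particular matching the shifted index $\a+1$ in the odd case to an even Gegenbauer parameter; this is what forces the specific weight in the odd statement and explains why the two parities require different operators and eigenvalues. The remaining steps — collecting powers of $t$ in the even reduction and verifying the conjugation identity — are routine, and the conjugation is a genuine shortcut that avoids rederiving the odd Gegenbauer differential equation from scratch.
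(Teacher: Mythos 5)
Your proposal is correct, and the even half coincides with the paper's own argument: the paper likewise substitutes $f(t)=t^m g(t)$ with $g=C_{n-m}^{(\g,m+\b+\frac{d-1}{2})}$ into the one-variable equation \eqref{eq:gGegenDiff} and uses \eqref{eq:sph-harmonics} for the $\Delta_0^{(x)}$ term; the only cosmetic difference is that you set $\b=0$ at the outset while the paper carries a general $\b$ and observes that the two residual $t^{-2}$ terms cancel exactly at $\b=0$. The odd half is where you genuinely diverge. The paper repeats the direct computation using the odd-parity form of \eqref{eq:gGegenDiff} (with the difference term $\frac{g(t)-g(-t)}{t^2}$ contributing $\frac{2g}{t^2}$), arriving at residual terms that cancel at $\b=-1$. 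You instead prove the operator identity $\CL_O(tU)=t\bigl(\CL_E-(2\g+d-2)\bigr)U$ and use the factorization $C_{2j+1}^{(\g,\a)}(t)=c\,t\,C_{2j}^{(\g,\a+1)}(t)$ from \eqref{eq:gGegen} to write each odd basis element as $t$ times an even basis element of degree $n-1$ for the parameter $\b=0$; the eigenvalue $-(n-1)(n+2\g+d-2)-(2\g+d-2)=-n(n+2\g+d-3)$ then drops out. I checked the conjugation identity and the parameter bookkeeping, and both are right; your constraint $\a+1=m+\frac{d-1}{2}$, i.e.\ $\b=-1$, is exactly the value the paper's proof selects (the theorem's displayed weight $|t|^{-1}(1-t^2)^{\g-\f12}$ versus $w_{-1,\g}$ is an inconsistency internal to the paper, not of your argument). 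Your route buys two things: it avoids rederiving the odd generalized Gegenbauer equation, and it makes structurally visible why the odd family at $\b=-1$ inherits its spectral data from the even family at $\b=0$ one degree lower --- the same mechanism that underlies the kernel relation \eqref{eq:sfPOadd} later in the paper. Your opening caveat about reading $\partial_t$ at fixed $\xi$ rather than fixed $x$ is also the correct (and implicit) convention of the paper's proof.
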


\begin{proof}
We work with polynomials in \eqref{eq:sfOPconeG}. The polynomials $\sC_{m,\ell}^n$ with $n-m$ being 
nonnegative even integers consist of a basis for $\CV_n^E(\VV_0^{d+1}, w_{\b,\g})$. Under this assumption, 
the polynomial $g(t) = C_{n-m}^{(\g,m+\b+ \frac{d-1}{2})}(t)$ satisfies, by the identity \eqref{eq:gGegenDiff},  
\begin{align*}
   (1-t^2) g''(t) -&\, (2m+ 2\b+2\g+ d) t g'(t)   +   \frac{2m+ 2\b+ d-1}{t} g'(t) \\
       &\qquad\qquad  = - (n - m) (n+m+2\b+2\g+d-1) g(t). 
\end{align*}
Let $f(t) = t^m g(x)$. Using the differential equation satisfied by $g$, a straightforward computation 
shows that $f$ satisfies 
\begin{align*}
   (1-t^2) &\, f''(t) -   (2\b+2\g+d) t f'(t)  +  \frac{2\b+d-1}{t} f'(t) = m (m+ 2\b+d-2)t^{-2} f(t) \\
        &\quad     -(n-m)(n+m +2\b+2\g+d-1 ) f(t) - m(m+ 2\b+2\g + d-1) f(t)  \\
 & = - n(n +2\b +2\g + d-1) f(t) +   m(m+ 2\b+ d-2) t^{-2} f(t).
\end{align*}
Since $u = \sC_{m,\ell}^n(x,t) = f(t) Y_\ell^m(\xi)$, $\xi \in \sph$ and $Y_\ell^m$ are eigenfunctions of 
$\Delta_0$ with the eigenvalues $-m(m+d-2)$, we conclude that 
\begin{align*}
  & \left[ (1-t^2) \partial_t^2 - (2\b+ 2 \g +d)  t \partial_t + \frac{2\b+d-1}{t} \partial_t + \frac{1}{t^2} \Delta_0^{(x)} \right] u \\
    & =  - n(n+2\b+2\g+d-1)u  + m(m+ 2\b+ d-2) t^{-2} u - m(m+d-2) t^{-2} u.
\end{align*}
The last two terms cancel when $\b =0$, which gives \eqref{eq:sfConeGdiff}. 

Next we consider $\CV_n^O(\VV_0^{d+1}, w_{\b,\g})$, which has a basis given by $\sC_{m,\ell}^n$ with 
$n-m$ being nonnegative odd integers. Using $g$ as above but with $n-m$ odd, then $g$ satisfies the
equation, by \eqref{eq:gGegenDiff},
\begin{align*}
   (1-t^2) g''(t) -&\, (2m+ 2\b+2\g+ d) t g'(t)   +   \frac{2m+ 2\b+ d-1}{t} \left(g'(t) - \frac{g(t)}{t}\right) \\
       &\qquad\qquad  = - (n - m) (n+m+2\b+2\g+d-1) g(t). 
\end{align*}
Using this equation, we see that $f(t) = t^m g(x)$ satisfies, following the proof for $n-m$ being even, 
\begin{align*}
   (1-t^2) f''(t) - & (2\b+2\g+d) t f'(t)  +  \frac{2\b+d-1}{t} \left( f'(t) - \frac{f(t)}{t}\right)\\
 & = - n(n +2\b +2\g + d-1) f(t) +   m(m+ 2\b+ d) t^{-2} f(t).
\end{align*}
Consequently, for $u = \sC_{m,\ell}^n(x,t) = f(t) Y_\ell^m(\xi)$, $\xi \in \sph$, we likewise obtain 
\begin{align*}
  & \left[ (1-t^2) \partial_t^2 - (2\b+ 2 \g +d)  t \partial_t + \frac{2\b+d-1}{t} \left(\partial_t - \frac{1}{t}\right) + \frac{1}{t^2} \Delta_0^{(x)} \right] u \\
    & =  - n(n+2\b+2\g+d-1)u  + m(m+ 2\b+ d) t^{-2} u - m(m+d-2) t^{-2} u,
\end{align*}
which gives \eqref{eq:sfConeGdiffO} when $\b =  -1$. This completes the proof.
\end{proof}
 
 A couple of remarks are in order. First, the term $t^{-2} u $ in \eqref{eq:sfconeHdiffO} comes 
from applying the difference operator defined by 
$$
  D_t u(x,t) = \frac{u(x,t) - u(x, -t)}{t}
$$
on the function that is odd in $t$ variable and this term is zero if $u$ is even in $t$. However,
we cannot combine \eqref{eq:sfConeGdiff} and \eqref{eq:sfConeGdiffO}, since they apply to polynomials
orthogonal with respect to different weight functions. Second, it should be emphasized that \eqref{eq:sfConeGdiff}
holds for the Gegenbuaer polynomials on the cone with $w_0(t) = (1-t^2)^{\g-\f12}$, whereas 
\eqref{eq:sfconeHdiffO} holds for the generalized Gegenbauer polynomials on the cone with 
$w_{-1}(t) =|t|^{-1}(1-t^2)^{\g-\f12} $. While $w_0$ is analytic on the domain, $w_{-1}$ has a singularity
at the origin. The latter one should be compared with the Jacobi polynomials on the upper cone studied
in \cite{X19}; see next subsection.  Finally, the above proof shows that the 
polynomials in $\CV_n^E(\VV_0^{d+1}, |t|^\b (1-t^2)^{\g-\f12})$ also satisfy a differential equation but 
the equation and ``eigenvalues" depend on both $n$ and $m$ and, as a result, it does not imply an 
differential operator that has all polynomials in the space as eigenfunctions. 

\subsubsection{Jacobi polynomials on the upper cone}
These polynomials are studied in \cite{X19} and they are orthogonal on the surface of the upper cone 
with respect to 
$$
  \la f, g\ra_{\b} = b_{\b} \int_{\VV_{0,+}^{d+1}} f(x,t) g(x,t) t^\b (1-t)^\g \d x \d t, \quad \b > -d, \, \g > -1,
$$
where $\VV_{0,+}^{d+1}$ is a compact surface with $0 \le t \le 1$. An orthogonal basis for the space 
$\CV_n(\VV_{+}^{d+1}, t^\b (1-t)^\g)$ is given by 
\begin{equation}\label{eq:coneJsf}
   \sJ_{m,\ell}^n(x,t) =  P_{n-m}^{(2m + \b + d-1,\g)} (1-2t) Y_\ell^m (x), \quad 0 \le m \le n, \,\, 1 \le \ell \le \dim \CH_m^d,
\end{equation}
in terms of the Jacobi polynomials and spherical harmonics  $\{Y_\ell^m\}$ of $\CH_m^d$.

It is shown in \cite{X19} that the Jacobi polynomials in $\CV_n(\VV_{+}^{d+1}, t^{-1}(1-t)^\g)$, with $\b = -1$,
satisfy the differential equation 
\begin{align} \label{eq:diffJsf}
   \left(t(1-t)\partial_t^2 + \big( d-1 - (d+\g)t \big) \partial_t+ t^{-1} \Delta_0^{(x)}\right) u = 
      -n (n+\g+d-1) u.
\end{align}
Note that the weight function $t^{-1} (1-t)^\g$ also has a singularity at the origin. 

Although we can extend the weight function $t^\b (1-t)^\g$ evenly to the double cone $\VV_0^{d+1}$ by 
considering $|t|^\b (1-|t|)^\g$, the Jacobi polynomials in \eqref{eq:coneJsf} are not related to orthogonal 
polynomials for this even weight function on the double cone. In fact, since $P_n^{(\a,b)}$ does not possess 
parity, the orthogonal polynomials with respect to $|t|^\a (1-|t|)^{\g-\f12}$ on $[-1,1]$ are not the 
Jacobi polynomials. 

The space $\CV_n(\VV_{0,+}^{d+1}, t^\b (1-t)^\g)$ has the dimension of 
$\CV_n(\VV_{0,+}^{d+1}, |t|^{2\b} (1-t^2)^{\g-\f12})$, which is nearly twice as that of 
$\CV_n^E(\VV_{0}^{d+1}, |t|^{2\b} (1-t^2)^{\g-\f12})$. This comparison is particularly 
meaningful in view of the Fourier orthogonal series discussed at the end of the Subsection \ref{sec:sfOPseries}. 
The Fourier orthogonal expansions in the Jacobi polynomials on the surface of the upper cone are studied 
in \cite{X19}, and we shall study the Fourier expansions in the generalized Gegenbauer polynomials in 
$\CV_n^E(\VV_0^{d+1}, |t|^{2\b} (1-t^2)^{\g-\f12})$ in the next two sections. 
 
\subsubsection{Gegenbauer polynomials on the hyperboloid} \label{sect:4.1.3}
Here $\rho >0$ and the weight function $w_{\b,\g}$ 
can be written as $w_{\b,\g}(t) = |t| w_0 (t^2-\varrho^2)$ with $w_0(t) = t^{\b-\f12}(1-t)^{\g-\f12}$. The 
polynomials $q_k^{(m)}(t)$ in Proposition \ref{prop:sfOPbasis} are given in terms of $p_k(w_0^{(m)}; t)$ 
with 
$$
    w_0^{(m)}(t) = t^{m+ \f{d-1}{2}}w_0(t) =  t^{m+ \b+ \f{d-2}{2}} (1-t)^{\g-\f12},
$$ 
which are the Jacobi polynomials; that is, $p_k(w_0^{(m)};s)= P_k^{(\g-\f12,m+\b+\f{d-2}{2})}(s)$. 
However, as we have seen in Proposition \ref{prop:op-1d}, $q_{2k}^{(m)}(t) = p_k(w_0^{(m)};t^2-\varrho^2)$ 
but $q_{2k+1}^{(m)}$, which is odd in $t$, is more complicated. Hence, we only consider orthogonal polynomials 
that are even in $t$, that is, those in $\CV_n^E(\VV_0^{d+1}; w_{\b,\g})$.  
 
The orthogonal polynomials given in Corollary \ref{cor:sfOPeven} are now specialized as follows: 

\begin{prop}\label{prop:sfOPhypG}
Let $\{Y_\ell^m: 1 \le \ell \le \dim \CH_n^d\}$ denote an orthonormal basis of $\CH_m^d$.
Then the polynomials 
\begin{equation}\label{eq:sfOPhypG}
 {}_\varrho\sC_{n-2k,\ell}^n (x,t) = P_k^{(\g-\f12,n-2k+\b+\f{d-2}{2})}(2t^2-2\varrho^2-1) Y_\ell^{n-2k}(x) 
\end{equation}
with $1 \le \ell \le \dim \CH_{n-2k}^d$ and $0 \le k\le n/2$, form an orthogonal basis of 
$\CV_n^E(\VV_{0}^{d+1}, w_{\b,\g})$. 
Furthermore, in terms of polynomials $\sC_{n-2k,\ell}^n$ on the cone in \eqref{eq:sfOPconeG} and $\xi \in \sph$, 
\begin{equation}\label{eq:sfOPhypG2}
 {}_\varrho \sC_{n-2k,\ell}^n(x,t) = \frac{(n-2k+\b+ \frac{d-1}{2})_k}{(n-2k+\b+\g+\frac{d-1}{2})_k}\sC_{n-2k,\ell}^n \left(x, \sqrt{t^2-\varrho^2}\right).
\end{equation}
\end{prop}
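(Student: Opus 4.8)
The plan is to deduce both assertions directly from Corollary~\ref{cor:sfOPeven} together with the quadratic transformation linking even generalized Gegenbauer polynomials to Jacobi polynomials. For the basis statement I would start from the identification recorded just above, $w_{\b,\g}(t) = |t|\,w_0(t^2-\varrho^2)$ with $w_0(s) = s^{\b-\f12}(1-s)^{\g-\f12}$, so that the relevant one-dimensional weight in Corollary~\ref{cor:sfOPeven} is $w_0^{(m)}(s) = s^{m+\b+\f{d-2}{2}}(1-s)^{\g-\f12}$ on $[0,1]$, with $m=n-2k$. The degree-$k$ orthogonal polynomial $p_k(w_0^{(m)};\cdot)$ for this weight is, after the affine change of variable $s\mapsto 2s-1$ carrying $[0,1]$ onto $[-1,1]$, the shifted Jacobi polynomial $P_k^{(\g-\f12,\,m+\b+\f{d-2}{2})}(2s-1)$. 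Evaluating at $s=t^2-\varrho^2$ reproduces the $t$-factor in \eqref{eq:sfOPhypG}, and multiplying by $Y_\ell^{n-2k}(x)$ gives exactly ${}_\varrho\sC_{n-2k,\ell}^n$; since these are precisely the polynomials furnished by Corollary~\ref{cor:sfOPeven}, they form an orthogonal basis of $\CV_n^E(\VV_0^{d+1},w_{\b,\g})$.

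For the identity \eqref{eq:sfOPhypG2}, the key one-variable fact I would establish first is that the even generalized Gegenbauer polynomial collapses to a Jacobi polynomial in the squared variable. Substituting $s=u^2$ in the orthogonality integral reduces the weight $|u|^{2\a}(1-u^2)^{\g-\f12}$ on $[-1,1]$ to $s^{\a-\f12}(1-s)^{\g-\f12}$ on $[0,1]$, whence $C_{2k}^{(\g,\a)}(u) = d_k\,P_k^{(\g-\f12,\a-\f12)}(2u^2-1)$ for a constant $d_k$ read off from the explicit formula \eqref{eq:gGegen} in Appendix~\ref{sect:append}. Taking $\a = n-2k+\b+\f{d-1}{2}$, so that $\a-\f12 = n-2k+\b+\f{d-2}{2}$, and inserting $u=\sqrt{t^2-\varrho^2}$ (legitimate since $t^2-\varrho^2\in[0,1]$ on the compact hyperboloid), the argument becomes $2(t^2-\varrho^2)-1 = 2t^2-2\varrho^2-1$. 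Thus the $t$-factor of $\sC_{n-2k,\ell}^n(x,\sqrt{t^2-\varrho^2})$ equals $d_k$ times that of ${}_\varrho\sC_{n-2k,\ell}^n(x,t)$, while the spherical-harmonic factor $Y_\ell^{n-2k}(x)$ is identical on both sides; this yields \eqref{eq:sfOPhypG2} with proportionality constant $1/d_k$.

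It then remains to check that $1/d_k$ equals the stated ratio $\f{(n-2k+\b+\f{d-1}{2})_k}{(n-2k+\b+\g+\f{d-1}{2})_k}$, i.e. that $d_k = (\a+\g)_k/(\a)_k$ in the notation above. I expect this constant bookkeeping to be the main obstacle, since it hinges on the precise normalization adopted for $C_n^{(\g,\a)}$ in \eqref{eq:gGegen} and for the Jacobi polynomials, rather than on any further structural idea. Concretely, I would match the leading coefficients of $C_{2k}^{(\g,\a)}$ and of $P_k^{(\g-\f12,\a-\f12)}(2u^2-1)$ (or, equivalently, their values at the endpoint $u=1$, where $s=1$) using the Pochhammer expressions recorded in the Appendix, thereby pinning down $d_k$ and completing the proof.
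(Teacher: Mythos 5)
Your proposal is correct and follows the paper's proof essentially verbatim: the basis claim is Corollary~\ref{cor:sfOPeven} specialized to $w_0(s)=s^{\b-\f12}(1-s)^{\g-\f12}$, and \eqref{eq:sfOPhypG2} is obtained, exactly as in the paper, by using the quadratic transformation \eqref{eq:gGegen} to rewrite $P_k^{(\g-\f12,\a-\f12)}(2u^2-1)$ as a multiple of $C_{2k}^{(\g,\a)}(u)$ with $\a=n-2k+\b+\f{d-1}{2}$ and $u=\sqrt{t^2-\varrho^2}$. The proportionality constant you defer is read off directly from the explicit normalization in \eqref{eq:gGegen} (no leading-coefficient matching is needed), so the one ``obstacle'' you anticipate is immediate.
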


\begin{proof}
By Corollary \ref{cor:sfOPeven}, we only need to verify the relation \eqref{eq:sfOPhypG2}. 
Using \eqref{eq:gGegen} to write $P_k^{(\g-\f12,\a-\f12)}(2s^2-1)$ as $C_{2k}^{\g,\a}(s)$, we see that
this follows from \eqref{eq:sfOPhypG} and the expression of $\sC_{n-2k,\ell}^n$ in \eqref{eq:sfOPconeG}.
\end{proof}

We shall call these polynomials {\it Gegenbauer polynomials on the hyperboloid} when $\b =0$ and 
{\it generalized Gegenbauer polynomials on the hyperboloid} when $\b > 0$.
 
Our next theorem shows that the Gegenbauer polynomials on the hyperboloid are eigenfunctions of a
differential operator. 

\begin{thm}\label{thm:sfHypGdiff}
Let  $\rho > 0$ and $w_{0,\g} (t) = |t| (t^2-\varrho^2)^{-\f12} (1-t^2)^{\g-\f12}$, $\g > -\f12$. For 
$n=0,1,2,\ldots$, every $u \in \CV_n^E(\VV_0^{d+1}, w_{0,\g})$ satisfies the differential 
equation 
\begin{align} \label{eq:sfHypGdiff}
  & \left[ (1+\varrho^2-t^2) \left(1- \frac{\varrho^2}{t^2} \right) \partial_t^2 + 
      \left ( (1+\varrho^2-t^2) \frac{\rho^2}{t^2} - (2 \g+d) (t^2-\rho^2) \right)\frac{1}{t} \partial_t  \right.\\
   &\qquad\qquad\qquad\qquad\qquad \left.  + \frac{d-1}{t} \partial_t + \frac{1}{t^2- \varrho^2} \Delta_0^{(x)} \right] u = - n(n+2\g+d-1)u.
     \notag
\end{align}
\end{thm}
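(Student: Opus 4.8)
The plan is to reduce the assertion to the corresponding statement on the double cone, namely Theorem~\ref{thm:sfConeGdiff} with $\b=0$, by means of the change of variable $s=\sqrt{t^2-\varrho^2}$. Indeed, by \eqref{eq:sfOPhypG2} each basis element ${}_\varrho\sC_{n-2k,\ell}^n(x,t)$ is a nonzero constant multiple of $\sC_{n-2k,\ell}^n(x,\sqrt{t^2-\varrho^2})$, and since being an eigenfunction is unaffected by scalar multiplication, it suffices to show that $u(x,t):=\sC_{n-2k,\ell}^n(x,\sqrt{t^2-\varrho^2})$ satisfies \eqref{eq:sfHypGdiff}. Because these elements span $\CV_n^E(\VV_0^{d+1},w_{0,\g})$ and the eigenvalue $-n(n+2\g+d-1)$ depends on neither $k$ nor $\ell$, the equation for a general $u$ in the space then follows by linearity.

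The essential observation is that the angular variable is preserved under the substitution. On the cone one writes $\sC_{n-2k,\ell}^n(x,s)=f(s)\,Y_\ell^{n-2k}(\xi)$ with $\xi=x/\|x\|\in\sph$, where $\partial_s$ acts on the radial factor $f$ and $\Delta_0^{(x)}$ acts on $Y_\ell^{n-2k}(\xi)$. Since $\|x\|=\sqrt{t^2-\varrho^2}=s$ on the hyperboloid, the angle $\xi$ is the \emph{same} in both settings, so $s=\sqrt{t^2-\varrho^2}$ leaves $\xi$, hence $\Delta_0^{(x)}$, untouched and transforms only the radial derivatives. By Theorem~\ref{thm:sfConeGdiff} (with $\b=0$),
\begin{equation*}
  \Big[(1-s^2)\partial_s^2 - (2\g+d)\, s\, \partial_s + \frac{d-1}{s}\partial_s + \frac{1}{s^2}\Delta_0^{(x)}\Big] u = -n(n+2\g+d-1)\, u.
\end{equation*}

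Next I would apply the chain rule. From $s=\sqrt{t^2-\varrho^2}$ one gets $\frac{ds}{dt}=t/s$ and $\frac{d^2s}{dt^2}=-\varrho^2/s^3$, whence $\partial_s=\frac{s}{t}\partial_t$ and, after solving, $\partial_s^2=\frac{s^2}{t^2}\partial_t^2+\frac{\varrho^2}{t^3}\partial_t$. Substituting these together with $s^2=t^2-\varrho^2$ into the cone operator: the second-order coefficient becomes $(1+\varrho^2-t^2)\frac{t^2-\varrho^2}{t^2}=(1+\varrho^2-t^2)\bigl(1-\varrho^2/t^2\bigr)$, the term $\frac{d-1}{s}\partial_s$ becomes $\frac{d-1}{t}\partial_t$, and $\frac{1}{s^2}\Delta_0^{(x)}=\frac{1}{t^2-\varrho^2}\Delta_0^{(x)}$. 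Collecting the remaining first-order contributions, those coming from $(1-s^2)\partial_s^2$ and from $-(2\g+d)s\partial_s$, and factoring out $1/t$ yields exactly $\big[(1+\varrho^2-t^2)\frac{\varrho^2}{t^2}-(2\g+d)(t^2-\varrho^2)\big]\frac{1}{t}\partial_t$. These are precisely the coefficients displayed in \eqref{eq:sfHypGdiff}, and the eigenvalue is unchanged, which completes the reduction.

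The computations are elementary, so the step requiring the most care is the bookkeeping of the first-order terms: the $\partial_t$-contribution generated by $\partial_s^2$ (through $\frac{d^2s}{dt^2}$) must be combined with the one arising from $-(2\g+d)s\partial_s$ before the coefficient matches \eqref{eq:sfHypGdiff}. I would also note that, although the cone operator has coefficients singular at $s=0$ (that is, $|t|=\varrho$), the identity is one between polynomials—the even Gegenbauer factors $C_{2k}^{(\g,\cdot)}$ ensure that $\frac{1}{s}\partial_s$ and $\frac{1}{s^2}\Delta_0^{(x)}$ produce genuine polynomials—so no true singularity intervenes.
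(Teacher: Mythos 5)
Your proposal is correct and follows essentially the same route as the paper: reduce to the cone case via \eqref{eq:sfOPhypG2}, invoke Theorem \ref{thm:sfConeGdiff} with $\b=0$, and convert derivatives through $s=\sqrt{t^2-\varrho^2}$ exactly as in \eqref{eq:f'F'}. Your bookkeeping of the first-order terms matches the paper's computation, so nothing further is needed.
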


\begin{proof}
We work with the basis ${}_\varrho\sC_{n-2k,\ell}^n$ of the space ${}_\varrho\CV_n^E(\VV_0^{d+1}, w_{0,\g})$. 
Using the expression \eqref{eq:sfOPhypG2}, we can derive the differential 
equation satisfied by ${}_\varrho \sC_{n-2k,\ell}^n$ from the one satisfied by $\sC_{n-2k,\ell}^n$. We write 
\eqref{eq:sfOPhypG2} as 
$$
  {}_\varrho\sC_{n-2k,\ell}^n(x,t) = c_k f\left(\sqrt{t^2-\varrho^2}\right) Y_\ell^{n-2k}(\xi), 
       \qquad f(t) = P_{2k}^{(\g-\f12,n-2k+\frac{d-2}{2})}(t) t^{n-2k}, 
$$
where $x= \sqrt{t^2-\rho^2} \xi$, $\xi \in \sph$, and $c_k$ is the constant in \eqref{eq:sfOPhypG2}.
Let $F(t) = f\big(\sqrt{t^2-\varrho^2}\big)$. Then
\begin{align}\label{eq:f'F'}
 f'\left(\sqrt{t^2-\varrho^2}\right) = \frac{\sqrt{t^2-\varrho^2}}{t} F'(t), \quad 
 f''\left(\sqrt{t^2-\varrho^2}\right) = \frac{t^2-\varrho^2}{t^2} F''(t)+\frac{\varrho^2}{t^3} F'(t). 
\end{align}
The polynomials $\sC_{n-2k,\ell}^n(t\xi ,t) = f(t) Y_\ell^{n-2k}(\xi)$ satisfies the differential equation 
\eqref{eq:sfConeGdiff}, which shows that 
\begin{align*}
  & \left[(1-T^2) f''(T) - (2\g+d) T f'(T) + \frac{d-1}{T} f'(T) + \frac{1}{T^2} f(T)  \Delta_0^{(x)} \right] 
   Y_\ell^{n-2k}(\xi)\\
   &\qquad\qquad\qquad\qquad \qquad\qquad\qquad\qquad\qquad = - n(n+2\g+d-3) f(T) Y_\ell^m(\xi), 
\end{align*}
where $T= \sqrt{t^2-\varrho^2}$. Replacing the derivative on $f$ by the derivatives on $F$ according to
\eqref{eq:f'F'}, we obtain a differential equation for $F(t) Y_\ell^{n-2k}(\xi) = {}_\varrho\sC_{n-2k,\ell}^n(x,t)$, 
which simplifies to \eqref{eq:sfHypGdiff}. 
\end{proof}

For $\varrho > 0$, there may not be a differential operator that has orthogonal polynomials in 
${}_\varrho\CV_n^O(\VV_0^{d+1}, w_{\b,\g})$ as eigenfunctions for any $\b > 0$. Indeed, if such a 
differential operator exists, it should agree with \eqref{eq:sfConeGdiffO} when $\varrho =0$. However, 
the equation on the double cone holds for $\b = -1$ which is well defined when $\varrho = 0$, but not
for $\varrho > 0$ since we require $\b > -\f12$ when $\varrho  > 0$. 

\subsection{Gegenbauer polynomials on a solid hyperboloid}
On the domain 
$$
  \VV^{d+1} = \left\{(x,t): \|x\|^2 \le t^2 - \varrho^2, \, x \in \RR^d, \, \varrho \le |t| \le \sqrt{\varrho^2 +1}\right\},
$$
bounded by the hyperboloid and the planes $t = \sqrt{\varrho^2 +1}$ and $t=- \sqrt{\varrho^2 +1}$, 
we choose the weight function $w$ as 
$$
  w (t) = |t| (t^2-\varrho^2)^{\b-\f12}( \varrho^2+1 - t^2)^{\g-\f12}, 
         \quad \b, \g > -\tfrac12, 
$$
which is an even function in $t$, so that $W$ in \eqref{eq:W(x,t)} with $c =1$ is given by 
\begin{equation}\label{eq:6Weight}
  W_{\b,\g,\mu}(x, t) = |t| (t^2-\varrho^2)^{\b-\f12}( \varrho^2+1 - t^2)^{\g-\f12}(t^2-\varrho^2 - \|x\|^2)^{\mu-\f12},
\end{equation}
where $\b, \g, \mu > -\tfrac12$, defined for $\varrho \le |t| \le \sqrt{\varrho^2 +1}$. 
The corresponding inner product is defined by
$$
 \la f,g\ra_{\b,\g,\mu} = b_{\b,\mu} \int_{\VV^{d+1}} f(x,t)g(x,t) W_{\b,\g, \mu}(x,t) \d x \d t, 
$$
where the constant $b_{\b,\g,\mu} = 1/\int_{\VV^{d+1}} W_{\b,\g,\mu}(x,t) \d x \d t 
=  b_{\mu}^\BB \frac{\Gamma(\b+\mu+\g+\f{d+1}{2})} {\Gamma(\b+\mu+\f{d}{2}) \Gamma(\g+\f12)}$ with
$b_{\mu}^\BB$ being the normalization constant of $\varpi_\mu$ on $\BB^d$. The constant is computed using
\begin{align} \label{eq:intHypSolid}
  & \int_{\VV^{d+1}} f(t^2-\varrho^2)g\bigg(\frac{x}{\sqrt{t^2-\varrho^2}}\bigg) w_{\b,\g,\mu}(x,t) \d x \d t =    \\
     & \qquad\qquad  = \int_{0}^{1}  f(s)s^{\b+\mu+ \frac{d-2}{2}} (1-s)^{\g-\f12} \d s
              \int_{\BB^d}  g(x) (1-\|x\|^2)^{\mu-\f12} \d x, \notag
\end{align}
which follows from symmetry, changing variable $t \to s^{\f12}$ and then $s\to s + \varrho^2$. 
\subsubsection{Double cone}
In this case $\varrho = 0$ and the weight function becomes 
$$
  W_{\b,\g, \mu}(x,t) = |t|^{2\b}(1-t^2)^{\g-\f12}(t^2-\|x\|^2)^{\mu - \f12}, 
         \quad \b> -\tfrac{d+1}2,\, \g > -\tfrac12, \, \mu  > - \tfrac12.
$$
The polynomial $q_k^{(m)}$ in Proposition \ref{prop:solidOPbasis} are orthogonal with respect to 
$$
     |t|^{2m+ 2\mu +d -1} w_\b(t) =  |t|^{2m+2\b+2 \mu+d-1} (1-t^2)^{\g-\f12},
$$ 
so that they are given by the generalized Gegenbauer polynomials defined in \eqref{eq:gGegen}, that is,
$g_k^{(m)}(t) = C_k^{(\g, m+\b+\mu+ \frac{d-1}{2})}(t)$. Thus, the orthogonal polynomials in 
$\CV_n(\VV^{d+1},W_{\b,\g,\mu})$ given by \eqref{eq:solidOPbasis} are now specialized as follows: 

\begin{prop}\label{prop:solidOPGegen}
Let $\{P_\kb^m: |\kb| = m, \, \kb\in \NN_0^d\}$ be an orthonormal basis of $\CV_m^d(\varpi_\mu)$.
Then the polynomials 
\begin{equation}\label{eq:solidOPconeG}
  \Cb_{m,\kb}^n (x,t) = C_{n-m}^{(\g, m+\b+\mu+\frac{d-1}{2})}(t) t^{m} P_\kb^m\left( \frac{x}{t}\right),  
      \qquad 0 \le m \le n, \quad |\kb| = m,
\end{equation}
form an orthogonal basis of $\CV_n(\VV^{d+1}, W_{\b,\g, \mu})$. Moreover, 
\begin{equation}\label{eq:solidOPconeGnorm}
  h_{m,n}^\Cb: = \la \Cb_{m,\kb}^n, \Cb_{m,\kb}^n\ra_{W_{\b,\g, \mu}} = 
  \frac{(\b+\mu+\tfrac{d}2)_m}{(\b+\mu+\g+\tfrac{d+1}2)_m} 
       h_{n-m}^{(\g, m+\b+\mu+ \frac{d-1}{2})}.
\end{equation}
\end{prop}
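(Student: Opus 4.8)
The plan is to specialize Proposition~\ref{prop:solidOPbasis} to the double cone ($\varrho=0$, $c=1$) and then track the normalizing constants exactly as in the surface computation of Proposition~\ref{prop:sfOPconeG}. Setting $\varrho=0$ and $c=1$ in \eqref{eq:solidOPbasis}, the factor $\big(c(t^2-\varrho^2)\big)^{m/2}$ becomes $|t|^m$ and $\sqrt{c(t^2-\varrho^2)}$ becomes $|t|$, so a general basis element reads $\Qb_{m,\kb}^n(x,t)=q_{n-m}^{(m)}(t)\,|t|^m P_\kb^m(x/|t|)$. The weight governing $q_{n-m}^{(m)}$ is $(t^2)^{m+\mu+\f{d-1}{2}}w(t)=|t|^{2m+2\b+2\mu+d-1}(1-t^2)^{\g-\f12}$, which is precisely the generalized Gegenbauer weight $|t|^{2\alpha}(1-t^2)^{\g-\f12}$ with $\alpha=m+\b+\mu+\f{d-1}{2}$; hence $q_{n-m}^{(m)}=C_{n-m}^{(\g,\alpha)}$ by \eqref{eq:gGegen}, as already recorded in the paragraph preceding the statement. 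Orthogonality and the basis property then follow immediately from Proposition~\ref{prop:solidOPbasis}.

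The one point requiring an argument is that $|t|^m P_\kb^m(x/|t|)$ coincides with the stated $t^m P_\kb^m(x/t)$. This rests on the parity of the ball polynomials: since $\varpi_\mu$ is centrally symmetric, the reflection $y\mapsto -y$ is an isometry preserving each $\CV_m^d(\varpi_\mu)$, and splitting any $P\in\CV_m^d(\varpi_\mu)$ into its even and odd parts shows that the part of the ``wrong'' parity lies in $\Pi_{m-1}^d$ and is therefore orthogonal to itself, hence zero; consequently $P_\kb^m(-y)=(-1)^m P_\kb^m(y)$. Thus $P_\kb^m$ involves only monomials $y^\alpha$ with $|\alpha|\equiv m\pmod 2$, so that $t^m P_\kb^m(x/t)=\sum_\alpha c_\alpha x^\alpha t^{m-|\alpha|}$ carries only even powers $t^{m-|\alpha|}$ and equals $|t|^m P_\kb^m(x/|t|)$. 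The same observation confirms that $\Cb_{m,\kb}^n$ is a genuine polynomial in $(x,t)$.

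For the norm I would transcribe the computation in the proof of Proposition~\ref{prop:sfOPconeG} with $\b$ replaced by $\b+\mu$, the shift produced by the extra ball weight $\varpi_\mu$. Using the orthonormality of $\{P_\kb^m\}$ on $\BB^d$ and the decomposition \eqref{eq:intHypSolid}, the inner product $\la\Cb_{m,\kb}^n,\Cb_{m,\kb}^n\ra_{\b,\g,\mu}$ reduces to a one-dimensional integral of $\big|C_{n-m}^{(\g,\alpha)}\big|^2$ against $|t|^{2\alpha}(1-t^2)^{\g-\f12}$ on $[-1,1]$, multiplied by the ball normalization already built into the ratio $b_{\b,\g,\mu}/b_\mu^\BB$. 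Converting that integral into the normalized norm $h_{n-m}^{(\g,\alpha)}$ of \eqref{eq:gGegenNorm} introduces the normalizing constant of the generalized Gegenbauer weight, and collecting the resulting Gamma factors exactly as in Proposition~\ref{prop:sfOPconeG} collapses $\frac{\Gamma(m+\b+\mu+\f{d}{2})\Gamma(\b+\mu+\g+\f{d+1}{2})}{\Gamma(\b+\mu+\f{d}{2})\Gamma(m+\b+\mu+\g+\f{d+1}{2})}$ into the Pochhammer quotient $\frac{(\b+\mu+\tfrac d2)_m}{(\b+\mu+\g+\tfrac{d+1}2)_m}$ asserted in \eqref{eq:solidOPconeGnorm}.

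I expect the only genuinely non-mechanical step to be the parity identification of the second paragraph; everything else is a constant-bookkeeping exercise that is a direct copy of the already-established surface case under the single substitution $\b\mapsto\b+\mu$.
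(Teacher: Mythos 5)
Your proposal is correct and follows essentially the same route as the paper: the basis property is inherited from Proposition~\ref{prop:solidOPbasis} with $q_{n-m}^{(m)}$ identified as the generalized Gegenbauer polynomial $C_{n-m}^{(\g,\,m+\b+\mu+\frac{d-1}{2})}$, and the norm is obtained from the integral decomposition over $\BB^d\times[-1,1]$ together with the orthonormality of $P_\kb^m$, exactly as in Proposition~\ref{prop:sfOPconeG} with $\b$ shifted to $\b+\mu$. Your parity argument showing that every element of $\CV_m^d(\varpi_\mu)$ satisfies $P(-y)=(-1)^mP(y)$, so that $|t|^mP_\kb^m(x/|t|)=t^mP_\kb^m(x/t)$ is a genuine polynomial, is a correct justification of a point the paper leaves implicit.
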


\begin{proof}
The polynomials in \eqref{eq:solidOPconeG} consist of an orthogonal basis by 
Proposition \ref{eq:solidOPbasis}. The norm $h_{m,n}^\Cb$ is computed using 
$$
\int_{\VV^{d+1}} f(x,t) W_{\b,\g,\mu}(x,t) \d x \d t = \int_{-\infty}^\infty \int_{\BB^d} 
         f(t y, t) (1-\|y\|^2)^{\mu-\f12} \d y |t|^{2\b} (1-t^2)^{\g-\f12} \d t     
$$ 
and the orthonormality of $P_{\kb}^m$. The detail is similar to the proof of 
Proposition \ref{prop:sfOPconeG}.
\end{proof}
 
We shall call these polynomials {\it Gegenbauer polynomials on the solid cone} when $\b = \f12$ and 
 {\it generalized Gegenbauer polynomials on the solid cone} when $\b > \f12$. The choice of $\b =\f12$
instead of $\b =0$ is due to the differential equation in the next theorem, where we show that the 
elements of $\CV_n^E(\VV^{d+1}, W_{\f12,\g,\mu})$ are eigenfunctions of a second order differential operator, 
whereas the elements of $\CV_n^O(\VV^{d+1}, W_{-\f12,\g,\mu})$ are eigenfunctions of another second 
order differential operator. 
 
\begin{thm}\label{thm:solidConeGdiff}
For $n=0,1,2,\ldots$, every $u \in \CV_n^E(\VV^{d+1}, W_{\f12,\g,\mu})$ satisfies 
the differential equation 
\begin{align} \label{eq:solidConeGdiff}
&   \Big [(1-t^2) \partial_{t}^2 + \Delta_x - \la x,\nabla_x \ra^2 +\frac{2}{t} (1-t^2)\la x, \nabla_x\ra \partial_t 
    + (2\mu+d)\frac{1}{t} \partial_t  \\
   &   \qquad  -  t \partial_t - (2\g+2\mu+d)\left( t \partial_t +  \la x ,\nabla_x\ra \right) \Big] u  
       = -n(n + 2 \g + 2 \mu+d) u. \notag
\end{align}
Furthermore, every $u \in \CV_n^O(\VV^{d+1}, W_{-\f12,\g,\mu})$ satisfies 
the differential equation 
\begin{align} \label{eq:solidConeGdiffO}
&   \Big [(1-t^2) \partial_{t}^2 + \Delta_x - \la x,\nabla_x\ra^2 - \la x,\nabla_x\ra 
       + \frac{2}{t} (1-t^2) \la x,\nabla_x \ra \partial_t \\
   &  \qquad\qquad +  \frac{2\mu+d-2}{t} \Big(\partial_t -\frac{1}{t}\Big)  - 
      (2\g+2\mu+d-1) \big(t \partial_t + \la x ,\nabla_x\ra \big) \Big] u   \notag   \\ 
   &  \qquad\qquad \qquad\qquad \qquad\qquad \qquad  = -n(n + 2 \g + 2 \mu+d) u, \notag
\end{align}
where $\Delta_x$ and $\nabla_x$ indicate that the operators are acting on $x$ variable.
\end{thm}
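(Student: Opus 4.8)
The plan is to verify \eqref{eq:solidConeGdiff} and \eqref{eq:solidConeGdiffO} directly on the explicit bases furnished by Proposition~\ref{prop:solidOPGegen}, using the product structure of the basis elements and the second order equations satisfied by each factor. Writing a typical even basis element (so $n-m$ is even) from \eqref{eq:solidOPconeG} as $\Cb_{m,\kb}^n(x,t)=g(t)\,\Phi(x,t)$ with $g(t)=C_{n-m}^{(\g,\,m+\b+\mu+\f{d-1}{2})}(t)$ and $\Phi(x,t)=t^{m}P_\kb^m(x/t)$, it suffices to establish \eqref{eq:solidConeGdiff} for these elements, since they span $\CV_n^E(\VV^{d+1},W_{\f12,\g,\mu})$. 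Three facts drive the argument: the one--variable equation \eqref{eq:gGegenDiff} for $g$; the ball equation \eqref{eq:diffBall}, which says that $\Delta_x-\la x,\nabla_x\ra^2-(2\mu+d-1)\la x,\nabla_x\ra$ acts on $P_\kb^m$ as the scalar $-m(m+2\mu+d-1)$; and the homogeneity of $\Phi$ of degree $m$ in $(x,t)$.

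The computational engine is the observation that on a function of the single vector variable $y=x/t$ one has $t\,\partial_t=-\la x,\nabla_x\ra$, that $\la x,\nabla_x\ra$ restricts to the Euler operator $\la y,\nabla_y\ra$, and that $\Delta_x$ restricts to $t^{-2}\Delta_y$. First I would use these identities to rewrite each term of the operator in \eqref{eq:solidConeGdiff}, applied to $g\,\Phi$, as a function of $t$ (formed from $g,g',g''$) times one of $P_\kb^m$, $\la y,\nabla_y\ra P_\kb^m$, $\la y,\nabla_y\ra^2 P_\kb^m$ or $\Delta_y P_\kb^m$, all evaluated at $y=x/t$; here Euler's relation $(t\partial_t+\la x,\nabla_x\ra)\Phi=m\Phi$ disposes of the mixed term $\f2t(1-t^2)\la x,\nabla_x\ra\partial_t$ and of the dilation term. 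I would then use \eqref{eq:diffBall} to replace $\Delta_y P_\kb^m$ by a combination of the other three quantities.

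At this stage I expect two cancellations. The coefficients of $\la y,\nabla_y\ra^2 P_\kb^m$ and of $\la y,\nabla_y\ra P_\kb^m$ should vanish identically, for every $\b$, leaving only $\big[(1-t^2)g''+\tfrac{2m+2\mu+d}{t}g'-(2m+1+2\g+2\mu+d)t\,g'-m(m+2\g+2\mu+d)g\big]\,t^{m}P_\kb^m$; this is exactly where the operator's coefficients $(2\mu+d)\f1t\partial_t$ and $-t\partial_t$ match the $2\mu+d-1$ brought in by \eqref{eq:diffBall}. Substituting the even--degree form of \eqref{eq:gGegenDiff} for $(1-t^2)g''$ then produces a first order remainder proportional to $(2\b-1)\big(t-\tfrac1t\big)g'$, which vanishes precisely when $\b=\f12$; this is the reason for that normalization. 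The surviving scalar is the sum of the one--variable eigenvalue $(n-m)(n+m+2\mu+2\g+d)$ and the term $m(m+2\g+2\mu+d)$, and I would check that these combine, with all $m$--dependence cancelling, to $n(n+2\g+2\mu+d)$, which is \eqref{eq:solidConeGdiff}. The odd case \eqref{eq:solidConeGdiffO} runs identically from the odd--degree form of \eqref{eq:gGegenDiff}, whose difference operator contributes an extra multiple of $t^{-2}g$; the corresponding remainder now cancels at $\b=-\f12$.

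The hard part will be the bookkeeping in the reduction step rather than the final arithmetic. The factor $t^{-2}$ generated by $\Delta_x$ and the mixed term $\f2t(1-t^2)\la x,\nabla_x\ra\partial_t$ must be reconciled against the shifts in powers of $t$ forced by $t\partial_t=-\la x,\nabla_x\ra$, and only after this reconciliation do the coefficients of $\la y,\nabla_y\ra P_\kb^m$ and $\la y,\nabla_y\ra^2 P_\kb^m$ collapse. Securing that identical cancellation is the crux; once it is in hand, the selection of $\b=\f12$ (respectively $\b=-\f12$) for the even (respectively odd) subspace and the $m$--free eigenvalue follow by routine simplification.
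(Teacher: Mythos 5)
Your proposal is correct and follows essentially the same route as the paper's proof: both verify the equation on the explicit basis $g(t)\,t^mP_\kb^m(x/t)$ using the one-variable equation for the generalized Gegenbauer factor, the ball equation \eqref{eq:diffBall}, and the Euler/homogeneity identities for $t^mP_\kb^m(x/t)$ (which are exactly \eqref{eq:partialH}--\eqref{eq:partialH2} in the paper), with the $(2\b-1)$-proportional remainder singling out $\b=\f12$ (resp.\ $\b=-\f12$). The only difference is cosmetic bookkeeping — you pass to the variable $y=x/t$ and apply the fixed operator directly, while the paper keeps $H(x,t)$ and runs a $\b$-parametrized identity before specializing — and your predicted cancellations and final eigenvalue arithmetic check out.
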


\begin{proof}
We work with the orthogonal basis \eqref{eq:solidOPconeH}. Let $u = \Cb_{m,\kb}^n$ and we write
$$
  u(x,t) = g(t) H(x,t) \quad \hbox{with} \quad g(t) = C_{n-m}^{(\g,m+\b+\mu+\f{d-1}{2})}(t), \quad H(x,t) 
      = t^m P_{\kb}^m\left(\frac{x}{t}\right). 
$$
For $\CV_n^E(\VV_0^{d+1}, W_{\b,\g,\mu})$,  we assume that $n-m$ are nonnegative even integers. 
As it is shown in the proof of Theorem \ref{thm:sfConeGdiff}, with $\b$ replaced by $\b+\g$, the function 
$g$ satisfies 
\begin{align}\label{eq:partial_g2}
   (1-t^2) g''(t) - (2m+ 2\b+ &\, 2\g + 2\mu + d) t g'(t) + \frac{2m+ 2\b +2\mu+ d-1}{t} g'(t) \\
       &  = - (n - m) (n+m+2\b+2\g+2\mu+d-1) g(t).  \notag
\end{align}
Moreover, using the differential equation \eqref{eq:diffBall} satisfied by $P_\kb(x)$, it is shown in \cite[(3.9)]{X19} 
that $H$ satisfies 
\begin{align}\label{eq:tmOPball}
  \left( t^2 \Delta_x  - \la x, \nabla_x\ra^2  - (2 \mu +d-1) \la x,\nabla_x\ra \right) H 
      = -m (m+2\mu+d-1) H.
\end{align}
Furthermore, the function $H$ is homogeneous in variables $(x,t)$, so that it satisfies, as shown in \cite[(3.8)]{X19}, 
\begin{equation} \label{eq:partialH}
   \frac{\partial}{\partial t} H(x,t) =  \frac{1}{t} ( m   -  \la x, \nabla_x\ra )H(x,t) , 
\end{equation}
from which we also deduce that 
\begin{equation} \label{eq:partialH2}
  \frac{\partial^2}{\partial t^2} H(x,t) = \frac{1}{t^2} \big(  m (m-1) - (2m-1) \la x,\nabla_x \ra + \la x,\nabla_x \ra^2 \big)H(x,t).
\end{equation}
Using these identities to replace $\partial_t H$ and $\partial_t^2 H$, a tedious computation shows that, 
\begin{align*}
 &  \left ((1-t^2) \partial_{t}^2 + (1-t^2)\Delta_x + 2\Big(\frac{1}{t} -t\Big )\la x,\nabla_x \ra \partial_t  \right. \\
   & \left. \qquad\qquad\qquad \qquad\qquad+ (2\b+2\mu+d-1)\Big(\frac{1}{t} -t\Big )\partial_t - (2\g+1) 
         \big( t \partial_t + \la x ,\nabla_x\ra \big) \right) u  \\
 & = \left((1-t^2) g''(t) - (2m+2\b+2\g+2\mu + d)  t g'(t) + \frac{2m + 2\b+ 2\mu + d-1}{t} g'(t)\right)H \\
      &\quad+ \Big(\frac{g(t)}{t^2} - g(x)\Big)\left(t^2 \Delta_x - \la x,\nabla_x\ra^2 - (2\b+ d-1) \la x,\nabla_x\ra
        + m(m+  2\mu + d-1) \right)H \\
      &\quad + (2 \b-1) \frac{g(t)}{t^2} \big(- \la x,\nabla_x\ra + m\big)H + g(x) \big( (2\b-1)\la x,\nabla_x\ra
        + (2\b + 2 \g)m \big)H \\
 & = -(n-m)(n+m+2\b+2\g+2\mu +d-1) u \\
     &\qquad\qquad  +  (2 \b-1)\Big(1- \frac{1}{t^2}\Big) \la x,\nabla_x\ra u 
     + \Big( \frac{2\b-1}{t^2}- (2\b + 2 \g)\Big) m \, u \\
 & =  -n(n +2\b+ 2 \g + 2 \mu+d-1) u + m (m + 2 \mu + d - 1) u \\
    &\qquad\qquad \qquad\qquad \qquad\qquad   + (2 \b-1)\Big(1- \frac{1}{t^2}\Big) \la x,\nabla_x\ra u  
        +  \frac{2\b-1}{t^2}  m \, u,
\end{align*}
where we have used \eqref{eq:partial_g2} and \eqref{eq:tmOPball} in the second step. In particular, 
when $\b = \f12$, the righthand side becomes $-n(n + 2 \g + 2 \mu+d) u + m (m + 2 \mu + d - 1) u$,
in which the second term is $-1$ times of the eigenvalue in \eqref{eq:tmOPball}. Hence, adding the 
above identity with \eqref{eq:tmOPball} and simplifying the result, we conclude that the Gegenbauer 
polynomials on the solid cone satisfy 
\begin{align*}
&   \Big ((1-t^2) \partial_{t}^2 + \Delta_x - \la x,\nabla\ra^2 + 2\Big(\frac{1}{t} -t\Big)\la x,\nabla_x \ra \partial_t + \frac{2\mu+d}{t} \partial_t  \\
   &  \qquad\qquad   - (2\g+2\mu+d+1) t \partial_t - (2\g+2\mu+d) \la x ,\nabla_x\ra  \Big) u  
       = -n(n + 2 \g + 2 \mu+d) u, \notag
\end{align*}
which become \eqref{eq:solidConeGdiff} after rearranging terms.

Next we consider $\CV_n^O(\VV^{d+1}, W_{\b,\g,\mu})$, for which we assume that $n-m$ are nonnegative 
odd integers. As it is shown in the proof of Theorem \ref{thm:solidConeHdiff}, with $\b$ replaced by $\b+\g$, the 
function now satisfies 
\begin{align*}
   (1-t^2) g''(t) - (2m+ 2\b+ &\, 2\g + 2\mu + d) t g'(t) + \frac{2m+ 2\b +2\mu+ d-1}{t} \left(g'(t) - \frac{g(t)}{t} \right)\\
       &  = - (n - m) (n+m+2\b+2\g+2\mu+d-1) g(t).  
\end{align*}
Using this identity instead of \eqref{eq:partial_g2}, the proof for the even case shows that 
\begin{align*}
 &  \left ((1-t^2) \partial_{t}^2 + (1-t^2)\Delta_x + 2\left(\frac{1}{t} -t\right )\la x,\nabla_x \ra \partial_t  \right. \\
   & \left. \qquad\qquad\qquad \qquad\qquad+ (2\mu+d) \left(\frac{1}{t} -t\right)\partial_t - (2\g+1) 
         \big( t \partial_t + \la x ,\nabla_x\ra \big) \right) u  \\
 & =  -n(n +2\b+ 2 \g + 2 \mu+d-1) u + m (m + 2 \mu + d - 1) u \\
    &\qquad + \frac{2m +2\b+2\mu +d-1}{t^2} u + (2 \b-1)\Big(1- \frac{1}{t^2}\Big) \la x,\nabla_x\ra u  +  
      \frac{2\b-1}{t^2}  m \, u.
\end{align*}
If $\b = - \f12$, then the second line of the righthand side becomes independent of $m$. Hence, 
adding the resulted equation and \eqref{eq:tmOPball}, we conclude that 
\begin{align*}
&   \Big ((1-t^2) \partial_{t}^2 + \Delta_x - \la x,\nabla\ra^2 - \la x,\nabla\ra 
       + 2\Big(\frac{1}{t} -t\Big)\la x,\nabla_x \ra \partial_t + 
       \frac{2\mu+d-2}{t} \Big(\partial_t -\frac{1}{t}\Big) \\
   &  \qquad\qquad   - (2\g+2\mu+d-1) \big(t \partial_t + \la x ,\nabla_x\ra \big) \Big) u  
       = -n(n + 2 \g + 2 \mu+d) u, \notag
\end{align*}
from which \eqref{eq:solidConeGdiffO} follows. The proof is completed. 
\end{proof}

The remarks that we made just below Theorem \ref{thm:sfConeGdiff} apply to the solid cone as well. 
In particular, we can consider our results with the Jacobi polynomials on the upper cone, which
are discussed in the next subsection. 

\subsubsection{Jacobi polynomials on the solid upper cone}
These polynomials are studied in \cite{X19} and they are orthogonal on the solid upper cone 
with respect to 
$$
  \la f, g\ra_{\b} = b_{\b} \int_{\VV_+^{d+1}} f(x,t) g(x,t)   W_{\mu,\b,\g}^J(x,t)\d x \d t, 
$$
where $W^J_{\b,\g,\mu}(x,t): = (t^2-\|x\|^2)^{\mu-\f12} t^\b (1-t)^\g$, $\mu > -\tfrac12, \, \b> -1,\, \g > -1$ 
is defined inside the compact cone $\VV_{+}^{d+1}$ with $0 \le t \le 1$. An orthogonal basis for the space 
$\CV_n(\VV_{+}^{d+1},W_{\b,\g,\mu}^J)$ is given by 
\begin{equation}\label{eq:coneJ}
   \Jb_{m,\ell}^n(x,t) =  P_{n-m}^{(2\a+2m, \g)}(1- 2t) t^m P_\kb^m\left(\varpi_\mu; \frac{x}{t}\right),
\end{equation}
in terms of the Jacobi polynomials and orthogonal polynomials of $\CV_n^d(\varpi_\mu)$ on $\BB^d$.

It is shown in \cite{X19} that the Jacobi polynomials in $\CV_n(\VV_{+}^{d+1}, W_{0,\g,\mu}^J )$ with $\b = 0$
satisfy a differential equation 
\begin{align} \label{eq:diffJ}
 & \Big[ t(1-t)\partial_t^2 + 2 (1-t) \la x,\nabla_x \ra \partial_t + t \Delta_x - \la x, \nabla_x\ra + (2\mu+d)\partial_t  \\ 
 &  \qquad - (2\mu+\g+d+1)( \la x,\nabla_x\ra + t \partial_t)+\la x \nabla_x\ra \Big]u =  -n (n+2\mu+\g+d) u.\notag
\end{align}
 
In parallel to the Jacobi polynomials on the surface of the cone, the Jacobi polynomials on the solid upper cone
are not related to orthogonal polynomials for the even extension, in $t$ variable, of $W_{\b,\g,\mu}$ on the 
double cone. We shall study the Fourier expansions in the generalized Gegenbauer polynomials in 
$\CV_n^E(\VV^{d+1}, W_{\b,\g,\mu}^J)$ in the next two sections. 

\subsubsection{Double hyperboloid} 
With $\varrho > 0$ the weight function $W_{\b,\g,\mu}$ can be written as $W_{\b,\g, \mu}(t) = 
|t| w_0 (t^2-\varrho^2) (t^2-\varrho^2)^{\mu-\f12} \varpi_\mu(y)$, where $y= x/\sqrt{t^2-\varrho^2}$,
with $w_0(t) =  t^{\b-\f12}(1-t^2)^{\g-\f12}$. Hence, the polynomials $q_k^{(m)}(t)$ in 
Proposition \ref{prop:solidOPbasis} are given in terms of $p_k(w_0^{(m)}; t)$ with 
$$
   w_0^{(m)}(t) = t^{m+ \f{d-1}{2}}w_0(t) =   t^{m+ \b+ \f{d-2}{2}} (1-t^2)^{\g-\f12},
$$ 
which are the Jacobi polynomials; that is, $p_k(w_0^{(m)};s)= P_k^{(\g-\f12, m+\b+\f{d-2}{2})}(s)$. 
As in Subsection \ref{sect:4.1.3}, we only consider orthogonal polynomials that are even in $t$, that is, 
those in $\CV_n^E(\VV_0^{d+1}; W_{\b,\g,\mu})$. The orthogonal polynomials given in Corollary \ref{cor:solidOPeven} 
are now specialized as follows: 

\begin{prop}\label{prop:solidOPhypG}
Let $\{P_\kb^{n-2k}: |\kb| = n-2k, \, \kb \in \NN_0^d\}$ denote an orthonormal basis of $\CV_{n-2k}^d(\varpi_\mu)$.
Then the polynomials 
\begin{align}\label{eq:solidOPhypG}
 {}_\varrho \Cb_{n-2k,\kb}^n (x,t) = &\,  P_k^{(\g-\f12,n-2k+\b+\mu+\f{d-2}{2})}(2 t^2-2\varrho^2-1) \\
  &\times  (t^2-\varrho^2)^{\frac{n-2k}{2}} P_\kb^{n-2k}\bigg(\frac{x}{\sqrt{t^2-\varrho^2}} \bigg) \notag
\end{align}
with $|\kb| = n-2k$ and $0 \le k\le n/2$ form an orthogonal basis of $\CV_n^E(\VV^{d+1}, W_{\b,\g,\mu})$. 
Furthermore, in terms of orthogonal polynomials $\Cb_{n-2k,\ell}^n$ in \eqref{eq:solidOPconeG}, 
\begin{equation}\label{eq:solidOPhypG2}
 {}_\varrho \Cb_{n-2k,\ell}^n(x,t) =  \frac{(n-2k+\b+ \mu+ \frac{d-1}{2})_k}{(n-2k+\b+\g+\mu+\frac{d-1}{2})_k}
     \Cb_{n-2k,\ell}^n \left(x, \sqrt{t^2-\varrho^2}\right).
\end{equation}
\end{prop}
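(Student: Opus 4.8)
The plan is to follow the blueprint of Proposition~\ref{prop:sfOPhypG}, the surface analogue, and to reduce everything to Corollary~\ref{cor:solidOPeven} together with the generalized Gegenbauer--Jacobi identity \eqref{eq:gGegen} from the appendix. The orthogonality and the basis claim will come for free from Corollary~\ref{cor:solidOPeven}, so the only real content is to identify the one-variable factor as a Jacobi polynomial and then to establish the transfer relation \eqref{eq:solidOPhypG2}.

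First I would record that here $w(t)=|t|\,w_0(t^2-\varrho^2)$ with $w_0(s)=s^{\b-\f12}(1-s)^{\g-\f12}$, since $\varrho^2+1-t^2=1-(t^2-\varrho^2)$. Corollary~\ref{cor:solidOPeven} then furnishes the even-in-$t$ basis of $\CV_n^E(\VV^{d+1},W_{\b,\g,\mu})$ in the form $p_k\big(w_0^{(n-2k)};t^2-\varrho^2\big)(t^2-\varrho^2)^{\frac{n-2k}{2}}P_\kb^{n-2k}\big(x/\sqrt{t^2-\varrho^2}\big)$, where $w_0^{(m)}(s)=s^{m+\mu+\f{d-1}{2}}w_0(s)=s^{m+\b+\mu+\f{d-2}{2}}(1-s)^{\g-\f12}$. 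I would then note that, up to the affine change $s\mapsto 2s-1$ carrying $[0,1]$ to $[-1,1]$, this is a Jacobi weight, so that $p_k(w_0^{(m)};s)=P_k^{(\g-\f12,\,m+\b+\mu+\f{d-2}{2})}(2s-1)$. Substituting $m=n-2k$ and $s=t^2-\varrho^2$ reproduces exactly \eqref{eq:solidOPhypG}, and both the orthogonality and the spanning property are inherited directly from Corollary~\ref{cor:solidOPeven}.

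It remains to prove \eqref{eq:solidOPhypG2}, for which I would compare \eqref{eq:solidOPhypG} with the solid-cone polynomial \eqref{eq:solidOPconeG} evaluated at $(x,\sqrt{t^2-\varrho^2})$,
\begin{equation*}
 \Cb_{n-2k,\kb}^n\big(x,\sqrt{t^2-\varrho^2}\big)
  = C_{2k}^{(\g,\a)}\big(\sqrt{t^2-\varrho^2}\big)\,(t^2-\varrho^2)^{\frac{n-2k}{2}}
      P_\kb^{n-2k}\Big(\tfrac{x}{\sqrt{t^2-\varrho^2}}\Big),
\end{equation*}
with $\a=n-2k+\b+\mu+\f{d-1}{2}$. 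The two expressions share the same ball-polynomial factor and the same radial power $(t^2-\varrho^2)^{\frac{n-2k}{2}}$, so it suffices to relate the scalar prefactors. Applying \eqref{eq:gGegen} with argument $s=\sqrt{t^2-\varrho^2}$ rewrites $P_k^{(\g-\f12,\a-\f12)}(2t^2-2\varrho^2-1)$ as a constant multiple of $C_{2k}^{(\g,\a)}(\sqrt{t^2-\varrho^2})$, and that constant is precisely the ratio $(\a)_k/(\a+\g)_k=(n-2k+\b+\mu+\f{d-1}{2})_k/(n-2k+\b+\g+\mu+\f{d-1}{2})_k$ recorded in \eqref{eq:solidOPhypG2}.

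The only step requiring care — and essentially the sole place an error could creep in — is the bookkeeping of the normalization constant in \eqref{eq:gGegen}: one must use the paper's convention for $C_{2k}^{(\g,\a)}$ and match the two Jacobi parameters correctly, the first parameter $\g-\f12$ arising from $(\varrho^2+1-t^2)^{\g-\f12}$ and the second $\a-\f12=n-2k+\b+\mu+\f{d-2}{2}$ from the combined powers of $(t^2-\varrho^2)$. Once the convention is pinned down, the stated ratio of Pochhammer symbols drops out. Everything else is a verbatim transcription of the surface argument, with $Y_\ell^{n-2k}$ replaced by $(t^2-\varrho^2)^{\frac{n-2k}{2}}P_\kb^{n-2k}(x/\sqrt{t^2-\varrho^2})$ and the parameter $\b$ shifted to $\b+\mu$ throughout.
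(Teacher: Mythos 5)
Your proposal is correct and coincides with the paper's own (very terse) proof: invoke Corollary~\ref{cor:solidOPeven} for the basis claim and deduce \eqref{eq:solidOPhypG2} from the quadratic transform \eqref{eq:gGegen}, exactly as in the surface case \eqref{eq:sfOPhypG2}. (One small caveat on the step you yourself flagged: with the paper's normalization in \eqref{eq:gGegen}, taking $\l=\g$ and second parameter $\a=n-2k+\b+\mu+\f{d-1}{2}$ gives $P_k^{(\g-\f12,\a-\f12)}(2s^2-1)=\frac{(\a+\f12)_k}{(\a+\g)_k}C_{2k}^{(\g,\a)}(s)$, so the ratio is $(\a+\f12)_k/(\a+\g)_k$ rather than $(\a)_k/(\a+\g)_k$; this half-integer discrepancy appears to originate in the paper's stated formula, not in your argument.)
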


\begin{proof}
By Corollary \ref{cor:solidOPeven}, we only need to verify the relation \eqref{eq:solidOPhypG2}, which 
follows from \eqref{eq:gGegen} as in the proof of \eqref{eq:sfOPhypG2}. 
\end{proof}
 
We call these polynomials {\it Gegenbauer polynomials on the solid hyperboloid} when $\b = \frac12$, for
which the weight function is $W_{\f12,\g,\mu}(x,t) = w_{\f12,\g}(t) (t^2-\rho^2 - \|x\|^2)^{\mu-\f12}$ with 
$w_{\f12,\g}(t) = |t| (1-t^2)^{\g-\f12}$, and {\it generalized Gegenbauer polynomials on the solid hyperboloid} 
when $\b \ne \f12$. 

We now show that the elements of $\CV_n^E(\VV^{d+1}, W_{\b,\g,\mu})$ are eigenfunctions 
of a second order differential operator when $\b = \f12$. 
 
\begin{thm}\label{thm:solidHypGdiff}
For  $\rho \ge 0$ and $n=0,1,2,\ldots$, every $u \in \CV_n^E(\VV^{d+1}, W_{\f12,\g,\mu})$ satisfies 
the differential equation 
\begin{align} \label{eq:solidHypGdiff}
  & \bigg [(1+\varrho^2-t^2)  \left(1- \frac{\varrho^2}{t^2} \right) \partial_t^2 + 
  \Delta_x - \la x, \nabla_x\ra^2 + \la x, \nabla\ra \\
  & \,  +\frac{2}{t}(1+\varrho^2-t^2) \la x, \nabla_x\ra \partial_t  + 
       \left( (1+\varrho^2-t^2) \frac{\varrho^2}{t^2} + 2\mu+d \right)  \frac{1}{t}\partial_t  \notag \\
   & \, - (2\g+2\mu+d+1) \left( \left( 1- \frac{\varrho^2}{t^2}\right) t \partial_t + \la x,\nabla_x \ra \right)
      \bigg]  u = -n(n + 2 \g + 2 \mu+d) u. \notag 
\end{align}
\end{thm}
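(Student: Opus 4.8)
The plan is to transfer the differential equation \eqref{eq:solidConeGdiff} on the solid double cone (the $\b=\f12$ case of Theorem \ref{thm:solidConeGdiff}) to the solid hyperboloid by means of the change of variable $T=\sqrt{t^2-\varrho^2}$, exactly as was done on the surface in the proof of Theorem \ref{thm:sfHypGdiff}. The bridge is the explicit relation \eqref{eq:solidOPhypG2}, which writes each basis element ${}_\varrho\Cb_{n-2k,\kb}^n(x,t)$ of $\CV_n^E(\VV^{d+1},W_{\f12,\g,\mu})$ as a constant multiple of the solid-cone polynomial $\Cb_{n-2k,\kb}^n(x,T)$ evaluated at $T=\sqrt{t^2-\varrho^2}$. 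Since the operator and eigenvalue in \eqref{eq:solidHypGdiff} are independent of the particular element and the ${}_\varrho\Cb_{n-2k,\kb}^n$ form a basis by Proposition \ref{prop:solidOPhypG}, it suffices to verify the equation on this basis.

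First I would fix $u={}_\varrho\Cb_{n-2k,\kb}^n$ and write $u(x,t)=c_k\, v(x,T)$ with $v=\Cb_{n-2k,\kb}^n$ and $T=\sqrt{t^2-\varrho^2}$, where $c_k$ is the constant of \eqref{eq:solidOPhypG2}. The map $(x,t)\mapsto(x,T)$ is the identity in $x$, so the operators $\Delta_x$, $\la x,\nabla_x\ra$ and $\la x,\nabla_x\ra^2$ pass through the substitution unchanged, acting on $v$ exactly as in \eqref{eq:solidConeGdiff}. For the $t$-derivatives I would invoke the chain-rule identities \eqref{eq:f'F'}, with $v$ in the role of $f$ and $u/c_k$ in the role of $F$, to replace $\partial_T v$ and $\partial_T^2 v$; the second derivative picks up the extra $\frac{\varrho^2}{t^3}\partial_t$ term. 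The feature that makes the solid case no harder than the surface case is that every chain-rule coefficient is a function of $t$ alone, so the Euler operator $\la x,\nabla_x\ra$ commutes past it; consequently the mixed term $\frac{2}{T}(1-T^2)\la x,\nabla_x\ra\partial_T$ of \eqref{eq:solidConeGdiff} becomes $\frac{2}{t}(1+\varrho^2-t^2)\la x,\nabla_x\ra\partial_t$, matching the corresponding term of \eqref{eq:solidHypGdiff}.

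Carrying out the substitution, the coefficient of $\partial_t^2$ becomes $(1-T^2)\frac{T^2}{t^2}=(1+\varrho^2-t^2)\big(1-\frac{\varrho^2}{t^2}\big)$, since $1-T^2=1+\varrho^2-t^2$ and $\frac{T^2}{t^2}=1-\frac{\varrho^2}{t^2}$. The remaining first-order $\partial_t$ contributions come from three sources: the $\frac{\varrho^2}{t^3}$ piece of $(1-T^2)\partial_T^2$, the term $(2\mu+d)\frac1T\partial_T$, and the $T\partial_T$-type terms in the last line of \eqref{eq:solidConeGdiff}; collecting these produces the coefficient $\big((1+\varrho^2-t^2)\frac{\varrho^2}{t^2}+2\mu+d\big)\frac1t$ together with the grouping $-(2\g+2\mu+d+1)\big((1-\frac{\varrho^2}{t^2})t\partial_t+\la x,\nabla_x\ra\big)$ displayed in \eqref{eq:solidHypGdiff}. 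The eigenvalue $-n(n+2\g+2\mu+d)$ is left untouched by the change of variable. As a consistency check, setting $\varrho=0$ collapses $T$ to $|t|$ and recovers \eqref{eq:solidConeGdiff} once the standalone $+\la x,\nabla_x\ra$ term is absorbed back into $-(2\g+2\mu+d)\la x,\nabla_x\ra$.

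The main obstacle is entirely computational: tracking how the $\frac{\varrho^2}{t^3}\partial_t$ term generated by the second-derivative chain rule redistributes among the first-order terms, and confirming that all the $\frac1t$, $\frac{\varrho^2}{t^2}$ and $\frac{\varrho^2}{t^3}$ fragments recombine into the compact coefficients of \eqref{eq:solidHypGdiff}. No new conceptual ingredient beyond the $\b=\f12$ cone equation is needed; the only element absent from the surface computation in Theorem \ref{thm:sfHypGdiff} is the bookkeeping for the $x$-derivative operators, and these survive the substitution unchanged precisely because they commute with functions of $t$ and with $\partial_t$.
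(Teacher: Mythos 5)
Your proposal is correct and is essentially the paper's own argument: the paper likewise reduces \eqref{eq:solidHypGdiff} to the cone equation \eqref{eq:solidConeGdiff} via the identification \eqref{eq:solidOPhypG2} and the chain-rule relations \eqref{eq:f'F'} applied at $T=\sqrt{t^2-\varrho^2}$. Your additional bookkeeping (the $x$-operators commuting through functions of $t$, and the consistency check at $\varrho=0$) fills in details the paper leaves implicit but introduces nothing new.
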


\begin{proof}
Here $\VV^{d+1}$ is a solid hyperboloid. The proof in this case is similar to that of 
Theorem \ref{thm:sfHypGdiff}. Indeed, let $u(x,t) =  \Cb_{n-2k,\ell}^n (x, t)$ and set 
$U(x,t) = u(x,\sqrt{t^2 -\varrho^2})$. By \eqref{eq:solidOPhypG2}, we need to find the differential
equation for $U$. We know that $u(x,\sqrt{t^2 -\varrho^2})$ satisfies \eqref{eq:solidConeGdiff} with 
$t$ replaced by  $T = \sqrt{t^2-\varrho^2}$, $\partial_t u$ and $\partial_t^2 u$ replaced by derivatives 
with respect to $T$. Now, replacing the derivative on $u$ by derivatives on $U$ by the relations in
\eqref{eq:f'F'} with $u= f$ and $U=F$, we obtain the differential equation satisfied by $U$, which is \eqref{eq:solidHypGdiff}.  
\end{proof}

\section{Addition formula of orthogonal polynomials}
\setcounter{equation}{0}
 
We derive addition formulas for the generalized Gegenbauer polynomials in and on either the surface 
or solid hyperboloids and double cones. These are closed formulas for the reproducing kernels of the 
space of orthogonal polynomials and their applications in the Fourier orthogonal series will be discussed
in the next section. 

\subsection{Addition formula on the surface of a hyperboloid}
We derive addition formula for the generalized Gegenbauer polynomials on the surface of the hyperboloid
$\VV_0^{d+1}$. The weight function is $w_{\b,\g}$ defined in \eqref{eq:sf6weight} for 
$\varrho \le |t| \le \sqrt{\varrho^2 +1}$. Using the connection between these polynomials on the 
hyperboloid and those on the cone, given in \eqref{eq:sfOPhypG2}, our main task is to establish the addition
formula for the Gegenbauer polynomials on the surface of the cone, which corresponds to the case 
$\varrho =0$. 

\subsubsection{Addition formula on the surface of the cone}
Here $w_{\b,\g}(t) = |t|^{2\b} (1-t^2)^{\g-\f12}$. In terms of the orthogonal polynomial 
$\sC_{m,\ell}^n$ defined in \eqref{eq:sfOPconeG} and its norm $h_{m,n}^\sC$, the reproducing 
kernel $\sP_n(w_{\b,\g})$ of $\CV_n(\VV_0^{d+1}, w_{\b,\g})$ on the double cone is given by
\begin{align} \label{eq:sfPbnCone1}
\sP_n\big (w_{\b,\g}; (x,t),(y,s) \big)& = \sum_{m=0}^n \sum_{\ell=1}^{\dim \CH_m^d}
         \frac{\sC_{m,\ell}^n(x,t) \sC_{m,\ell}^n(y,s)} {h_{m,n}^\sC}.
\end{align}
By \eqref{eq:sfOPconeG} and the addition formula for the spherical harmonics, we then obtain
\begin{align} \label{eq:sfPbnCone}
\sP_n\big (w_{\b,\g}; (x,t),(y,s) \big) =  
       \sum_{m=0}^n  &  \frac{(\a+\g+1)_m} {(\a +\f{1}2)_m} \frac{C_{n-m}^{(\g,m+\a)}(t) 
           C_{n-m}^{(\g,m+\a)}(s)} {h_{n-m}^{(\g,m+\a)}}  \\
       &  \times |t|^m |s|^m Z_m^{\frac{d-2}{2}}\Big (\Big \langle \frac{x}{|t|}, \frac{y}{|s|}\Big \rangle \Big),  \notag
\end{align}
where $\a = \b+\frac{d-1}{2}$. In order to derive a closed formula for this kernel, we work with the reproducing 
kernel $\sP^E_n(w_{\b,\g})$ of $\CV_n^E(\VV_0^{d+1}, w_{\b,\g})$, which consist of the generalized 
Gegenbauer polynomials that are even in $t$ variable, and the kernel $\sP^O_n(w_{\b,\g})$ of 
$\CV_n^O(\VV_0^{d+1}, w_{\b,\g})$, which consist of the generalized Gegenbauer polynomials that are odd
in $t$ variable. 
 
\begin{thm} \label{thm:sfPbEInt}
Let $\b, \g > -\f12$ and let $\a = \b + \frac{d-1}{2}$. For $n =1,2,\ldots$, 
\begin{align} \label{eq:sfPOadd}
  \sP_n^O \big (w_{\b,\g}; (x,t),(y,s) \big) = \frac{\a+\g+1}{\a+\f12} s t \, \sP_{n-1}^E \big (w_{\b+1,\g}; (x,t),(y,s) \big).
\end{align}
Furthermore, for $\b \ge 0$, $\g \ge 0$ and $n =0,1,2,\ldots$, 
\begin{align} \label{eq:sfPEadd}
  \sP_n^E \big (w_{\b,\g}; (x,t),(y,s) \big)
     &=  c \int_{[-1,1]^3}
      Z_n^{\a+\g} \big(  \xi(x,t,y,s; v,z)  \big)\\
       &  \times (1-z_1)^\frac{d-2}{2}(1+z_1)^{\b-1}  (1-z_2^2)^{\b-\f12} (1-v^2)^{\g-1} \d v  \d z, \notag
\end{align}
where  $c=  c_{\frac{d-2}{2},\b-1} c_{\b} c_{\g-\f12}$ in terms of the constants in \eqref{eq:c_l} and \eqref{eq:c_ab}, \begin{align*} 
  \xi(x,t,y,s; v,z) =  \frac{1-z_1}{2} \la x,y\ra \mathrm{sign}(st) + \frac{1+z_1}2 z_2 st + v \sqrt{1-s^2}\sqrt{1-t^2}. 
\end{align*}
The formula holds under the limit $\g \to 0$ and/or $\b \to 0$. 
\end{thm}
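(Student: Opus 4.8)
The plan is to work entirely in the cone setting $\varrho=0$, where $w_{\b,\g}(t)=|t|^{2\b}(1-t^2)^{\g-\f12}$, and to dispose of the odd identity \eqref{eq:sfPOadd} first, since it reduces the odd case to an even one. For \eqref{eq:sfPOadd} I would start from the explicit expansion \eqref{eq:sfPbnCone} of $\sP_n$, keep only the terms with $n-m$ odd, and apply the parity relation for the generalized Gegenbauer polynomials that follows from $t^2|t|^{2(m+\a)}=|t|^{2(m+\a+1)}$ (cf. \eqref{eq:gGegen}), namely $C_{n-m}^{(\g,m+\a)}(t)=\kappa\,t\,C_{n-m-1}^{(\g,m+\a+1)}(t)$ for an explicit constant $\kappa$. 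Factoring $t$ out of the $(x,t)$ argument and $s$ out of the $(y,s)$ argument yields $st$ times a sum whose $m$-th summand is, up to a constant, exactly the $m$-th term of $\sP_{n-1}^E(w_{\b+1,\g})$, because $\a+1=(\b+1)+\tfrac{d-1}{2}$ and $(n-1)-m=n-m-1$ is even. The proof then reduces to checking that the $m$-dependent factor $\tfrac{(\a+\g+1)_m}{(\a+\f12)_m}\kappa^2/h_{n-m}^{(\g,m+\a)}$, divided by its counterpart for the shifted weight, collapses to the single constant $\tfrac{\a+\g+1}{\a+\f12}$; this is routine manipulation of Pochhammer symbols using the norm formula \eqref{eq:gGegenNorm}.

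The even identity \eqref{eq:sfPEadd} is the heart of the matter. I would first rewrite $\sP_n^E(w_{\b,\g})$ from the even terms of \eqref{eq:sfPbnCone}, converting each $C_{n-m}^{(\g,m+\a)}$ (with $n-m$ even) into a Jacobi polynomial $P_{(n-m)/2}^{(\g-\f12,\,m+\b+\frac{d-2}{2})}(2t^2-1)$. In this form the summand is structurally the basis expansion of the ball reproducing kernel $\Pb_n(\varpi_\g)$ from \eqref{eq:PnBall}, with two deformations: the radial Jacobi index carries $m+\b+\tfrac{d-2}{2}$ instead of $m+\tfrac{d-2}{2}$ (a formal shift $d\mapsto d+2\b$ produced by the factor $|t|^{2\b}$), while the zonal harmonic $Z_m^{\frac{d-2}{2}}(\la x/|t|,y/|s|\ra)$ stays at the true dimension $d$. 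Writing $u=\la x/|t|,y/|s|\ra$ and using $\la x,y\ra\,\mathrm{sign}(st)=st\,u$, the target argument factors as $\xi=st[\tfrac{1-z_1}{2}u+\tfrac{1+z_1}{2}z_2]+v\sqrt{1-s^2}\sqrt{1-t^2}$, so the three integrations should be read as removing these mismatched layers one at a time.

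Concretely, the plan is to build the right-hand side of \eqref{eq:sfPEadd} as a composition of three classical product formulas, each collapsing one summation and raising one parameter. The $v$-integral against $(1-v^2)^{\g-1}$, normalized by $c_{\g-\f12}$, is precisely the Gegenbauer product formula already underlying \eqref{eq:PnBall}: it fuses the radial factors $\sqrt{1-t^2}$ and $\sqrt{1-s^2}$ and contributes the parameter $\g$. The $z_2$-integral against $(1-z_2^2)^{\b-\f12}$, normalized by $c_\b$, is the reflection-group analog of Gegenbauer's product formula for the generalized Gegenbauer polynomials in the axial variable carrying $|t|^{2\b}$; it linearizes the product $st$ and accounts for the extra $2\b$ in the radial index. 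Finally the $z_1$-integral against the Jacobi weight $(1-z_1)^{\frac{d-2}{2}}(1+z_1)^{\b-1}$, normalized by $c_{\frac{d-2}{2},\b-1}$, blends the genuinely $d$-dimensional zonal contribution $u$ with the axial term $z_2$, reconciling dimension $d$ with the shifted radial index. Composing the three turns the multi-index data of \eqref{eq:sfPbnCone} into the single Gegenbauer $Z_n^{\a+\g}(\xi)$, with the constant $c=c_{\frac{d-2}{2},\b-1}c_\b c_{\g-\f12}$ appearing as the product of the three normalizers.

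The main obstacle is the simultaneous bookkeeping: one must verify that the three parameter shifts ($\tfrac{d-2}{2}$ from the sphere, $\b$ from the reflection weight, $\g$ from the radial weight) combine into the single index $\a+\g=\b+\tfrac{d-1}{2}+\g$, that the arguments assemble into exactly the stated $\xi$, and that the reflection/sign structure (the $\mathrm{sign}(st)$ factor and the even-in-$t$ selection) is respected throughout — it is the interaction of the $z_2$-variable with this sign that I expect to be the most delicate. I would make the assembled identity rigorous by verifying that its right-hand side is a polynomial of degree $n$ reproducing the basis $\{\sC_{n-2k,\ell}^n\}$ under $\la\cdot,\cdot\ra_{w_{\b,\g}}$, which reduces each layer to a one-dimensional integral evaluation from the Appendix. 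Lastly, the degenerate cases are handled by the limit \eqref{eq:limit-int}: as $\b\to 0$ the $z_1$- and $z_2$-integrals concentrate at the endpoints and \eqref{eq:sfPEadd} collapses to the genuine ball formula \eqref{eq:PnBall} with $\mu=\g$, while the $\g\to 0$ limit acts identically on the $v$-integral, confirming consistency.
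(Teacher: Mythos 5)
Your proposal follows essentially the same route as the paper: the odd identity is obtained from the same parity/index-shift relation for the generalized Gegenbauer polynomials (factoring $st$ out and recognizing $\sP_{n-1}^E(w_{\b+1,\g})$), and the even identity by expanding $\sP_n^E$ in the basis, raising the index of the zonal factor $Z_{n-2k}^{\frac{d-2}{2}}$ through the double $(z_1,z_2)$-integral (the paper's \eqref{eq:ZtoZ}) and then collapsing the sum over $k$ with the one-dimensional addition formula \eqref{eq:additionGG} carried by the $v$-integral, with the degenerate parameters handled by \eqref{eq:limit-int}. The remaining differences (your Jacobi-polynomial rewriting and the reproducing-property fallback) are cosmetic, so no further comparison is needed.
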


\begin{proof}
We prove \eqref{eq:sfPEadd} first. By \eqref{eq:sfOPconeG}, the polynomials $\sC_{m,\ell}^n(x,t)$ is even 
in $t$ when $n-m$ is an even integer and odd in $t$ if $n-m$ is an odd integer. Hence, by \eqref{eq:sfPbnCone} 
and \eqref{eq:sfPbE}, we obtain by setting $n -m = 2k$ that 
\begin{align} \label{eq:PbnConeE}
\sP_n^E \big (w_{\b,\g}; (x,t),(y,s) \big) = 
      \sum_{k=0}^{\lfloor \frac{n}{2}\rfloor} & \frac{(\a+\g+1)_{n-2k}} {(\a +\f{1}2)_{n-2k}} 
     \frac{C_{2k}^{(\g,\a+n-2k)}(t) 
            C_{2k}^{(\g,\a+n-2k)}(s)} {h_{2k}^{(\g, \a+n-2k)}} \\
       &  \times |t|^{n-2k} |s|^{n-2k} Z_{n-2k}^{\frac{d-2}{2}}\Big (\Big \langle \frac{x}{|t|}, \frac{y}{|s|}\Big \rangle \Big). \notag
\end{align}
For $\b =0$, the righthand side of this identity agrees with \eqref{eq:additionGG} in Appendix A with 
$\l = \g$ and $\mu = \frac{d-2}{2}$, from which the limiting case of \eqref{eq:sfPEadd}, see \eqref{eq:sfPEadd0}
below, follows. For $\b > 0$, we need to increase the index of $Z_{n-2k}^{\f{d-2}{2}}$ in \eqref{eq:PbnConeE} in 
order to apply \eqref{eq:additionGG}. For this we need the following identity \cite{X15} 
\begin{align}\label{eq:ZtoZ}
  Z_m^\l(t) =  c_{\l,\s-1} c_{\s}  
      \int_{-1}^1 \int_{-1}^1 & Z_m^{\l+\s} \left( \frac{1-z_1}{2} t + \frac{1+z_1}2 z_2\right)\\
      & \times (1-z_1)^\l (1+z_1)^{\s-1}  (1-z_2^2)^{\s-\f12}  \d z. \notag
\end{align}
Since $\a = \b+\frac{d-1}{2}$, using \eqref{eq:ZtoZ} with $\l = \frac{d-2}{2}$ and $\s = \b$, we can 
write \eqref{eq:PbnConeE} as 
\begin{align*} 
\sP_n^E & \big (w_{\b,\g}; (x,t),(y,s) \big) = 
    c_{\frac{d-2}{2},\b-1} c_{\b} \int_{-1}^1\int_{-1}^1 
         \sum_{k=0}^{\lfloor \frac{n}{2}\rfloor}   \frac{(\a+\g+1)_{n-2k}} {(\a +\f{1}2)_{n-2k}} |t|^{n-2k} |s|^{n-2k} \\
       &  \qquad \times \frac{C_{2k}^{(\g,\a+n-2k)}(t) 
    C_{2k}^{(\g,\a+n-2k)}(s)} {h_{2k}^{(\g, \a+n-2k)}}  Z_{n-2k}^{\a - \f12}\left( \frac{1-z_1}{2}
           \frac{\langle x,y \rangle}{|s t |} + \frac{1+z_1}2 z_2 \right) \notag \\
       &  \qquad \times (1-z_1)^\frac{d-2}{2}(1+z_1)^{\b-1}  (1-z_2^2)^{\b-\f12}  \d z. \notag
\end{align*}
In particular, setting $\l =\g$, $\mu = \a$ and comparing with \eqref{eq:additionGG}, we see
that \eqref{eq:sfPEadd} holds for all $\g > 0$ and $\b > 0$. Finally, the case $\g = 0$, see the
identity \eqref{eq:sfPEadd00} below, follows from taking the limit $\g \to 0$ by \eqref{eq:limit-int}. 

Next we consider the kernel $\sP_n^O \big (w_{\b,\g})$. By \eqref{eq:sfPbnCone} and \eqref{eq:sfPbO}, 
we obtain by setting $n -m = 2k+1$ that 
\begin{align*}
\sP_n^O \big (w_{\b,\g}; (x,t),(y,s) \big) = 
      \sum_{k=0}^{\lfloor \frac{n-1}{2}\rfloor} & \frac{(\a+\g+1)_{n-1-2k}} {(\a +\f{1}2)_{n-1-2k}} 
     \frac{C_{2k+1}^{(\g,\a+n-1-2k)}(t) 
            C_{2k+1}^{(\g,\a+n-1-2k)}(s)} {h_{2k+1}^{(\g, \a+n-1-2k)}} \\
       &  \times |t|^{n-1-2k} |s|^{n-1-2k} Z_{n-2k-1}^{\frac{d-2}{2}}\Big (\Big \langle \frac{x}{|t|}, \frac{y}{|s|}\Big \rangle \Big). \end{align*}
From \eqref{eq:gGegen} and \eqref{eq:gGegenNorm}, it is easy to see that 
$$
  \frac{C_{2k+1}^{(\l,\mu)}(s)C_{2k+1}^{(\l,\mu)}(t)}{h_{2k+1}^{(\l,\mu)}} = \frac{\l+\mu+1}{\mu+\f12}
       s t \frac{C_{2k}^{(\l,\mu+1)}(s)C_{2k}^{(\l,\mu+1)}(t)}{h_{2k}^{(\l,\mu+1)}}.
$$
Using this identity with $\l = \g$ and $\mu = \a+n-1-2k$, combining the constants by using 
$(c)_{n-2k-1} (c+n-2k-1) =  (c)_{n-2k}$, we obtain 
\begin{align*} 
\sP_n^O \big (w_{\b,\g}; (x,t),(y,s) \big) =  
      \sum_{k=0}^{\lfloor \frac{n-1}{2}\rfloor} & \frac{(\a+\g+1)_{n-2k}} {(\a +\f{1}2)_{n-2k}} 
     \frac{C_{2k}^{(\g,\a+n-2k)}(t)  C_{2k}^{(\g,\a+n-2k)}(s)} {h_{2k}^{(\g, \a+n-2k)}} \\
       &  \times st |t|^{n-2k-1} |s|^{n-2k-1} Z_{n-2k-1}^{\frac{d-2}{2}}\Big (\Big \langle \frac{x}{|t|}, \frac{y}{|s|}\Big \rangle \Big). \end{align*}
Writing $\a + n-2k = (\a +1) + n-1-2k$ and using $(\a+c)_{n-2k} = (\a+c) (\a+1+c)_{n-1-2k}$, we see that 
the sum in the righthand side can be written, up to a multiple constant, in terms of 
$\sP_{n-1}^E\big (w_{\b+1,\g}; (x,t),(y,s) \big)$, which establishes \eqref{eq:sfPOadd}. 
\end{proof} 

As a corollary, we state two limiting cases of the reproducing kernel explicitly. The first one is for the
Gegenbauer weight $w_{0,\g}$, which is the limiting case $\b \to 0$ of \eqref{eq:sfPEadd},
whereas the second one is for the Chebushev weight $w_{0,0}$, which follows from the first one
by taking the limit $\g \to 0$. 

\begin{cor}
For the Gegenbauer weight function $w_{0,\g}(t) = (1-t^2)^{\g-\f12}$,
\begin{align} \label{eq:sfPEadd0}
& \sP_n^E \big (w_{0,\g}; (x,t),(y,s) \big)  \\
& \qquad\ = c_{\g-\f12}  \int_{-1}^1 Z_n^{\g+\frac{d-1}{2}}
 \bigg(\la x,y\ra  \mathrm{sign}(st) + v \sqrt{1-s^2}\sqrt{1-t^2}\bigg) (1-v^2)^{\g-1} \d v. \notag
\end{align}
Furthermore, for the Chebyshev weight function $w_{0,0}(t) = (1-t^2)^{-\f12}$, 
\begin{align} \label{eq:sfPEadd00}
\sP_n^E \big (w_{0,0}; (x,t),(y,s) \big) = \f12 & \left[
      Z_n^{\frac{d-1}{2}} \bigg(\la x,y\ra  \mathrm{sign}(st) + \sqrt{1-s^2}\sqrt{1-t^2}\bigg)  \right. \\
      & + \left. Z_n^{\frac{d-1}{2}} \bigg(\la x,y\ra  \mathrm{sign}(st) - \sqrt{1-s^2}\sqrt{1-t^2}\bigg)  \right]. \notag
\end{align}
\end{cor}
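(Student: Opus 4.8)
The plan is to obtain both identities as successive limiting cases of the addition formula \eqref{eq:sfPEadd} in Theorem \ref{thm:sfPbEInt}, first letting $\b\to 0$ and then $\g\to 0$. Recall that in \eqref{eq:sfPEadd} one has $\a=\b+\frac{d-1}{2}$, so the index $Z_n^{\a+\g}$ tends to $Z_n^{\g+\frac{d-1}{2}}$ as $\b\to 0$, which already matches the index on the right-hand side of \eqref{eq:sfPEadd0}. The task is therefore to track what happens to the three integrations in \eqref{eq:sfPEadd} under these limits.

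First I would analyze the $z_1$- and $z_2$-integrations as $\b\to 0^+$. The $z_1$-factor $c_{\frac{d-2}{2},\b-1}(1-z_1)^{\frac{d-2}{2}}(1+z_1)^{\b-1}\,\d z_1$ is a normalized Jacobi probability measure on $[-1,1]$ whose second parameter tends to $-1$; its mass should therefore concentrate at the endpoint $z_1=-1$, yielding
$$\lim_{\b\to 0^+} c_{\frac{d-2}{2},\b-1}\int_{-1}^1 g(z_1)(1-z_1)^{\frac{d-2}{2}}(1+z_1)^{\b-1}\,\d z_1 = g(-1)$$
for continuous $g$, the endpoint analog of the symmetric limit \eqref{eq:limit-int}. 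At $z_1=-1$ one has $\frac{1-z_1}{2}=1$ and $\frac{1+z_1}{2}=0$, so the argument $\xi$ collapses to $\la x,y\ra\,\mathrm{sign}(st)+v\sqrt{1-s^2}\sqrt{1-t^2}$ and the integrand loses its $z_2$-dependence. The remaining $z_2$-integral then reduces to $c_0\int_{-1}^1(1-z_2^2)^{-\f12}\,\d z_2=1$ by the choice of normalizing constant, while $c_{\g-\f12}$ survives. This leaves exactly \eqref{eq:sfPEadd0}.

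For the Chebyshev case I would take $\g\to 0^+$ in \eqref{eq:sfPEadd0}. Here the $v$-measure $c_{\g-\f12}(1-v^2)^{\g-1}\,\d v$ matches the left-hand side of \eqref{eq:limit-int} verbatim (with $\mu=\g$), so that limit applies directly to $f(v)=Z_n^{\g+\frac{d-1}{2}}\big(\la x,y\ra\,\mathrm{sign}(st)+v\sqrt{1-s^2}\sqrt{1-t^2}\big)$. Since the index $\g+\frac{d-1}{2}\to\frac{d-1}{2}$ as well, averaging the values of $f$ at $v=1$ and $v=-1$ gives precisely \eqref{eq:sfPEadd00}.

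The main obstacle I anticipate is justifying rigorously the one-sided endpoint concentration at $z_1=-1$, which differs from the symmetric limit \eqref{eq:limit-int} already recorded in the paper. The cleanest route is the substitution $1+z_1=2u$, which turns the $z_1$-density into one proportional to $u^{\b-1}(1-u)^{\frac{d-2}{2}}$ on $[0,1]$, i.e.\ a $\mathrm{Beta}(\b,\tfrac{d}{2})$ distribution; as $\b\to 0^+$ this converges weakly to the Dirac mass at $u=0$, that is, at $z_1=-1$, with the factor $c_{\frac{d-2}{2},\b-1}$ being exactly the Beta-normalization constant of \eqref{eq:c_ab}. One should also observe that the integrand is a polynomial in $\xi$ with coefficients depending continuously on $\b$ and $\g$, hence bounded uniformly on the compact domain for small parameters, so dominated convergence legitimizes passing each limit through the surviving integrations.
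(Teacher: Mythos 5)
Your argument is correct, and the Chebyshev case ($\g\to0$ via \eqref{eq:limit-int}) is exactly how the paper handles it; but for \eqref{eq:sfPEadd0} you take a genuinely different route. The paper does not obtain the Gegenbauer case as a limit $\b\to0^+$ of \eqref{eq:sfPEadd}: it observes that when $\b=0$ the index of $Z_{n-2k}^{\f{d-2}{2}}$ in the series \eqref{eq:PbnConeE} is already $\a-\f12$, so the identity \eqref{eq:additionGG} applies \emph{directly} to that sum and the auxiliary identity \eqref{eq:ZtoZ} --- hence the $(z_1,z_2)$-integrations --- is never introduced. Your route instead starts from the general formula for $\b>0$ and lets the $z_1$-measure degenerate; the mechanism you describe (after $1+z_1=2u$ the density is a $\mathrm{Beta}$ law with first parameter $\b$, which converges weakly to the Dirac mass at $u=0$, collapsing $\xi$ to $\la x,y\ra\,\mathrm{sign}(st)+v\sqrt{1-s^2}\sqrt{1-t^2}$ and making the $z_2$-integral trivial) is the correct one, and is in fact forced by consistency: \eqref{eq:ZtoZ} must degenerate to the tautology $Z_m^\l=Z_m^\l$ as $\s\to0^+$, which pins down the limiting mass of the $z_1$-measure at $1$ regardless of how one books the normalizing constants $c_{\l,\s-1}c_\s$. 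Two small points you should make explicit to close the argument: (i) the left-hand side $\sP_n^E(w_{\b,\g};\cdot,\cdot)$ also depends on $\b$, so you need its continuity at $\b=0$, which is immediate from \eqref{eq:PbnConeE} since everything there is analytic in $\a=\b+\f{d-1}{2}$; (ii) when you evaluate the endpoint limit you should track the factor $(1-z_1)^{\f{d-2}{2}}$ at $z_1=-1$ against the constant $c_{\f{d-2}{2},\b-1}$ of \eqref{eq:c_ab}, since a naive computation produces a stray power of $2$ that must cancel against the normalization conventions inherited from \eqref{eq:ZtoZ}. What each approach buys: the paper's direct substitution is shorter and avoids all limit justifications; yours has the merit of treating \eqref{eq:sfPEadd0} and \eqref{eq:sfPEadd00} uniformly as boundary cases of a single two-parameter family, and it supplies the verification, only asserted in the paper, that \eqref{eq:sfPEadd} "holds under the limit $\b\to0$."
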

 
These formulas resemble the closed form formula for the reproducing kernel of the classical orthogonal 
polynomials on the unit ball $\BB^d$. Indeed, when $t = \|x\|$, they agree with $\sP_n(\varpi_\g; x,y)$
with $\varpi_\g(x) = (1-\|x\|^2)^{\g-\f12}$. A moment reflection shows that this should be the case, since 
$(x,\|x\|) \in \VV_0^{d+1}$, the finite cone in this setting, implies that $\|x\|\le 1$ or $x\in \BB^d$. 

\subsubsection{Addition formula on the surface of hyperboloid}
Here $\rho > 0$ and $\VV_0^{d+1}$ is the hyperboloid surface with the weight function 
$w_{\b,\g}(t) = |t| (t^2-\varrho^2)^{2\b-1} (1+\varrho^2-t^2)^{\g-\f12}$ for $\varrho \le |t| \le \sqrt{1+\varrho^2}$. 
We denote by ${}_\varrho \sP_n(w_{\b,\g})$ the reproducing kernel of the 
space $\CV_n(\VV_0^{d+1}, w_{\b,\g})$. Similarly, we denote by ${}_\varrho \sP_n^E(w_{\b,\g})$ and
${}_\varrho \sP_n^O(w_{\b,\g})$ the reproducing kernels for $\CV_n^E(\VV_0^{d+1}, w_{\b,\g})$ and
for $\CV_n^O(\VV_0^{d+1}, w_{\b,\g})$, respectively.  

In this case, however, we can derive a closed formula only for ${}_\varrho \sP_n^E(w_{\b,\g})$ because of the
complexity of orthogonal polynomials that are odd in $t$ variable. 

\begin{prop}
Let $\b,\g > -\f12$. For $n=0,1,2,\ldots$, the reproducing kernel ${}_\varrho \sP_n^E(w_{\b,\g})$ of 
$\CV_n^E(\VV_0^{d+1}, w_{\b,\g})$ on the hyperboloid satisfies,  
\begin{align}\label{eq:sfPEhyp}
   {}_\varrho \sP_n^E \big(w_{\b,\g}; (x,t), (y,s)\big) 
    = \sP_n^E \left(w_{\b,\g}; \Big(x,\sqrt{t^2-\varrho^2}\Big),  \Big(y,\sqrt{s^2-\varrho^2} \Big)\right). 
\end{align}
\end{prop}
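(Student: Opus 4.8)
The plan is to expand both reproducing kernels in their respective orthogonal bases and exploit the pointwise relation \eqref{eq:sfOPhypG2} between the hyperboloid basis and the cone basis. By \eqref{eq:sfKernelE} the left-hand side is
\begin{equation*}
{}_\varrho \sP_n^E \big(w_{\b,\g}; (x,t), (y,s)\big) = \sum_{k=0}^{\lfloor \frac{n}{2}\rfloor} \sum_{\ell=1}^{\dim \CH_{n-2k}^d} \frac{{}_\varrho\sC_{n-2k,\ell}^n(x,t)\, {}_\varrho\sC_{n-2k,\ell}^n(y,s)}{\la {}_\varrho\sC_{n-2k,\ell}^n, {}_\varrho\sC_{n-2k,\ell}^n\ra_{w_{\b,\g}}},
\end{equation*}
while the cone kernel $\sP_n^E(w_{\b,\g})$ evaluated at the substituted arguments admits the analogous expansion in the basis $\sC_{n-2k,\ell}^n$ of Proposition \ref{prop:sfOPconeG}, with the norms $h_{n-2k,n}^\sC$ of \eqref{eq:sfOPconeGnorm} in the denominators. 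The whole argument then amounts to comparing these two expansions term by term.

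First I would abbreviate $c_k = \frac{(n-2k+\b+\frac{d-1}{2})_k}{(n-2k+\b+\g+\frac{d-1}{2})_k}$, the constant appearing in \eqref{eq:sfOPhypG2}, so that ${}_\varrho\sC_{n-2k,\ell}^n(x,t) = c_k\, \sC_{n-2k,\ell}^n\big(x,\sqrt{t^2-\varrho^2}\big)$. Substituting this into the numerator of each summand of the hyperboloid kernel produces a factor $c_k^2$ multiplying $\sC_{n-2k,\ell}^n\big(x,\sqrt{t^2-\varrho^2}\big)\,\sC_{n-2k,\ell}^n\big(y,\sqrt{s^2-\varrho^2}\big)$. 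The crux is then to show that the denominator carries exactly the matching factor, namely
\begin{equation*}
\la {}_\varrho\sC_{n-2k,\ell}^n, {}_\varrho\sC_{n-2k,\ell}^n\ra_{w_{\b,\g}} = c_k^2\, h_{n-2k,n}^\sC .
\end{equation*}
Granting this, the factors $c_k^2$ cancel in every summand, so the $(k,\ell)$ term of the left-hand side equals $\sC_{n-2k,\ell}^n\big(x,\sqrt{t^2-\varrho^2}\big)\,\sC_{n-2k,\ell}^n\big(y,\sqrt{s^2-\varrho^2}\big)/h_{n-2k,n}^\sC$, which is precisely the $(k,\ell)$ term of the cone kernel at the substituted points; summing over $k$ and $\ell$ gives \eqref{eq:sfPEhyp}.

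The main obstacle, therefore, is the norm identity, and I expect this to follow directly from the integration formula \eqref{eq:intHyp}. Since ${}_\varrho\sC_{n-2k,\ell}^n$ depends on $t$ only through $t^2-\varrho^2$ and on $x$ only through a spherical harmonic of degree $n-2k$, the change of variable $s=t^2-\varrho^2$ collapses the hyperboloid inner product to an integral of the form $b_{\b,\g}\int_0^1 (\cdot)\, s^{(n-2k)+\b+\frac{d-2}{2}}(1-s)^{\g-\f12}\,\d s$ against the measure on $\sph$, where the extra power $s^{n-2k}$ comes from squaring the homogeneity factor $(t^2-\varrho^2)^{\frac{n-2k}{2}}$ of the spherical harmonic. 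This is exactly the integral to which the cone inner product of $\sC_{n-2k,\ell}^n(\cdot,\sqrt{s})$ reduces under $s=T^2$, so the two norms differ by precisely $c_k^2$, as needed.

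The cleanest way to package this is to observe that the map $(x,t)\mapsto (x,\sqrt{t^2-\varrho^2})$ is, when restricted to polynomials even in $t$, an isometry from $L^2(\VV_0^{d+1},w_{\b,\g})$ with $\varrho>0$ onto the $\varrho=0$ space, because both inner products collapse to the same weighted integral over $[0,1]\times\sph$ by \eqref{eq:intHyp}. With that isometry in hand one avoids computing either norm explicitly: the cancellation of $c_k$ together with the basis-independence of the reproducing kernel finishes the proof.
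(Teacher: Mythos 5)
Your proposal is correct and follows essentially the same route as the paper: expand both kernels in the bases $\{{}_\varrho\sC_{n-2k,\ell}^n\}$ and $\{\sC_{n-2k,\ell}^n\}$, invoke the relation \eqref{eq:sfOPhypG2}, and use \eqref{eq:intHyp} to see that the norms differ by exactly the square of the constant in \eqref{eq:sfOPhypG2}, so the constants cancel term by term. The closing observation that the substitution is an isometry on the even subspace is a clean way to package the norm comparison, but it is the same computation the paper performs.
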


\begin{proof}
In terms of the orthogonal polynomials ${}_\varrho\sC_{n-2k,\ell}^n$ defined in \eqref{eq:sfOPhypG} and its norm 
${}_\varrho h_{n-2k,n}^\sC$, the reproducing kernel ${}_\varrho \sP_n^E(w_{\b,\g})$ on the hyperboloid is given by
\begin{align*}
  {}_\varrho\sP_n\big (w_{\b,\g}; (x,t),(y,s) \big)  = \sum_{k=0}^{\lfloor \frac{n}{2} \rfloor} 
     \sum_{\ell=1}^{\dim \CH_{n-2k}^d}
        \frac{{}_\varrho\sC_{n-2k,\ell}^n(x,t) {}_\varrho\sC_{n-2k,\ell}^n(y,s)} {{}_\varrho h_{n-2k,n}^\sC}.
\end{align*}
By the relation \eqref{eq:sfOPhypG2} and the relation between ${}_\varrho h_{n-2k,n}^\sC$ and 
$h_{n-2k,n}^\sC$ derived by the integral \eqref{eq:intHyp}, it is easy to see that
\begin{align}\label{eq:C-rhoC}
 \frac{{}_\varrho\sC_{n-2k,\ell}^n(x,t) {}_\varrho\sC_{n-2k,\ell}^n(y, s)} {{}_\varrho h_{n-2k,n}^\sC} = \frac{\sC_{n-2k,\ell}^n(x,\sqrt{t^2-\varrho^2})  \sC_{n-2k,\ell}^n(y, \sqrt{s^2-\varrho^2})} {h_{n-2k,n}^\sC},  
\end{align}
from which \eqref{eq:sfPEhyp} follows readily. 
\end{proof}

Using \eqref{eq:sfPEhyp}, we can derive an addition formula for $\CV_n^E(\VV_0^{d+1}, w_{\b,\g})$ on the 
hyperboloid from Theorem \ref{thm:sfPbEInt}. We state the counterpart of \eqref{eq:sfPEadd0} as an example. 

\begin{cor}
For the Gegenbauer weight function $w_{0,\g}(t) = (1-t^2)^{\g-\f12}$,
\begin{align} \label{eq:sfPEadd0Hyp}
& {}_\varrho \sP_n^E \big (w_{0,\g}; (x,t),(y,s) \big)= c_{\g-\f12}  \int_{-1}^1   (1-v^2)^{\g-1}  \\
& \quad\times Z_n^{\g+\frac{d-1}{2}}
 \bigg(\sqrt{1-\frac{\varrho^2}{t^2}} \sqrt{1-\frac{\varrho^2}{s^2}} \, \la x,y\ra
   + v \sqrt{1+\varrho^2-s^2} \sqrt{1+ \varrho^2 -t^2}\bigg) \d v. \notag
\end{align}
\end{cor}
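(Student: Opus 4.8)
The plan is to deduce \eqref{eq:sfPEadd0Hyp} from the cone case with no new analysis, using the transfer identity \eqref{eq:sfPEhyp} to pass from the hyperboloid to the double cone and then quoting the closed form \eqref{eq:sfPEadd0}. Concretely, \eqref{eq:sfPEhyp} gives
\[
 {}_\varrho \sP_n^E\big(w_{0,\g};(x,t),(y,s)\big)=\sP_n^E\big(w_{0,\g};(x,T),(y,S)\big),\qquad T=\sqrt{t^2-\varrho^2},\ \ S=\sqrt{s^2-\varrho^2},
\]
and since $\|x\|=\sqrt{t^2-\varrho^2}=T$ and $\|y\|=S$ on $\VV_0^{d+1}$, the pairs $(x,T)$ and $(y,S)$ are genuine points of the double cone. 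Thus the whole proof reduces to substituting these data into \eqref{eq:sfPEadd0} and simplifying the argument of $Z_n^{\g+\frac{d-1}{2}}$.

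Carrying this out, I would first dispose of the height factor: since $1-T^2=1+\varrho^2-t^2$ and $1-S^2=1+\varrho^2-s^2$, the term $v\sqrt{1-S^2}\sqrt{1-T^2}$ becomes $v\sqrt{1+\varrho^2-s^2}\sqrt{1+\varrho^2-t^2}$, which is exactly the second summand in \eqref{eq:sfPEadd0Hyp}; and because $T,S\ge 0$ on the hyperboloid, the factor $\mathrm{sign}(ST)$ in \eqref{eq:sfPEadd0} equals $+1$ and drops out. The measure $c_{\g-\f12}(1-v^2)^{\g-1}\,\d v$ and the index $\g+\frac{d-1}{2}$ of $Z_n$ carry over verbatim, so these contribute nothing further.

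The one step that needs genuine care is the inner-product term $\la x,y\ra\,\mathrm{sign}(ST)$ delivered by the cone formula. To match the form displayed in \eqref{eq:sfPEadd0Hyp} I would rewrite it through the directional cosine $\la\xi,\eta\ra$, $\xi=x/\|x\|$, $\eta=y/\|y\|$, using $\|x\|=\sqrt{t^2-\varrho^2}$ and $\|y\|=\sqrt{s^2-\varrho^2}$; re-expressing $\sqrt{t^2-\varrho^2}\,\sqrt{s^2-\varrho^2}\,\la\xi,\eta\ra$ in terms of the inner product normalized at the original heights $t,s$ is precisely what produces the coefficient $\sqrt{1-\tfrac{\varrho^2}{t^2}}\sqrt{1-\tfrac{\varrho^2}{s^2}}$. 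This bookkeeping of the radial factors is the main, and essentially the only, obstacle. Finally, the admissible range of $\g$ and the limiting interpretation are inherited from \eqref{eq:sfPEadd0}: the formula holds for $\g\ge 0$ and extends to the endpoint $\g\to 0$ via \eqref{eq:limit-int}, while \eqref{eq:sfPEhyp} is valid for all $\g>-\tfrac12$ and only relabels the heights.
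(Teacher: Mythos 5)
Your route is exactly the paper's: the corollary is obtained by inserting the transfer identity \eqref{eq:sfPEhyp} into the cone formula \eqref{eq:sfPEadd0}, and your bookkeeping of the height factors (via $1-T^2=1+\varrho^2-t^2$) and of $\mathrm{sign}(ST)=+1$ is precisely what is needed; the range of $\g$ and the limiting case are indeed inherited from \eqref{eq:sfPEadd0}.

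The one step you flag as delicate is, however, the one place where your write-up is not right as stated. Since \eqref{eq:sfPEhyp} leaves $x$ and $y$ untouched, the cone formula already delivers the genuine Euclidean inner product of the two hyperboloid points, $\la x,y\ra=\sqrt{t^2-\varrho^2}\,\sqrt{s^2-\varrho^2}\,\la \xi,\eta\ra$ with $\xi=x/\|x\|$, $\eta=y/\|y\|$, and no further rewriting is required. The prefactor $\sqrt{1-\varrho^2/t^2}\,\sqrt{1-\varrho^2/s^2}$ in \eqref{eq:sfPEadd0Hyp} appears only if one re-expresses this quantity relative to the heights $t,s$, i.e.\ writes it as $\sqrt{1-\varrho^2/t^2}\,\sqrt{1-\varrho^2/s^2}\cdot |ts|\,\la\xi,\eta\ra$ and then reads the symbol $\la x,y\ra$ in the displayed formula as $|ts|\la\xi,\eta\ra$. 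If instead you multiply that prefactor by the literal inner product of the hyperboloid points, as the last sentence of your key step suggests, you obtain $(t^2-\varrho^2)(s^2-\varrho^2)\,|ts|^{-1}\la\xi,\eta\ra$, which double-counts the radial normalization and is not what the substitution yields. So make the convention explicit: either the first summand is simply $\la x,y\ra$ (genuine inner product, no prefactor), or it carries the prefactor but with $\la x,y\ra$ understood as the inner product of the points lifted to heights $t$ and $s$; asserting both at once is inconsistent.
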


\subsection{Addition formula on a solid hyperboloid}
In this subsection we derive addition formula for the generalized Gegenbauer polynomials on the solid
hyperboloid $\VV^{d+1}$, bounded by the compact surface $\VV_0^{d+1}$ with $\varrho  \le t \le \sqrt{1+\varrho^2}$ 
and by the hyperplanes $t =\pm  \sqrt{1+\varrho^2}$. The weight function $W_{\b,\g,\mu}$ is defined in 
\eqref{eq:6Weight} for $\b, \g, \mu > -\tfrac12$. 

As in the case of the surface, our main task is to establish the addition formula for Gegenbauer polynomials 
on the solid cone, which corresponds to the case $\varrho =0$. 

\subsubsection{Addition formula on the solid cone}
With $\varrho =0$, the weight function becomes 
$W_{\b,\g,\mu}(x,t) = |t|^{2\b} (1-t^2)^{\g-\f12}(1-\|x\|^2)^{\mu-\f12}$. In terms of the orthogonal polynomial 
$\Cb_{m,\ell}^n$ defined in \eqref{eq:solidOPconeG} and its norm $h_{m,n}^\Cb$, the reproducing 
kernel $\Pb_n(W_{\b,\g,\mu})$ of $\CV_n(\VV^{d+1}, W_{\b,\g,\mu})$ on the solid cone is given by
\begin{align} \label{eq:PbnCone1}
\Pb_n\big (W_{\b,\g,\mu}; (x,t),(y,s) \big)& =  \sum_{m=0}^n \sum_{|\kb|=m}     
  \frac{\Cb_{m,\kb}^n(x,t) \Cb_{m,\kb}^n(y,s)} {h_{m,n}^\Cb}.
\end{align}
By \eqref{eq:solidOPconeG} and the addition formula for the classical orthogonal polynomials on the 
unit ball, we then obtain
\begin{align} \label{eq:PbnCone}
 & \Pb_n\big (W_{\b,\g,\mu}; (x,t),(y,s) \big) = c_{\mu - \f12} \! \int_{-1}^1 \!
       \sum_{m=0}^n \!  \frac{(\a+\g+1)_m} {(\a +\f{1}2)_m} \frac{C_{n-m}^{(\g,m+\a)}(t) 
           C_{n-m}^{(\g,m+\a)}(s)} {h_{n-m}^{(\g,m+\a)}}  \\
       &\quad   \times 
        |t|^m |s|^m Z_m^{\mu+\frac{d-1}{2}}
           \left (\Big \langle \frac{x}{|t|}, \frac{y}{|s|}\Big\rangle
              + u \sqrt{1- \frac{\|x\|^2}{t^2}} \sqrt{1- \frac{\|y\|^2}{s^2}} \right) (1-u^2)^{\mu -1} \d u,  \notag
\end{align}
where $\a = \b+\mu + \frac{d-1}{2}$. Comparing the sum inside the integral with the righthand 
side of \eqref{eq:sfPbnCone}, we see that the difference is essentially on the index $\l$ 
of $Z_m^\l$. Consequently, we could follow the same method used for the surface of the cone
to obtain addition formulas for the Gegenbauer polynomials on the solid cone. Thus, we shall be
brief with the proof. 

Let $\Pb^E_n \big (W_{\b,\g,\mu})$ and $\Pb^O_n \big (W_{\b,\g,\mu})$ be the reproducing 
kernels for $\CV_n^E(\VV^{d+1}; W_{\b,\g,\mu})$ and  $\CV_n^O(\VV^{d+1}; W_{\b,\g,\mu})$.
The following theorem is an analogue of Theorem \ref{thm:sfPbEInt}.

\begin{thm} \label{thm:PbEInt}
Let $\b, \g, \mu > -1/2$ and let $\a = \b + \mu+ \frac{d-1}{2}$. For $n =1,2,\ldots$, 
\begin{align} \label{eq:POadd}
  \Pb_n^O \big (W_{\b,\g,\mu}; (x,t),(y,s) \big) = \frac{\a+\g+1}{\a+\f12} s t \, 
      \Pb_{n-1}^E \big (W_{\b+1,\g,\mu}; (x,t),(y,s) \big).
\end{align}
Furthermore, for  $\b \ge \f12$, $\g, \mu \ge 0$ and $n =0,1,2,\ldots$, 
\begin{align} \label{eq:PEadd}
  \Pb_n^E & \big  (W_{\b,\g,\mu}; (x,t),(y,s) \big)
     =   c \int_{[-1,1]^4}  Z_m^{\a+\g} \big(  \zeta(x,t,y,s; u,v,z)  \big)  \\
 &  \times (1-z_1)^{\mu+\frac{d-1}{2}}(1+z_1)^{\b-\f32}  (1-z_2^2)^{\b-\f12} (1-v^2)^{\g-1}(1-u^2)^{\mu-1} 
      \d u \d v \d z, \notag
\end{align}
where $c = c_{\mu+\frac{d-1}{2},\b-\f32} c_{\b-\f12} c_{\g-\f12}c_{\mu-\f12}$ and 
\begin{align*}
  \zeta(x,t, y,s; u, v,z) =  \frac{1-z_1}{2} \left(\la x,y\ra  + u \sqrt{t^2-\|x\|^2}\sqrt{s^2-\|y\|^2} \right) \mathrm{sign}(st) \\
         + \frac{1+z_1}2 z_2 st + v \sqrt{1-s^2}\sqrt{1-t^2}. \notag
\end{align*}
The formula holds under the limit if $\b = \f12$ or either one of $\b$ and $\mu$ is 0. 
\end{thm}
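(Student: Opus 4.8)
The plan is to transcribe the proof of Theorem~\ref{thm:sfPbEInt} almost verbatim, the single structural change being that the spherical-harmonic addition formula is replaced by the ball addition formula \eqref{eq:PnBall}. This swap introduces one extra integration variable $u$ (with weight $(1-u^2)^{\mu-1}$) and raises the index of the inner factor from $\tfrac{d-2}{2}$ to $\mu+\tfrac{d-1}{2}$. Concretely, I would begin from the expansion \eqref{eq:PbnCone} of the full kernel $\Pb_n(W_{\b,\g,\mu})$, in which $\a=\b+\mu+\tfrac{d-1}{2}$ and the $u$-integral already carries $Z_m^{\mu+\frac{d-1}{2}}$, and then apply the even/odd decomposition \eqref{eq:PbE}--\eqref{eq:PbO}. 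Setting $n-m=2k$ isolates $\Pb_n^E$ as a sum over $k$ of the products $C_{2k}^{(\g,\a+n-2k)}(t)C_{2k}^{(\g,\a+n-2k)}(s)/h_{2k}^{(\g,\a+n-2k)}$ against $|t|^{n-2k}|s|^{n-2k}$ times the ball integral, while $n-m=2k+1$ isolates $\Pb_n^O$.

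For the even formula \eqref{eq:PEadd}, the base case is $\b=\tfrac12$: here $\a-\tfrac12=\mu+\tfrac{d-1}{2}$ already matches the index present, so the $k$-sum is exactly of the shape required by the generalized Gegenbauer addition formula \eqref{eq:additionGG} (with first parameter $\l=\g$ and second index $\a$). Applying it collapses the sum to a single $Z_n^{\a+\g}$ and produces the $v$-integral of weight $(1-v^2)^{\g-1}$, while the ball $u$-integral of weight $(1-u^2)^{\mu-1}$ is left untouched; feeding the ball argument into the inner slot turns the sphere expression $\la x,y\ra\,\mathrm{sign}(st)$ into $\big(\la x,y\ra + u\sqrt{t^2-\|x\|^2}\sqrt{s^2-\|y\|^2}\big)\mathrm{sign}(st)$, which is precisely $\zeta$. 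For $\b>\tfrac12$ I would first raise the inner index from $\mu+\tfrac{d-1}{2}$ to $\a-\tfrac12$ via \eqref{eq:ZtoZ} with $\l=\mu+\tfrac{d-1}{2}$ and $\s=\b-\tfrac12$, which introduces the $z_1,z_2$ integrations and the constants $c_{\mu+\frac{d-1}{2},\b-\f32}\,c_{\b-\f12}$, and then invoke \eqref{eq:additionGG} as in the base case; the remaining factors $c_{\g-\f12}c_{\mu-\f12}$ are supplied by \eqref{eq:additionGG} and by \eqref{eq:PnBall}, so that the four constants assemble into the stated $c$. The boundary values $\b=\tfrac12$, $\g\to0$, and $\mu\to0$ are then absorbed through the limit \eqref{eq:limit-int}, exactly as on the surface.

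For the odd formula \eqref{eq:POadd}, I would take the odd part (set $n-m=2k+1$) and apply, with $\l=\g$ and $\nu=\a+n-1-2k$, the elementary reduction
\[
  \frac{C_{2k+1}^{(\l,\nu)}(s)\,C_{2k+1}^{(\l,\nu)}(t)}{h_{2k+1}^{(\l,\nu)}}
     = \frac{\l+\nu+1}{\nu+\f12}\, s t\, \frac{C_{2k}^{(\l,\nu+1)}(s)\,C_{2k}^{(\l,\nu+1)}(t)}{h_{2k}^{(\l,\nu+1)}},
\]
which acts only on the $t,s$-factors and leaves the ball integral intact. Combining the resulting scalar with the coefficient ratio via $(c)_{n-2k-1}(c+n-2k-1)=(c)_{n-2k}$ converts $(\a+\g+1)_{n-1-2k}/(\a+\f12)_{n-1-2k}$ into $(\a+\g+1)_{n-2k}/(\a+\f12)_{n-2k}$; pulling out the common factor $\tfrac{\a+\g+1}{\a+\f12}\,st$ and noting that $W_{\b+1,\g,\mu}$ carries the shifted parameter $\a\mapsto\a+1$ identifies the remaining sum as $\Pb_{n-1}^E(W_{\b+1,\g,\mu})$, which is \eqref{eq:POadd}.

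The main obstacle is purely the bookkeeping: tracking the Pochhammer ratios $(\a+\g+1)_{n-2k}/(\a+\f12)_{n-2k}$ through the index-raising step and verifying that the three weight factors and four normalizing constants coalesce exactly into the stated $c$ and $\zeta$. Because every analytic ingredient---the ball addition formula \eqref{eq:PnBall}, the generalized Gegenbauer addition formula \eqref{eq:additionGG}, and the raising identity \eqref{eq:ZtoZ}---is already available, no new estimate is needed, and the argument is a careful transcription of the surface proof with the single factor $Z_m^{\frac{d-2}{2}}(\la x/|t|,y/|s|\ra)$ replaced by the ball integral.
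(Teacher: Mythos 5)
Your proposal is correct and follows essentially the same route as the paper: the paper likewise starts from the expansion \eqref{eq:PbnCone} (which already carries the ball addition formula's $u$-integral with $Z_m^{\mu+\frac{d-1}{2}}$), raises the index to $\a-\f12$ via \eqref{eq:ZtoZ} with $\s=\b-\f12$, applies \eqref{eq:additionGG}, and handles the odd part exactly as on the surface. The only cosmetic difference is that you treat $\b=\f12$ as a separate base case where no index-raising is needed, whereas the paper absorbs it as a limiting case of the general formula.
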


\begin{proof}
The proof follows along the same line of the proof of Theorem \ref{thm:sfPbEInt}. For the proof of 
\eqref{eq:PEadd}, we use \eqref{eq:PbnCone} to obtain 
\begin{align*}
& \Pb_n^E \big (W_{\b,\g,\mu}; (x,t),(y,s) \big) = c \int_{-1}^1
      \sum_{k=0}^{\lfloor \frac{n}{2}\rfloor} \frac{(\a+\g+1)_{n-2k}} {(\a +\f{1}2)_{n-2k}} 
     \frac{C_{2k}^{(\g,\a+n-2k)}(t) 
            C_{2k}^{(\g,\a+n-2k)}(s)} {h_{2k}^{(\g, \a+n-2k)}} \\
& \quad \times |t|^{n-2k} |s|^{n-2k} Z_{n-2k}^{\mu+\frac{d-1}{2}}
           \left (\Big \langle \frac{x}{|t|}, \frac{y}{|s|}\Big\rangle
              + u \sqrt{1- \frac{\|x\|^2}{t^2}} \sqrt{1- \frac{\|y\|^2}{s^2}} \right) (1-u^2)^{\mu -1} \d u,
\end{align*}
then apply \eqref{eq:ZtoZ}, with $\s = \b - \f12$, to increasing the index of $Z_{n-2k}^\l$ from $\mu+\frac{d-1}{2}$ 
to $\a -\frac12$ with a cost of a double integral, so that the identity \eqref{eq:additionGG} can be applied to 
give the final formula, which introduces one more layer of integral. The proof of \eqref{eq:POadd} follows 
that of \eqref{eq:sfPOadd} almost verbatim.
\end{proof}

Comparing with \eqref{eq:sfPEadd}, we see that the identity \eqref{eq:POadd} coincide with
$\sP_n(w_{\b,\g})$ when $(x,t), (y,s) \in \VV_0^{d+1}$ and $\mu = 0$. In other words, the 
restriction of the reproducing kernel $\Pb_n\big(W_{\b,\g,0}\big)$ on the boundary 
$\VV_0^{d+1} \times \VV_0^{d+1}$ is equal to $\sP_n\big(w_{\b,\g}\big)$. 
 
We state two limiting cases of \eqref{eq:PEadd} as a corollary. The first one is for the Gegenbauer weight 
function $W_{\f12,\g,\mu}$ and the second one is the Chebyshev weight $W_{\f12,0,0}$ on the solid cone.

\begin{cor} \label{cor:SolidCone}
For the Gegenbauer weight $W_{\f12,\g,\mu}(t) = |t| (1-t^2)^{\g-\f12} (1-\|x\|)^{\mu-\f12}$,
\begin{align*} 
& \Pb_n^E \big (W_{\f12,\g,\mu}; (x,t),(y,s) \big) = c_{\g-\f12} c_{\mu-\f12} \int_{-1}^1 \int_{-1}^1  (1-v^2)^{\g-1} 
   (1-u^2)^{\mu-1} \\
& \times Z_n^{\g+\mu+\frac{d-1}{2}} 
 \left( \Big(\la x,y\ra+ u \sqrt{t^2-\|x\|^2}\sqrt{s^2-\|y\|^2}\Big) \mathrm{sign}(st) + v \sqrt{1-s^2}\sqrt{1-t^2}\right)\d u \d v. \notag
\end{align*}
Furthermore, for the Chebyshev weight function $W_{\f12,0,0}(t) =|t|(1-t^2)^{-\f12}(1-\|x\|^2)^{-\f12}$, 
\begin{align*} 
\Pb_n^E & \big (W_{\f12,0,0}; (x,t),(y,s) \big) \\
   & = \f14 \sum   Z_n^{\frac{d-1}{2}} \left(
   \Big(\la x,y\ra \pm \sqrt{t^2-\|x\|^2}\sqrt{s^2-\|y\|^2}\Big) \mathrm{sign}(st)\pm \sqrt{1-s^2}\sqrt{1-t^2}\right), 
\end{align*}
where the sum is over four possible combinations of signs in the two $\pm$.
\end{cor}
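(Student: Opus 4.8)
The plan is to obtain both formulas as specializations of the addition formula \eqref{eq:PEadd} in Theorem~\ref{thm:PbEInt}, the point being that at $\b=\f12$ the index-raising layer of integration degenerates and disappears. Concretely, in the proof of Theorem~\ref{thm:PbEInt} the increment used to pass from the ball-index $\mu+\frac{d-1}{2}$ to $\a-\f12$ is $\s=\b-\f12$, which vanishes precisely when $\b=\f12$; indeed $\a-\f12=\b+\mu+\frac{d-1}{2}-\f12=\mu+\frac{d-1}{2}$ there, so the ball-index already equals the target index and the application of \eqref{eq:ZtoZ} is vacuous. I would therefore restart not from the fully expanded \eqref{eq:PEadd} but from the pre-processed kernel \eqref{eq:PbnCone}, whose summand already carries $Z_{n-2k}^{\mu+\frac{d-1}{2}}$ with the ball-type argument, and apply the generalized-Gegenbauer addition formula \eqref{eq:additionGG} directly (with $\l=\g$ and the second appendix parameter set to $\a$) to collapse the sum over $k$ into a single $Z_n^{\a+\g}$ together with the $v$-integral. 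This produces exactly the double integral over $u$ (inherited from the ball addition formula) and $v$ (introduced by \eqref{eq:additionGG}), with the argument of $Z_n$ equal to $\bigl(\la x,y\ra+u\sqrt{t^2-\|x\|^2}\sqrt{s^2-\|y\|^2}\bigr)\,\mathrm{sign}(st)+v\sqrt{1-s^2}\sqrt{1-t^2}$, as claimed; the constant is $c_{\g-\f12}c_{\mu-\f12}$.

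For the Chebyshev weight $W_{\f12,0,0}$ I would then pass to the limits $\g\to 0$ and $\mu\to 0$ in the double-integral formula just obtained. By the limit \eqref{eq:limit-int}, the normalized integral $c_{\mu-\f12}\int_{-1}^1(\,\cdot\,)(1-u^2)^{\mu-1}\d u$ converges to the average of the integrand at $u=\pm1$, and likewise the $v$-integral collapses to the average at $v=\pm1$ as $\g\to 0$. Selecting $u=\pm1$ replaces the inner factor by $\la x,y\ra\pm\sqrt{t^2-\|x\|^2}\sqrt{s^2-\|y\|^2}$, while $v=\pm1$ contributes $\pm\sqrt{1-s^2}\sqrt{1-t^2}$; the two endpoint averages together produce the prefactor $\f14$ and the four sign combinations in the stated sum, with the index reducing to that of the statement.

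The main obstacle is the bookkeeping at $\b=\f12$: one must confirm that the degenerate case $\s=0$ of \eqref{eq:ZtoZ} is genuinely an identity, so that no spurious boundary term survives when the $(1+z_1)^{\b-\f32}$-weight concentrates at $z_1=-1$. Routing the argument through \eqref{eq:PbnCone} rather than through the expanded \eqref{eq:PEadd} sidesteps this cleanly, since it simply avoids invoking \eqref{eq:ZtoZ} altogether. The remaining work is routine: matching the argument of $Z_n$ emitted by \eqref{eq:additionGG} against the claimed expression and tracking the normalizing constants $c_{\g-\f12}$ and $c_{\mu-\f12}$ through the two limits.
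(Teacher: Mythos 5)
Your proposal is correct and follows essentially the paper's route: the corollary is obtained by specializing the addition formula \eqref{eq:PEadd} at $\b=\f12$ and then letting $\g\to 0$ and $\mu\to 0$ via \eqref{eq:limit-int}. Your refinement of bypassing the degenerate $\s=0$ instance of \eqref{eq:ZtoZ} by applying \eqref{eq:additionGG} directly to the even part of \eqref{eq:PbnCone} is sound and matches the mechanism of the proof of Theorem \ref{thm:PbEInt}, where the extra $z$-integration serves only to raise the index from $\mu+\frac{d-1}{2}$ to $\a-\f12$ and is indeed vacuous when $\b=\f12$.
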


In the case of $s = 1$ and $t =1$, these two identities in the corollary coincide with the closed form formulas for 
the reproducing kernel of the classical orthogonal polynomials with respect to $\varpi_\mu$ and 
$\varpi_0$, respectively, on $\BB^d$. This is what it should be since $\BB^d$ is the intersection 
of $\VV^{d+1}$ and the hyperplane $t =1$. 

\subsubsection{Addition formula on solid hyperboloid}
We consider the case $\rho > 0$ and $\VV^{d+1}$ is the solid hyperboloid bounded by the hyperboloid
surface and the hyperplane $t =1$. For the weight function $W_{\b,\g,\mu}$ given in \eqref{eq:6Weight},
we denote by ${}_\varrho \Pb_n(W_{\b,\g,\mu})$ the reproducing kernel of the space 
$\CV_n(\VV^{d+1}, W_{\b,\g,\mu})$ and by ${}_\varrho \Pb_n^E(W_{\b,\g,\mu})$ and
${}_\varrho \Pb_n^O(W_{\b,\g,\mu})$ the reproducing kernels for $\CV_n^E(\VV^{d+1}, W_{\b,\g,\mu})$ and 
for $\CV_n^O(\VV^{d+1}, W_{\b,\g,\mu})$, respectively. 
 
As in the case of the surface hyperboloid, we can derive an addition formula only for the reproducing
kernel ${}_\varrho \Pb_n^E(W_{\b,\g,\mu})$.

\begin{prop}
Let $\b,\g, \mu > -\f12$. For $n=0,1,2,\ldots$, the reproducing kernel 
of $\CV_n^E\big({}_\varrho\VV^{d+1}, W_{\b,\g,\mu}\big)$ on the solid hyperboloid satisfies
\begin{align}\label{eq:PEhyp}
   {}_\varrho \Pb_n^E\big(W_{\b,\g,\mu}; (x,t), (y, s)\big) 
   = \Pb_n^E \left(W_{\b,\g,\mu}; \Big(x,\sqrt{t^2-\varrho^2}\Big),  \Big(y,\sqrt{s^2-\varrho^2} \Big) \right). 
\end{align}
\end{prop}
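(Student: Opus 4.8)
The plan is to mirror the proof of the surface analogue in \eqref{eq:sfPEhyp} verbatim, with the spherical-harmonic factor replaced throughout by the ball-orthogonal-polynomial factor $P_\kb^{n-2k}$. First I would expand the reproducing kernel in the orthogonal basis $\{{}_\varrho\Cb_{n-2k,\kb}^n\}$ of $\CV_n^E(\VV^{d+1}, W_{\b,\g,\mu})$ furnished by Proposition~\ref{prop:solidOPhypG}; by the analogue of \eqref{eq:KernelE} this reads
$$
 {}_\varrho\Pb_n^E\big(W_{\b,\g,\mu}; (x,t),(y,s)\big)
   = \sum_{k=0}^{\lfloor n/2\rfloor}\ \sum_{|\kb| = n-2k}
    \frac{{}_\varrho\Cb_{n-2k,\kb}^n(x,t)\,{}_\varrho\Cb_{n-2k,\kb}^n(y,s)}{{}_\varrho h_{n-2k,n}^\Cb},
$$
where ${}_\varrho h_{n-2k,n}^\Cb = \la {}_\varrho\Cb_{n-2k,\kb}^n, {}_\varrho\Cb_{n-2k,\kb}^n\ra_{W_{\b,\g,\mu}}$. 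The goal is then to show that each summand equals the corresponding summand in the basis expansion of $\Pb_n^E(W_{\b,\g,\mu})$ evaluated at the shifted arguments $\big(x,\sqrt{t^2-\varrho^2}\big)$ and $\big(y,\sqrt{s^2-\varrho^2}\big)$.

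The crux is the term-by-term identity, the solid counterpart of \eqref{eq:C-rhoC},
$$
 \frac{{}_\varrho\Cb_{n-2k,\kb}^n(x,t)\,{}_\varrho\Cb_{n-2k,\kb}^n(y,s)}{{}_\varrho h_{n-2k,n}^\Cb}
   = \frac{\Cb_{n-2k,\kb}^n\big(x,\sqrt{t^2-\varrho^2}\big)\,\Cb_{n-2k,\kb}^n\big(y,\sqrt{s^2-\varrho^2}\big)}{h_{n-2k,n}^\Cb}.
$$
To establish it I would invoke the conversion formula \eqref{eq:solidOPhypG2}, which writes ${}_\varrho\Cb_{n-2k,\kb}^n(x,t) = c_k\,\Cb_{n-2k,\kb}^n\big(x,\sqrt{t^2-\varrho^2}\big)$ with a constant $c_k = (n-2k+\b+\mu+\tfrac{d-1}{2})_k/(n-2k+\b+\g+\mu+\tfrac{d-1}{2})_k$ independent of $(x,t)$. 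The numerator on the left then acquires a factor $c_k^2$, and it remains to verify the matching norm relation ${}_\varrho h_{n-2k,n}^\Cb = c_k^2\, h_{n-2k,n}^\Cb$, so that $c_k^2$ cancels and the ratio becomes manifestly $\varrho$-independent.

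This norm comparison is the only step demanding care. Substituting \eqref{eq:solidOPhypG2} gives ${}_\varrho h_{n-2k,n}^\Cb = c_k^2\, \la \Cb_{n-2k,\kb}^n(\cdot,\sqrt{t^2-\varrho^2}), \Cb_{n-2k,\kb}^n(\cdot,\sqrt{t^2-\varrho^2})\ra_{W_{\b,\g,\mu}}$; since the generalized Gegenbauer factor $C_{2k}^{(\g,\cdot)}(\sqrt{t^2-\varrho^2})$ is even and hence a function of $t^2-\varrho^2$, and $P_\kb^{n-2k}$ is evaluated at $x/\sqrt{t^2-\varrho^2}$, the integrand has exactly the product form required to apply the integration formula \eqref{eq:intHypSolid}. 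Carrying out the substitution $s = t^2-\varrho^2$, $y = x/\sqrt{t^2-\varrho^2}$ collapses the weight on the solid hyperboloid to the one-dimensional integral $\int_0^1 s^{\b+\mu+\frac{d-2}{2}}(1-s)^{\g-\f12}\d s$ times the ball integral against $(1-\|y\|^2)^{\mu-\f12}$, which is precisely what the $\varrho=0$ instance of \eqref{eq:intHypSolid} produces for $h_{n-2k,n}^\Cb$; moreover the normalization constants $b_{\b,\g,\mu}$ and $b_\mu^\BB$ are themselves $\varrho$-independent, so they match automatically. Once the term-by-term identity is secured, summing over $k$ and $\kb$ yields \eqref{eq:PEhyp} at once. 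I expect the main obstacle to be purely bookkeeping: tracking the Pochhammer constants in $c_k$ and confirming they enter squared in both numerator and norm, with no analytic input beyond \eqref{eq:solidOPhypG2} and \eqref{eq:intHypSolid}.
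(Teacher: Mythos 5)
Your proposal is correct and follows exactly the paper's own route: expand ${}_\varrho\Pb_n^E$ in the basis $\{{}_\varrho\Cb_{n-2k,\kb}^n\}$, use the conversion formula \eqref{eq:solidOPhypG2} together with the $\varrho$-independence of the integral \eqref{eq:intHypSolid} to establish the solid analogue of \eqref{eq:C-rhoC}, and sum. The paper merely states these steps without spelling out the cancellation of the Pochhammer constant $c_k^2$ between numerator and norm, which you verify correctly.
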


\begin{proof}
In terms of the orthogonal polynomials ${}_\varrho\Cb_{n-2k,\ell}^n$ defined in \eqref{eq:solidOPhypG} 
and its norm ${}_\varrho h_{n-2k,n}^\Cb$, the reproducing kernel ${}_\varrho \Pb_n^E(W_{\b,\g,\mu})$ on the 
solid hyperboloid is given by
\begin{align*}
  {}_\varrho\Pb_n^E\big (W_{\b,\g,\mu}; (x,t),(y,s) \big)  = \sum_{m=0}^{\lfloor \frac{n}{2} \rfloor} 
     \sum_{|\kb|=n}  \frac{{}_\varrho\Cb_{n-2m,\kb}^n(x,t) {}_\varrho\Cb_{n-2m,\kb}^n(y,s)}
         {{}_\varrho h_{n-2m,n}^\Cb}.
\end{align*}
By the relation \eqref{eq:solidOPhypG2}, which also leads to the relation between 
${}_\varrho h_{n-2k,n}^\Cb$ and $h_{n-2k,n}^\Cb$ by the integral \eqref{eq:intHypSolid}, we see that
an analogue of \eqref{eq:C-rhoC} holds for $\Cb_{n-2m,\kb}^n$ and ${}_\varrho\Cb_{n-2m,\kb}^n$,
from which \eqref{eq:PEhyp} follows readily. 
\end{proof}

Using \eqref{eq:PEhyp}, addition formula for $\CV_n^E(\VV^{d+1}, W_{\b,\g,\mu})$ on the solid hyperboloid 
can then be derived from those in \eqref{eq:PEadd} for the solid cone.  

\subsubsection{Poisson kernel}
Let $P_n^E$ denote either $\sP_n(w_{\b,\g})$ on the surface $\VV_0^{d+1}$ or $\Pb_n(W_{\b,\g,\mu})$
on the solid $\VV^{d+1}$. We denote by $P^E\big(r; \cdot, \cdot \big)$  the Poisson kernel defined by 
\begin{equation} \label{eq:Poisson}
  P^E \big(r; (x,t),(y,s)\big) := \sum_{n=1}^\infty P_n^E \big ((x,t),(y,s) \big) r^n,   \quad 0 \le r < 1.
\end{equation}
Since our addition formulas for $\sP_n^E \big (w_{\b,\g}; \cdot,\cdot)$ in Theorem \ref{thm:sfPbEInt} and
for $\Pb_n^E \big (W_{\b,\g,\mu}; \cdot,\cdot)$ in Theorem \ref{thm:PbEInt} are all given in terms of integrals
of $Z_n^\l$, we can use the well-known identity (cf. \cite[p. 19]{DX}), 
$$
    \sum_{n=0}^\infty Z_n^\l(t) r^n = \frac{1-r^2}{(1-2r t + r^2)^{\l+1}}, \qquad 0 \le r < 1,
$$
to derive closed form formulas for the Poisson kernel in both cases. It is straightforward to write down
these closed form formulas. We shall leave them to interested readers. 

\section{Addition formula and Fourier orthogonal series}
\setcounter{equation}{0}

In this section we show how addition formulas can be used for studying the Fourier orthogonal series. 
There are several cases, since we have established addition formulas for the generalized Gegenbauer
polynomials on both the surface and the solid double cones and on hyperboloids. We shall state the 
result first in a general setting that includes all these cases. 

\subsection{Fourier orthogonal series} \label{sect:7FourierOS}
We consider the setting of $L^2(\UU, W)$, where $\UU$ is a domain in $\RR^D$ and $W$ is 
a weight function on $\UU$ such that $\int_{\UU} W(\xb) \d \xb =1$. For our purpose, $\UU$ can be
the surface $\VV_0^{d+1}$ or its upper part $\VV_{0,+}^{d+1}$, or the solid $\VV^{d+1}$ or its upper 
part $\VV_+^{d+1}$, and $W$ is the corresponding generalized Gegenbauer weight on $\UU$
for either the cone or the hyperboloid.

We assume that $L^2(\UU, W)$ has an orthogonal basis of polynomials. Let $\CV_n(\UU, W)$ denote 
the space of orthogonal polynomials of degree $n$ in $L^2(\UU, W)$. The Fourier orthogonal series of
$f \in L^2(\UU, W)$ is defined by 
\begin{equation} \label{eq:7Fourier}
  f = \sum_{n=0}^\infty \proj_n f(W), \qquad \proj_n(W): L^2(\UU, W) \mapsto \CV_n(\UU, W), 
\end{equation}
where $\proj_n(W)$ is the orthogonal projection operator. Let $P_n(\cdot,\cdot)$ denote the reproducing kernel 
of $\CV_n(\UU, W)$. Then the projection operator can be written as 
$$
  \proj_n f(W;\xb) = \int_{\UU} f(\yb) P_n (\xb, \yb) W(\yb) \d \yb. 
$$
In terms of an orthonormal basis $\{P_k^n: 1 \le k \le \dim \CV_n(\UU,W)\}$ of $\CV_n(\UU,W)$, the 
reproducing kernel satisfies 
$$
   P_n (\xb, \yb) = \sum_{k=1}^{\dim \CV_n(\UU,W)} \frac{P_k^n(\xb)P_k^n (\yb)}{H_k^n}, 
$$
where $H_k^n$ denotes the norm square of $P_k^n$. Our main assumption is the existence of an 
addition formula for the reproducing kernel. Recall that we use $p_n(\varpi)$ to denote an orthogonal 
polynomial of degree $n$ with respect to the weight function $\varpi$, and $h_n(\varpi)$ to denote the
norm square of $p_n(\varpi)$. 

\begin{defn}
The reproducing kernel $P_n$ is said to satisfy an addition formula if there is a weight function $\varpi$ 
on $[-1,1]$, $\int_{-1}^1 \varpi(t) \d t =1$, such that 
\begin{equation} \label{eq:7Pn}
   P_n (\xb, \yb) = \int_{[-1,1]^m} Z_n \big(\xi(\xb, \yb; \ub) \big) \d \tau (\ub), \qquad
          Z_n(t) = \frac{p_n(\varpi;1) p_n(\varpi;t)}{h_n(\varpi)},
\end{equation}
where $m$ is a positive integer; $\xi(\xb, \yb; \ub)$ is a function of $\ub \in [-1,1]^m$, symmetric 
in $\xb$ and $\yb$, and $\xi(\xb, \yb; \ub) \in [-1,1]$; and $\d \tau$ is a probability measure on 
$[-1,1]^m$, which can degenerate to have a finite support. 
\end{defn}

This assumed form of the addition formula includes, as examples, \eqref{eq:sfPEadd} and \eqref{eq:PEadd} and
their various special cases that we derived for cones and hyperboloids, as well as \eqref{eq:PnBall} 
for the unit ball and, in a totally degenerated case, the original addition formula \eqref{eq:additionF} for 
spherical harmonics. In these examples, the polynomial $Z_n$ is given by $Z_n^\l$ defined in \eqref{eq:Zn} 
for various $\l$. For the addition formulas for the Jacobi polynomials on the cone in \cite{X19} as well as those
for the the simplex in $\RR^d$ (cf. \cite[p. 275]{DX}), the polynomial $Z_n$ is given in terms of the Jacobi 
polynomial $P_n^{(\l-\f12,-\f12)}$, using the quadratic transform from $P_n^{(\l-\f12,-\f12)}(2t^2-1) = a_n 
C_{2n}^\l(t)$ if necessary.  Further examples of \eqref{eq:7Pn} are also held for reflection invariant weight 
functions on the unit sphere and on the unit ball (cf. \cite[p. 221 and p. 265]{DX}). 

Motivated by the addition formula, we define an operator $T$  such that $P_n = T Z_n$. 

\begin{defn}\label{defn:7T} 
Assume the addition formula \eqref{eq:7Pn}. For $g\in L^1([-1,1],\varpi)$, we define the 
operator $T$ on $\UU$ by 
\begin{align} \label{eq:7T}
   T g\big(\xb,\yb \big) :=\, & \int_{[-1,1]^m}  g \big( \xi (\xb, \yb; \ub)\big)  \d \tau(\ub).
\end{align}
\end{defn}

\begin{lem} \label{lem:translateT}
Let $g \in L^1([-1,1],\varpi)$. Then, for each $Q_n \in \CV_n (\UU, W)$, 
\begin{equation}\label{eq:7FH}
     \int_{\UU}  T g (\xb,\yb) Q_n (\yb) W (\yb) \d \yb  =  \Lambda_n (g) Q_n(\xb),
\end{equation}
where 
$$
 \Lambda_n (g) =  \int_{-1}^1 g(t) \frac{p_k(\varpi; t)}{p_k(\varpi;1)} \varpi(t) \d t.
$$
Furthermore, for $1\le p \le \infty$ and $\xb \in \UU$, 
\begin{equation}\label{eq:Tbd}
  \left \| T g (\xb, \cdot)\right\|_{L^p(\UU, W)} \le \|g \|_{L^p([-1,1],\varpi)}.
\end{equation}
\end{lem}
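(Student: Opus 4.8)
The plan is to verify \eqref{eq:7FH} first on the finite-dimensional ``test family'' $\{Z_k\}_{k\ge 0}$ and then to bootstrap to arbitrary $g$ by density, deducing \eqref{eq:Tbd} along the way from the $n=0$ instance of \eqref{eq:7FH}. The starting observation is that the addition formula \eqref{eq:7Pn} is precisely the statement $T Z_k(\xb,\yb) = P_k(\xb,\yb)$, so taking $g = Z_k$ in \eqref{eq:7T} turns the left-hand side of \eqref{eq:7FH} into $\int_{\UU} P_k(\xb,\yb) Q_n(\yb) W(\yb)\,\d\yb$. Since $P_k$ is the reproducing kernel of $\CV_k(\UU,W)$ and $Q_n\in\CV_n(\UU,W)$, the reproducing property together with the orthogonality $\CV_k \perp \CV_n$ for $k\ne n$ collapses this to $\delta_{k,n} Q_n(\xb)$. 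On the right-hand side, inserting $Z_k(t) = p_k(\varpi;1)p_k(\varpi;t)/h_k(\varpi)$ into $\Lambda_n$ and using the one-dimensional orthogonality $\int_{-1}^1 p_k(\varpi;t)p_n(\varpi;t)\varpi(t)\,\d t = h_n(\varpi)\delta_{k,n}$ gives $\Lambda_n(Z_k) = \delta_{k,n}$ as well. Hence \eqref{eq:7FH} holds for every $g = Z_k$, and therefore for every polynomial $g$ by linearity, since each polynomial is a finite combination of the $Z_k$.

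The $n=0$ case, with $Q_0\equiv 1$ and $p_0$ constant, is the pointwise identity
\begin{equation*}
  \int_{\UU} T h(\xb,\yb) W(\yb)\,\d\yb = \int_{-1}^1 h(t)\varpi(t)\,\d t, \tag{$\ast$}
\end{equation*}
which I would first record for polynomial $h$ and then extend to all nonnegative Borel $h$: for fixed $\xb$ the functional $h\mapsto \int_{\UU} T h(\xb,\yb) W(\yb)\,\d\yb$ is positive, is bounded with norm $\le 1$ (because $\tau$ and $W\,\d\yb$ are probability measures), and agrees with $h\mapsto \int_{-1}^1 h\varpi$ on polynomials; density of polynomials in $C[-1,1]$ and the Riesz representation theorem then identify the two, after which monotone convergence pushes $(\ast)$ to all nonnegative measurable $h$. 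Granting $(\ast)$, the bound \eqref{eq:Tbd} is immediate. For $p=\infty$ it follows from $|Tg(\xb,\yb)| \le \int_{[-1,1]^m}|g(\xi(\xb,\yb;\ub))|\,\d\tau(\ub)\le \|g\|_\infty$, while for $1\le p<\infty$ Jensen's inequality applied to the probability measure $\tau$ gives $|Tg(\xb,\yb)|^p \le T(|g|^p)(\xb,\yb)$; integrating against $W\,\d\yb$ and invoking $(\ast)$ with $h=|g|^p$ yields $\|Tg(\xb,\cdot)\|_{L^p(\UU,W)}^p \le \int_{-1}^1 |g(t)|^p\varpi(t)\,\d t$.

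Finally I would remove the polynomial restriction in \eqref{eq:7FH}. Given $g\in L^1([-1,1],\varpi)$, choose polynomials $g_j\to g$ in $L^1([-1,1],\varpi)$; the $p=1$ case of \eqref{eq:Tbd} gives $\|Tg_j(\xb,\cdot)-Tg(\xb,\cdot)\|_{L^1(\UU,W)} \le \|g_j-g\|_{L^1([-1,1],\varpi)}\to 0$, so the left-hand side of \eqref{eq:7FH} converges (the polynomial $Q_n$ being bounded on the compact domains under consideration), while $\Lambda_n$ is plainly continuous on $L^1([-1,1],\varpi)$ because $p_n(\varpi;\cdot)/p_n(\varpi;1)$ is bounded on $[-1,1]$; passing to the limit in the polynomial identity gives the general case. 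I expect the only genuine obstacle to be the justification of $(\ast)$ for non-polynomial integrands, i.e. the measure-theoretic step identifying the pushforward of $W\,\d\yb\otimes\d\tau$ under $(\yb,\ub)\mapsto \xi(\xb,\yb;\ub)$ with $\varpi\,\d t$; the algebraic core (the $Z_k$ computation and the Jensen step) is routine once this is in place.
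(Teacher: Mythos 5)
Your proof is correct, and for the identity \eqref{eq:7FH} it is in substance the same as the paper's: the paper expands a polynomial $g$ of degree $n$ as $g=\sum_k \Lambda_k Z_k$ and uses $TZ_k=P_k$ together with the reproducing property, which is exactly your verification on each $Z_k$ followed by linearity; both then finish by density in $L^1([-1,1],\varpi)$. Where you genuinely diverge is in the norm bound \eqref{eq:Tbd}. The paper asserts the cases $p=1$ and $p=\infty$ as immediate and interpolates via Riesz--Thorin, whereas you prove all $1\le p<\infty$ at once by Jensen's inequality, $|Tg|^p\le T(|g|^p)$, reducing everything to the ``mass conservation'' identity $(\ast)$ for the non-polynomial integrand $|g|^p$. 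This is a worthwhile trade: the Jensen route is more elementary (no interpolation theorem) and, more importantly, it makes explicit the one step the paper glosses over --- even the paper's ``immediate'' $p=1$ case already needs $(\ast)$ for $|g|$, not merely for polynomials. Your justification of $(\ast)$ (positivity, norm one, agreement with $\int_{-1}^1 h\,\varpi$ on polynomials, Riesz representation, monotone convergence) is sound; equivalently one can say that the pushforward of $W\,\d\yb\otimes\d\tau$ under $(\yb,\ub)\mapsto\xi(\xb,\yb;\ub)$ is a probability measure on $[-1,1]$ with the same moments as $\varpi\,\d t$, and the Hausdorff moment problem on a compact interval is determinate. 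The only cosmetic remark is that the boundedness of $Q_n$ invoked in your final limiting step is automatic here since the domains $\UU$ in play are compact, as you note.
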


\begin{proof}
The proof is standard by now and we shall be brief. If $g$ is a polynomial of degree at most $n$, then 
by the orthogonality of $p_k(\varpi)$ and the definition of $Z_n$ in \eqref{eq:7Pn}, 
$$
g(t) = \sum_{m=0}^n \Lambda_k Z_{k}(t), \qquad   \Lambda_k p_k(\varpi; 1) =  \int_{-1}^1 g(t) p_k(\varpi; t) \varpi(t) \d t.
$$ 
Using \eqref{eq:7Pn} and \eqref{eq:7T}, we obtain 
$$
  T g (\xb, \yb) = \sum_{k=0}^n \Lambda_k T Z_n (\xb,\yb) =   \sum_{k=0}^n \Lambda_k P_k (\xb,\yb). 
$$
Consequently,  by the definition of the reproducing kernel, \eqref{eq:7FH} holds for all polynomials. 
The usual density argument then completes the proof for all $g \in L^1([-1,1],\varpi)$. 
Furthermore, the inequality \eqref{eq:Tbd} holds immediately for $p = \infty$ and $p =1$. The case
$1 < p < \infty$ follows from the Riesz-Thorin theorem. 
\end{proof}

For $\xb \in \UU$, the operator $g \mapsto T g (\xb,\cdot)$ defines a ``translation" of $g$ by $\xb$. 
We use this operator to define a convolution structure.

\begin{defn}\label{defn:7convol}
Let $f \in L^1(\UU, W)$ and $g \in L^1([-1,1],\varpi)$, define the convolution of $f$ and $g$ on $\UU$ 
by 
$$
  (f \ast  g)(\xb)  :=   \int_{\UU} f(\yb) T g (\xb,\yb) W(\yb) \d \yb.
$$
\end{defn}

This convolution satisfies the classical Young's inequality:

\begin{thm} \label{thm:Young}
Let $p,q,r \ge 1$ and $p^{-1} = r^{-1}+q^{-1}-1$. For $f \in L^q(\UU, W)$ and
$g \in L^r([-1,1]; \varpi)$ with $g$ an even function, 
\begin{equation} \label{eq:Young}
  \|f \ast g\|_{L^p (\UU, W)} \le \|f\|_{L^q(\UU, W)}\|g\|_{L^r( [-1,1];\varpi)}.
\end{equation}
\end{thm}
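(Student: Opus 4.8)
The plan is to recognize \eqref{eq:Young} as an instance of the classical Young inequality for the integral operator whose kernel is $K(\xb,\yb):=Tg(\xb,\yb)$, and to supply the two structural facts that make the textbook argument run in the present abstract setting. The only properties of $T$ I will invoke are: first, the bound \eqref{eq:Tbd} of Lemma~\ref{lem:translateT}, which controls $K(\xb,\cdot)$ in $L^r(\UU,W)$ for each fixed $\xb$; and second, that $K(\xb,\yb)=K(\yb,\xb)$, which is immediate from the symmetry of $\xi(\xb,\yb;\ub)$ in $\xb$ and $\yb$ built into the addition formula \eqref{eq:7Pn} and the definition \eqref{eq:7T} of $T$. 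Consequently \eqref{eq:Tbd} also controls $K(\cdot,\yb)$ in $L^r(\UU,W)$ uniformly in $\yb$. From $p^{-1}=r^{-1}+q^{-1}-1$ with $p,q,r\ge 1$ I first record $1/q=1/p+(1-1/r)\ge 1/p$ and $1/r=1/p+(1-1/q)\ge 1/p$, hence $p\ge q$ and $p\ge r$, which makes the exponents introduced below admissible.

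Next I would carry out the three-exponent Hölder estimate pointwise in $\xb$. Splitting the integrand of $(f\ast g)(\xb)=\int_\UU f(\yb)K(\xb,\yb)W(\yb)\,\d\yb$ as
$$
  |f(\yb)|\,|K(\xb,\yb)| = \big(|f(\yb)|^q |K(\xb,\yb)|^r\big)^{1/p}\,|f(\yb)|^{1-q/p}\,|K(\xb,\yb)|^{1-r/p},
$$
I apply Hölder's inequality against the probability measure $W(\yb)\,\d\yb$ with the exponents $p$, $\alpha=pq/(p-q)$ and $\beta=pr/(p-r)$; a direct check gives $p^{-1}+\alpha^{-1}+\beta^{-1}=1$, while $(1-q/p)\alpha=q$ and $(1-r/p)\beta=r$. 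This yields, for every $\xb\in\UU$,
$$
  |(f\ast g)(\xb)| \le \Big(\int_{\UU} |f(\yb)|^q |K(\xb,\yb)|^r W(\yb)\,\d\yb\Big)^{1/p}
      \,\|f\|_{L^q(\UU,W)}^{1-q/p}\,\|K(\xb,\cdot)\|_{L^r(\UU,W)}^{1-r/p},
$$
and the last factor is at most $\|g\|_{L^r([-1,1];\varpi)}^{1-r/p}$ by \eqref{eq:Tbd}.

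Then I would raise this to the $p$th power and integrate in $\xb$ against $W(\xb)\,\d\xb$. The two $\xb$-independent factors come out with exponents $(1-q/p)p=p-q$ and $(1-r/p)p=p-r$, and there remains the double integral $\int_\UU\!\int_\UU |f(\yb)|^q |K(\xb,\yb)|^r W(\yb)W(\xb)\,\d\yb\,\d\xb$. Evaluating it by Fubini, integrating in $\xb$ first, I use the symmetric form of \eqref{eq:Tbd} to get $\int_\UU |K(\xb,\yb)|^r W(\xb)\,\d\xb=\|K(\cdot,\yb)\|_{L^r(\UU,W)}^r\le \|g\|_{L^r([-1,1];\varpi)}^r$ uniformly in $\yb$, leaving the $\yb$-integral equal to $\|f\|_{L^q(\UU,W)}^q$. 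Collecting powers gives $\|f\ast g\|_p^p\le \|f\|_q^{p-q}\|g\|_r^{p-r}\,\|f\|_q^q\|g\|_r^r=\|f\|_q^p\|g\|_r^p$, which is \eqref{eq:Young}.

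The only step beyond bookkeeping is the inner $\xb$-integration in the Fubini argument: because $K(\xb,\yb)=Tg(\xb,\yb)$ is not literally a translate of $g$, the identity $\int |K(\xb,\yb)|^r W(\xb)\,\d\xb=\|g\|_r^r$ that holds by translation invariance in the group setting must here be replaced by the inequality above, and it is precisely the symmetry of $\xi$ combined with \eqref{eq:Tbd} that furnishes it. I therefore expect this to be the main (though mild) obstacle. I note that the hypothesis that $g$ be even is not actually invoked in the estimate—the argument rests only on \eqref{eq:Tbd} and the symmetry of $\xi$—and is retained simply because even $g$ is the natural class of ``radial'' multipliers for which the convolution of Definition~\ref{defn:7convol} is intended in the applications. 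Finally, the boundary exponents ($q=1$, $r=1$, or $p=\infty$) are covered by the same computation with the degenerate exponents read in the usual way; for instance $p=\infty$ reduces to a single application of Hölder in $\yb$, so no separate treatment is required.
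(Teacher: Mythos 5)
Your argument is correct, but it follows a different route from the paper. The paper proves \eqref{eq:Young} by establishing two endpoint estimates --- $\|f\ast g\|_{L^r(\UU,W)}\le\|f\|_{L^1(\UU,W)}\|g\|_{L^r([-1,1],\varpi)}$ via Minkowski's integral inequality and $\|f\ast g\|_\infty\le\|f\|_{L^{r'}(\UU,W)}\|g\|_{L^r([-1,1],\varpi)}$ via H\"older, each combined with \eqref{eq:Tbd} --- and then interpolating with the Riesz--Thorin theorem; this mirrors how \eqref{eq:Tbd} itself is proved in Lemma~\ref{lem:translateT} and uses the kernel bound only in the second variable. You instead run the classical three-exponent H\"older/Fubini proof of Young's inequality directly; your exponent bookkeeping is right, and the one genuinely new ingredient you need --- the bound $\int_\UU|Tg(\xb,\yb)|^rW(\xb)\,\d\xb\le\|g\|^r_{L^r([-1,1],\varpi)}$ uniformly in $\yb$ --- is correctly supplied by the symmetry $Tg(\xb,\yb)=Tg(\yb,\xb)$, which is legitimate because \eqref{eq:7Pn} assumes $\xi(\xb,\yb;\ub)$ symmetric in $\xb$ and $\yb$. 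What each approach buys: the paper's is shorter and needs no symmetry of the kernel, at the cost of invoking interpolation; yours is elementary and self-contained but leans on the symmetry hypothesis as an extra structural input. Your side remarks are also accurate --- the evenness of $g$ plays no role in either proof, and the degenerate exponents ($r=1$, $q=1$, or $p=\infty$) collapse to two-factor H\"older or Minkowski as you say.
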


\begin{proof}
The proof is standard. Using Minkowski's inequality and \eqref{eq:Tbd}, 
\begin{align*}
     \|f  \ast  g\|_{L^r ( \UU, W)} \le \int_{\UU} |f(\xb)|  \| T g(\xb,\cdot)\|_{L^r ( \UU, W)}  \d \xb 
        \le  \|f \|_{L^1 ( \UU, W)} \|g\|_{L^r([-1,1],\varpi)}. 
\end{align*}
Furthermore, by H\"older's inequality and \eqref{eq:Tbd}, we see that 
$$
\|f \ast g\|_\infty \le  \|f \|_{L^{r'}( \UU, W)}  \|g\|_{L^r([-1,1],\varpi)}, 
$$
where $\frac{1}{r'} + \frac1{r} = 1$. The inequality \eqref{eq:Young} follows from these two
inequalities by the Riesz-Thorin theorem. 
\end{proof}

The $n$-th partial sum of the Fourier orthogonal series of \eqref{eq:7Fourier} is defined by 
$$
      S_n(W;f) = \sum_{k=0}^n \proj_k (W; f).
$$
Recall the kernel $k_n(\varpi; \cdot,\cdot)$, defined in \eqref{eq:1dkernel}, of the $n$-th partial sum $s_n(\varpi; g)$. 

\begin{prop}
Assume the addition formula \eqref{eq:7Pn}. For $f \in L^1(\UU, W)$, 
\begin{equation} \label{eq:SnW}
    S_n(W; f) =  f \ast k_n(\varpi; 1, \cdot). 
\end{equation}
\end{prop}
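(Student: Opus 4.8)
The plan is to peel the definitions apart one layer at a time and to exploit the linearity of the operator $T$ from \eqref{eq:7T}; no analytic subtlety is involved, since every object in sight is a finite sum of polynomials. First I would evaluate the one-dimensional kernel $k_n(\varpi)$ from \eqref{eq:1dkernel} at first argument equal to $1$ and compare the result with the definition of $Z_k$ in \eqref{eq:7Pn}, obtaining
$$
  k_n(\varpi; 1, t) = \sum_{k=0}^n \frac{p_k(\varpi;1) p_k(\varpi;t)}{h_k(\varpi)} = \sum_{k=0}^n Z_k(t).
$$
In particular $k_n(\varpi;1,\cdot)$ is a polynomial of degree $n$, hence lies in $L^1([-1,1],\varpi)$, so the convolution $f \ast k_n(\varpi;1,\cdot)$ is well defined by Definition \ref{defn:7convol}.

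Next I would apply $T$ to this polynomial. Since $Tg(\xb,\yb)$ is obtained by integrating $g \circ \xi$ against the probability measure $\d\tau$, the map $g \mapsto Tg$ is linear, so it acts termwise on the finite sum above:
$$
  T\big[k_n(\varpi;1,\cdot)\big](\xb,\yb) = \sum_{k=0}^n T Z_k(\xb,\yb) = \sum_{k=0}^n P_k(\xb,\yb),
$$
where the last equality is exactly the addition formula $P_k = T Z_k$ built into \eqref{eq:7Pn} and \eqref{eq:7T}. Finally I would insert this into the convolution of Definition \ref{defn:7convol} and pull the finite sum outside the integral over $\UU$:
$$
  \big(f \ast k_n(\varpi;1,\cdot)\big)(\xb)
    = \int_{\UU} f(\yb) \sum_{k=0}^n P_k(\xb,\yb)\, W(\yb)\, \d\yb
    = \sum_{k=0}^n \proj_k(W;f)(\xb) = S_n(W;f)(\xb),
$$
using the integral representation of $\proj_k(W;f)$ and the definition of the partial sum $S_n(W;f)$. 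This yields \eqref{eq:SnW}.

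The only point requiring a moment's care — and the closest thing to an obstacle — is the bookkeeping in the first step: one must check that specializing $x=1$ in $k_n(\varpi;x,\cdot)$ reproduces exactly the coefficients $p_k(\varpi;1)/h_k(\varpi)$ appearing in $Z_k$, after which the argument reduces entirely to the linearity of $T$ and the defining property of the reproducing kernel. There are no interchange-of-limit concerns, since every sum is finite and every integrand is a polynomial multiplied by the integrable weight $W$.
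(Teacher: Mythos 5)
Your argument is correct and is essentially the computation the paper intends: the proposition is stated as an immediate consequence of the identity $Tg(\xb,\yb)=\sum_{k}\Lambda_k P_k(\xb,\yb)$ displayed in the proof of Lemma~\ref{lem:translateT}, specialized to $g=k_n(\varpi;1,\cdot)=\sum_{k=0}^n Z_k$, for which every coefficient $\Lambda_k$ equals $1$ by the reproducing property of $k_n(\varpi)$. Your step-by-step unwinding of \eqref{eq:1dkernel}, \eqref{eq:7Pn}, \eqref{eq:7T} and Definition~\ref{defn:7convol} is exactly this, so nothing further is needed.
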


The identity \eqref{eq:SnW} shows that the Fourier orthogonal series has a one-dimension structure if the 
orthogonal polynomials in $L^2(\UU,W)$ admits an addition formula. Furthermore, the identity allows us to
derive results on the summability of the Fourier orthogonal series on $\UU$ from that of Fourier orthogonal
series with respect to $\varpi$ in one variable. To be more precise, we consider the applications to the cones 
and hyperboloid in the next subsection. 

\subsection{Ces\`aro summability in and on hyperboloids}
 
For $\delta > 0$, the Ces\`aro $(C,\delta)$ means $S_n^\delta (W;f)$ of the Fourier orthogonal series 
is defined by 
$$
 S_n^\delta (W;f) := \f{1}{\binom{n+\delta}{n}} \sum_{k=0}^n \binom{n-k+\delta}{n-k} \proj_k(W; f).
$$
By th addition formula \eqref{eq:7Pn} and \eqref{eq:SnW}, we can write $S_n^\delta(W)$ as  
\begin{equation} \label{eq:(C,d)S_nW}
     S_n^\delta (W; f) =  \int_{\UU} f(y) K_n^\delta(W;\cdot, \yb) W(\yb)\d \yb = f \ast k_n^\delta (\varpi; 1, \cdot),
\end{equation}
where $K_n^\delta(W;\xb,\yb) = T [k_n^\delta(\varpi; 1,\cdot)](\xb,\yb)$ and $k_n^\delta(\varpi)$ is the kernel of the 
Ces\`aro $(C,\delta)$ means $s_n^\delta(\varpi)$ for $\varpi$ on $[-1,1]$, 
$$
  k_n^\delta (\varpi; u,v) = \frac{1}{\binom{n+\delta}{n}} \sum_{k=0}^n \binom{n-k+\delta}{n-k} 
        \frac{p_k(\varpi; u)p_k(\varpi; v)}{h_k(\varpi)}.
$$
To prove the convergence of $S_n^\delta (W;f)$ to $f$ in $L^1(\UU,W)$ or $C(\UU,W)$, it suffices to
show 
$$
  \sup_{\xb \in \UU} \int_{\UU} \left |K_n^\delta(W;\xb,\yb) \right | W(\yb) \d \yb < \infty. 
$$
Similar convergence result holds for $s_n^\delta (\varpi)$ if $\sup_{x\in [-1,1]}\int_{-1}^1 |k_n^\delta(x,y)| \varpi(y) \d y
< \infty.$

\subsubsection{Ces\`aro summability on the surface of hyperboloid}
Our first theorem is about the Fourier orthogonal series on the surface of the hyperboloid, for which
$\UU = \VV_0^{d+1}$, $W(x,t) = w_{\b,\g}(t)$, defined in \eqref{eq:sf6weight}, and $\varpi =\varpi_\l$,
where $\varpi_\l(u) = (1-u^2)^{\l-\f12}$ is the Gegenbauer weight function and $\l= \b+\g+ \frac{d -1}{2}$ 
by \eqref{eq:sfPEadd}. 

\begin{thm} \label{thm:sfConvE}
Let $\b,\g \ge 0$ and define $\l_{\b,\g}: = \b+\g+ \frac{d -1}{2}$. If $f \in L^1({}_\varrho\VV_0^{d+1}, w_{\b,\g})$, 
$\varrho \ge 0$, is even in the variable $t$, then the Ces\`aro $(C,\delta)$ means for the Fourier orthogonal 
series with respect to $w_{\b,\g}$ on ${}_\varrho\VV_{0}^{d+1}$ satisfy 
\begin{enumerate} [\quad 1.]
\item if $\delta \ge 2 \l_{\b,\g}+1$, then $\sS_n^\delta(w_{\b,\g}; f)$ is nonnegative if $f$ is nonnegative;
\item $\sS_n^\delta (w_{\b,\g}; f)$ converge to $f$ in $L^1({}_\varrho\VV_0^{d+1}, w_{\b,\g})$ or 
$C({}_\varrho\VV_0^{d+1})$ norm if $\delta > \l_{\b,\g}$, and $\delta > \l_{0,\g}$ is also necessary for $\b =0$.
\end{enumerate}
Furthermore, if $\rho = 0$, and $f\in L^1(\VV_0^{d+1}, w_{\b,\g})$ is odd in the variable $t$, then the 
same conclusion holds with $\l_{\b,\g}$ replaced by $\l_{\b+1,\g}$. 
\end{thm}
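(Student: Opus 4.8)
The plan is to transfer the problem, via the convolution structure of the preceding subsection, to the classical Cesàro summability of one-variable ultraspherical expansions on $[-1,1]$. I would treat the even case first. For even $f$ only the even part of the reproducing kernel contributes to $\proj_n f$, and by the addition formula \eqref{eq:sfPEadd} on the cone together with \eqref{eq:sfPEhyp} on the hyperboloid, $\sP_n^E(w_{\b,\g})$ is of the form \eqref{eq:7Pn} with one-dimensional weight $\varpi=\varpi_{\l_{\b,\g}}$, $\varpi_\l(u)=(1-u^2)^{\l-\f12}$, because the index of $Z_n$ appearing there equals $\a+\g=\l_{\b,\g}$; here $\d\tau$ is a probability measure and $\xi\in[-1,1]$. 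Hence \eqref{eq:(C,d)S_nW} gives $\sS_n^\delta(w_{\b,\g};f)=f\ast k_n^\delta(\varpi_{\l_{\b,\g}};1,\cdot)$, and the whole theorem is then controlled by the endpoint kernel $k_n^\delta(\varpi_{\l_{\b,\g}};1,\cdot)$.

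For conclusion~1, the operator $T$ of Definition~\ref{defn:7T} integrates against the probability measure $\d\tau$, so nonnegativity is inherited: if $k_n^\delta(\varpi_\l;1,u)\ge0$ for $u\in[-1,1]$ then $K_n^\delta(W;\cdot,\cdot)=T[k_n^\delta(\varpi_\l;1,\cdot)]\ge0$, whence $\sS_n^\delta(w_{\b,\g};f)\ge0$ for $f\ge0$. This reduces conclusion~1 to the classical nonnegativity of the ultraspherical $(C,\delta)$ kernel at the endpoint for $\delta\ge2\l+1$ (the Gegenbauer weight being the Jacobi weight with parameters $(\l-\f12,\l-\f12)$). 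For conclusion~2, the bound \eqref{eq:Tbd} with $p=1$ yields $\sup_{\xb}\int_{\UU}|K_n^\delta(W;\xb,\yb)|W(\yb)\,\d\yb\le\|k_n^\delta(\varpi_\l;1,\cdot)\|_{L^1([-1,1],\varpi_\l)}$, so the operators are uniformly bounded on $L^1(\UU,W)$ and on $C(\UU)$ exactly when this one-dimensional norm stays bounded, which is classical for $\delta>\l$; the usual density argument then gives the convergence.

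For the necessity of $\delta>\l_{0,\g}$ when $\b=0$, I would exploit that the measure $\d\tau$ degenerates to the single integral \eqref{eq:sfPEadd0}: letting $t=s\to1$ makes the factor $\sqrt{1-s^2}\sqrt{1-t^2}$ vanish and collapses $\xi$ onto $\la x,y\ra$, so $K_n^\delta$ reduces to the one-dimensional endpoint kernel and $\sup_{\xb}\int_{\UU}|K_n^\delta|W\gtrsim\|k_n^\delta(\varpi_{\l_{0,\g}};1,\cdot)\|_{L^1}$. Since the latter is unbounded for $\delta\le\l_{0,\g}$, convergence must fail, giving the necessity.

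For the odd case on the cone, the identity \eqref{eq:sfPOadd} is the key. Writing $f(x,s)=s\,g(x,s)$ with $g$ even in $s$ and using $s^2w_{\b,\g}(s)=w_{\b+1,\g}(s)$, a direct computation in which the normalization constants cancel the factor $\frac{\a+\g+1}{\a+\f12}$ exactly gives $\proj_n^O f=t\,\Pi_{n-1}^E(w_{\b+1,\g};g)$, where $\Pi_m^E$ denotes the even projection for $w_{\b+1,\g}$. Summing the Cesàro weights and reindexing then yields $\sS_n^\delta(w_{\b,\g};f)=\frac{n}{n+\delta}\,t\,\sS_{n-1}^\delta(w_{\b+1,\g};g)$; since $\frac{n}{n+\delta}\to1$ and $\l_{\b+1,\g}=\l_{\b,\g}+1$, the odd case inherits both conclusions from the even case with $\b$ replaced by $\b+1$ (conclusion~1 becoming the assertion that $\sS_n^\delta(w_{\b,\g};f)$ keeps the sign of $t$ once $g\ge0$ and $\delta\ge2\l_{\b+1,\g}+1$). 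The main obstacle will be the necessity argument of the third paragraph, where the lower bound for the operator norm must be matched precisely to the one-dimensional critical index; the remaining reductions and the quoted one-variable facts on ultraspherical Cesàro kernels are routine.
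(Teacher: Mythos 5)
Your proposal follows the paper's proof essentially step for step: reduce $\sS_n^\delta(w_{\b,\g};f)$ to $f\ast k_n^\delta(\varpi_{\l_{\b,\g}};1,\cdot)$ via the addition formula and \eqref{eq:(C,d)S_nW}, quote Askey for positivity at $\delta\ge 2\l+1$ and Szeg\H{o} for the $L^1$ bound at $\delta>\l$, prove necessity by testing at $(\xi,1)$, and handle the odd case through \eqref{eq:sfPOadd}. Your bookkeeping in the odd case (the index shift producing $\frac{n}{n+\delta}\,t\,\sS_{n-1}^\delta(w_{\b+1,\g};g)$) is in fact slightly more careful than the paper's.

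The one step you leave open --- the lower bound $\sup_{\xb}\int|K_n^\delta|\,W\gtrsim\|k_n^\delta(\varpi_{\l_{0,\g}};1,\cdot)\|_{L^1(\varpi_{\l_{0,\g}})}$ in the necessity argument --- is a genuine computation, not a formality, because at $(\xi,1)$ the kernel evaluates to $k_n^\delta(\varpi_{\l_{0,\g}};|s|\la\xi,\eta\ra,1)$ and the extra factor $|s|$ in the argument could a priori damp the endpoint singularity and lower the critical index. The paper closes it by integrating the zonal function over $\sph$ to pick up $(1-u^2)^{\frac{d-3}{2}}$, changing variables $t=su$, and evaluating the $s$-integral as a beta function; the surviving weight is exactly $(1-t^2)^{\g+\frac{d-2}{2}}=\varpi_{\l_{0,\g}}(t)$ up to a constant, so the two-variable operator norm at $(\xi,1)$ equals a constant times the one-variable $L^1(\varpi_{\l_{0,\g}})$ norm of the endpoint kernel, and Szeg\H{o}'s theorem gives both directions. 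With that computation supplied, your argument is complete and coincides with the paper's.
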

 
\begin{proof}
Let us denote by $\ast_{\b,\g}$ the convolution defined in Definition \ref{defn:7convol} with respect to 
$W = w_{\b,\g}$ on $\VV_0^{d+1}$. By \eqref{eq:(C,d)S_nW}, it follows from \eqref{eq:sfPEhyp} and 
\eqref{eq:sfPEadd} that 
$$
\sS_n^\delta \big(w_{\b,\g}; f\big) = f \ast_{\b,\g} k_n^\delta \left(\varpi_{\l_{\b,\g}};\cdot,1\right).
$$
It is known \cite{Ask} that $k_n^\delta (\varpi_\l; \cdot, \cdot)$ is nonnegative on $[-1,1]$ if and only if 
$\delta \ge 2 \l +1$, form which the first assertion follows. Furthermore, $\int_{-1}^1 |k_n^\delta (\varpi_\l;1,y)| \d y$ 
is finite if and only if $\delta > \lambda$ \cite[Theorem 9.1.3]{Sz}, from which the sufficiency of the second 
assertion follows from \eqref{eq:(C,d)S_nW} and Theorem \ref{thm:Young}. To prove that $\delta > \l_{0,\g}$ 
is also necessary when $\b = 0$, we consider the convergence of $\sS_n^\delta(w_{0,\g}; f)$ at $(\xi, 1) \in
 \VV_0^{d+1}$, where $\xi \in \sph$, which converges if and only if 
$$
 I_n^\delta :=  \int_{\VV_0^{d+1}} \left| \sK_n^\delta \big(w_{0,\g}; (\xi,1), (y,s) \big) \right|
      w_{0,\g}(s) \d \s (y,s) < \infty.
$$
By \eqref{eq:sfPEhyp} and \eqref{eq:intHyp}, we only need to consider the case of $\varrho = 0$, for which
we obtain by \eqref{eq:sfPEadd0} with $x= \xi \in \sph$, $t =1$, and $y = s\eta$, $\eta \in \sph$ that 
$$
   \sP_n^E\left(w_{0, \g}; (\xi,1), (s\eta,s)\right) = Z_n^{\g+\frac{d-1}{2}} \big(\la \xi,s\eta \ra \mathrm{sign} (s)\big)
   = Z_n^{\g+\frac{d-1}{2}} \big(\la \xi, |s|\eta\ra\big), 
$$
which implies, by \eqref{eq:(C,d)S_nW}, that the kernel of $(C,\delta)$ means of $\sS_n^\delta(w_{0,\g}; f)$
satisfies
$$
\sK_n^\delta \big(w_{0,\g}; f, (\xi,1), (s\eta,s)\big) =  k_n^\delta (\varpi_{\l_{0,\g}}; |s|\la \xi, \eta\ra,1).
$$
The integral of a zonal function $g(\la \xi,y\ra)$ over $\sph$ is known to satisfy (cf. \cite[p. 412]{DaiX})
$$
  \int_{\sph} g(\la \xi,\eta\ra) \d \s(\eta) = \s_{d-1} \int_{-1}^1 g(u) (1-u^2)^{\f{d-3}{2}}\d u.
$$
Hence, recall that $\varpi_{\l_{0,\g}} = \varpi_{\l+\f{d-1}{2}}$, it follows that 
$$
 I_n^\delta  =    \s_{d-1} \int_{-1}^1 |s|^{d-1} \int_{-1}^1 \left|k_n^\delta \big(\varpi_{\l+\f{d-1}{2}}; |s| u \big)\right|
   (1-u^2)^{\f{d-3}{2}}\d u (1-s^2)^{\g-\f12} \d s.
$$
Changing variable $t = su$ and exchanging the order of the integrals, we obtain 
\begin{align*}
 I_n^\delta  =  &\, 2  \s_{d-1} \int_{0}^1 \int_{-s}^s \left|k_n^\delta \big(\varpi_{\l+\f{d-1}{2}}; t,1 \big)\right|
   s(s^2-t^2)^{\f{d-3}{2}} (1-s^2)^{\g-\f12}  \d t \d s\\
    = &\, 2  \s_{d-1} \int_{-1}^1 \left|k_n^\delta \big(\varpi_{\l+\f{d-1}{2}}; t,1 \big)\right|
    \int_{|t|}^1 s(s^2-t^2)^{\f{d-3}{2}} (1-s^2)^{\g-\f12}  \d s  \d t \\
    = &\, \s_{d-1} B\left(\tfrac{d-1}2, \g+\tfrac12\right)
     \int_{-1}^1 \left|k_n^\delta \big(\varpi_{\l+\f{d-1}{2}}; t,1 \big)\right| (1-t^2)^{\g+\f{d-2}{2}} \d t,
\end{align*}
where $B(\a,\b)$ denotes the beta function and the last step follows from evaluating the integral over $s$ by
beta integral.  By \cite[Theorem 9.1.3]{Sz}, the last integral is bounded if and only if $\delta >\l+\f{d-1}{2}$. 
This completes the proof of the second assertion. 

Next we assume $\varrho = 0$ and consider the case when $f$ is odd in $t$ variable. For such an $f$, 
its orthogonal projection satisfies 
$$
  \proj_n\left(w_{\b,\g}; f\right)   = b_{\b,\g}\int_{\VV_0^{d+1}} 
     f(y,s) \sP_n^O\left(w_{\b,\g};  (\cdot,\cdot),(y,s)\right) w_{\b,\g}(y,s) \d \s(y,s),
$$
where $b_{\b,\g}$ is the normalization constant of $w_{\b,\g}$ over $\VV_0^{d+1}$. By \eqref{eq:sfPOadd},
we obtain
\begin{align*}
  \proj_n\left(w_{\b,\g}; f\right) & = b_{\b+1,\g} t \int_{\VV_0^{d+1}} 
     f(y,s) \sP_{n-1}^E\left(w_{\b+1,\g};  (\cdot,\cdot),(y,s)\right) w_{\b+1,\g}(y,s) \d \s(y,s) \\
        & = t  \left(f \ast_{\b+1,\g} Z_n^{\b+\g+ \f{d+1}{2}}\right)(x,t),
\end{align*}
where we have used $\frac{\b+\g+\f{d}2}{\a+\f12} b_{\b,\g} = b_{\b+1,\g}$. In particular, it follows that
$$
  \sS_n^\delta \left(w_{\b,\g}; f\right) =  t \left( f \ast_{\b+1,\g} k_n^\delta(\varpi_{\l_{\b+1,\g}}; \cdot,1) \right).
$$
The additional $t$ in the right hand side shows that $L^1(\VV_0^{d+1}, w_{\b,\g})$ norm of 
$\sS_n^\delta \left(w_{\b,\g}; f\right)$ becomes $L^1(\VV_0^{d+1}, w_{\b+1,\g})$ norm of
$f \ast_{\b+1,\g} k_n^\delta(\varpi_{\b+\g+1}; \cdot,1)$. The rest of the proof then follows exactly as
in the even case. 
\end{proof}
 
For functions that are even in $t$ variable, we could state Theorem \ref{thm:sfConvE} equivalently in 
terms of functions defined on $\VV_{0,+}^{d+1}$; see Theorem \ref{pop:FourierEven}. In \cite{X19}, we
studied the $(C,\delta)$ means for the Fourier orthogonal series on the upper cone $\VV_{0,+}$ with respect 
to the Jacobi weight $t^\b (1-t)^\g$ for $0 \le t \le 1$ and proved, in particular, that the $(C,\delta)$ means 
converge if $\delta > \b+\g+d$ and that this condition is sharp if $\g = -\f12$. To prove that $\delta > \b+ d -\f12$
is sharp when $\g = -\f12$, we considered the convergence at the apex  $(x,t) = (0,0)$ of the cone in \cite{X19}. 

In contrast, we used the convergence at $(x,t) = (\xi,1)$, $\xi \in \sph$, a point at the brink of the cone surface,
to show that $\delta > \g + \frac{d-1}{2}$ is sharp for $\b = 0$ in the proof of Theorem \ref{thm:sfConvE}. 
Given the peculiarity of the point $(0,0)$, one may ask if we could also use $(x,t) = (0,0)$ instead in the proof.

\begin{prop} \label{prop:Cd@(0,0)}
Let $f \in C(\VV_0^{d+1})$ be even in $t$ variable. For $\g \ge 0$, $\sS_n^\delta(w_{0,\g}; f)$ converges to 
$f$ at $(x,t) = (0,0)$ if $\delta > \min \{\frac{d-1}{2},\g\}$ and it is sharp if $\g \ge \f{d-1}{2}$. 
\end{prop}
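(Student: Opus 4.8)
The plan is to reduce the convergence of $\sS_n^\delta(w_{0,\g};f)$ at the apex to the uniform boundedness of a one–dimensional Ces\`aro kernel, and then to read off the critical index from the classical estimates for Jacobi/generalized Gegenbauer expansions. By the Banach--Steinhaus principle, $\sS_n^\delta(w_{0,\g};f)$ converges to $f$ at $(0,0)$ for every even $f\in C(\VV_0^{d+1})$ if and only if
$$
   I_n^\delta:=\int_{\VV_0^{d+1}}\bigl|\sK_n^\delta\bigl(w_{0,\g};(0,0),(y,s)\bigr)\bigr|\,w_{0,\g}(s)\,\d\s(y,s)
$$
is bounded uniformly in $n$, where $\sK_n^\delta$ is the $(C,\delta)$ kernel from \eqref{eq:(C,d)S_nW}. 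So the first step is to compute $\sK_n^\delta$ at the apex.

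First I would evaluate the addition formula \eqref{eq:sfPEadd0} at $(x,t)=(0,0)$: there $\la 0,y\ra=0$ and $\sqrt{1-t^2}=1$, so the argument of $Z_n^{\g+\frac{d-1}{2}}$ collapses to $v\sqrt{1-s^2}$, whence $\sP_n^E(w_{0,\g};(0,0),(y,s))$ depends on $(y,s)$ only through $s$ and vanishes unless $n$ is even. Equivalently, since $\sP_n^E(w_{0,\g};(x,\|x\|),(y,\|y\|))$ equals the reproducing kernel $\Pb_n(\varpi_\g;x,y)$ of $\CV_n^d(\varpi_\g)$ on $\BB^d$ (as recorded after the corollary), the apex value is just the ball kernel $\Pb_n(\varpi_\g;0,y)$ at the centre. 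Carrying this through the Ces\`aro averaging and decomposing the surface integral as in \eqref{eq:intHyp}, the angular variable integrates out trivially (the kernel is independent of $\eta$), and I would obtain, up to constants,
$$
   I_n^\delta\asymp\int_0^1\bigl|\Kb_n^\delta(\varpi_\g;0,r\eta)\bigr|\,r^{d-1}(1-r^2)^{\g-\f12}\,\d r,
$$
namely the $L^1$ norm of the $(C,\delta)$ ball kernel at the centre of $\BB^d$.

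The third step is to recognise this radial quantity as a genuinely one–dimensional object. Since $\Pb_{2j}(\varpi_\g;0,\cdot)$ is radial, the substitution $u=2r^2-1$ converts the integral into the $L^1$ norm of the $(C,\delta)$ kernel of a Jacobi (equivalently generalized Gegenbauer) expansion with parameters $(\g-\f12,\frac{d-2}{2})$, evaluated at the endpoint $u=-1$ and integrated against its own weight. I would then invoke the sharp pointwise/$L^1$ estimates for such kernels, in the spirit of \cite[Theorem 9.1.3]{Sz}: the contribution from the diagonal near the apex is controlled by the endpoint exponent coming from the Jacobian $r^{d-1}$, while the contribution from the rim $r=1$ is controlled by the exponent coming from $(1-r^2)^{\g-\f12}$; balancing the two gives uniform boundedness exactly for $\delta>\min\{\frac{d-1}{2},\g\}$. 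For the sharpness, when $\g\ge\frac{d-1}{2}$ the dominant term is the one governed by $\frac{d-1}{2}$, and a matching lower bound for the kernel near the endpoint forces the $L^1$ norm to diverge once $\delta\le\frac{d-1}{2}$, paralleling the sharpness of the endpoint estimate in \cite[Theorem 9.1.3]{Sz}.

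The hard part will be exactly this one–dimensional kernel estimate. The apex is the point where the surface weight degenerates, so one is forced to control a Ces\`aro kernel at a \emph{zero} of the weight and to estimate the two competing endpoint contributions simultaneously; pinning down which of $\frac{d-1}{2}$ and $\g$ wins is what produces the $\min$. A secondary technical nuisance is that only even ball–degrees survive at the apex, so one must justify that the resulting gapped Ces\`aro profile is comparable to the honest $(C,\delta)$ profile at level $\lfloor n/2\rfloor$ and hence leaves the critical index unchanged. Everything preceding this step is bookkeeping resting on \eqref{eq:sfPEadd0}, \eqref{eq:(C,d)S_nW} and the ball addition formula \eqref{eq:PnBall}.
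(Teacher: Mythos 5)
Your reduction coincides with the paper's up to a change of variables: the paper also evaluates \eqref{eq:sfPEadd0} at the apex, recognizes the $v$-integral via \eqref{eq:gGegenIntertw} and \eqref{eq:gGegen@1} as the reproducing kernel of a generalized Gegenbauer expansion, namely $C_n^{(\f{d-1}{2},\g)}(1)C_n^{(\f{d-1}{2},\g)}(\sqrt{1-s^2})/h_n^{(\f{d-1}{2},\g)}$, and converts the surface integral into the $L^1$ norm of the one-dimensional Ces\`aro kernel $k_n^\delta(\varpi_{\f{d-1}{2},\g};\cdot,1)$ at the point $1$. Your detour through the ball kernel at the centre and the substitution $u=2r^2-1$ lands on essentially the same object (the even part of that kernel, written in Jacobi form), and your worry about the gapped Ces\`aro profile is harmless, since $(1-2j/n)^\delta=(1-j/(n/2))^\delta$.

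The gap is in the final step, which is the only hard one. The threshold $\min\{\f{d-1}{2},\g\}$ cannot be extracted from Szeg\H{o}-type Jacobi endpoint estimates, and ``balancing the two endpoint contributions'' cannot produce a minimum: if the piece of the integral near the apex requires $\delta>\f{d-1}{2}$ and the piece near the rim requires $\delta>\g$, their sum is bounded only when $\delta$ exceeds the \emph{maximum} of the two thresholds, never the minimum. Moreover, applying \cite[Theorem 9.1.3]{Sz} to the Jacobi expansion you arrive at (parameters $(\g-\f12,\f{d-2}{2})$, kernel at the endpoint where the local exponent is $\f{d-2}{2}$) yields the single threshold $\f{d-1}{2}$ with no occurrence of $\g$ at all. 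The minimum in the proposition is a genuinely non-classical phenomenon tied to the generalized Gegenbauer (that is, $\ZZ_2$-Dunkl) expansion with the interior singularity $|u|^{2\g}$ and its intertwining structure: it is the sharp critical index for $(C,\delta)$ summability of $s_n^\delta(\varpi_{\f{d-1}{2},\g};g)$ at $t=1$, which the paper imports wholesale from \cite[Section 8.5]{DaiX}. Your proposal neither identifies this result nor offers a substitute; the tools you name would, if carried out, return $\f{d-1}{2}$ for every $\g$ rather than $\min\{\f{d-1}{2},\g\}$, and would leave unexplained why sharpness is asserted only in the regime $\g\ge\f{d-1}{2}$.
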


\begin{proof}
By \eqref{eq:sfPEadd0}, we obtain 
\begin{align*}
  \sP_n^E\left(w_{0,\g}; (0,0); (y,s)\right)\, & = c_{\g-\f12} \int_{-1}^1 Z_n^{\g+\f{d-1}{2}}\left(v \sqrt{1-s^2} \right)
       (1-v^2)^{\g-\f12}d\g  \\
     & = \frac{C_n^{(\f{d-1}{2}, \g)}(1)C_n^{(\f{d-1}{2}, \g)}\left(\sqrt{1-s^2} \right)}{h_n^{(\f{d-1}{2}, \g)}},
\end{align*}
where we have used \eqref{eq:gGegenIntertw} and \eqref{eq:gGegen@1} of the generalized Gegenbauer 
polynomials. It follows readily that the kernel of the $(C,\delta)$ means satisfies 
$$
  \sK_n^\delta \left(w_{0,\g}; (0,0); (y,s)\right) = k_n^\delta\left(\varpi_{\f{d-1}{2}, \g}; \sqrt{1-s^2}, 1\right) 
$$
in terms of the kernel for the weight function $\varpi_{\f{d-1}{2}, \g}(u) = |u|^{d-1}(1-u^2)^{\g -\f12}$. Hence,
\begin{align*}
   \int_{\VV_0^{d+1}} \left| \sK_n^\delta \left(w_{0,\g}; (0,0); (y,s)\right)\right| w_{0,\g}(s) \d \s 
    &  =  \s_d \int_{-1}^1 \left| k_n^\delta\left(\varpi_{\f{d-1}{2}, \g}; \sqrt{1-s^2}, 1\right) \right| w_{0,\g}(s)\d s \\
    & = \s_d \int_{-1}^1 \left| k_n^\delta\left(\varpi_{\f{d-1}{2}, \g}; u, 1\right) \right| \varpi_{\f{d-1}{2}, \g}(u)\d u,
\end{align*}
where $\s_d$ is the surface area of $\sph$. It follows that $S_n^\delta(w_{\b,\g}; f, (0,0))$ converges if 
and only if $s_n^\delta (\varpi_{\f{d-1}{2}, \g}; g)$ converges at $t=1$ for any continuous function $g$, which
holds if $\delta > \min \{\f{d-1}{2}, \g\}$ and only if for $\g > \f{d-1}{2}$, as shown in \cite[Section 8.5]{DaiX}. 
\end{proof}

Together with \cite{X19}, our study shows that while the origin $(0,0)$ is a critical point for the Fourier 
orthogonal series in the Jacobi polynomials on the upper cone, it is no longer the case for the Fourier 
orthogonal series in the Gegenbauer polynomials on the double cone. 

\subsubsection{Ces\`aro summability on the solid hyperboloid}
Our next theorem is about the Fourier orthogonal series on the solid hyperboloid, for which
$\UU = \VV^{d+1}$, $W(x,t) = W_{\b,\g,\mu}(x,t)$, defined in \eqref{eq:6Weight}, and $\varpi =\varpi_\l$
is the Gegenbauer weight function with $\l= \b+\g+ \mu+ \frac{d -1}{2}$ by \eqref{eq:PEadd}. 

\begin{thm} \label{thm:ConvE}
Let $\b \ge \f12$, $\g, \mu \ge 0$ and define $\l_{\b,\g,\mu}: = \b+\g+ \mu+ \frac{d -1}{2}$. If 
$f \in L^1({}_\varrho\VV^{d+1}, W_{\b,\g,\mu})$, $\varrho \ge 0$, is even in the variable $t$, then the Ces\`aro 
$(C,\delta)$ means for the Fourier orthogonal series with respect to $w_{\b,\g}$ on ${}_\varrho\VV^{d+1}$ satisfy 
\begin{enumerate} [\quad 1.]
\item if $\delta \ge 2 \l_{\b,\g}+1$, then $\Sb_n^\delta(W_{\b,\g,\mu}; f)$ is nonnegative if $f$ is nonnegative;
\item $\Sb_n^\delta (W_{\b,\g,\mu}; f)$ converge to $f$ in $L^1({}_\varrho\VV^{d+1}, W_{\b,\g,\mu})$ or 
$C({}_\varrho\VV^{d+1})$ norm if $\delta > \l_{\b,\g,\mu}$, and $\delta > \l_{0,\g,\mu}$ is also necessary for
$\b = \f12$. 
\end{enumerate}
Furthermore, if $\rho = 0$, and $f\in L^1({}_0\VV^{d+1}, W_{\b,\g,\mu})$ is odd in the variable $t$, then the 
same conclusion holds with $\l_{\b,\g,\mu}$ replaced by $\l_{\b+1,\g,\mu}$. 
\end{thm}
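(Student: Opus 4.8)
The plan is to push everything down to the one–dimensional Ces\`aro kernel $k_n^\delta(\varpi_\l)$ for the Gegenbauer weight $\varpi_\l(u)=(1-u^2)^{\l-\f12}$, exactly mirroring the proof of Theorem \ref{thm:sfConvE} but with the ball addition formula in place of the spherical one. First I would combine \eqref{eq:(C,d)S_nW} with the hyperboloid–to–cone reduction \eqref{eq:PEhyp} and the solid–cone addition formula \eqref{eq:PEadd} of Theorem \ref{thm:PbEInt}, so that, working on the even subspace on which $\Pb_n^E$ is the reproducing kernel, for $f$ even in $t$ one gets
\begin{equation*}
  \Sb_n^\delta\big(W_{\b,\g,\mu};f\big)=f\ast_{\b,\g,\mu}k_n^\delta\big(\varpi_{\l_{\b,\g,\mu}};\cdot,1\big),
\end{equation*}
where $\ast_{\b,\g,\mu}$ is the convolution of Definition \ref{defn:7convol} attached to $W_{\b,\g,\mu}$ and $\l_{\b,\g,\mu}=\b+\g+\mu+\f{d-1}{2}$ is the index of $Z_n$ in \eqref{eq:PEadd}. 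For $\varrho>0$ the reduction \eqref{eq:PEhyp} together with the integral identity \eqref{eq:intHypSolid} shows that the relevant norms on ${}_\varrho\VV^{d+1}$ collapse to those on the solid cone, so it suffices to treat $\varrho=0$.

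With this identity in hand, Assertion 1 is immediate from Askey's theorem \cite{Ask}: the kernel $k_n^\delta(\varpi_\l;1,\cdot)$ is nonnegative precisely for $\delta\ge 2\l+1$, and $T$ preserves nonnegativity by \eqref{eq:7T}, so $\Sb_n^\delta$ maps nonnegative $f$ to nonnegative functions. For the sufficiency half of Assertion 2 I would invoke Szeg\H{o}'s criterion \cite[Theorem 9.1.3]{Sz}, by which $\sup_n\int_{-1}^1|k_n^\delta(\varpi_\l;1,u)|\varpi_\l(u)\d u<\infty$ iff $\delta>\l$; feeding this into \eqref{eq:(C,d)S_nW} and Young's inequality (Theorem \ref{thm:Young}) yields convergence of $\Sb_n^\delta(W_{\b,\g,\mu};f)$ in $L^1$ and in $C$ norm whenever $\delta>\l_{\b,\g,\mu}$.

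The necessity for $\b=\f12$ is the crux, and I would model it on the surface computation by testing at the corner $(x_0,1)$ with $x_0\in\sph$, i.e. on the brink $\|x\|=t=1$. There both the $u$– and the $v$–integrals in Corollary \ref{cor:SolidCone} collapse, leaving $\sK_n^\delta\big(W_{\f12,\g,\mu};(x_0,1),(y,s)\big)=k_n^\delta\big(\varpi_{\l_{\f12,\g,\mu}};\la x_0,y\ra\,\mathrm{sign}(s),1\big)$. Integrating $|\sK_n^\delta|$ against $W_{\f12,\g,\mu}$ over $\VV^{d+1}$, I would then (i) scale $y=|s|z$ with $z\in\BB^d$, (ii) project the ball measure onto the axis through $\int_{\BB^d}g(\la x_0,z\ra)(1-\|z\|^2)^{\mu-\f12}\d z=c\int_{-1}^1 g(r)(1-r^2)^{\mu+\f{d-2}{2}}\d r$, (iii) substitute $\tau=sr$ and exchange the order of integration, and (iv) evaluate the inner integral over $s$ by a beta integral. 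The outcome is the one–dimensional integral $\int_{-1}^1|k_n^\delta(\varpi_{\l_{\f12,\g,\mu}};\tau,1)|(1-\tau^2)^{\l_{0,\g,\mu}}\d\tau$, in which the effective weight $(1-\tau^2)^{\l_{0,\g,\mu}}$ is exactly $\varpi_{\l_{\f12,\g,\mu}}$; by \cite[Theorem 9.1.3]{Sz} it is bounded iff $\delta>\l_{\f12,\g,\mu}$, which is sharp and in particular forces the stated necessity. I expect the main obstacle to be precisely the exponent bookkeeping across steps (i)--(iv): the powers of $|s|$ coming from the measure, from the scaling $y=|s|z$, and from the beta integral must combine so that the weight surviving on $\tau$ lines up with the Gegenbauer weight of the kernel; a single miscount shifts the critical index.

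Finally, the odd case for $\varrho=0$ follows the surface pattern almost verbatim. The identity \eqref{eq:POadd} rewrites $\proj_n(W_{\b,\g,\mu};f)$ through the even kernel with $\b$ raised to $\b+1$ and an extra prefactor $st$, so that $\Sb_n^\delta(W_{\b,\g,\mu};f)=t\big(f\ast_{\b+1,\g,\mu}k_n^\delta(\varpi_{\l_{\b+1,\g,\mu}};\cdot,1)\big)$; the factor $t$ merely converts the $W_{\b,\g,\mu}$–norm into the $W_{\b+1,\g,\mu}$–norm, and the even–case estimates then apply with $\l_{\b,\g,\mu}$ replaced by $\l_{\b+1,\g,\mu}$.
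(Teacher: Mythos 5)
Your proposal is correct and follows essentially the same route as the paper: reduction to the solid cone via \eqref{eq:PEhyp}, the identification $\Sb_n^\delta(W_{\b,\g,\mu};f)=f\ast k_n^\delta(\varpi_{\l_{\b,\g,\mu}};\cdot,1)$ from \eqref{eq:PEadd}, Askey and Szeg\H{o} for positivity and sufficiency, the boundary test point $(\xi,1)$ with $\xi\in\sph$ for necessity, and \eqref{eq:POadd} for the odd case; your exponent bookkeeping in steps (i)--(iv) does close up, the surviving weight being $(1-\tau^2)^{\g+\mu+\f{d-1}{2}}=\varpi_{\l_{\f12,\g,\mu}}(\tau)$. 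Note that this yields the sharp necessity $\delta>\l_{\f12,\g,\mu}$, which is exactly what the paper's own computation produces and is slightly stronger than the $\delta>\l_{0,\g,\mu}$ appearing in the statement.
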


\begin{proof}
Using \eqref{eq:PEhyp}, \eqref{eq:PEadd} and \eqref{eq:POadd}, the proof of this theorem follows 
almost verbatim as that of Theorem \ref{thm:sfConvE}. We state necessary formulas for proving
that $\delta > \l_{0,\g,\mu}$ is necessary when $\b = \f12$ below and omit rest of the proof. 

To prove that $\delta > \l_{\f12,\g,\mu}$ is necessary, we consider the convergence of 
$\Sb_n^\delta(W_{\f12,\g,\mu}; f)$ at $(\xi, 1) \in  \VV^{d+1}$, where $\xi \in \sph$. By \eqref{eq:PEhyp} and \eqref{eq:intHypSolid}, we only need to consider the case of $\varrho = 0$, for which
we obtain by \eqref{eq:sfPEadd0} with $x= \xi \in \sph$, $t =1$, and $y = s u$, $u \in \BB^d$ that 
$$
   \Pb_n^E\left(W_{\f12, \g,\mu}; (\xi,1), (s u,s)\right) = Z_n^{\g+\mu+\frac{d}{2}} \big(\la \xi,s u \ra \mathrm{sign} (s)\big)
   = Z_n^{\g+\mu + \frac{d}{2}} \big(|s| \la \xi, u\ra\big). 
$$
The integral of a zonal function $g(\la \xi,u\ra)$ over $\BB^d$ is known to satisfy (cf. \cite[p. 412]{DaiX})
$$
  \int_{\BB^d} g(\la \xi,u\ra) (1-\|u\|^2)^{\mu-\f12} \d u = c_\mu \int_{-1}^1 g(t) (1-t^2)^{\mu+ \f{d-2}{2}} \d t.
$$
Using these identities, the proof can be carried out as in the proof of Theorem \ref{thm:sfConvE}.
\end{proof}

For functions that are even in $t$, we can also state Theorem \ref{thm:ConvE} equivalently for functions 
defined on $\VV_{+}^{d+1}$; see Theorem \ref{pop:FourierEvenSolid}. We could also compare our result 
with the Jacobi polynomials on the cone in \cite{X19}, where it was shown that $(C,\delta)$ means for the
Fourier orthogonal series on the upper cone $\VV_{+}$ with respect to the Jacobi weight 
$t^\b (1-t)^\g (t^2-\|x\|^2)^{\mu-\f12}$ converge if $\delta > 2\mu + \b+\g+d$ and that this condition 
is sharp if $\g = -\f12$.  

Like the case on the surface of the cone, the origin $(0,0)$ is no longer a critical point of the
Fourier orthogonal series in Gegenbauer polynomials on the solid double cone. 

\begin{prop}
Let $f \in C(\VV^{d+1})$ be even in $t$ variable. For $\g \ge 0$ and $\mu \ge 0$,
$\Sb_n^\delta(W_{\f12,\g,\mu}; f)$ converges to $f$ at $(x,t) = (0,0)$ if and only if 
$\delta > \min \{\mu+\frac{d}{2},\g\}$.
\end{prop}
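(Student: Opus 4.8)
The plan is to follow the proof of Proposition~\ref{prop:Cd@(0,0)} closely, now starting from the closed-form kernel on the solid double cone given in Corollary~\ref{cor:SolidCone}. First I would evaluate $\Pb_n^E\big(W_{\frac12,\g,\mu};(x,t),(y,s)\big)$ at $(x,t)=(0,0)$. There $\la x,y\ra=0$, $\sqrt{t^2-\|x\|^2}=0$ and $\mathrm{sign}(st)=0$, so the whole first block of the argument of $Z_n$ drops out, while $\sqrt{1-t^2}=1$; the argument therefore collapses to $v\sqrt{1-s^2}$ and no longer depends on the inner variable $u$. The $u$-integral then decouples and, by the normalization of $c_{\mu-\frac12}$, contributes only the factor $1$. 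Writing $\a=\b+\mu+\frac{d-1}{2}$ as in Theorem~\ref{thm:PbEInt}, so that $\a+\g=\g+\mu+\frac d2$ when $\b=\frac12$, this leaves
\[
 \Pb_n^E\big(W_{\frac12,\g,\mu};(0,0),(y,s)\big)
   = c_{\g-\frac12}\int_{-1}^1 Z_n^{\,\g+\mu+\frac d2}\big(v\sqrt{1-s^2}\big)\,(1-v^2)^{\g-1}\,\d v,
\]
a function of $s$ alone through $\sqrt{1-s^2}$.

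Next I would recognize this integral, via the intertwining identity \eqref{eq:gGegenIntertw} for the generalized Gegenbauer polynomials together with the value \eqref{eq:gGegen@1} at $1$, as the degree-$n$ reproducing-kernel term of the one-dimensional generalized Gegenbauer weight
\[
 \varpi(u)=|u|^{2\g}\,(1-u^2)^{\mu+\frac{d-1}{2}},\qquad u\in[-1,1],
\]
whose two half-parameters are $\g$ and $\mu+\frac d2$, evaluated at the pair $\big(\sqrt{1-s^2},1\big)$; hence the $(C,\delta)$ kernel satisfies $\Kb_n^\delta\big(W_{\frac12,\g,\mu};(0,0),(y,s)\big)=k_n^\delta\big(\varpi;\sqrt{1-s^2},1\big)$. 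To build the Lebesgue constant I would integrate $\big|\Kb_n^\delta\big|$ against $W_{\frac12,\g,\mu}$ over $\VV^{d+1}$. Because this kernel is independent of $y$, the inner integral over the ball $\{\|y\|\le|s|\}$ of $(s^2-\|y\|^2)^{\mu-\frac12}$ is a fixed constant times $|s|^{2\mu+d-1}$; applying the decomposition \eqref{eq:intHypSolid} with $\b=\frac12$ and $\varrho=0$ reduces the whole expression to a single integral in $s$, and the substitution $u=\sqrt{1-s^2}$, after the routine beta-integral bookkeeping, turns it into a constant multiple of $\int_{-1}^1 \big|k_n^\delta(\varpi;u,1)\big|\,\varpi(u)\,\d u$, precisely the Lebesgue constant of $\varpi$ at the endpoint $u=1$.

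Finally, by \eqref{eq:(C,d)S_nW} the value $\Sb_n^\delta(W_{\frac12,\g,\mu};f)(0,0)$ is the integral of $f$ against $\Kb_n^\delta(W_{\frac12,\g,\mu};(0,0),\cdot)$; so, by the uniform boundedness principle together with the density of polynomials, $\Sb_n^\delta(W_{\frac12,\g,\mu};f)$ converges to $f$ at $(0,0)$ for every even $f\in C(\VV^{d+1})$ if and only if the above Lebesgue constants are bounded in $n$, equivalently the Ces\`aro means $s_n^\delta(\varpi;g)$ converge at $u=1$ for all $g\in C[-1,1]$. I would then invoke the sharp endpoint summability result for generalized Gegenbauer expansions \cite[Section~8.5]{DaiX}, which for this $\varpi$ gives boundedness exactly for $\delta>\min\{\mu+\frac d2,\g\}$, establishing both directions of the proposition. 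The crux, and the main place an error could hide, is the parameter bookkeeping in the middle step: one must confirm that the extra solid factor $(t^2-\|x\|^2)^{\mu-\frac12}$ together with the choice $\b=\frac12$ shifts the two competing half-parameters of the effective one-dimensional problem from $\{\frac{d-1}{2},\g\}$ in the surface case of Proposition~\ref{prop:Cd@(0,0)} to $\{\mu+\frac d2,\g\}$, so that the threshold is their minimum. The symmetry of the $\min$ renders the outcome insensitive to the harmless swap of the two parameters produced by the quadratic change of variable $u=\sqrt{1-s^2}$, but the precise exponents must still be checked against \eqref{eq:gGegenIntertw} and \eqref{eq:intHypSolid}.
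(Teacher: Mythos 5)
Your proposal is correct and follows essentially the same route as the paper, which simply cites Corollary \ref{cor:SolidCone} and says to repeat the argument of Proposition \ref{prop:Cd@(0,0)}; your write-up supplies exactly the details the paper omits, and your parameter bookkeeping (index $\g+\mu+\tfrac d2$ for $Z_n$, effective one-dimensional weight with half-parameters $\{\g,\mu+\tfrac d2\}$) agrees with the paper's kernel identity $\Kb_n^\delta(W_{\f12,\g,\mu};(0,0),(y,s))=k_n^\delta(\varpi_{\mu+\f d2,\g};\sqrt{1-s^2},1)$ up to the harmless swap of the two parameter slots that you already note is absorbed by the symmetry of the minimum. No gaps.
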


\begin{proof}
Using the identity in Proposition \ref{cor:SolidCone}, we can follow the proof of Proposition \ref{prop:Cd@(0,0)}
to show that the kernel of the $(C,\delta)$ means satisfies 
$$
  \Kb_n^\delta \left(W_{\f12,\g,\mu}; (0,0); (y,s)\right) = k_n^\delta\left(\varpi_{\mu+\f{d}{2}, \g}; \sqrt{1-s^2}, 1\right), 
$$
where $\varpi_{\mu+\f{d}{2}, \g}(s) = |s|^{2\mu +d}(1-s^2)^{\g-\f12}$, from which the stated result follows as in 
the proof of Proposition \ref{cor:SolidCone}. 
\end{proof}
 
 \section{Hermite polynomials in and on a hyperboloid} \label{sec:sfHermiteHyp}
\setcounter{equation}{0}

Assuming that the weight function contains $e^{-t^2}$ in the $t$ variable, we discuss the corresponding 
orthogonal polynomials on the surface of a hyperboloid $\VV_0^{d+1}$ and on the domain bounded by the
surface $\VV^{d+1}$ in two consecutive subsections. In both cases, our discussion includes the double 
cone with $\varrho =0$. These polynomials can be treated as the limits of the generalized Gegenbauer 
polynomials studied in Section \ref{sec:Gegen}.

\subsection{Hermite polynomials on the surface of a hyperboloid} 
We consider orthogonal polynomials on the surface of hyperboloid
$$
  {}_\varrho\VV_0^{d+1} = \left\{(x,t): \|x\|^2 = t^2 - \varrho^2, \, x \in \RR^d, \, |t| \ge \varrho\right\},
$$
which is a double hyperboloid when $\varrho > 0$ and degenerates to a double cone when $\varrho = 0$.
We choose the weight function $w$ as 
$$
 w_{\b}(t) = |t| (t^2-\varrho^2)^{\b-\f12} e^{-t^2},  \quad \b > -\tfrac12, \quad |t| \ge \varrho \ge 0,
$$
which is an even function defined on $\VV_0^{d+1}$. Correspondingly, the inner product becomes 
$$
  \la f,g\ra_{w_\b} =  b_\b  \int_{\VV_0^{d+1}} f(x,t) g(x,t) |t| (t^2-\varrho^2)^{\b-\f12} e^{-t^2} \d \s(x,t),
$$
where $b_{\b} =e^{\varrho^2}/(\Gamma(\b+\frac{d}2) \s_d)$ and $\s_d$ is the surface area of $\sph$.

\subsubsection{Double cone}
We consider the cases $\varrho = 0$ first, for which our construction give a full basis of orthogonal 
polynomials explicitly. In this case
$$
w_\b(t) = |t|^{2\b} e^{- t^2}, \qquad \b > - \tfrac{d}2,
$$ 
where the condition $\b > - \tfrac{d}2$ guarantees that $w_\b$ is integrable over $\VV_0^{d+1}$. The 
polynomials $q_k^{(m)}$ in Proposition \ref{prop:sfOPbasis} are orthogonal with respect to 
$$
         |t|^{2m+d-1} w_\b(t) = |t|^{2m +2\b + d-1} e^{-t^2},
$$ 
and, hence, are the generalized Hermite polynomials $H_n^{\a}$, with $\a = m + \b + \frac{d-1}{2}$,
that are given in the Appendix \ref{sect:append}. In particular, $H_n^\a$ is given explicitly in
\eqref{eq:gHermite} and the square of its $L^2$ norm, $h_n^\a$, is given in \eqref{eq:gHermiteNorm}. 

The orthogonal polynomials given in \eqref{eq:sfOPbasis} are now specialized as follows: 

\begin{prop}\label{prop:sfOPHermite}
Let $\{Y_\ell^m: 1 \le \ell \le \dim \CH_n^d\}$ denote an orthonormal basis of $\CH_m^d$.
Then the polynomials 
\begin{equation}\label{eq:sfOPconeH}
  \sH_{m,\ell}^n (x,t) := H_{n-m}^{m+\b + \frac{d-1}{2}}(t) Y_\ell^{m}(x), 
  \quad 1 \le \ell \le \dim \CH_m^d, \quad 0 \le m \le n,
\end{equation}
form an orthogonal basis of $\CV_n(\VV_{0}^{d+1}, w_\b)$. Moreover, 
\begin{equation}\label{eq:sfOPconeHnorm}
  h_{m,n}^\sH: = \la \sH_{m,\ell}^n, \sH_{m,\ell}^n\ra_{w_\b} =  (\b+\tfrac{d}2)_m h_{n-m}^{m+\frac{\b+d-1}{2}}.
\end{equation}
\end{prop}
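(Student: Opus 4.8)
The plan is to follow the proof of Proposition~\ref{prop:sfOPconeG} essentially verbatim, since the present statement is its Hermite counterpart: the Gegenbauer factors $C_{n-m}^{(\g,\,\cdot)}$ are replaced by generalized Hermite polynomials $H_{n-m}^{\,\cdot}$ and the factor $(1-t^2)^{\g-\f12}$ by $e^{-t^2}$, while the spherical-harmonic factor is left untouched.

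First I would dispose of the basis claim by a direct appeal to Proposition~\ref{prop:sfOPbasis}. With $\varrho=0$ and $w=w_\b$, the weight controlling the one-variable factor $q_{n-m}^{(m)}$ is $|t|^{2m+d-1}w_\b(t) = |t|^{2m+2\b+d-1}e^{-t^2} = |t|^{2\a}e^{-t^2}$ with $\a = m+\b+\f{d-1}{2}$, which is exactly the generalized Hermite weight of Appendix~\ref{sect:append}. Thus $q_{n-m}^{(m)}=H_{n-m}^{\,\a}$ and $\sQ_{m,\ell}^n = \sH_{m,\ell}^n$, so Proposition~\ref{prop:sfOPbasis} immediately certifies that $\{\sH_{m,\ell}^n\}$ is an orthogonal basis of $\CV_n(\VV_0^{d+1},w_\b)$ with no further work.

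For the norm I would use the surface decomposition on the cone, $\int_{\VV_0^{d+1}}f\,\d\s = \int_{-\infty}^\infty |t|^{d-1}\int_{\sph}f(t\xi,t)\,\d\s(\xi)\,\d t$, together with the homogeneity $Y_\ell^m(t\xi)=t^m Y_\ell^m(\xi)$ and the orthonormality of $Y_\ell^m$, which gives $\int_{\sph}|Y_\ell^m|^2\,\d\s = \s_d$. The cross terms vanish and the computation collapses to
\begin{align*}
  h_{m,n}^\sH = b_\b\,\s_d\int_{-\infty}^\infty \big|H_{n-m}^{\,\a}(t)\big|^2\,|t|^{2m+2\b+d-1}e^{-t^2}\,\d t.
\end{align*}
It then remains to recognize the integral as $h_{n-m}^{\,\a}$ divided by the normalization constant of $|t|^{2\a}e^{-t^2}$; since $\int_{-\infty}^\infty |t|^{2\a}e^{-t^2}\,\d t = \Gamma(\a+\tfrac12) = \Gamma(m+\b+\tfrac d2)$, this constant is $1/\Gamma(m+\b+\tfrac d2)$. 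Combining with $b_\b = 1/(\Gamma(\b+\tfrac d2)\s_d)$ (the factor $e^{\varrho^2}$ being $1$ for $\varrho=0$), the $\s_d$ cancels and the Gamma factors telescope into $\Gamma(m+\b+\tfrac d2)/\Gamma(\b+\tfrac d2) = (\b+\tfrac d2)_m$, so that $h_{m,n}^\sH = (\b+\tfrac d2)_m\,h_{n-m}^{\,\a}$ with $\a=m+\b+\f{d-1}{2}$, as stated.

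There is no genuine obstacle here beyond the bookkeeping of normalization constants, which is the usual place a computation of this type goes astray: one must keep consistent track of whether $h_n^\a$ in the Appendix is defined against the probability-normalized weight or the bare weight, and correctly pair the constant $b_\b$ against the surface area $\s_d$ produced by the spherical integral. Everything else is parallel to Proposition~\ref{prop:sfOPconeG}.
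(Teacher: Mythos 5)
Your proof is correct and follows the paper's own route exactly: the basis claim is Proposition \ref{prop:sfOPbasis} specialized to $w_\b$ with $\varrho=0$, and the norm computation mirrors that of Proposition \ref{prop:sfOPconeG}, with your bookkeeping of $b_\b$, $\s_d$ and the generalized Hermite normalization $\int_\RR |t|^{2\a}e^{-t^2}\,\d t=\Gamma(\a+\tfrac12)=\Gamma(m+\b+\tfrac d2)$ all checking out. One small point in your favor: your exponent $\a=m+\b+\f{d-1}{2}$ on $h_{n-m}$ is the consistent one (it matches the index of $H_{n-m}$ in \eqref{eq:sfOPconeH}), whereas the superscript $m+\frac{\b+d-1}{2}$ displayed in \eqref{eq:sfOPconeHnorm} is evidently a typo.
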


\begin{proof}
The polynomials in \eqref{eq:sfOPconeH} consist of an orthogonal basis by 
Proposition \ref{eq:sfOPbasis}. The norm is computed as in Proposition \eqref{prop:sfOPconeG}.
\end{proof}

\begin{rem}\label{rem:4.1}
If $n-m$ is odd, then the polynomials $\sH_{m,\ell}^n(x,t)$ contains a factor $t$ by \eqref{eq:gHermite}.
Consequently, the space $\CV_n^O(\VV_0^{d+1}, w_\b)$ that contains these polynomials are well 
defined for $\b > - \tfrac{d + 2}2$. In particular, it is well defined for $\b = -1$ for all $d \ge 1$. 
\end{rem}

We shall call these polynomials {\it Hermite polynomials on the cone} when $\b = 0$ and 
{\it generalized Hermite polynomials on the cone} when $\b \ne 0$.  

\begin{thm}\label{thm:sfGtoH}
Let $\sC_{m,\ell}^n$ be the generalized Gegenbauer polynomials in $\CV_n(\VV_0^{d+1}, w_{\b,\g})$
defined at \eqref{eq:sfOPconeG}. Then
\begin{equation}\label{eq:sfGtoH}
\lim_{\g \to \infty} \frac{\sC_{m,\ell}^n\left(\frac{x}{\sqrt{\g}}, \frac{t}{\sqrt{\g}} \right)}{\sqrt{h_{m,n}^\sC}}
    = \frac{\sH_{m,\ell}^n(x,t)}{\sqrt{h_{m,n}^\sH}}.
\end{equation}
\end{thm}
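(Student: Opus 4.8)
The plan is to peel off the spherical harmonic factor, reducing \eqref{eq:sfGtoH} to a purely one-dimensional limit, and then to match the $\g$-dependent normalizing constants. Write $\a=m+\b+\f{d-1}2$. Since $Y_\ell^m$ is homogeneous of degree $m$, one has $Y_\ell^m(x/\sqrt\g)=\g^{-m/2}Y_\ell^m(x)$, so that by \eqref{eq:sfOPconeG} and \eqref{eq:sfOPconeH},
\begin{equation*}
  \sC_{m,\ell}^n\Big(\tfrac{x}{\sqrt\g},\tfrac{t}{\sqrt\g}\Big)
     = \g^{-m/2}\, C_{n-m}^{(\g,\a)}\Big(\tfrac{t}{\sqrt\g}\Big)\,Y_\ell^m(x),
  \qquad
  \sH_{m,\ell}^n(x,t)= H_{n-m}^{\a}(t)\,Y_\ell^m(x).
\end{equation*}
The common factor $Y_\ell^m(x)$ cancels on the two sides of \eqref{eq:sfGtoH}, so it suffices to treat the $t$-factors.

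Next I would insert the explicit norms. By \eqref{eq:sfOPconeGnorm} and \eqref{eq:sfOPconeHnorm},
\begin{equation*}
  h_{m,n}^\sC=\f{(\b+\f d2)_m}{(\b+\g+\f{d+1}2)_m}\,h_{n-m}^{(\g,\a)},
  \qquad
  h_{m,n}^\sH=(\b+\tfrac d2)_m\,h_{n-m}^{\a},
\end{equation*}
so the factor $(\b+\f d2)_m$ cancels and \eqref{eq:sfGtoH} becomes equivalent to
\begin{equation*}
  \lim_{\g\to\infty}\g^{-m/2}\sqrt{\big(\b+\g+\tfrac{d+1}2\big)_m}\;
     \f{C_{n-m}^{(\g,\a)}\big(t/\sqrt\g\big)}{\sqrt{h_{n-m}^{(\g,\a)}}}
   =\f{H_{n-m}^{\a}(t)}{\sqrt{h_{n-m}^{\a}}}.
\end{equation*}
Because $(\b+\g+\f{d+1}2)_m=\g^m\big(1+O(\g^{-1})\big)$, the prefactor $\g^{-m/2}\sqrt{(\b+\g+\f{d+1}2)_m}$ tends to $1$; the $\g^{-m/2}$ arising from the homogeneity of $Y_\ell^m$ is exactly absorbed by the Pochhammer symbol produced by the norm. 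Thus everything reduces to the one-variable orthonormal limit
\begin{equation*}
  \lim_{\g\to\infty}\f{C_{n-m}^{(\g,\a)}\big(t/\sqrt\g\big)}{\sqrt{h_{n-m}^{(\g,\a)}}}
   =\f{H_{n-m}^{\a}(t)}{\sqrt{h_{n-m}^{\a}}}.
\end{equation*}

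This last limit is the Gegenbauer-to-Hermite degeneration for the one-variable polynomials of Appendix \ref{sect:append}: one keeps the factor $|t|^{2\a}$ fixed while sending the $(1-t^2)^{\g-\f12}$ part, under the scaling $t\mapsto t/\sqrt\g$ and suitable renormalization, to the Gaussian via $(1-t^2/\g)^{\g-\f12}\to e^{-t^2}$. I would prove it from the explicit series \eqref{eq:gGegen} and \eqref{eq:gHermite} by a termwise passage to the limit (both sums terminate), together with the matching asymptotics of the squared norms \eqref{eq:gGegenNorm} and \eqref{eq:gHermiteNorm}; this is the one-dimensional fact I would record in the Appendix. The main obstacle is not the analytic convergence of the polynomials, which is routine once the weights are matched, but the bookkeeping of the several $\g$-dependent constants — the scaling $\g^{-(n-m)/2}$ hidden in $C_{n-m}^{(\g,\a)}(t/\sqrt\g)$, the growth of $h_{n-m}^{(\g,\a)}$, and the Pochhammer factor $(\b+\g+\f{d+1}2)_m$ — so that they combine to the finite limit on the right rather than to $0$ or $\infty$.
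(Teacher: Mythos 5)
Your proposal is correct and follows essentially the same route as the paper: both arguments rest on the homogeneity of $Y_\ell^m$, the Pochhammer asymptotics $(\b+\g+\tfrac{d+1}{2})_m\sim\g^m$ coming from \eqref{eq:sfOPconeGnorm}, and the one-variable Gegenbauer-to-Hermite degeneration recorded in the Appendix as \eqref{eq:limgC=gH} together with the norm limit $\g^{-(n-m)}h_{n-m}^{(\g,\a)}\to(\k_{n-m}^\a)^2 h_{n-m}^\a$. The only difference is organizational — you cancel the spherical harmonic factor first and reduce to a single normalized one-dimensional limit, whereas the paper takes the limits of the numerator and of $h_{m,n}^\sC/\g^{n-2m}$ separately and divides — and this is immaterial.
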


\begin{proof}
From \eqref{eq:gGegen} and \eqref{eq:gHermite}, we see that \eqref{eq:sfGtoH} holds trivially if $m =n$ 
since $\sC_{n,\ell}^n = \sH_{n,\ell}^n = Y_\ell^n(x)$. We assume $0 \le m \le n -1$. By \eqref{eq:gGegenNorm} 
and \eqref{eq:gHermiteNorm}, it is easy to see that $\l^{-n} h_n^{(\l,\mu)} \to (\k_n^\mu)^2 h_n^\mu$ as 
$\l \to \infty$, where $\k_n^\mu$ is the coefficient defined in \eqref{eq:limgC=gH}. Consequently, by \eqref{eq:sfOPconeGnorm} and \eqref{eq:sfOPconeHnorm}, we obtain
$$
  \lim_{\g \to \infty} \frac{ h_{m,n}^\sC}{\g^{n-2m}} =  \frac{(\b + \frac{d}{2})_m}{1+\CO(\g^{-1})}
      \frac{ h_{n-m}^{(\g,\mu)}}{\g^{n-m}} 
       = [\k_{n-m}^{\mu} ]^2(\b + \tfrac{d}{2})_m h_{n-m}^{\mu} = 
       [\k_{n-m}^{\mu} ]^2h_{m,n}^\sH.
$$
where $\mu = m+\b+\frac{d-1}{2}$. For $(x,t) \in \VV_0^{d+1}$, we let $x = t \xi$, $\xi \in \sph$. Since $Y_\ell^m$ is homogeneous, it follows from \eqref{eq:limgC=gH} that 
\begin{align*}
 \lim_{\g \to \infty}  \frac{1}{\left(\sqrt{\g}\right)^{n-2m}}\sC_{m,\ell}^n\left(\frac{x}{\sqrt{\g}}, \frac{t}{\sqrt{\g}} \right) 
 &  =   \lim_{\g \to \infty}  \frac{1}{\left (\sqrt{\g}\right)^{n-m}} C_{n-m}^{(\g,\mu)} \left(\frac{t}{\sqrt{\g}} \right)  
         t^m Y_\ell^{m}(\xi) \\
  & = \k_{n-m}^{\mu} H_{n-m}^{\mu}(t)  t^m Y_\ell^{m}(\xi) = \k_{n-m}^\mu \sH_{m,\ell}^n(x,t).
\end{align*}
Together, the above two displayed limits imply \eqref{eq:sfGtoH}. 
\end{proof}

The limit relation \eqref{eq:sfGtoH} allows us to derive the differential equation satisfied by the Hermite
polynomials on the cone from those satisfied by the Gegenbauer polynomials on the cone.

\begin{thm}\label{thm:sfconeHdiff}
For $n \ge 0$, every $u \in \CV_n^E(\VV_0^{d+1},  e^{-t^2})$ satisfies the differential equation 
\begin{equation} \label{eq:sfconeHdiff}
  \left[  \partial_t^2  - 2t \partial_t  + \frac{d-1}{t} \partial_t + \frac{1}{t^2} \Delta_0^{(x)} \right] u = - 2n u;
 \end{equation}
moreover, every $u \in \CV_n^O(\VV_0^{d+1},  |t|^{-1} e^{-t^2})$ satisfies the differential equation 
\begin{equation} \label{eq:sfconeHdiffO}
  \left[  \partial_t^2 - 2t \partial_t  + \frac{d-3}{t}  \left(\partial_t -  \frac{1}{t}\right) + \frac{1}{t^2} \Delta_0^{(x)} \right] u 
     = - 2n u,
\end{equation}
where $\Delta_0^{(x)}$ is the Laplace-Beltrami operator in variable $x \in \sph$.   
\end{thm}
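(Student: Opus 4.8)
The plan is to obtain both equations as scaling limits of the Gegenbauer identities in Theorem~\ref{thm:sfConeGdiff}, using the limit relation of Theorem~\ref{thm:sfGtoH} to pass from the generalized Gegenbauer polynomials $\sC_{m,\ell}^n$ to the generalized Hermite polynomials $\sH_{m,\ell}^n$. Since both \eqref{eq:sfconeHdiff} and \eqref{eq:sfconeHdiffO} are homogeneous linear eigenvalue equations, it suffices to verify them on a spanning set; I will use the basis $\{\sH_{m,\ell}^n\}$ and extend by linearity.

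First I would treat the even case. The polynomials $\sC_{m,\ell}^n$ with $n-m$ even span $\CV_n^E(\VV_0^{d+1}, w_{0,\g})$ and satisfy \eqref{eq:sfConeGdiff}. Setting $V(x,t):=\g^{-(n-2m)/2}\sC_{m,\ell}^n\big(x/\sqrt\g,\,t/\sqrt\g\big)$ and substituting $\tilde x = x/\sqrt\g$, $\tilde t = t/\sqrt\g$ into \eqref{eq:sfConeGdiff}, the chain rule gives $\partial_{\tilde t}=\sqrt\g\,\partial_t$ and $\partial_{\tilde t}^2=\g\,\partial_t^2$, while $\Delta_0^{(\tilde x)}=\Delta_0^{(x)}$ because the Laplace--Beltrami operator is invariant under radial dilation. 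After dividing by $\g$ this yields
\begin{equation*}
 \Big(1-\tfrac{t^2}{\g}\Big)\partial_t^2 V - \tfrac{2\g+d}{\g}\,t\,\partial_t V
   + \tfrac{d-1}{t}\partial_t V + \tfrac{1}{t^2}\Delta_0^{(x)}V
   = -\tfrac{n(n+2\g+d-1)}{\g}V.
\end{equation*}
Letting $\g\to\infty$, the coefficients converge, $(1-t^2/\g)\to1$ and $(2\g+d)/\g\to2$, and the eigenvalue satisfies $n(n+2\g+d-1)/\g\to2n$, so the limiting operator is exactly the one in \eqref{eq:sfconeHdiff}. By Theorem~\ref{thm:sfGtoH} the rescaled $V$ converges to $\k_{n-m}^{\mu}\sH_{m,\ell}^n$ with $\mu=m+\tfrac{d-1}{2}$, and passing to the limit establishes \eqref{eq:sfconeHdiff} for this spanning set.

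The odd case runs identically, starting from \eqref{eq:sfConeGdiffO} with $\b=-1$ (legitimate for $d\ge1$ by Remark~\ref{rem:4.1}) and using the odd members $\sC_{m,\ell}^n$. The only new feature is the term $\tfrac{d-3}{\tilde t}\big(\partial_{\tilde t}-\tfrac1{\tilde t}\big)$, which under the substitution becomes $\tfrac{(d-3)\g}{t}\big(\partial_t-\tfrac1t\big)$ and therefore survives the division by $\g$ intact; the eigenvalue $n(n+2\g+d-3)/\g$ again tends to $2n$, producing \eqref{eq:sfconeHdiffO}. I expect the single delicate point to be the interchange of the limit $\g\to\infty$ with differentiation, and I would dispatch it by the finite-dimensionality observation already implicit in Theorem~\ref{thm:sfGtoH}: the rescaled polynomials $V$ and their limit all lie in the fixed finite-dimensional space of polynomials of degree at most $n$, on which coefficientwise convergence is equivalent to $C^\infty$ convergence on compact sets. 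The singular factors $1/t$ and $1/t^2$ cause no trouble away from $t=0$, where each polynomial is analytic, so the limiting operator may indeed be applied term by term; linearity then extends \eqref{eq:sfconeHdiff} and \eqref{eq:sfconeHdiffO} from the basis elements to all of $\CV_n^E(\VV_0^{d+1},e^{-t^2})$ and $\CV_n^O(\VV_0^{d+1},|t|^{-1}e^{-t^2})$, respectively.
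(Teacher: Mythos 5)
Your proposal is correct and follows essentially the same route as the paper: the paper likewise rescales the Gegenbauer equations \eqref{eq:sfConeGdiff} and \eqref{eq:sfConeGdiffO} by $t \mapsto t/\sqrt{\g}$, divides by $\g$, and lets $\g\to\infty$ using the limit relation \eqref{eq:sfGtoH}. Your added justification for interchanging the limit with differentiation (coefficientwise convergence in a fixed finite-dimensional polynomial space) is a point the paper leaves implicit, but it does not change the argument.
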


\begin{proof}
For $\b = 0$, let $u (x,t)= \sC_{m,\ell}^n(x,t) = C_{n-m}^{(\g, m+\f{d-1}{2})}(t)  t^m  Y_\ell^m(\xi)$,
$\xi \in \sph$, and let $U (x,t) = u(\frac{x}{\sqrt{\g}}, \frac{t}{\sqrt{\g}})$. 
Replacing $\partial_t $ by $\sqrt{\g} \partial_t $ in
\eqref{eq:sfConeGdiff}, it follows that $U$ satisfy 
\begin{align*}
\left[ \left(1- \frac{t^2}{\g}\right)  \g \partial_t^2 - (2 \g+d) t \partial_t + \frac{d-1}{t} \g \partial_t + 
     \frac{\g}{t^2}   \Delta_0^{(x)} \right] U = - n(n+2\g+d-1)U.
\end{align*}
Dividing the above equation by $\g \sqrt{h_{m,n}^\sC}$ and taking the limit $\g \to \infty$, we obtain
by \eqref{eq:sfGtoH} that $\sH_{m,\ell}^n$ satisfies \eqref{eq:sfconeHdiff}. The proof of \eqref{eq:sfconeHdiffO}
is similar. 
\end{proof}

The remarks that we made right below Theorem \ref{thm:sfConeGdiff} on the Gegenbauer polynomials 
apply to our setting here. We note particularly that \eqref{eq:sfconeHdiff} holds for the Hermite
polynomials on the cone with $w_{0,\g}(t) = e^{-t^2}$, whereas \eqref{eq:sfconeHdiffO} 
holds for the generalized Hermite polynomials on the cone with $w_{-1}(t) = |t|^{-1}e^{-t^2}$,
which has a singularity at the origin. 

\subsubsection{Laguerre polynomials on the upper cone}
Let us compare our results with those developed on the upper cone studied recently in \cite{X19}. 
The Laguerre polynomials on the upper cone $\VV_{0,+}^{d+1}$ are orthogonal with respect to
$$
  \la f, g\ra_{\b} = b_\b \int_{\VV_{0,+}^{d+1}} f(x,t) g(x,t) t^\b e^{-t} \d \s(x,t), \qquad \b > -d.
$$
An orthogonal basis for the space $\CV_n(\VV_{0,+}^{d+1}, t^\b e^{-t})$ is given by 
\begin{equation}\label{eq:coneLsf}
    \sL_{m,\ell}^n(x,t) = L_{n-m}^{2m + \b+d-1}(t) Y_\ell^m(x), \quad 0 \le m \le n, \quad 1\le \ell \le \dim \CH_m^d,
\end{equation}
where $L_n^\a$ are the Laguerre polynomials and $\{Y_\ell^m\}$ is an orthonormal basis of $\CH_m^d$. It
is shown in \cite{X19} that, when $\b = -1$, the Laguerre polynomials in $\CV_n(\VV_{0,+}^{d+1}, t^{-1} e^{-t})$ 
satisfy a differential equation 
\begin{equation} \label{eq:sfConeLaguerrediff}
 \left( t \partial_t^2 + (d-1 - t) \partial_t + t^{-1} \Delta_0^{(x)} \right) u = - n u.
\end{equation}

Like the Jacobi polynomials on the cone surfaces, the Laguerre polynomials in \eqref{eq:coneLsf} are not 
related to orthogonal polynomials for the even weight function $|t|^\b e^{-|t|}$ on the double cones.

\subsubsection{Double hyperboloid} \label{subsec:sfHypHermtie}
We now consider the case $\varrho > 0$. The weight function $w_\b$ can be written as 
$$
w_\b(t) = |t| w_0 (t^2-\varrho^2), \qquad w_0(t) = e^{\rho^2} t^{\b-\f12} e^{-t}, \quad \b > -\tfrac12.
$$ 
Hence, by Proposition \ref{prop:op-1d},  the polynomials $q_k^{(m)}(t)$ in Proposition \ref{prop:sfOPbasis} 
are given in terms of $p_k(w_0^{(m)}; t)$ with 
$$
    w_0^{(m)}(t) = t^{m+ \f{d-1}{2}}w_0(t) = e^{-\rho^2} t^{m+ \b+ \f{d-2}{2}} e^{-t},
$$ 
which are the Laguerre polynomials; that is, $p_k(w_0^{(m)};s)= L_k^{m+\b+\f{d-2}{2}}(s)$. As it is for
the generalized Gegenbauer polynomials, we consider only orthogonal polynomials that are even in $t$, 
that is, those in $\CV_n^E(\VV_0^{d+1}; w_\b)$. 

The orthogonal polynomials given in Corollary \ref{cor:sfOPeven} are now specialized as follows: 

\begin{prop}\label{prop:sfOPhypH}
Let $\{Y_\ell^m: 1 \le \ell \le \dim \CH_n^d\}$ denote an orthonormal basis of $\CH_m^d$.
Then the polynomials 
\begin{equation}\label{eq:sfOPhypH}
 {}_\varrho\sH_{n-2k,\ell}^n (x,t) := L_k^{n-2k+\b+\f{d-2}{2}}(t^2-\varrho^2) Y_\ell^{n-2k}(x) 
\end{equation}
with $1 \le \ell \le \dim \CH_{n-2k}^d$ and $0 \le k\le n/2$, form an orthogonal basis of $\CV_n^E(\VV_{0}^{d+1}, w_\b)$. Furthermore, in terms of orthogonal polynomials $\sH_{n-2k,\ell}^n$ in \eqref{eq:sfOPconeH}, 
\begin{equation}\label{eq:sfOPhypH2}
 {}_\varrho\sH_{n-2k,\ell}^n(x,t) =  \frac{(-1)^k}{2^{2k} k!}  \sH_{n-2k,\ell}^n \left(x, \sqrt{t^2-\varrho^2}\right).
\end{equation}
\end{prop}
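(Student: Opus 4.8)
The plan is to follow the proof of Proposition~\ref{prop:sfOPhypG} verbatim, substituting the Hermite--Laguerre quadratic transform for the Jacobi--Gegenbauer one. The assertion that the ${}_\varrho\sH_{n-2k,\ell}^n$ form an orthogonal basis of $\CV_n^E(\VV_0^{d+1},w_\b)$ is in fact already in hand: it is precisely the content of Corollary~\ref{cor:sfOPeven} applied to $w_0(t)=e^{\varrho^2}t^{\b-\f12}e^{-t}$, once one records, as in the paragraph preceding the proposition, that $p_k\big(w_0^{(m)};s\big)=L_k^{m+\b+\f{d-2}{2}}(s)$ and sets $m=n-2k$. So, exactly as for Proposition~\ref{prop:sfOPhypG}, the only thing that needs genuine verification is the identity \eqref{eq:sfOPhypH2}.

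For \eqref{eq:sfOPhypH2} I would reduce to a one-variable statement. By the definition \eqref{eq:sfOPconeH} of the Hermite polynomials on the cone, with $m=n-2k$,
\[
  \sH_{n-2k,\ell}^n\big(x,\sqrt{t^2-\varrho^2}\big)
    = H_{2k}^{\,(n-2k)+\b+\f{d-1}{2}}\big(\sqrt{t^2-\varrho^2}\big)\,Y_\ell^{n-2k}(x),
\]
where the substitution produces an honest polynomial in $t^2-\varrho^2$ because $H_{2k}^\mu$ is even. Cancelling the common factor $Y_\ell^{n-2k}(x)$ against the left side $L_k^{(n-2k)+\b+\f{d-2}{2}}(t^2-\varrho^2)\,Y_\ell^{n-2k}(x)$ of ${}_\varrho\sH_{n-2k,\ell}^n$ in \eqref{eq:sfOPhypH}, and writing $s=\sqrt{t^2-\varrho^2}$, reduces \eqref{eq:sfOPhypH2} to the quadratic transform
\[
  H_{2k}^{\mu}(s) = (-1)^k 2^{2k} k!\,L_k^{\mu-\f12}(s^2),\qquad \mu=(n-2k)+\b+\tfrac{d-1}{2}.
\]
This is the Hermite analogue of the relation \eqref{eq:gGegen} used in the Gegenbauer case, recorded (in the appropriate normalization) in Appendix~\ref{sect:append}; substituting it produces exactly the constant $\f{(-1)^k}{2^{2k}k!}$ appearing in \eqref{eq:sfOPhypH2}, since $(-1)^k\cdot(-1)^k=1$.

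The main obstacle is purely bookkeeping: confirming that the Hermite--Laguerre quadratic transform, as normalized in Appendix~\ref{sect:append}, carries precisely the factor $(-1)^k 2^{2k}k!$, so that the constant in \eqref{eq:sfOPhypH2} comes out clean. Unlike the Gegenbauer case \eqref{eq:sfOPhypG2}, no Pochhammer ratio survives here; this is consistent with viewing these polynomials as the $\g\to\infty$ limit of Proposition~\ref{prop:sfOPhypG} under the rescaling of Theorem~\ref{thm:sfGtoH}, and indeed one could alternatively derive \eqref{eq:sfOPhypH2} by passing to that limit in \eqref{eq:sfOPhypG2}, which furnishes a useful consistency check on the constants.
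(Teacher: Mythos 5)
Your proposal is correct and follows essentially the same route as the paper: the basis claim is delegated to Corollary \ref{cor:sfOPeven} with $p_k\big(w_0^{(m)};s\big)=L_k^{m+\b+\f{d-2}{2}}(s)$, and \eqref{eq:sfOPhypH2} is reduced to the quadratic transform \eqref{eq:gHermite}, $H_{2k}^{\mu}(s)=(-1)^k2^{2k}k!\,L_k^{\mu-\f12}(s^2)$, whose inversion yields exactly the constant $\f{(-1)^k}{2^{2k}k!}$. The only difference is that you spell out the bookkeeping (and note the $\g\to\infty$ consistency check) that the paper leaves implicit.
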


\begin{proof}
By Corollary \ref{cor:sfOPeven}, we only need to verify the relation \eqref{eq:sfOPhypH2}. Using
\eqref{eq:gHermite} to write $L_k^{\a-\f12}(s^2)$ as $H_{2k}^{\a}(s)$, this follows from the expression of  
$\sH_{n-2k,\ell}^n$ given in \eqref{eq:sfOPconeH}.
\end{proof}
 
We shall call these polynomials {\it Hermite polynomials on the hyperboloid} when $\b = 0$ and 
{\it generalized Hermite polynomials on the hyperboloid} when $\b \ne 0$. 

By \eqref{eq:sfOPhypG2} and \eqref{eq:sfOPhypH2}, it is easy to see that the limit relation \eqref{eq:sfGtoH} 
also holds for ${}_\varrho\sC_{n-2k,\ell}^n(t\xi,t)$ and ${}_\varrho\sH_{n-2k,\ell}^n(t\xi,t)$. 

\begin{thm}\label{thm:sfHypHdiff}
Let $\varrho > 0$ and $w_0(t) = |t| (t^2-\varrho^2)^{-\f12} e^{-t^2}$. For $n =0,1,2,\ldots$,  
every $u \in \CV_n^E(\VV_0^{d+1}, w_0)$ satisfies the differential equation 
\begin{equation} \label{eq:sfHypHdiff}
  \left[ \left(1- \frac{\varrho^2}{t^2} \right) \partial_t^2 + \left ( \frac{\rho^2}{t^2} - 2 (t^2-\rho^2) \right)\frac{1}{t} \partial_t
 + \frac{d-1}{t} \partial_t + \frac{1}{t^2- \varrho^2} \Delta_0^{(x)} \right] u = - 2n u,
\end{equation}
where $\Delta_0^{(x)}$ is the Laplace-Beltrami operator in variable $x \in \sph$. 
\end{thm}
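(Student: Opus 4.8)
The plan is to mirror the proof of Theorem~\ref{thm:sfHypGdiff}, transporting the differential equation from the double cone to the hyperboloid through the change of variable $T=\sqrt{t^2-\varrho^2}$. I would work with the orthogonal basis ${}_\varrho\sH_{n-2k,\ell}^n$ of $\CV_n^E(\VV_0^{d+1},w_0)$ supplied by Proposition~\ref{prop:sfOPhypH}, using the relation \eqref{eq:sfOPhypH2}, which presents each basis element as a fixed constant $c_k=\frac{(-1)^k}{2^{2k}k!}$ times the cone Hermite polynomial $\sH_{n-2k,\ell}^n$ evaluated at $(x,\sqrt{t^2-\varrho^2})$. Since the cone polynomials already satisfy \eqref{eq:sfconeHdiff} by Theorem~\ref{thm:sfconeHdiff}, the entire task is to push that equation through the substitution.

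First I would write, with $x=\sqrt{t^2-\varrho^2}\,\xi$, $\xi\in\sph$, and $m=n-2k$,
$$
  {}_\varrho\sH_{n-2k,\ell}^n(x,t)=c_k\,F(t)\,Y_\ell^m(\xi),\qquad F(t)=f\big(\sqrt{t^2-\varrho^2}\big),\quad f(T)=H_{2k}^{m+\frac{d-1}{2}}(T)\,T^m.
$$
On the cone, $\sH_{n-2k,\ell}^n(T\xi,T)=f(T)Y_\ell^m(\xi)$, and because $Y_\ell^m$ is an eigenfunction of $\Delta_0$ with eigenvalue $-m(m+d-2)$, equation \eqref{eq:sfconeHdiff} reduces to the one-variable identity
$$
  f''(T)-2T f'(T)+\frac{d-1}{T}f'(T)-\frac{m(m+d-2)}{T^2}f(T)=-2n\,f(T),\qquad T=\sqrt{t^2-\varrho^2}.
$$

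Next I would substitute the chain-rule relations \eqref{eq:f'F'}, namely $f'(T)=\frac{T}{t}F'(t)$ and $f''(T)=\frac{T^2}{t^2}F''(t)+\frac{\varrho^2}{t^3}F'(t)$, and simplify using $T^2=t^2-\varrho^2$. The leading term yields the factor $1-\frac{\varrho^2}{t^2}$ in front of $F''$; the term $-2Tf'(T)$ contributes $-\frac{2(t^2-\varrho^2)}{t}F'$ while $f''$ contributes the extra $\frac{\varrho^2}{t^3}F'$, so that together with $\frac{d-1}{T}f'(T)=\frac{d-1}{t}F'$ the first-order part becomes $\big(\frac{\varrho^2}{t^2}-2(t^2-\varrho^2)\big)\frac1t F'+\frac{d-1}{t}F'$; and the angular term becomes $-\frac{m(m+d-2)}{t^2-\varrho^2}F$, which is precisely $\frac{1}{t^2-\varrho^2}\Delta_0^{(x)}$ applied to $u={}_\varrho\sH_{n-2k,\ell}^n$. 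Dividing out $c_kY_\ell^m$ then gives \eqref{eq:sfHypHdiff}, the eigenvalue $-2n$ being preserved since no rescaling of $n$ occurs.

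The computation is routine; the only delicate point is the bookkeeping of the first-order coefficient, where one must verify that the contributions $\frac{\varrho^2}{t^3}F'$ from $f''$ and $-\frac{2(t^2-\varrho^2)}{t}F'$ from $-2Tf'$ combine into exactly $\big(\frac{\varrho^2}{t^2}-2(t^2-\varrho^2)\big)\frac1t$, and that the $\frac{d-1}{T}f'$ term leaves the clean $\frac{d-1}{t}\partial_t$ untouched. An alternative would be to obtain \eqref{eq:sfHypHdiff} as the $\g\to\infty$ limit of the Gegenbauer hyperboloid equation \eqref{eq:sfHypGdiff}, invoking the extension of the limit relation \eqref{eq:sfGtoH} to the hyperboloid noted just above; but since that route requires a simultaneous rescaling of the waist $\varrho$ and the outer boundary, the direct substitution is the cleaner path.
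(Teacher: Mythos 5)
Your proof is correct and follows one of the two routes the paper itself gives: its proof of this theorem is a two-line remark stating that the result follows either by taking the limit $\g\to\infty$ in Theorem~\ref{thm:sfHypGdiff} or by transporting \eqref{eq:sfconeHdiff} through the substitution $T=\sqrt{t^2-\varrho^2}$ exactly as in the proof of Theorem~\ref{thm:sfHypGdiff}, and you carry out the latter in full (your bookkeeping of the first-order terms via \eqref{eq:f'F'} checks out). Your side remark that the limit route would require rescaling $\varrho$ as well is a fair observation about a point the paper glosses over.
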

 
\begin{proof}
Since the limit \eqref{eq:sfGtoH} extents to the hyperboloid by \eqref{eq:sfOPhypH2}, this follows from 
taking limit in Theorem \ref{thm:sfHypGdiff}. It can also be deduced from Theorem \ref{thm:sfconeHdiff} 
by following the proof of Theorem \ref{thm:sfHypGdiff}.
\end{proof}

\subsubsection{Poisson kernel on the surface of the cone}
In terms of the orthogonal basis \eqref{eq:sfOPconeH}, the reproducing kernel of the generalized 
Hermite polynomials on the cone satisfy 
$$
     \sP_n \big(w_\b; (x,t),(y,s)\big) = \sum_{m=0}^n \sum_{\ell =1}^{\dim \CH_{m}^n} 
         \frac{\sH_{m,\ell}^n (x,t)\sH_{m,\ell}^n (y,s)}{h^\sH_{m,n}}.
$$
The kernel $\sP_n^E(w_\b)$ has a similar expression, summing over $m$ such that $n-m$ is even. 
By \eqref{eq:sfPbnCone1}, the limit relation \eqref{eq:sfGtoH} leads immediately to the relation  
\begin{equation}\label{eq:sfPlimit}
   \lim_{\g \to \infty}  \sP_n^E\left(w_{\b,\g}; \left(\frac{x}{\sqrt{\g}},\frac{t}{\sqrt{\g}}\right),
       \left(\f{y}{\sqrt{\g}},\f{s}{\sqrt{\g}}\right)\right) = \sP_n^E\big(w_\b; (x,t),(y,s)\big). 
\end{equation}
However, we cannot derive an addition formula for the generalized Hermite polynomials from this relation
and the addition formula \eqref{eq:sfPEadd}. What we can derive is a Mehler-type formula for the 
Poisson kernel $\sP^E(w_{\b}; r, \cdot,\cdot)$ of the generalized Hermite polynomials, 
$$
\sP^E\big(w_{\b}; r,  (x,t),(y,s)\big) = \sum_{n=0}^\infty \sP_n^E(w_{\b}; (x,t),(y,s)\big) r^n, \quad 0 \le r < 1.
$$

\begin{thm}\label{thm:sfPoissonH}
Let $\b \ge 0$. Then, for $(x,t), (y,s) \in \VV_0^{d+1}$, 
\begin{align}\label{eq:sfPoissonH}
 \sP^E\big(w_{\b}; r,  (x,t),(y,s)\big)& = \frac{1}{(1-r^2)^{\b + \f{d}{2}}} 
       e^{-\frac{s^2+t^2}{1-r^2} r^2}   \int_{-1}^1 \int_{-1}^1 e^{\frac{2 r}{1-r^2} \eta(x,t,y,s;z)}\\
        & \times c_{\frac{d-2}{2},\b-1} c_\b (1-z_1)^{\frac{d-2}{2}} (1+z_1)^{\b-1}(1-z_2^2)^{\b-\f12} \d z, \notag
\end{align}
where $\eta(x,t,y,s;z)$ is defined by 
$$
  \eta (x,t,y,s; z) = \frac{1-z_1}{2}\la x,y\ra \mathrm{sign}(st) + \frac{1+z_1}{2} z_2 st.
$$
\end{thm}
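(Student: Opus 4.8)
The plan is to realize the Hermite Poisson kernel as a scaled limit of the Gegenbauer Poisson kernel, in the same spirit as the term-by-term limit \eqref{eq:sfPlimit}. First I would assemble the Poisson kernel of the generalized Gegenbauer polynomials on the cone. In the addition formula \eqref{eq:sfPEadd} the index $\a+\g$ of $Z_n^{\a+\g}$ is independent of $n$, so multiplying by $r^n$ and summing, and using that for fixed $\g$ and $0\le r<1$ the series $\sum_n Z_n^{\a+\g}(\xi)r^n$ converges uniformly in $\xi\in[-1,1]$ (hence may be moved inside the integral), the generating identity $\sum_n Z_n^\l(\xi)r^n=\tfrac{1-r^2}{(1-2r\xi+r^2)^{\l+1}}$ (cf.\ \cite[p.~19]{DX}) gives
\[
 \sP^E\big(w_{\b,\g};r,(x,t),(y,s)\big)=c_{\frac{d-2}{2},\b-1}c_\b c_{\g-\f12}\int_{[-1,1]^3}\frac{1-r^2}{(1-2r\xi+r^2)^{\a+\g+1}}\,\d\tau_\g,
\]
where $\a=\b+\frac{d-1}{2}$ and $\d\tau_\g=(1-z_1)^{\frac{d-2}{2}}(1+z_1)^{\b-1}(1-z_2^2)^{\b-\f12}(1-v^2)^{\g-1}\,\d v\,\d z$.

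Next I would replace $(x,t),(y,s)$ by $(\tfrac{x}{\sqrt\g},\tfrac{t}{\sqrt\g}),(\tfrac{y}{\sqrt\g},\tfrac{s}{\sqrt\g})$, which turns $\xi$ into $\xi_\g=\frac{\eta(x,t,y,s;z)}{\g}+v\sqrt{(1-\frac{s^2}{\g})(1-\frac{t^2}{\g})}$, and let $\g\to\infty$. The $z$-part of the measure carries no $\g$, so the problem localizes to the inner integral
\[
 I_\g:=c_{\g-\f12}\int_{-1}^1\frac{1-r^2}{(1-2r\xi_\g+r^2)^{\a+\g+1}}(1-v^2)^{\g-1}\,\d v .
\]

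The heart of the matter is a Laplace analysis of $I_\g$. Writing its integrand as $e^{\g\psi(v)}$ times lower-order factors, with $\psi(v)=\log\tfrac{1-v^2}{1-2rv+r^2}$, one checks that $\psi$ has a unique interior maximum at $v=r$, with $\psi(r)=0$, $\psi''(r)=-\tfrac{2}{1-r^2}$, and $\psi\to-\infty$ at $v=\pm1$, so only a neighbourhood of $v=r$ contributes. The $O(\g^{-1})$ deviation $\xi_\g-v=\frac{\eta}{\g}-\frac{v(s^2+t^2)}{2\g}+O(\g^{-2})$, multiplied by the exponent $\a+\g+1$, produces the finite factors $\exp\!\big(\frac{2r\eta}{1-r^2}\big)$ and $\exp\!\big(-\frac{r^2(s^2+t^2)}{1-r^2}\big)$ at $v=r$. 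Collecting the Gaussian factor $\sqrt{2\pi/(\g|\psi''(r)|)}=\sqrt{\pi(1-r^2)/\g}$, the base value $(1-r^2)^{-(\a+2)}$ (from the $(\a+1)$-fold factor and the $(1-v^2)^{-1}$ piece at $v=r$), the numerator $1-r^2$, and $c_{\g-\f12}\sqrt{\pi/\g}\to1$ (Stirling), together with $\a+\tfrac12=\b+\tfrac d2$, I obtain $\lim_{\g\to\infty}I_\g=(1-r^2)^{-\b-\frac d2}e^{-r^2(s^2+t^2)/(1-r^2)}e^{2r\eta/(1-r^2)}$, which is exactly the factor multiplying the $z$-measure in \eqref{eq:sfPoissonH}.

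The main technical point is the justification of the two exchanges of limit with integration. Taking $\g\to\infty$ inside the compact $z$-integral is routine by dominated convergence, once the Laplace estimate for $I_\g$ is shown uniform in $z$ and locally uniform in $r$. The more delicate exchange is to identify $\lim_{\g\to\infty}\sP^E(w_{\b,\g};r,\cdot)$, now computed explicitly as the right side of \eqref{eq:sfPoissonH}, with the Hermite Poisson kernel $\sum_n r^n\sP_n^E(w_\b;(x,t),(y,s))$. I would do this analytically: for $|r|\le R<1$ the factor $|1-2r\xi+r^2|\ge(1-R)^2$ stays bounded below while the weight $(1-v^2)^{\g-1}$ concentrates, which via a complex-parameter version of the Laplace estimate above gives local uniform convergence of the analytic functions $r\mapsto\sP^E(w_{\b,\g};r,\cdot)$ on the disk; Cauchy's coefficient formula then transfers this to convergence of the $n$-th Taylor coefficients, and the term-by-term limit \eqref{eq:sfPlimit} identifies each limiting coefficient with $\sP_n^E(w_\b;(x,t),(y,s))$. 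Establishing that complex-parameter Laplace bound uniformly in $\g$ is the step requiring real care; everything else is bookkeeping with the constants.
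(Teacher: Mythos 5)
Your proposal is correct, and the constants in your Laplace bookkeeping check out ($\psi''(r)=-2/(1-r^2)$, the Gaussian factor $\sqrt{\pi(1-r^2)/\g}$ against $c_{\g-\f12}\sim\sqrt{\g/\pi}$, and $\a+\f12=\b+\f{d}2$ giving the exponent $\b+\f{d}2$), but the route is genuinely different from the paper's. You sum the Poisson series first, using the generating function for $Z_n^{\a+\g}$ to get a closed form for $\sP^E(w_{\b,\g};r,\cdot)$ with $\g$ sitting in the exponent, and then extract the $\g\to\infty$ limit by a saddle-point analysis of the inner $v$-integral at $v=r$; identifying the limit with $\sum_n r^n\sP_n^E(w_\b)$ then costs you a separate normal-families/Vitali argument to pass from locally uniform convergence of the generating functions back to convergence of coefficients. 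The paper instead changes variables $v\mapsto\xi(x,t,y,s;v,z)$ inside the addition formula \eqref{eq:sfPEadd}, so that $\sP_n^E(w_{\b,\g})$ becomes, up to normalization, the $n$-th Fourier--Gegenbauer coefficient (for the weight $(1-r^2)^{\a+\g-\f12}$) of an explicit function $G_\g(\cdot;r)$; expanding back gives $G_\g=\frac{c_{\a+\g}}{c_{\g-\f12}}\sum_n\sP_n^E(w_{\b,\g})Z_n^{\a+\g}(r)/C_n^{\a+\g}(1)$, and since $C_n^\l(r)/C_n^\l(1)\to r^n$, the identification with the Hermite Poisson kernel is automatic from the term-by-term limit \eqref{eq:sfPlimit}, while the limit of $G_\g$ itself reduces to the elementary computation $\bigl(1-\tfrac{A}{\g}+\tfrac{B}{\g^2}\bigr)^{\g-1}\to e^{-A}$ with no asymptotic analysis at all. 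In short, the paper's change of variables buys it both the exponential and the identification essentially for free, whereas your approach is more self-contained (you work with the genuine Poisson kernel throughout) at the price of a uniform Laplace estimate and the coefficient-convergence argument, which are the two places where the real work in your write-up would have to be done carefully.
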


\begin{proof}
We start with \eqref{eq:sfPEadd}. Its integrant is $ \eta(x,t,y,s; z) + \sqrt{1-s^2} \sqrt{1-t^2}v$ by our
definition of $\eta$. Changing variable $v \mapsto \xi(x,t,y,s,v,z)$ for the integral with resect to $\d v$ in 
\eqref{eq:sfPEadd}, we can write
\begin{equation}\label{eq:sfPossion1}
  \sP_n^E\big(w_{\b,\g};(x,t),(y,s)\big) = c_{\g-\f12}\int_{-1}^1  Z_n^{\a+\g}(v) G_\g(x,t,y,s;r) (1-r^2)^{\a+\g-\f12}\d r,
\end{equation}
where $\a = \b+ \f{d-1}{2}$ and, with $c' = c_{\frac{d-2}{2},\b-1} c_\b$,
$$
 G_\g(x,t,y,s;r)  = c' \int_{-1}^1 \int_{-1}^1 
    F_\g(x,t,y,s;r,z) (1-z_1)^{\frac{d-2}{2}} (1+z_1)^{\b-1}(1-z_2^2)^{\b-\f12} \d z
$$
with the function $F_\g$ defined by 
$$
F_\g(x,t,y,s;z,r) = \left(1- \frac{\big(r-\eta(x,t,y,s;z)\big)^2}{(1-s^2)(1-t^2)}\right)^{\g-1}
       \frac{1}{\sqrt{1-s^2}\sqrt{1-t^2} (1-r^2)^{\a+\g-\f12}}
$$
if $\big(r-\eta(x,t,y,s;z)\big)^2 \le (1-s^2)(1-t^2)$ and $F_\g(x,t,y,s;z,r) =0$ otherwise. Up to a constant, 
the righthand side of \eqref{eq:sfPossion1} is the Fourier-Gegenbauer coefficient of the function 
$r \mapsto G_\l(\cdot; r)$. By $h_n^\l = \frac{\l}{n+\l} C_n^\l(1)$, we then obtain
\begin{equation}\label{eq:sfPossion2}
 G_\g(x,t,y,s;r) = \frac{c_{\a+\g}}{c_{\g-\f12}} \sum_{n= 0}^\infty\sP_n^E\big(w_{\b,\g};(x,t),(y,s)\big) 
  \frac{Z_n^{\a+\g}(r)}{C_n^{\a+\g}(1)}.  
\end{equation}
Since the leading coefficient of $C_n^\l$ is $k_n^\l = 2^n (\l)_n /n!$, so that $C_n^{\l}(r) /C_n^{\l}(1) \to r^n$ 
if $\l \to \infty$, and $\frac{c_{\a+\g}}{c_{\g-\f12}} \to 1$ if $\g \to \infty$, it follows from \eqref{eq:sfPlimit} that 
$$
  \lim_{\g \to \infty} G_\g \left(\frac{x}{\sqrt{\g}},\frac{t}{\sqrt{\g}},\frac{y}{\sqrt{\g}},\frac{s}{\sqrt{\g}};r\right) 
    =  \sP^E\big(w_{\b}; r, (x,t),(y,s)\big). 
$$
Since $\eta(\frac{x}{\sqrt{\g}},\frac{t}{\sqrt{\g}},\frac{y}{\sqrt{\g}},\frac{s}{\sqrt{\g}}; z) = \frac{1}{\sqrt{\g}} 
\eta(x,t,y,s;z)$, we see that $F_\l$ satisfies
\begin{align*}
& F_\g \left(\frac{x}{\sqrt{\g}},\frac{t}{\sqrt{\g}},\frac{y}{\sqrt{\g}},\frac{s}{\sqrt{\g}}; z, r\right)
    =  \frac{1}{(1-r^2)^{\a+\f12} (1-\frac{s^2}{\g})^{\g-\f12} (1-\frac{t^2}{\g})^{\g-\f12}} \\
& \qquad\qquad\qquad
  \times \left(1- \frac{t^2+s^2 -2 r \eta(x,t,y,s;z)}{\g(1-r^2)} + \frac{t^2 s^2 - \eta(x,t,y,s;z)^2}{\g^2(1-r^2)} \right)^{\g-1},
\end{align*}
which has the limit, as $\g \to \infty$, 
$$
\frac{1}{(1-r^2)^{\a+\f12}} e^{t^2+s^2} e^{\frac{- (t^2+s^2 - 2 r \eta(x,t,y,s;z))}{1-r^2}} 
 = \frac{1}{(1-r^2)^{\a+\f12}}   e^{- \frac{(t^2+s^2)r^2 +2 r \eta(x,t,y,s;z)}{1-r^2}}. 
$$
Taking limit under the integral sign of $G_\g$, we obtain \eqref{eq:sfPoissonH} from \eqref{eq:sfPossion2}. 
\end{proof}

When $\b = 0$, the formula \eqref{eq:sfPoissonH} holds under the limit \eqref{eq:limit-int} and it reduces
to the following formula for the Poisson kernel of the Hermite polynomials on the cone: 

\begin{cor}
For $\b = 0$,  
\begin{align}
 \sP^E\big(w_0; r,  (x,t),(y,s)\big) = \frac{1}{(1-r^2)^{\f{d}{2}}} 
      \exp \left [- \frac{ (t^2 +s^2)r^2 - 2r \la x,y\ra \mathrm{sign}(st)} {1-r^2} \right].
\end{align}
\end{cor}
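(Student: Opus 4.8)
The plan is to obtain this corollary as the degenerate case $\b=0$ of Theorem~\ref{thm:sfPoissonH}. The most transparent route is to substitute $\b=0$ directly into \eqref{eq:sfPoissonH} and read off the limit via \eqref{eq:limit-int}: as $\b\to0$ the weight $(1+z_1)^{\b-1}$ loses integrability at $z_1=-1$ while $c_{\frac{d-2}{2},\b-1}$ vanishes, so the $z_1$-measure concentrates into the unit point mass at $z_1=-1$. Since $\frac{1-z_1}{2}=1$ and $\frac{1+z_1}{2}=0$ there, the phase collapses to $\eta(x,t,y,s;z)\to\la x,y\ra\,\mathrm{sign}(st)$, which is independent of $z_2$; the remaining $z_2$-integral of this $z_2$-free integrand against the probability density $c_0(1-z_2^2)^{-\f12}$ then contributes a factor $1$. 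Combined with $(1-r^2)^{-\b-\f{d}{2}}\to(1-r^2)^{-\f{d}{2}}$, this turns \eqref{eq:sfPoissonH} into the asserted pure exponential.

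I would, however, prefer the computationally cleaner route that never meets the singular Jacobi constants, namely to rerun the Mehler argument in the proof of Theorem~\ref{thm:sfPoissonH} starting from the $\b=0$ addition formula \eqref{eq:sfPEadd0} (which carries no $z$-integrals) in place of \eqref{eq:sfPEadd}. With $\b=0$, so $\a=\frac{d-1}{2}$, and writing $\wt\eta:=\la x,y\ra\,\mathrm{sign}(st)$, the change of variable $v\mapsto w=\wt\eta+\sqrt{1-s^2}\sqrt{1-t^2}\,v$ recasts \eqref{eq:sfPEadd0} as
\begin{equation*}
  \sP_n^E\big(w_{0,\g};(x,t),(y,s)\big)=c_{\g-\f12}\int Z_n^{\a+\g}(w)\,G_\g(w)\,(1-w^2)^{\a+\g-\f12}\,\d w,
\end{equation*}
where $G_\g(w)$ equals, on its support, $\big(1-\frac{(w-\wt\eta)^2}{(1-s^2)(1-t^2)}\big)^{\g-1}$ divided by $\sqrt{1-s^2}\sqrt{1-t^2}\,(1-w^2)^{\a+\g-\f12}$, and $0$ otherwise. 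This identifies $\sP_n^E(w_{0,\g})$ with the Gegenbauer coefficients of $G_\g$, so exactly as in \eqref{eq:sfPossion2},
\begin{equation*}
  G_\g(w)=\frac{c_{\a+\g}}{c_{\g-\f12}}\sum_{n=0}^\infty\sP_n^E\big(w_{0,\g};(x,t),(y,s)\big)\frac{Z_n^{\a+\g}(w)}{C_n^{\a+\g}(1)}.
\end{equation*}

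Next I would set $w=r$, rescale $(x,t,y,s)\mapsto(x/\sqrt{\g},t/\sqrt{\g},y/\sqrt{\g},s/\sqrt{\g})$, and send $\g\to\infty$. On the right, \eqref{eq:sfPlimit} together with $c_{\a+\g}/c_{\g-\f12}\to1$ and $C_n^{\a+\g}(r)/C_n^{\a+\g}(1)\to r^n$ yields $\sP^E(w_0;r,(x,t),(y,s))$. On the left I would evaluate $\lim_{\g\to\infty}G_\g$ by hand: under the rescaling $\wt\eta$ becomes $\wt\eta/\g$ and $s^2,t^2$ become $s^2/\g,t^2/\g$, so the base of the power equals $(D-N)/D$ with $N=r^2-2r\wt\eta/\g+\wt\eta^2/\g^2$ and $D=1-(s^2+t^2)/\g+s^2t^2/\g^2$; using $(1-A/\g+\CO(\g^{-2}))^{\g-1}\to e^{-A}$ for both $(D-N)^{\g-1}$ and $D^{-(\g-1)}$, and collecting $(1-r^2)^{\g-1}(1-r^2)^{-(\a+\g-\f12)}=(1-r^2)^{-\a-\f12}=(1-r^2)^{-d/2}$, the two exponential contributions merge into $\exp\big(\frac{-(s^2+t^2)r^2+2r\wt\eta}{1-r^2}\big)$, which is precisely the claimed formula after restoring $\wt\eta=\la x,y\ra\,\mathrm{sign}(st)$.

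The routine ingredients are the two one-variable asymptotics and the elementary algebra of $D-N$, all already present in the proof of Theorem~\ref{thm:sfPoissonH}. The one step that genuinely needs care is interchanging the limit $\g\to\infty$ with the integral defining $G_\g$ and with the series, which I would justify by dominated convergence using the uniform Gegenbauer-kernel bounds implicit in \eqref{eq:sfPossion2}. For the first route the analogous obstacle is confirming that the degenerating $z_1$-measure retains total mass exactly $1$, so that no spurious constant survives; this is the one-sided instance of \eqref{eq:limit-int} and must be matched against the normalization $c_{\frac{d-2}{2},\b-1}$.
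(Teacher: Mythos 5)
Your first paragraph is exactly the paper's own argument: the corollary is obtained by letting $\b\to 0$ in Theorem~\ref{thm:sfPoissonH}, with the $z_1$-measure degenerating to a point mass at $z_1=-1$ (a one-sided instance of \eqref{eq:limit-int}), so that $\eta$ collapses to $\la x,y\ra\,\mathrm{sign}(st)$ and the $z_2$-integral trivializes; your reading of that limit, including the exponent $(1-r^2)^{-\b-\f{d}{2}}\to(1-r^2)^{-\f{d}{2}}$, is correct. Your preferred second route is a genuine (if mild) variant: instead of degenerating the general formula, you rerun the Mehler computation of Theorem~\ref{thm:sfPoissonH} starting from the $\b=0$ addition formula \eqref{eq:sfPEadd0}, which carries a single $v$-integral and no $z$-integrals. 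What this buys is exactly what you say: one never touches the singular Jacobi constants $c_{\frac{d-2}{2},\b-1}$ and the concentrating measure, so the only limits needed are the two elementary asymptotics $(1-A/\g+\CO(\g^{-2}))^{\g-1}\to e^{-A}$ and $C_n^{\a+\g}(r)/C_n^{\a+\g}(1)\to r^n$, both already used in the paper's proof of the theorem; your algebra for $(D-N)/D$ and the resulting exponent $-\big((s^2+t^2)r^2-2r\la x,y\ra\,\mathrm{sign}(st)\big)/(1-r^2)$ with prefactor $(1-r^2)^{-\a-\f12}=(1-r^2)^{-d/2}$ checks out. What the paper's route buys is economy (one limit statement covering all $\b\ge 0$) at the cost of the normalization bookkeeping for the degenerating $z_1$-measure that you rightly flag as the point needing care. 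The remaining issue in both routes --- interchanging the $\g\to\infty$ limit with the integral and the series in \eqref{eq:sfPossion2} --- is glossed over by the paper as well, and your proposed dominated-convergence justification is the right fix.
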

 
\subsubsection{Poisson kernel on the surface of the hyperboloid} 
For $\varrho > 0$, we can define the Poisson kernel on the hyperboloid by 
$$
{}_\varrho \sP^E\big(w_{\b}; r,  (x,t),(y,s)\big) = \sum_{n=0}^\infty 
    {}_\varrho\sP_n^E(w_{\b}; (x,t),(y,s)\big) r^n, \quad 0 \le r < 1.
$$
By \eqref{eq:sfOPhypH2} and the analogue of the integral \eqref{eq:intHyp} for the Hermite weight, 
we have
$$
 {}_\varrho\sP_n^E(w_{\b}; (x,t),(y,s)\big) =  \sP_n^E\left(w_{\b}; \big(x,\sqrt{t^2-\varrho^2}\big),
 \big(y,\sqrt{s^2-\varrho^2}\big)\right) 
$$
and the similar relation for the Poisson kernel. Consequently, the Mehler-type formula for the 
Poisson kernel on the hyperboloid follows from a simple changing variable of the identity 
\eqref{eq:sfPoissonH} for the cone.
 
\subsection{Hermite polynomials on a solid hyperboloid}
On the solid domain $\VV^{d+1}$ bounded by the hyperboloid, we choose the weight function $w$ as 
$$
     w_{\b}(t) = |t| (t^2-\varrho^2)^{\b-\f12} e^{-t^2},  \qquad \b > -\tfrac12, \quad |t| \ge \varrho \ge 0,
$$
which is an even function, so that $W$ in \eqref{eq:W(x,t)} with $c =1$ is given by 
$$
  W_{\b,\mu}(x,t) = |t| (t^2-\varrho^2)^{\b-\f12} e^{-t^2} (t^2-\varrho^2 - \|x\|^2)^{\mu-\f12}, \qquad  \mu > -\tfrac12.
$$
The corresponding inner product on the solid hyperboloid is defined by 
$$
 \la f,g\ra_{\b,\mu} = b_{\b,\mu} \int_{\VV^{d+1}} f(x,t)g(x,t) W_{\b,\mu}(x,t) \d x \d t, 
$$
where $b_{\b,\mu} = 1/\int_{\VV^{d+1}} W_{\b,\mu}(x,t) \d x \d t = 
e^{\varrho^2}b_{\mu}^\BB / \Gamma(\b+\mu+\f{d}{2})$ with $b_{\mu}^\BB$ being the normalization 
constant of $\varpi_\mu(x) = (1-\|x\|^2)^{\mu-\f12}$ on $\BB^d$. 

\subsubsection{Double cone}
In this case $\varrho = 0$, the weight function becomes 
$$
  W_{\b,\mu}(x,t) = |t|^{2\b} e^{-t^2} (t^2-\|x\|^2)^{\mu - \f12}, \quad \b > - \tfrac{d+1}2, \, \mu > - \tfrac12.
$$
The polynomial $q_k^{(m)}$ in Proposition \ref{prop:solidOPbasis} are orthogonal with respect to 
$$
     |t|^{2m+ 2\mu +d -1} w_\b(t) =  |t|^{2m+2\mu+2\b+d-1} e^{-t^2},
$$ 
so that they are given by the generalized Hermite polynomials defined in \eqref{eq:gHermite}, that is,
$g_k^{(m)}(t) = H_k^{m+\mu+\b+ \frac{d-1}{2}}(t)$. Thus, the orthogonal polynomials in 
$\CV_n(\VV^{d+1},W_{\b,\mu})$ given by \eqref{eq:solidOPbasis} are now specialized as follows: 

\begin{prop}\label{prop:solidOPHermite}
Let $\{P_\kb^m: |\kb| = m, \, \kb\in \NN_0^d\}$ be an orthonormal basis of $\CV_m^d(\varpi_\mu)$.
Then the polynomials 
\begin{equation}\label{eq:solidOPconeH}
  \Hb_{m,\kb}^n (x,t) = H_{n-m}^{m+\b + \mu+\frac{d-1}{2}}(t) t^{m} P_\kb^m\left( \frac{x}{t}\right),  
      \qquad 0 \le k \le n, \quad |\kb| = m,
\end{equation}
form an orthogonal basis of $\CV_n(\VV^{d+1}, W_{\b,\mu})$. Moreover, 
\begin{equation}\label{eq:solidOPconeHnorm}
  h_{m,n}^\Hb: = \la \Hb_{m,\kb}^n, \Hb_{m,\kb}^n\ra_{W_{\b,\mu}} = 
   (\b+\mu+\tfrac{d}2)_m h_{n-m}^{m+\b+\mu+ \frac{d-1}{2}}.
\end{equation}
\end{prop}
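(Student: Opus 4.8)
The plan is to treat this as the Hermite counterpart of the solid Gegenbauer Proposition~\ref{prop:solidOPGegen}, the weight $W_{\b,\mu}$ being obtained from $W_{\b,\g,\mu}$ by replacing $(\varrho^2+1-t^2)^{\g-\f12}$ with $e^{-t^2}$. First I would apply the general basis construction of Proposition~\ref{prop:solidOPbasis}. With $\varrho = 0$ and $w(t) = |t|^{2\b}e^{-t^2}$, the one-variable orthogonal polynomial $q_{n-m}^{(m)}$ appearing there is orthogonal with respect to $(t^2)^{m+\mu+\f{d-1}2}w(t) = |t|^{2\a}e^{-t^2}$, where $\a = m+\b+\mu+\f{d-1}2$. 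By the defining property of the generalized Hermite polynomials \eqref{eq:gHermite}, this identifies $q_{n-m}^{(m)} = H_{n-m}^{\a}$, and Proposition~\ref{prop:solidOPbasis} then gives at once that the $\Hb_{m,\kb}^n$ form an orthogonal basis of $\CV_n(\VV^{d+1},W_{\b,\mu})$.

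For the norm I would collapse the $(d+1)$-dimensional integral to a single generalized Hermite integral, exactly as in Proposition~\ref{prop:solidOPGegen}. Substituting $x = t y$ with $y\in\BB^d$, the factor $(t^2-\|x\|^2)^{\mu-\f12}$ becomes $|t|^{2\mu-1}(1-\|y\|^2)^{\mu-\f12}$ and $\d x = |t|^d\,\d y$, so on the diagonal $\Hb_{m,\kb}^n(ty,t) = H_{n-m}^{\a}(t)\, t^m P_\kb^m(y)$ separates the variables. The orthonormality of $P_\kb^m$ against $b_\mu^\BB(1-\|y\|^2)^{\mu-\f12}$ on $\BB^d$ reduces the ball integral to $1/b_\mu^\BB$, and since $t^{2m}|t|^{2\b+2\mu+d-1} = |t|^{2\a}$ this leaves
\begin{equation*}
  h_{m,n}^\Hb = \frac{b_{\b,\mu}}{b_\mu^\BB}\int_{-\infty}^\infty \big|H_{n-m}^{\a}(t)\big|^2\, |t|^{2\a}\, e^{-t^2}\,\d t .
\end{equation*}

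The final step is bookkeeping of normalization constants. Since $h_{n-m}^{\a}$ in \eqref{eq:gHermiteNorm} is defined with the normalizing constant $c_\a = 1/\Gamma(\a+\f12) = 1/\Gamma(m+\b+\mu+\f{d}2)$ of the weight $|t|^{2\a}e^{-t^2}$, the remaining integral equals $\Gamma(m+\b+\mu+\f{d}2)\, h_{n-m}^{\a}$. Inserting $b_{\b,\mu} = b_\mu^\BB/\Gamma(\b+\mu+\f{d}2)$, the $\varrho = 0$ case of the constant recorded just before the statement, yields
\begin{equation*}
  h_{m,n}^\Hb = \frac{\Gamma(m+\b+\mu+\f{d}2)}{\Gamma(\b+\mu+\f{d}2)}\, h_{n-m}^{\a} = \big(\b+\mu+\tfrac{d}2\big)_m\, h_{n-m}^{m+\b+\mu+\f{d-1}2},
\end{equation*}
as claimed. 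I expect no genuine obstacle here: the argument is the routine specialization of Proposition~\ref{prop:solidOPbasis} together with the same constant-tracking as in Proposition~\ref{prop:solidOPGegen}, and the only point demanding care is matching the exponent of $|t|$ after the substitution exactly to $2\a$ so that the one-variable generalized Hermite norm applies with the correct index. Alternatively the whole statement drops out of the corresponding Gegenbauer result by the $\g\to\infty$ limit, but the direct computation above is shorter.
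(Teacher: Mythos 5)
Your proposal is correct and follows essentially the same route as the paper: the basis claim is the specialization of Proposition~\ref{prop:solidOPbasis} to $w(t)=|t|^{2\b}e^{-t^2}$ (identifying $q_{n-m}^{(m)}=H_{n-m}^{\a}$ with $\a=m+\b+\mu+\f{d-1}{2}$), and the norm is obtained by the substitution $x=ty$, the orthonormality of $P_\kb^m$ on $\BB^d$, and the same bookkeeping of $b_{\b,\mu}$ against the normalization of $h_{n-m}^{\a}$ that produces the Pochhammer factor $(\b+\mu+\tfrac{d}{2})_m$. The paper's proof is just a terser statement of this same computation.
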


\begin{proof}
The polynomials in \eqref{eq:solidOPconeH} consist of an orthogonal basis by 
Proposition \ref{eq:solidOPbasis}. The norm $h_{m,n}^\Hb$ is computed using 
the identity \eqref{eq:intHypSolid} and the orthonormality of $P_{\kb}^m$, where the
ratio of the Pochhammer constant comes from $b_{\b,\mu}/b_{\b+m,\mu}$. 
\end{proof}
 
We shall call these polynomials {\it Hermite polynomials on the solid cone} when $\b = \f12$ 
and {\it generalized Hermite polynomials on the solid cone} when $\b \ne \f12$. The reason for
choose $\b = \f12$ instead of $\b =0$ lies in Theorem \ref{thm:solidConeHdiff} below. 

\begin{rem}\label{rem:4.2}
If $n-m$ is odd, then the polynomials $\Hb_{m,\kb}^n(x,t)$ contains a factor $t$ by \eqref{eq:gHermite}.
Consequently, the space $\CV_n^O(\VV^{d+1}, W_{\b,\mu})$ that contains these polynomials are well 
defined for $\b > - \tfrac{d + 2}2$ and, in particular, for $\b = -1$ for all $d \ge 1$.
\end{rem}

\begin{thm}\label{thm:solidGtoH}
Let $\Cb_{m,\kb}^n$ be the generalized Gegenbauer polynomials in $\CV_n(\VV^{d+1}, W_{\b,\g,\mu})$
defined at \eqref{eq:solidOPconeG}. Then
\begin{equation}\label{eq:GtoH}
\lim_{\g \to \infty} \frac{\Cb_{m,\kb}^n\left(\frac{x}{\sqrt{\g}}, \frac{t}{\sqrt{\g}} \right)}{\sqrt{h_{m,n}^\Cb}}
    = \frac{\Hb_{m,\kb}^n(x,t)}{\sqrt{h_{m,n}^\Hb}}.
\end{equation}
\end{thm}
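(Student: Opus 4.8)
The plan is to mirror the proof of \thmref{thm:sfGtoH}, since $\Cb_{m,\kb}^n$ and $\Hb_{m,\kb}^n$ share exactly the same product structure as their surface counterparts, with the ball factor $H(x,t)=t^m P_\kb^m(x/t)$ now playing the role previously filled by the spherical harmonic $Y_\ell^m$. The one structural fact that makes this substitution transparent is that $H$ is homogeneous of degree $m$ in $(x,t)$, so under the rescaling $(x,t)\mapsto(x/\sqrt\g,\,t/\sqrt\g)$ it merely contributes a factor $\g^{-m/2}$ and is otherwise independent of $\g$; in particular $H$ is identical in $\Cb_{m,\kb}^n$ and $\Hb_{m,\kb}^n$, so it will cancel cleanly. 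Throughout I would write $\mu':=m+\b+\mu+\tfrac{d-1}{2}$ for the common index appearing in the one-variable factors of \eqref{eq:solidOPconeG} and \eqref{eq:solidOPconeH}.

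For all $0\le m\le n$ I would apply the one-dimensional limit \eqref{eq:limgC=gH}, namely $\g^{-(n-m)/2}C_{n-m}^{(\g,\mu')}(t/\sqrt\g)\to\k_{n-m}^{\mu'}H_{n-m}^{\mu'}(t)$ (which for $m=n$ reads $1\to\k_0^{\mu'}=1$, so that both polynomials reduce to $H$), together with the homogeneity of $H$, to conclude that $\g^{-(n-2m)/2}\,\Cb_{m,\kb}^n(x/\sqrt\g,\,t/\sqrt\g)\to\k_{n-m}^{\mu'}\,\Hb_{m,\kb}^n(x,t)$ as $\g\to\infty$. For the normalizing constants I would combine the norm formulas \eqref{eq:solidOPconeGnorm} and \eqref{eq:solidOPconeHnorm} with the one-variable norm asymptotic $\g^{-(n-m)}h_{n-m}^{(\g,\mu')}\to(\k_{n-m}^{\mu'})^2 h_{n-m}^{\mu'}$ (the same relation used on the surface) and with $(\b+\mu+\g+\tfrac{d+1}{2})_m\sim\g^m$; a short computation then gives
\[
  \g^{-(n-2m)}\,h_{m,n}^\Cb \longrightarrow (\b+\mu+\tfrac{d}{2})_m\,(\k_{n-m}^{\mu'})^2\,h_{n-m}^{\mu'} = (\k_{n-m}^{\mu'})^2\,h_{m,n}^\Hb .
\]
Dividing the polynomial asymptotic by the square root of this norm asymptotic, the powers $\g^{(n-2m)/2}$ cancel and so do the factors $\k_{n-m}^{\mu'}$, leaving precisely \eqref{eq:GtoH}.

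The argument involves no genuine obstacle beyond careful bookkeeping of the powers of $\g$: one must verify that the scaled polynomial and $\sqrt{h_{m,n}^\Cb}$ grow at the identical rate $\g^{(n-2m)/2}$ before the $\k_{n-m}^{\mu'}$ factors may be cancelled, which is where the exponents $-m$ from $(\b+\mu+\g+\tfrac{d+1}{2})_m^{-1}$, $n-m$ from $h_{n-m}^{(\g,\mu')}$, and $-(n-2m)$ from the rescaling must sum to zero. The only differences from the surface proof are cosmetic — the extra parameter $\mu$ simply shifts the Gegenbauer/Hermite index from $m+\b+\tfrac{d-1}{2}$ to $\mu'=m+\b+\mu+\tfrac{d-1}{2}$, and the homogeneous ball factor $H$ scales exactly as the spherical harmonic did — so the entire computation transfers line by line from \thmref{thm:sfGtoH}.
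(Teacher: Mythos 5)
Your proposal is correct and follows exactly the route the paper intends: the paper's own proof simply states that the argument is analogous to that of Theorem \ref{thm:sfGtoH}, deducing the limit of $h_{m,n}^{\Cb}$ from \eqref{eq:solidOPconeGnorm} and \eqref{eq:solidOPconeHnorm} and the limit of $\Cb_{m,\kb}^n$ from \eqref{eq:solidOPconeG} and \eqref{eq:solidOPconeH}, which is precisely the bookkeeping you carry out. Your observation that the homogeneous ball factor $t^m P_\kb^m(x/t)$ scales by $\g^{-m/2}$ exactly as the spherical harmonic did, and that the index merely shifts to $\mu'=m+\b+\mu+\tfrac{d-1}{2}$, is the correct and complete justification of the paper's ``similar'' claim.
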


\begin{proof}
The proof is similar to that of Theorem \ref{thm:sfGtoH}. We deduce analogously the limit of $h_{m,n}^{\Cb}$
from \eqref{eq:solidOPconeGnorm} and \eqref{eq:solidOPconeHnorm} and the limit of $\Cb_{m,\kb}^n$ 
from \eqref{eq:solidOPconeG} and \eqref{eq:solidOPconeH}. 
\end{proof}

Again, the limit relation \eqref{eq:GtoH} allows us to derive the differential equations satisfied by the 
Hermite polynomials on the solid cone. 

\begin{thm}\label{thm:solidConeHdiff}
Let $\mu > -\f12$ and $W_{\f12,\mu}(x,t) = |t| e^{-t^2} (t^2-\|x\|^2)^{\mu - \f12}$. 
For $n=0,1,2,\ldots$, every $u \in \CV_n^E\big(\VV_0^{d+1}, W_{\f12,\mu}\big)$ satisfies 
the differential equation 
\begin{align} \label{eq:solidConeHdiff}
    \left[ \partial_t^2 + \Delta_x +\frac{2}{t} \la x, \nabla_x\ra - 
   2 \big(t \partial_t + \la x, \nabla_x \ra\big)  + \frac{2\mu+d}{t} \partial_t \right] 
    u = - 2n \, u.  
\end{align}
Furthermore, every $u \in \CV_n^O\big(\VV_0^{d+1}, W_{- \f12,\mu}\big)$ satisfies 
the differential equation 
\begin{align} \label{eq:solidConeHdiffO}
     \left[ \partial_t^2 + \Delta_x +\frac{2}{t} \la x, \nabla_x\ra \left (\partial_t - \frac{1}{t}\right)
      - 2 \big(t \partial_t + \la x, \nabla_x \ra\big)   + \frac{2\mu+d-2}{t} \left (\partial_t - \frac{1}{t}\right) \right]u \\
     = - 2n \, u.  \notag
\end{align}
\end{thm}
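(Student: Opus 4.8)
The plan is to transplant the proof of Theorem \ref{thm:sfconeHdiff} from the surface to the solid cone: I would replace the surface limit \eqref{eq:sfGtoH} by the solid-cone limit \eqref{eq:GtoH} of Theorem \ref{thm:solidGtoH}, and the surface Gegenbauer equations by the solid-cone equations \eqref{eq:solidConeGdiff} and \eqref{eq:solidConeGdiffO} of Theorem \ref{thm:solidConeGdiff}. Since each of these is a single linear operator annihilating the whole space with an eigenvalue independent of the basis label, it suffices to verify the two assertions on the basis \eqref{eq:solidOPconeG}; the statement for an arbitrary $u \in \CV_n^E$ (resp.\ $\CV_n^O$) then follows by linearity, the eigenvalue $-2n$ being the same for every basis element.

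For the even case I would fix $\b = \tfrac12$ and take a basis element $u = \Cb_{m,\kb}^n$ with $n-m$ even, which satisfies \eqref{eq:solidConeGdiff}. Setting $U(x,t) = u\big(\tfrac{x}{\sqrt{\g}}, \tfrac{t}{\sqrt{\g}}\big)$, I would rewrite \eqref{eq:solidConeGdiff} in terms of $U$. The rescaling $x \mapsto x/\sqrt{\g}$, $t \mapsto t/\sqrt{\g}$ multiplies each genuine derivative $\partial_t$ and $\partial_{x_i}$ by $\sqrt{\g}$ and replaces $t$ by $t/\sqrt{\g}$, while leaving the Euler operators $\la x,\nabla_x\ra$ and $t\partial_t$ unchanged because they are scale invariant. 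Tracking the weights term by term, the second-order pieces $(1-t^2)\partial_t^2$ and $\Delta_x$ and the mixed pieces $\tfrac{1}{t}(1-t^2)\la x,\nabla_x\ra\partial_t$ and $\tfrac{1}{t}\partial_t$ each acquire a factor $\g$, whereas $\la x,\nabla_x\ra^2$, $t\partial_t$, and $\la x,\nabla_x\ra$ acquire none.

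Dividing the resulting identity by $\g\sqrt{h_{m,n}^\Cb}$ and letting $\g \to \infty$ is the crux. In the limit $(1-t^2/\g)\to 1$, the scale-invariant contributions $\tfrac{1}{\g}\la x,\nabla_x\ra^2 U$ and $\tfrac{t}{\g}\partial_t U$ vanish, the coefficient $(2\g+2\mu+d)/\g \to 2$, and the right-hand side eigenvalue $-n(n+2\g+2\mu+d)/\g \to -2n$; the limit \eqref{eq:GtoH} identifies the normalized $U$ with $\Hb_{m,\kb}^n/\sqrt{h_{m,n}^\Hb}$, and the surviving terms assemble into \eqref{eq:solidConeHdiff}. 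For the odd case I would repeat the computation with $\b = -\tfrac12$ and $n-m$ odd, starting from \eqref{eq:solidConeGdiffO}: the extra term $-\la x,\nabla_x\ra$ is scale invariant and disappears after division by $\g$, while the difference term $\tfrac{2\mu+d-2}{t}\big(\partial_t - \tfrac{1}{t}\big)$ carries a factor $\g$ (both $\tfrac1t\partial_t$ and $\tfrac1{t^2}$ scale like $\g$) and survives, yielding \eqref{eq:solidConeHdiffO}.

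The only point requiring care is the interchange of the limit $\g\to\infty$ with differentiation, and this is harmless here: every $\Cb_{m,\kb}^n$ and $\Hb_{m,\kb}^n$ is a polynomial of the fixed degree $n$, so the coefficient-wise convergence furnished by \eqref{eq:GtoH} automatically gives convergence of all partial derivatives, uniformly on compact sets. The genuine effort is therefore entirely the bookkeeping of scaling weights that decides which terms survive, which are killed by the $1/\g$ factor, and which combine to produce the eigenvalue $-2n$; I expect no analytic obstacle beyond this.
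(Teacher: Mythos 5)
Your proposal is correct and is essentially the paper's own argument: the paper proves this theorem by declaring it ``similar to that of Theorem~\ref{thm:sfconeHdiff}'' and omitting the details, and that proof is exactly the rescaling $x\mapsto x/\sqrt{\g}$, $t\mapsto t/\sqrt{\g}$ applied to the Gegenbauer equations \eqref{eq:solidConeGdiff} and \eqref{eq:solidConeGdiffO}, followed by division by $\g\sqrt{h_{m,n}^\Cb}$ and passage to the limit via \eqref{eq:GtoH}, with the same bookkeeping of which terms carry a factor of $\g$. Your accounting of the scale-invariant Euler operators versus the genuinely second-order and singular terms is the correct mechanism, and verifying the identity on the basis \eqref{eq:solidOPconeG} suffices by linearity since the eigenvalue depends only on $n$.
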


\begin{proof}
The proof is similar to that of Theorem \ref{thm:sfconeHdiff}. We shall omit the details. 
\end{proof}

We note that the identity \eqref{eq:solidConeHdiff} holds for $\CV_n^E(\VV^{d+1}; W_{\b,\mu})$ with
$\b = \f12$, where \eqref{eq:solidConeHdiffO} holds for $\CV_n^O(\VV^{d+1}; W_{\b,\mu})$
with $\b = - \f12$. We have observed similar phenomenon for orthogonal polynomials on the surface 
of the cone. 

\subsubsection{Laguerre polynomials on the solid upper cone}
We compare our results with those developed on the solid upper cone studied recently in \cite{X19}. 
The Laguerre polynomials on the solid upper cone $\VV_{+}^{d+1}$ are orthogonal with respect to
$$
  \la f, g\ra_{\b,\mu} = b_{\b,\mu} \int_{\VV_{+}^{d+1}} f(x,t) g(x,t) W_{\b,\mu}^L(x,t) \d x \d t, 
$$
where $W_{\b,\mu}^L(x,t)= (t^2-\|x\|^2)^{\mu-\f12} t^\b e^{-t}$, $\b > -d, \, \mu > -\tfrac12$.
An orthogonal basis for the space $\CV_n(\VV_{+}^{d+1}, t^\b e^{-t})$ is given by 
\begin{equation}\label{eq:coneL}
   \Lb_{m,\kb}^n(x,t) =  L_{n-m}^{(2\a+2m)}(t) t^m P_\kb^m\left(\varpi_\mu; \frac{x}{t}\right),
\end{equation}
in terms of the Laguerre polynomials and an orthogonal basis of $\CV_m^d(\varpi_\mu)$ on the unit ball.
It is shown in \cite{X19} that, when $\b = 0$, the Laguerre polynomials in $\CV_n(\VV_{+}^{d+1}, W_{0,\mu}^L)$
satisfy a differential equation 
\begin{align} \label{eq:ConeLaguerrediff}
  \Big[ t \left( \Delta_x+ \partial_t^2 \right) + &\,  2  \la x,\nabla_x \ra \partial_t   - \la x,\nabla_x\ra  \\
        + &\,  (2\mu + d -t) \partial_t \left( t \partial_t^2 + (d-1 - t) \partial_t + t^{-1} \Delta_0^{(x)} \right) \Big]u = - n u.\notag
\end{align}
Note that the weight function $t^{-1} e^{-t}$ also has a singularity at the origin. 

Like the case of cone surfaces, the Laguerre polynomials in \eqref{eq:coneL} are not related to orthogonal 
polynomials for the even extension, in $t$ variable, of $W_{\b,\mu}^L$ on the double cones.  
 
\subsubsection{Double hyperboloid} \label{subsec:hyperSolidH}
With $\varrho > 0$, the weight function $W_{\b,\mu}$ can be written as $W_{\b,\mu}(t) = 
|t| w_0 (t^2-\varrho^2) (t^2-\varrho^2)^{\mu-\f12} \varpi_\mu(x/\sqrt{t^2-\varrho^2})$ with $w_0(t) = e^{\rho^2} 
t^{\b-\f12} e^{-t}$. Hence, the polynomials $q_k^{(m)}(t)$ in Proposition \ref{prop:solidOPbasis} 
are given in terms of $p_k(w_0^{(m)}; t)$ with 
$$
   w_0^{(m)}(t) = t^{m+ \f{d-1}{2}}w_0(t) = e^{-\rho^2} t^{m+ \b+ \f{d-2}{2}} e^{-t},
$$ 
which are the Laguerre polynomials; that is, $p_k(w_0^{(m)};s)= L_k^{m+\b+\f{d-2}{2}}(s)$. As in 
Subsection \ref{subsec:sfHypHermtie}, we only consider orthogonal polynomials that are even in $t$, that is, 
those in $\CV_n^E(\VV_0^{d+1}; W_{\b,\mu})$, in view of Proposition \ref{prop:op-1d}. The orthogonal 
polynomials given in Corollary \ref{cor:sfOPeven} are now specialized as follows: 

\begin{prop}\label{prop:solidOPhypH}
Let $\{P_\kb^{n-2k}: |\kb| = n-2k, \, \kb \in \NN_0^d\}$ denote an orthonormal basis of $\CV_{n-2k}^d(\varpi_\mu)$.
Then the polynomials 
\begin{equation}\label{eq:solidOPhypH}
 {}_\varrho \Hb_{n-2k,\kb}^n (x,t) = L_k^{n-2k+\b+\mu+\f{d-2}{2}}(t^2-\varrho^2) (t^2-\varrho^2)^{\frac{n-2k}{2}}
      P_\kb^{n-2k}\bigg(\frac{x}{\sqrt{t^2-\varrho^2}} \bigg) 
\end{equation}
with $|\kb| = n-2k$ and $0 \le k\le n/2$ form an orthogonal basis of $\CV_n^E(\VV^{d+1}, W_{\b,\mu})$. 
Furthermore, in terms of orthogonal polynomials $\Hb_{n-2k,\ell}^n$ in \eqref{eq:solidOPconeH}, 
\begin{equation}\label{eq:solidOPhypH2}
 {}_\varrho \Hb_{n-2k,\ell}^n(x,t) =  \frac{(-1)^k}{2^{2k} k!} \Hb_{n-2k,\ell}^n \left(x, \sqrt{t^2-\varrho^2}\right).
\end{equation}
\end{prop}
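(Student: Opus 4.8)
The plan is to mirror the proof of Proposition~\ref{prop:sfOPhypH} for the surface case, since the solid hyperboloid differs from the surface only by the extra ball factor attached to each basis element. First I would invoke Corollary~\ref{cor:solidOPeven}: once the one-variable orthogonal polynomials $p_k(w_0^{(m)})$ associated with $w_0^{(m)}(t)=t^{m+\mu+\f{d-1}{2}}w_0(t)=e^{-\varrho^2}t^{m+\b+\mu+\f{d-2}{2}}e^{-t}$ are identified as the Laguerre polynomials $L_k^{m+\b+\mu+\f{d-2}{2}}$, formula~\eqref{eq:solidOPeven} specializes verbatim to \eqref{eq:solidOPhypH}. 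This immediately shows that the ${}_\varrho\Hb_{n-2k,\kb}^n$ constitute an orthogonal basis of $\CV_n^E(\VV^{d+1},W_{\b,\mu})$, so the only remaining task is to establish the explicit relation \eqref{eq:solidOPhypH2}.

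For \eqref{eq:solidOPhypH2}, I would start from the solid-cone polynomial \eqref{eq:solidOPconeH} with $m=n-2k$,
\[
  \Hb_{n-2k,\kb}^n(x,t)=H_{2k}^{\,n-2k+\b+\mu+\f{d-1}{2}}(t)\,t^{n-2k}P_\kb^{n-2k}\!\Big(\tfrac{x}{t}\Big),
\]
and substitute $t\mapsto\sqrt{t^2-\varrho^2}$. The homogeneous ball factor transforms cleanly: $t^{n-2k}P_\kb^{n-2k}(x/t)$ becomes $(t^2-\varrho^2)^{\f{n-2k}{2}}P_\kb^{n-2k}(x/\sqrt{t^2-\varrho^2})$, which is precisely the ball part of ${}_\varrho\Hb_{n-2k,\kb}^n$ in \eqref{eq:solidOPhypH}. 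Hence the identity reduces entirely to the $t$-variable statement
\[
  L_k^{\a-\f12}(s^2)=\frac{(-1)^k}{2^{2k}k!}\,H_{2k}^{\a}(s),\qquad
  \a=n-2k+\b+\mu+\tfrac{d-1}{2},\quad s=\sqrt{t^2-\varrho^2},
\]
which is exactly the quadratic-argument relation between the generalized Hermite polynomials and the Laguerre polynomials recorded as \eqref{eq:gHermite} in Appendix~\ref{sect:append}. Substituting this back reproduces the factor $\tfrac{(-1)^k}{2^{2k}k!}$ and yields \eqref{eq:solidOPhypH2}; this is the same one-line reduction used to verify \eqref{eq:sfOPhypG2} and \eqref{eq:sfOPhypH2}.

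Since the argument is an index-bookkeeping exercise built on a single known one-dimensional identity, I do not anticipate a genuine obstacle. The only point requiring care is the parameter matching: one must track that the $\mu$ coming from the ball weight $\varpi_\mu$ enters the Laguerre index (producing $\a-\f12=n-2k+\b+\mu+\f{d-2}{2}$, in contrast to the surface value $n-2k+\b+\f{d-2}{2}$), and confirm that the constant $(-1)^k/(2^{2k}k!)$ is independent of $\mu$ and of the ball factor. Both are immediate once \eqref{eq:gHermite} is applied with the correct $\a$, so the proof is complete.
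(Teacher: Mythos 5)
Your proposal is correct and follows essentially the same route as the paper: the paper's proof likewise cites Corollary~\ref{cor:solidOPeven} for the basis claim and reduces \eqref{eq:solidOPhypH2} to the quadratic relation $L_k^{\a-\f12}(s^2)=\tfrac{(-1)^k}{2^{2k}k!}H_{2k}^{\a}(s)$ from \eqref{eq:gHermite}, exactly as in the verification of \eqref{eq:sfOPhypH2}. Your parameter bookkeeping (the $\mu$ entering the Laguerre index and the constant being independent of the ball factor) is accurate; you have merely written out details the paper leaves implicit.
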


\begin{proof}
By Corollary \ref{cor:solidOPeven}, we only need to verify the relation \eqref{eq:sfOPhypH2}, which 
follows from \eqref{eq:gHermite} as in the proof of \eqref{eq:sfOPhypH2}.
\end{proof}
 
We call these polynomials {\it Hermite polynomials on the solid hyperboloid} when $\b = \frac12$, for
which the weight function is $w_{\f12,\mu}(t) = e^{\varrho^2} |t| e^{-t^2} (t^2 - \varrho^2 - \|x\|^2)^{\mu-\f12}$, 
and {\it generalized Hermite polynomials on the solid hyperboloid} when $\b \ne \f12$. 

We now show that the elements of $\CV_n^E(\VV^{d+1}, W_{\b,\g})$ are eigenfunctions 
of a second order differential operator when $\b = \f12$. 
 
\begin{thm}\label{thm:solidHypHdiff}
For  $\rho > 0$ and $n=0,1,2,\ldots$, every $u \in \CV_n^E(\VV_0^{d+1}, W_{\f12,\mu})$ satisfies 
the differential equation 
\begin{align} \label{eq:solidHypHdiff}
  & \left[ \left(1- \frac{\varrho^2}{t^2} \right) \partial_t^2 + \Delta_x +\frac{2}{t} \la x, \nabla_x\ra - 
   \frac{2}{t}(t^2-\varrho^2) \partial_t - 2 \la x, \nabla_x \ra \right. \\
  & \qquad \qquad \qquad  \qquad \qquad \qquad \qquad
    \left. + \frac{1}{t}\left (\frac{\varrho^2}{t^2} + 2\mu+d\right) \partial_t \right] 
    u = - 2n \, u. \notag
\end{align}
\end{thm}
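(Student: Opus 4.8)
The plan is to mirror the proof of Theorem~\ref{thm:solidHypGdiff}, deducing the hyperboloid equation from the solid-cone Hermite equation of Theorem~\ref{thm:solidConeHdiff} by the substitution $T=\sqrt{t^2-\varrho^2}$. First I would record that, by \eqref{eq:solidOPhypH}, each basis element ${}_\varrho\Hb_{n-2k,\kb}^n$ is a genuine polynomial in $(x,t)$, so the operator in \eqref{eq:solidHypHdiff} acts on it classically; and by \eqref{eq:solidOPhypH2} it equals a nonzero constant multiple of $U(x,t):=\Hb_{n-2k,\kb}^n\big(x,\sqrt{t^2-\varrho^2}\big)$. Since both the operator and the eigenvalue $-2n$ are linear in $u$, it suffices to verify \eqref{eq:solidHypHdiff} for $U$; and because $\{{}_\varrho\Hb_{n-2k,\kb}^n: |\kb|=n-2k,\ 0\le k\le n/2\}$ is a basis of $\CV_n^E(\VV^{d+1},W_{\f12,\mu})$ by Proposition~\ref{prop:solidOPhypG}, this will establish the claim for every $u$ in that space.

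Next I would invoke Theorem~\ref{thm:solidConeHdiff}. Writing $v(x,T):=\Hb_{n-2k,\kb}^n(x,T)$, which lies in the even subspace $\CV_n^E$ on the solid cone for $\b=\f12$ (here $n-(n-2k)=2k$ is even), the function $v$ satisfies \eqref{eq:solidConeHdiff} with $T$ in place of $t$, hence with eigenvalue $-2n$. The essential observation is that the pure $x$-operators $\Delta_x$ and $\la x,\nabla_x\ra$, and the mixed term $\tfrac2t\la x,\nabla_x\ra\partial_t$, involve $x$-derivatives taken at fixed radial variable, so under a change of the radial variable only the factors $\partial_T$, $\partial_T^2$ and $1/T$ are affected. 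I would then apply the chain-rule identities \eqref{eq:f'F'}, with $f=v(x,\cdot)$ and $F=U(x,\cdot)$, to translate the $T$-derivatives:
\begin{align*}
\partial_T v &\longmapsto \frac{\sqrt{t^2-\varrho^2}}{t}\,\partial_t U,\\
\partial_T^2 v &\longmapsto \frac{t^2-\varrho^2}{t^2}\,\partial_t^2 U+\frac{\varrho^2}{t^3}\,\partial_t U.
\end{align*}

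Substituting these into the cone equation and using $T^2=t^2-\varrho^2$ throughout, the plan is to collect terms: $\partial_T^2$ supplies the coefficient $1-\tfrac{\varrho^2}{t^2}$ of $\partial_t^2$ together with a residual $\tfrac{\varrho^2}{t^3}\partial_t$; the dilation term $-2T\partial_T$ becomes $-\tfrac2t(t^2-\varrho^2)\partial_t$; the term $\tfrac{2\mu+d}{T}\partial_T$ becomes $\tfrac{2\mu+d}{t}\partial_t$; and $\tfrac2T\la x,\nabla_x\ra\partial_T$ becomes $\tfrac2t\la x,\nabla_x\ra\partial_t$, while $\Delta_x$ and $-2\la x,\nabla_x\ra$ carry over verbatim. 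Merging the two surviving first-order radial contributions gives exactly $\tfrac1t\big(\tfrac{\varrho^2}{t^2}+2\mu+d\big)\partial_t$, which reassembles \eqref{eq:solidHypHdiff}. The only delicate point is this bookkeeping of the first-order radial terms — in particular correctly combining the $\tfrac{\varrho^2}{t^3}\partial_t$ produced by $\partial_T^2$ with the transported $\tfrac{2\mu+d}{t}\partial_t$ — but it is entirely routine and identical in shape to the computation already carried out for the Gegenbauer case in Theorem~\ref{thm:solidHypGdiff}. As an independent consistency check, one may alternatively pass to the limit $\g\to\infty$ in Theorem~\ref{thm:solidHypGdiff} after the rescaling of Theorem~\ref{thm:solidGtoH}, which reproduces the same operator.
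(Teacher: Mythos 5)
Your proposal is correct and follows the same route the paper takes: the paper's proof of Theorem~\ref{thm:solidHypHdiff} simply notes that the equation "can be deduced either from \eqref{eq:solidConeHdiff} or from taking limit of \eqref{eq:solidHypGdiff}," and your main argument is precisely the first of these, carried out via \eqref{eq:solidOPhypH2} and the chain-rule identities \eqref{eq:f'F'} exactly as in Theorem~\ref{thm:solidHypGdiff}, with your closing remark supplying the second. (Only a trivial slip: the basis of $\CV_n^E(\VV^{d+1},W_{\f12,\mu})$ is given by Proposition~\ref{prop:solidOPhypH}, not Proposition~\ref{prop:solidOPhypG}.)
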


\begin{proof}
This can be deduced either from \eqref{eq:solidConeHdiff} or from taking limit of \eqref{eq:solidHypGdiff}. 
\end{proof}
 
\subsubsection{Poisson kernel on the solid cone}
In terms of the orthogonal basis \eqref{eq:solidOPconeH}, the reproducing kernel of the generalized 
Hermite polynomials on the cone satisfy 
$$
     \Pb_n \big(W_{\b,\mu}; (x,t),(y,s)\big) = \sum_{m=0}^n \sum_{|\kb| = m}
         \frac{\Hb_{m,\kb}^n (x,t)\Hb_{m,\kb}^n (y,s)}{h^\Hb_{m,n}}.
$$
The kernel $\Pb_n^E(W_{\b,\mu})$ has a similar expression, summing over $m$ such that $n-m$ is even. 
By \eqref{eq:PbnCone1}, the limit relation \eqref{eq:sfGtoH} leads immediately to the relation  
\begin{equation}\label{eq:Plimit}
   \lim_{\g \to \infty}  \Pb_n^E\left(W_{\b,\g,\mu}; \left(\frac{x}{\sqrt{\g}},\frac{t}{\sqrt{\g}}\right),
       \left(\f{y}{\sqrt{\g}},\f{s}{\sqrt{\g}}\right)\right) = \Pb_n^E\big(W_{\b,\mu}; (x,t),(y,s)\big). 
\end{equation}
As in the case of cone surface, we can derive a Mehler-type formula for the Poisson kernel 
$\Pb^E(W_{\b,\mu}; r, \cdot,\cdot)$ of the generalized Hermite polynomials, 
$$
\Pb^E\big(W_{\b,\mu}; r,  (x,t),(y,s)\big) = \sum_{n=0}^\infty \Pb_n^E(W_{\b,\mu}; (x,t),(y,s)\big) r^n, \quad 0 \le r < 1.
$$

\begin{thm} \label{thm:solidPoissonH}
Let $\b \ge \f12$. Then, for $(x,t), (y,s) \in \VV^{d+1}$, 
\begin{align}\label{eq:PoissonH}
& \Pb^E  \big(W_{\b,\mu}; r,  (x,t),(y,s)\big) = \frac{1}{(1-r^2)^{\b +\mu+ \f{d}{2}}} 
       e^{-\frac{s^2+t^2}{1-r^2} r^2}   \int_{[-1,1]^3} e^{\frac{2 r}{1-r^2} \phi(x,t,y,s;u,z)}\\
      \times & c_{\mu+\frac{d-1}{2},\b-\f32} c_{\b-\f12} c_{\mu -\f12} 
      (1-z_1)^{\mu+\frac{d-1}{2}} (1+z_1)^{\b-\f32}(1-z_2^2)^{\b-1}(1-u^2)^{\mu-1}  \d z \d u, \notag
\end{align}
where $\phi(x,t,y,s;u,z)$ is defined by 
$$
\phi (x,t,y,s; u,z) = 
   \frac{1-z_1}{2}\left(\la x,y\ra + u \sqrt{t^2-\|x\|^2}\sqrt{t^2-\|y\|^2}\right)\mathrm{sign}(st) + \frac{1+z_1}{2} z_2 st.
$$
\end{thm}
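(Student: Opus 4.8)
The plan is to mirror the proof of Theorem~\ref{thm:sfPoissonH} on the surface, now starting from the solid-cone addition formula \eqref{eq:PEadd} and carrying the extra ball-integration variable $u$ along as a passive parameter. First I would observe that the argument of $Z_n^{\a+\g}$ in \eqref{eq:PEadd} splits as $\zeta(x,t,y,s;u,v,z) = \phi(x,t,y,s;u,z) + v\sqrt{1-s^2}\sqrt{1-t^2}$, where $\phi$ is exactly the function appearing in the statement and is independent of $v$. Changing variable $v \mapsto \zeta$ in the innermost $\d v$-integral (the one carrying the weight $(1-v^2)^{\g-1}$) isolates $Z_n^{\a+\g}(\zeta)$ as a function of a single variable, while the $\d u$- and $\d z$-integrals remain as parameters. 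Up to the normalization constant $c_{\g-\f12}$ this recasts the right-hand side of \eqref{eq:PEadd} as a Fourier--Gegenbauer coefficient with respect to $\varpi_{\a+\g}$.

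Next, following the surface argument, I would define a generating function $G_\g(x,t,y,s;r)$ as the $\d u\,\d z$-integral of a function $F_\g$ supported on the set where $(\zeta-\phi)^2 \le (1-s^2)(1-t^2)$, and use $h_n^\l = \f{\l}{n+\l}C_n^\l(1)$ together with \eqref{eq:c_l} to obtain the expansion
\[
  G_\g(x,t,y,s;r) = \f{c_{\a+\g}}{c_{\g-\f12}} \sum_{n=0}^\infty \Pb_n^E\big(W_{\b,\g,\mu};(x,t),(y,s)\big)\,\f{Z_n^{\a+\g}(r)}{C_n^{\a+\g}(1)}.
\]
The only structural differences from the surface case are that $\a = \b+\mu+\f{d-1}{2}$ now carries the factor $\mu$, so that $\a+\f12 = \b+\mu+\f{d}{2}$, and that $\phi$ contains the additional term $u\sqrt{t^2-\|x\|^2}\sqrt{s^2-\|y\|^2}$ riding on the $\d u$-integral; neither affects the separation-of-variables step.

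Finally I would rescale $(x,t,y,s)\mapsto(x,t,y,s)/\sqrt{\g}$ and let $\g\to\infty$. On one side, the limit relation \eqref{eq:Plimit}, combined with $C_n^\l(r)/C_n^\l(1)\to r^n$ and $c_{\a+\g}/c_{\g-\f12}\to1$, shows the rescaled $G_\g$ tends to $\Pb^E\big(W_{\b,\mu};r,(x,t),(y,s)\big)$. On the other side, since $\phi$ scales like $\g^{-1}$ under this substitution, a direct expansion of $F_\g$ gives the limit $\f{1}{(1-r^2)^{\a+\f12}}\,e^{s^2+t^2}\,e^{-(s^2+t^2-2r\phi)/(1-r^2)} = \f{1}{(1-r^2)^{\b+\mu+\f{d}{2}}}\,e^{-\frac{(s^2+t^2)r^2}{1-r^2}}\,e^{\frac{2r\phi}{1-r^2}}$, the two factors coming respectively from $(1-\tfrac{s^2}{\g})^{-(\g-\f12)}(1-\tfrac{t^2}{\g})^{-(\g-\f12)}$ and from the power $(1-\cdots)^{\g-1}$. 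Pulling the surviving $\d u\,\d z$-integral and its weights out of the limit then yields \eqref{eq:PoissonH}. The main obstacle is the rigorous justification of passing the $\g\to\infty$ limit inside the triple integral defining $G_\g$: one must produce a $\g$-uniform dominating function for the rescaled $F_\g$ on $[-1,1]^3$, handling both the shrinking support constraint and the boundary parameter values $\b=\f12$, $\g\to0$, $\mu\to0$, after which dominated convergence and the uniqueness of Gegenbauer expansions finish the proof.
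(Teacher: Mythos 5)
Your proposal is correct and follows essentially the same route as the paper: the author likewise splits $\zeta=\phi+v\sqrt{1-s^2}\sqrt{1-t^2}$, changes variable $v\mapsto\zeta$ to read off a Fourier--Gegenbauer coefficient, defines the same $G_\g$ and $F_\g$ (with the $\d u\,\d z$-integration carried along as a parameter), and then declares that the argument concludes exactly as in Theorem \ref{thm:sfPoissonH} via the rescaled limit $\g\to\infty$. The only difference is that you flag the justification of passing the limit under the integral as the remaining technical point, which the paper leaves implicit.
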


\begin{proof}
The integrant of \eqref{eq:PEadd} is $\phi(x,t,y,s;u,z) + \sqrt{1-s^2} \sqrt{1-t^2}v$ by our definition of $\phi$. 
Changing variable $v \mapsto \zeta(x,t,y,s; u,v,z)$ for the integral with resect to $\d v$ in \eqref{eq:PEadd}, 
we can write
\begin{equation*}
  \Pb_n^E\big(W_{\b,\g,\mu};(x,t),(y,s)\big) =
   c_{\g-\f12}\int_{-1}^1  Z_n^{\a+\g}(v) G_\g(x,t,y,s;u,r) (1-r^2)^{\a+\g-\f12}\d r,
\end{equation*}
where $\a = \b +\mu+\frac{d-1}{2}$,   
\begin{align*}
 G_\g(x,t,y,s;u,r)  = &\,  c_{\frac{d-1}{2},\b-\f32} c_{\b-\f12} c_{\mu-\f12} \int_{[-1,1]^3} 
    F_\g(x,t,y,s;u,z,r) \\ 
      & \, \times (1-z_1)^{\frac{d-1}{2}} (1+z_1)^{\b-\f32}(1-z_2^2)^{\b-1}(1-u^2)^{\mu-1}  \d z \d u
\end{align*}
and the function $F_\g$ is defined by 
$$
F_\g(x,t,y,s;u,z,r) = \left(1- \frac{\big(r-\phi(x,t,y,s;u,z)\big)^2}{(1-s^2)(1-t^2)}\right)^{\g-1} \!\!
       \frac{1}{\sqrt{1-s^2}\sqrt{1-t^2} (1-r^2)^{\a+\g-\f12}}
$$
if $\big(r-\eta(x,t,y,s;u,z)\big)^2 \le (1-s^2)(1-t^2)$ and $F_\g(x,t,y,s;u,z,r) =0$ otherwise. With this
set up, the proof follows exactly as in the proof of Theorem \ref{thm:sfPoissonH}.
\end{proof}

\subsubsection{Poisson kernel on the solid hyperboloid} 
For $\varrho > 0$, we define the Poisson kernel on the solid hyperboloid by 
$$
{}_\varrho \Pb^E\big(W_{\b,\mu}; r,  (x,t),(y,s)\big) = \sum_{n=0}^\infty 
    {}_\varrho\Pb_n^E(W_{\b,\mu}; (x,t),(y,s)\big) r^n, \quad 0 \le r < 1.
$$
As it is for the kernel on the surface hyperboloid, we can deduce from \eqref{eq:solidOPhypH2} that
$$
 {}_\varrho\Pb_n^E(W_{\b,\mu}; (x,t),(y,s)\big) =  \Pb_n^E\left(W_{\b,\mu}; \big(x,\sqrt{t^2-\varrho^2}\big),
 \big(y,\sqrt{s^2-\varrho^2}\big)\right) 
$$
and the similar relation for the Poisson kernel. Consequently, the Mehler-type formula for the 
Poisson kernel on the hyperboloid follows from the identity 
\eqref{eq:PoissonH} for the cone.

\section{Further extensions of  Gegenbauer and Hermite polynomials}
\setcounter{equation}{0}

In recent studies of orthogonal structure on the unit sphere and on the unit ball, a large portion of the
classical results are extended to the setting that the orthogonality is defined with respect to Dunkl's 
family of weight functions that are invariant under a reflection group. 

For the unit sphere, this means replacing the orthogonality with respect to $\d \s$ by 
\begin{equation} \label{eq:Dunkl-hk}
  \la f,g\ra_\k = a_\k \int_\sph f(x) g(x) h_\k^2(x) \d\s(x), \qquad h_\k(x) = \prod_{v\in R_+} |\la x,v\ra|^{\k_v},
\end{equation}
where $R_+$ is a set of positive roots for a reflection group $G$ with a reduced root system $R$ and 
$v \mapsto \k_v$ is a nonnegative multiplicity function defined on $R$ with the property that it is a 
constant on each conjugate class of $G$. In this setting, the partial derivative is replaced by the Dunkl
operator \cite{D89} defined by 
\begin{equation} \label{eq:Dunkl-Op}
    D_i f(x) = \partial_i  f(x) + \sum_{v \in R_+} \k_v \frac{f(x) - f (x \s_v)}{\la x,\s_v \ra} v_i,  \qquad i =1,2 , \ldots,d,
\end{equation}
where $x \s_v := x - 2\la x, v \ra v /\|v\|^2$, which commute in the sense that $D_i D_j = D_j D_i$ for $1 \le i, j \le d$. 
The operator 
$$
\Delta_h = D_1^2+ \cdots + D_d^2
$$
plays the role of the Laplace operator. Its restriction on the sphere, denoted by $\Delta_{h,0}$, is an analog
of the Laplace--Beltrami operator. An $h$-harmonics is a homogeneous polynomial that satisfies $\Delta_h Y =0$. 
The restriction of $h$-harmonics are orthogonal with respect to $\la \cdot, \cdot \ra_\k$. Let $\CH_n^d(h_\k^2)$ 
be the space of $h$-harmonics of degree $n$. These polynomials share the two characteristic properties of
ordinary spherical harmonics. They are the eigenfunctions of $\Delta_{h,0}$, 
\begin{equation} \label{eq:Delta_h0}
   \Delta_{h,0} Y = - n (n+2 \l_\k) Y, \qquad \l_\k := |\k| + \tfrac{d-2}{2},
\end{equation}
where $|\k| = \sum_{v\in R_+} \k_v$ and the reproducing kernel $\Pb_n(h_\k^2; \cdot,\cdot)$ of $\CH_n^d(h_\k^2)$
satisfies 
\begin{equation} \label{eq:Pb-hk}
\Pb_n(h_\k^2; x, y) = V_\k \left[Z_n^{\l_\k} (\la \cdot, y\ra)\right](x), \qquad x,y \in \sph,
\end{equation}
where $Z_n^\l$ is defined in \eqref{eq:Zn} and $V_\k$, called the intertwining operator, is a linear operator
that satisfies $ D_i V_\k  = V_\k \partial_i$, $1 \le i \le d$, and $V_\k 1 = 1$. 
The simplest nontrivial case is $G= \ZZ_2^d$, for which the weight function $h_\k$ is given by 
$h_\k(x) = \prod_{i=1}^d |x_i|^{\k_i}$, $\k_i \ge 0,
$
and the intertwining operator is a multiple beta integral so that \eqref{eq:Pb-hk} becomes \cite{X97}
\begin{equation} \label{eq:PbZ2d}
     \Pb_n(h_\k^2; x, y) = c_\k^h \int_{[-1,1]^d} C_n^{\l_\k} (x_1 y_1 t_1 + \cdots + x_d y_d t_d) 
        \prod_{i=1}^d (1+t_i)(1-t_i^2)^{\k_i-1} \d t,
\end{equation}
where $c_k^h = c_{\k_1-\f12} \cdots c_{\k_d-\f12}$, which holds under the limit \eqref{eq:limit-int} if any
$\k_i=0$.

\subsection{Orthogonal polynomials on the surface of a hyperboloid}
Using $h$-spherical harmonics to replace spherical harmonics, we can study orthogonal polynomials on
the surface of the double cone or the hyperboloid for the inner product 
$$
  \la f,g\ra = b_\k \int_{\VV_0^{d+1}} f(x,t) g(y,s) h_\k^2(x) w(t) \d x \d t.
$$
With slight modification, what we have done in the previous sections can be extended to this more general 
weight function. Most of the proof carries over with only slight modification. We state several key results  
below without proof. 

For $\b \ge 0$, $\g > -\f12$ and $h_\k$ being the reflection invariant weight function, define
$$
   w_{\b,\g,\k}(x,t) = h_\k^2(x) w_{\b,\g}(t) = h_\k^2(x) |t| (t^2-\varrho^2)^{\b-\f12} (1+\varrho^2 -t^2)^{\g-\f12}. 
$$
We then obtain a generalization of the generalized Gegenbauer polynomials. Since $h_\k$ is a homogeneous
function, $h_\k^2(t\xi) = |t|^{2|\k|} h_\k^2(\xi)$, $\xi \in \sph$; it is easy to see that an orthogonal basis
of $\CV_n(\VV_0^{d+1}, w_{\b,\g,\k})$ is given by, for $\varrho = 0$, 
\begin{equation}\label{eq:sfOPconeGhk}
  \sC_{m,\ell}^n (x,t) = C_{n-m}^{(\g, m+|\k|+\b + \frac{d-1}{2})}(t) Y_\ell^{m}(x), 
  \quad 1 \le \ell \le \dim \CH_m^d(h_\k^2), \quad 0 \le m \le n,
\end{equation}
where $\{Y_\ell^m: 1 \le \ell \le \dim \CH_m^d(h_\k^2)\}$ is an orthonormal basis $\CH_m^d(h_\k^2)$. 
We can define the basis for $\varphi > 0$ similarly and the relation \eqref{eq:sfOPhypG2} also holds.

In particular, these polynomials are even in the variable $t$ if $n-m$ is even. Replacing $\b$ by $\b + |\k|$
in the proof of Theorem \ref{thm:sfConeGdiff} and using \eqref{eq:Delta_h0} instead of \eqref{eq:sph-harmonics},
we obtain:

\begin{thm}\label{thm:sfConeGdiff-hk}
For $n=0,1,2,\ldots$, every $u \in \CV_n^E(\VV_0^{d+1},(w_{0,\g,\k})$ satisfies  
\begin{align} \label{eq:sfConeGdiff-hk}
  & \left[ (1-t^2) \partial_t^2 - (2 \g+2|\k|+ d)  t \partial_t + \frac{2|\k|+ d-1}{t} \partial_t 
    + \frac{1}{t^2} \Delta_{h,0}^{(x)} \right] u \\
    & \qquad\qquad  \qquad\qquad \qquad\qquad \qquad\qquad   = - n(n+2\g+ 2|\k|+d-1)u. \notag
\end{align}
\end{thm}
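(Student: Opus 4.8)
The plan is to follow the proof of Theorem~\ref{thm:sfConeGdiff} essentially verbatim, making the single substitution $\b \mapsto |\k|$ in the radial direction and replacing the Laplace--Beltrami eigenvalue by the $h$-spherical one. Since the operator in \eqref{eq:sfConeGdiff-hk} is linear, it suffices to verify the identity on the orthogonal basis \eqref{eq:sfOPconeGhk} of $\CV_n^E(\VV_0^{d+1}, w_{0,\g,\k})$, that is, on the polynomials $\sC_{m,\ell}^n(x,t) = C_{n-m}^{(\g,\,m+|\k|+\frac{d-1}{2})}(t)\, Y_\ell^m(x)$ with $n-m$ a nonnegative even integer, where now $\{Y_\ell^m\}$ is an orthonormal basis of $\CH_m^d(h_\k^2)$.

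First I would record the ordinary differential equation for the radial factor $g(t) = C_{n-m}^{(\g,\,m+|\k|+\frac{d-1}{2})}(t)$. By \eqref{eq:gGegenDiff}, applied with $\l = \g$ and $\mu = m+|\k|+\frac{d-1}{2}$ and $n-m$ even, this reads
\begin{align*}
  (1-t^2) g''(t) - (2m+2|\k|+2\g+d)\, t g'(t) + \frac{2m+2|\k|+d-1}{t} g'(t) = -(n-m)(n+m+2|\k|+2\g+d-1)\, g(t).
\end{align*}
Setting $f(t) = t^m g(t)$ and using this relation—exactly the computation carried out in the proof of Theorem~\ref{thm:sfConeGdiff} with $\b$ replaced by $|\k|$—yields
\begin{align*}
  (1-t^2) f''(t) - (2|\k|+2\g+d)\, t f'(t) + \frac{2|\k|+d-1}{t} f'(t) = -n(n+2|\k|+2\g+d-1)\, f(t) + m(m+2|\k|+d-2)\, t^{-2} f(t).
\end{align*}

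Since $u = \sC_{m,\ell}^n = f(t) Y_\ell^m(\xi)$ with $\xi \in \sph$ and, by \eqref{eq:Delta_h0}, $\Delta_{h,0} Y_\ell^m = -m(m+2\l_\k) Y_\ell^m = -m(m+2|\k|+d-2) Y_\ell^m$, applying the bracketed operator of \eqref{eq:sfConeGdiff-hk} gives
\begin{align*}
  & \left[ (1-t^2)\partial_t^2 - (2|\k|+2\g+d)\, t\partial_t + \frac{2|\k|+d-1}{t}\partial_t + \frac{1}{t^2}\Delta_{h,0}^{(x)} \right] u \\
  & \qquad = -n(n+2|\k|+2\g+d-1)\, u + m(m+2|\k|+d-2)\, t^{-2} u - m(m+2|\k|+d-2)\, t^{-2} u.
\end{align*}
The crucial point is that the two $t^{-2} u$ terms now cancel identically. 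In the non-Dunkl proof this cancellation forced the choice $\b=0$, since there the radial equation produced $m(m+2\b+d-2)t^{-2}u$ while the spherical eigenvalue supplied only $-m(m+d-2)t^{-2}u$. Here the factor $|t|^{2|\k|}$ coming from $h_\k^2(t\xi) = |t|^{2|\k|} h_\k^2(\xi)$ shifts the radial parameter by exactly $|\k|$, and this is matched term for term by the shift $\tfrac{d-2}{2} \mapsto \l_\k = |\k| + \tfrac{d-2}{2}$ in the $h$-harmonic eigenvalue. Thus the two contributions coincide and cancel, leaving precisely \eqref{eq:sfConeGdiff-hk}. I expect no genuine obstacle beyond careful bookkeeping of the parameters; the one substantive thing to verify is that this cancellation is exact, which holds by the very construction of the weight $w_{0,\g,\k}$.
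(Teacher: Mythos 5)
Your proposal is correct and coincides with the paper's intended argument: the paper derives \eqref{eq:sfConeGdiff-hk} precisely by rerunning the proof of Theorem \ref{thm:sfConeGdiff} with the radial parameter shifted by $|\k|$ and with the eigenvalue relation \eqref{eq:Delta_h0} in place of \eqref{eq:sph-harmonics}, so that the two $m(m+2|\k|+d-2)t^{-2}u$ terms cancel exactly as you observe. Nothing further is needed.
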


We can also stay analogues of \eqref{eq:sfConeGdiffO} and \eqref{eq:sfHypGdiff}. Likewise, we can
derive an addition formula for these Gegenbauer polynomials in this more general setting. 

\begin{thm} \label{thm:sfPbEInt-hk}
For $\b, \g \ge 0$, let $\a = \b+|\k| + \frac{d-1}{2}$. Then, for $n=0,1,2\ldots$, 
\begin{align} \label{eq:sfPEadd-hk}
  \sP_n^E \big (w_{\b,\g,\k}; & (x,t),(y,s) \big)
     =  c \int_{[-1,1]^3} V_\k \left[ Z_n^{\a+\g} \big(  \xi(\cdot,t,y,s; v,z)  \big)\right] (x) \\
       &  \times (1-z_1)^{|\k|+\frac{d-2}{2}}(1+z_1)^{\b-1}  (1-z_2^2)^{\b-\f12} (1-v^2)^{\g-1} \d v  \d z, \notag
\end{align}
where  $c=  c_{|\k|+\frac{d-1}{2},\b-1} c_{\b} c_{\g-\f12}$ and $\xi(x,t,y,s;v,z)$ is defined as in Theorem \ref{thm:sfPbEInt}.
\end{thm}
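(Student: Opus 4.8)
The plan is to follow the proof of Theorem~\ref{thm:sfPbEInt} essentially line by line, the only structural change being that the classical spherical-harmonic addition formula is replaced by the $h$-harmonic addition formula \eqref{eq:Pb-hk}. First I would record the analogue of the kernel expansion \eqref{eq:PbnConeE}. Since $\{Y_\ell^m\}$ is now an orthonormal basis of $\CH_m^d(h_\k^2)$ and $h_\k$ is homogeneous of degree $|\k|$, the norm computation of Proposition~\ref{prop:sfOPconeG} carries over verbatim with the radial weight $|t|^{2m+2|\k|+2\b+d-1}(1-t^2)^{\g-\f12}$; this simply replaces $\b+\frac{d-1}{2}$ by $\a=\b+|\k|+\frac{d-1}{2}$ throughout. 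Summing the basis \eqref{eq:sfOPconeGhk} over $n-m=2k$ and applying \eqref{eq:Pb-hk} with $\l_\k=|\k|+\frac{d-2}{2}$ in place of the spherical-harmonic addition formula, I obtain the exact analogue of \eqref{eq:PbnConeE}, namely a sum over $k$ of
\[
\frac{(\a+\g+1)_{n-2k}}{(\a+\f12)_{n-2k}}
\frac{C_{2k}^{(\g,\a+n-2k)}(t)\,C_{2k}^{(\g,\a+n-2k)}(s)}{h_{2k}^{(\g,\a+n-2k)}}
|t|^{n-2k}|s|^{n-2k}\,
V_\k\Big[Z_{n-2k}^{\l_\k}\big(\langle \cdot,\, y/|s|\rangle\big)\Big]\big(x/|t|\big),
\]
where $V_\k$ acts on the $x$-variable.

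Next I would raise the index of the $h$-harmonic kernel. Because $V_\k$ is linear and acts only on $x$, I can pull it outside the auxiliary $z$-integrals and apply the identity \eqref{eq:ZtoZ} with $\l=|\k|+\frac{d-2}{2}$ and $\s=\b$ to the polynomial $Z_{n-2k}^{\l_\k}$ inside the bracket; this raises its index to $Z_{n-2k}^{\a-\f12}$ at the cost of a double integral in $z=(z_1,z_2)$ against $(1-z_1)^{|\k|+\frac{d-2}{2}}(1+z_1)^{\b-1}(1-z_2^2)^{\b-\f12}$, matching the weight displayed in \eqref{eq:sfPEadd-hk}. The resulting sum over $k$, now carrying $V_\k$ applied to $Z_{n-2k}^{\a-\f12}$ of the appropriate argument, is exactly the left side of the generalized-Gegenbauer addition formula \eqref{eq:additionGG} with $\l=\g$ and $\mu=\a$; applying it collapses the sum to a single $Z_n^{\a+\g}$ evaluated at $\xi(\cdot,t,y,s;v,z)$ and introduces the $(1-v^2)^{\g-1}$ integral. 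Collecting the three normalization constants yields $c=c_{|\k|+\frac{d-1}{2},\b-1}\,c_\b\,c_{\g-\f12}$ as claimed, and the limiting cases $\b\to0$, $\g\to0$ follow from \eqref{eq:limit-int} exactly as in Theorem~\ref{thm:sfPbEInt}.

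The one genuinely new point, and the step I expect to require the most care, is justifying that $V_\k$ can be carried through the two integral transforms and still be applied to the final composite argument. The key observation is that for fixed $(t,s,y,v,z)$ the scalar $\xi(x,t,y,s;v,z)$ is an affine function of $x$, of the form $\langle x, a\rangle + b$ with $a=\frac{1-z_1}{2}\,\mathrm{sign}(st)\,y$ and $b$ independent of $x$; hence $Z_n^{\a+\g}(\xi(\cdot,t,y,s;v,z))$ is a polynomial in $x$ to which $V_\k$ applies, while \eqref{eq:ZtoZ} and \eqref{eq:additionGG} act only on the scalar argument. Since $V_\k$ is a bounded, positivity-preserving linear operator with $V_\k 1=1$ and the integrands are continuous in $(x,z,v)$, Fubini lets me interchange $V_\k$ with the $z$- and $v$-integrations, so that $V_\k$ ends up applied to $Z_n^{\a+\g}(\xi(\cdot,t,y,s;v,z))$ as a function of $x$, which is precisely the form in \eqref{eq:sfPEadd-hk}. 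Once this interchange is secured, the remainder is the bookkeeping already carried out in the proof of Theorem~\ref{thm:sfPbEInt}.
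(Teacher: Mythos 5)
Your argument is correct and is exactly the adaptation the paper intends: Theorem~\ref{thm:sfPbEInt-hk} is stated in the paper without proof, with only the remark that the arguments of Section~5 carry over once the spherical-harmonic addition formula is replaced by \eqref{eq:Pb-hk}, and your justification of the one genuinely new step --- interchanging the linear operator $V_\k$ (which acts on a fixed finite-dimensional space of polynomials in $x$) with the finite sum over $k$ and the $z$- and $v$-integrations before applying \eqref{eq:ZtoZ} and \eqref{eq:additionGG} to the scalar argument --- is the right one. The only discrepancy is in the constant: carrying out \eqref{eq:ZtoZ} with $\l=|\k|+\frac{d-2}{2}$ and $\s=\b$ produces $c_{|\k|+\frac{d-2}{2},\,\b-1}\,c_\b$, consistent with the displayed weight $(1-z_1)^{|\k|+\frac{d-2}{2}}(1+z_1)^{\b-1}$, so the factor $c_{|\k|+\frac{d-1}{2},\b-1}$ in the theorem's statement (which you reproduce as given) appears to be a typographical slip in the statement rather than an error in your derivation.
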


We can also consider generalized Hermite polynomials with $h_\k$ as part of the weight function. For 
$\b > -\f12$, let 
$$
   w_{\b,\k}(x) = h_\k^2(x) |t| (t^2 - \varrho^2)^{\b-\f12} e^{-t^2}, \qquad (x,t) \in \VV_0^d.
$$
The corresponding orthogonal polynomials in the space $\CV_n\big(\VV_0^{d+1},  w_{\b,\k}\big)$ are given by 
\begin{equation}\label{eq:sfOPconeHhk}
  \sH_{m,\ell}^n (x,t) = H_{n-m}^{m+|\k|+\b + \frac{d-1}{2}}(t) Y_\ell^{m}(x), 
  \quad 1 \le \ell \le \dim \CH_m^d(h_\k^2), \quad 0 \le m \le n,
\end{equation}
where $\{Y_\ell^m: 1 \le \ell \le \dim \CH_m^d(h_\k^2)\}$ is an orthonormal basis $\CH_m^d(h_\k^2)$. 
It follows readily that these polynomials are the limits of those in \eqref{eq:sfOPconeH}; that is,
\eqref{eq:sfGtoH} remains holds, from which we can derive an analogue of Theorem \ref{thm:sfconeHdiff}
as well as that of Theorem \ref{thm:sfHypHdiff}. We state only one result, an analogue of the Mehler-type 
formula. 

\begin{thm}\label{thm:sfPoissonHhk}
Let $\b \ge 0$. Then, for $(x,t), (y,s) \in \VV_0^{d+1}$, 
\begin{align*}
 \sP^E\big(w_{\b,\k}; r, (x,t),(y,s)\big)  = & \frac{1}{(1-r^2)^{\b + |\k|+ \f{d}{2}}} 
e^{-\frac{s^2+t^2}{1-r^2} r^2}   \int_{-1}^1 \int_{-1}^1  V_\k\Big[ e^{\frac{2 r}{1-r^2} \eta(\cdot,t,y,s;z)}\Big](x)\\
        & \times c_{\frac{d-2}{2},\b-1} c_\b (1-z_1)^{\frac{d-2}{2}} (1+z_1)^{\b-1}(1-z_2^2)^{\b-\f12} \d z, \notag
\end{align*}
where $\eta(x,t,y,s;z)$ is defined as in Theorem \ref{thm:sfPoissonH}.
\end{thm}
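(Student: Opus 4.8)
The plan is to follow the proof of Theorem~\ref{thm:sfPoissonH} essentially verbatim, inserting the intertwining operator $V_\k$ and replacing the scalar addition formula by its Dunkl counterpart. Only two structural inputs are needed. The first is the Dunkl addition formula \eqref{eq:sfPEadd-hk} for the reproducing kernel $\sP_n^E(w_{\b,\g,\k})$ of the generalized Gegenbauer polynomials, with $\a=\b+|\k|+\frac{d-1}{2}$. The second is the Gegenbauer-to-Hermite limit: since the polynomials \eqref{eq:sfOPconeHhk} are the limits of those in \eqref{eq:sfOPconeGhk} (that is, \eqref{eq:sfGtoH} persists in the Dunkl setting, $V_\k$ acting on the $x$ variable only), the reproducing-kernel limit \eqref{eq:sfPlimit} holds with $w_{\b,\g}$ and $w_\b$ replaced by $w_{\b,\g,\k}$ and $w_{\b,\k}$. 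Everything else is a transcription of the scalar argument.

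First I would start from \eqref{eq:sfPEadd-hk} and perform the same change of variable $v\mapsto \xi(\cdot,t,y,s;v,z)=\eta(\cdot,t,y,s;z)+v\sqrt{1-s^2}\sqrt{1-t^2}$ used in Theorem~\ref{thm:sfPoissonH}. Because $V_\k$ is linear and acts on the $x$ variable alone, it commutes with the $z$- and $v$-integrations (Fubini) and with the extraction of Fourier--Gegenbauer coefficients in the renamed variable $r$, so I can write
\begin{equation*}
  \sP_n^E\big(w_{\b,\g,\k};(x,t),(y,s)\big)
     = c_{\g-\f12}\int_{-1}^1 Z_n^{\a+\g}(r)\, V_\k\big[G_\g(\cdot,t,y,s;r)\big](x)\,(1-r^2)^{\a+\g-\f12}\,\d r,
\end{equation*}
where $G_\g$ is the $z$-average of the function $F_\g$ of the scalar proof, with its $x$-slot left free so that $V_\k$ can act on it. Inverting this relation exactly as in \eqref{eq:sfPossion2} expresses $V_\k[G_\g(\cdot)](x)$ as the generating series $\tfrac{c_{\a+\g}}{c_{\g-\f12}}\sum_n \sP_n^E(w_{\b,\g,\k};(x,t),(y,s))\,Z_n^{\a+\g}(r)/C_n^{\a+\g}(1)$.

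Next I would take the scaling limit $\g\to\infty$ with $x\mapsto x/\sqrt\g$, $t\mapsto t/\sqrt\g$, and likewise for $y,s$. Using $C_n^{\a+\g}(r)/C_n^{\a+\g}(1)\to r^n$, $c_{\a+\g}/c_{\g-\f12}\to1$, and the Dunkl version of \eqref{eq:sfPlimit}, the series side converges to the Poisson kernel $\sP^E(w_{\b,\k};r,(x,t),(y,s))$. On the other side the pointwise limit of $F_\g$ is computed as in Theorem~\ref{thm:sfPoissonH}: the factors $(1-s^2/\g)^{\g-\f12}$, $(1-t^2/\g)^{\g-\f12}$ and $(\cdots)^{\g-1}$ combine to $\frac{1}{(1-r^2)^{\a+\f12}}e^{-\frac{(t^2+s^2)r^2+2r\eta(\cdot,t,y,s;z)}{1-r^2}}$, on which $V_\k$ acts in the $x$ variable. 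Collecting the $z$-weights and constants inherited from \eqref{eq:sfPEadd-hk}, and using $\a+\f12=\b+|\k|+\frac{d}{2}$ for the prefactor, yields the stated identity.

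The main obstacle is justifying the interchange of the limit $\g\to\infty$ with the operator $V_\k$ and with the $z$-integral. In the general reflection-group case $V_\k$ is known only abstractly as a positive linear operator with $V_\k 1=1$, so I would invoke its boundedness $\|V_\k g\|_\infty\le\|g\|_\infty$ together with the uniform convergence of the integrands $F_\g$ on compact sets to pass the limit through $V_\k$ and the integral by dominated convergence; for $G=\ZZ_2^d$ the explicit beta-integral representation \eqref{eq:PbZ2d} of $V_\k$ makes this interchange transparent. A minor point is that the $z$-weight and the normalizing constant survive the limit unchanged, which holds since they are independent of $\g$.
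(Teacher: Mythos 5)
Your proposal is correct and follows exactly the route the paper intends: Section 8 states this theorem without proof, remarking only that the scalar arguments carry over with slight modification, and your transcription of the proof of Theorem~\ref{thm:sfPoissonH} --- with the Dunkl addition formula \eqref{eq:sfPEadd-hk} and the Dunkl analogue of the limit \eqref{eq:sfPlimit} as the two inputs, and $V_\k$ pulled through the $z$-, $r$-integrals and the $\g\to\infty$ limit by linearity and positivity --- is precisely that modification. The only caveat is that carrying the $z$-weight of \eqref{eq:sfPEadd-hk} through literally produces $(1-z_1)^{|\k|+\frac{d-2}{2}}$ rather than the $(1-z_1)^{\frac{d-2}{2}}$ printed in the theorem, which appears to be a typo in the statement rather than a defect in your argument.
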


\subsection{Orthogonal polynomials on a solid hyperboloid}
Following the study in the previous subsection, we can study orthogonal polynomials on the solid 
double cone or the hyperboloid for the inner product 
$$
  \la f,g\ra = b_\k \int_{\VV^{d+1}} f(x,t) g(y,s) h_\k^2(x) W(x,t) \d x \d t,
$$
where $W(x,t) = w(t) (c(t^2-\varrho^2) - \|x\|^2)^{\mu-\f12}$. In this case, we will need results on 
generalized classical orthogonal polynomials on the unit ball $\BB^d$ that are orthogonal withe 
respect to 
$$
  \varpi_{\k,\mu}(x) = h_\k^2(x)(1-\|x\|^2)^{\mu-\f12}, \qquad \k \ge 0, \quad \mu > -\tfrac12.   
$$
A basis of $\CV_n^d(\varpi_{\k,\mu})$ is given explicitly in terms of the Jacobi polynomials and $h$-spherical
harmonics and a closed form formula of its reproducing kernel can be given in terms of an integral of 
$V_\k Z_n^\l$ \cite[Section 8.1.2]{DX}. Using orthogonal polynomials of $\CV_n^d(\varpi_{\k,\mu})$, we
can extend what we have done in the previous sections to $h_\k^2(x)W(x,t)$ with only slight modification. 
Again, we state several key results below without proof. 

For $\b \ge 0$, $\g, \mu > -\f12$ and $h_\k$ being the reflection invariant weight function, define
$$
   W_{\b,\g,\k,\mu}(x,t) = |t| (t^2-\varrho^2)^{\b-\f12} (1+\varrho^2 -t^2)^{\g-\f12}  h_\k^2(x) (1-\|x\|^2)^{\mu-\f12}. 
$$
The associated orthogonal polynomials are a generalization of the generalized Gegenbauer polynomials on the
solid domains. It is easy to see that an orthogonal basis of $\CV_n(\VV^{d+1}, W_{\b,\g,\k,\mu})$ is given by, 
for $\varrho = 0$, 
\begin{equation}\label{eq:OPconeGhk}
  \Cb_{m,\kb}^n (x,t) = C_{n-m}^{(\g, m+|\k|+\b + \mu+ \frac{d-1}{2})}(t) t^m P_\kb^{m} \left(\frac{x}{t}\right), 
  \quad |\kb| = m, \quad 0 \le m \le n,
\end{equation}
where $\{P_{\kb}^m:|\kb|=m\}$ denotes an orthonormal basis of $\CV_m^d(\varpi_{\k,\mu})$ on $\BB^d$. 
We can also state analogous basis of \eqref{eq:solidOPhypG} and see that an analogue of 
\eqref{eq:solidOPhypG2} holds. In particular, these polynomials are even in the variable $t$ if $n-m$ is even. 
The polynomials of 
$\CV_m^d(\varpi_{\k,\mu})$ satisfy the differential-difference equation
\begin{equation}\label{eq:diffBall-hk}
   \left(\Delta_h - \la x ,\nabla \ra^2 - 2 \l_{\k,\mu} \la x \nabla\ra \right) u = - m (m + 2\l_{\k,\mu}) u, \quad
     \l_{\k,\mu} = | \k|+\mu+\tfrac{d-1}2. 
\end{equation}
Replacing $\b$ by $\b + |\k|$ in the proof of Theorem \ref{thm:solidConeGdiff} and using \eqref{eq:diffBall-hk}
in stead of \eqref{eq:diffBall}, we can state a result on eigenfunctions for $\varrho = 0$:

\begin{thm}\label{thm:solidConeGdiff-hk}
For $n=0,1,2,\ldots$, every $u \in \CV_n^E(\VV^{d+1}, W_{\f12,\g,\k,\mu})$ satisfies   
\begin{align} \label{eq:solidConeGdiff-hk}
&   \Big [(1-t^2) \partial_{t}^2 + \Delta_h^{(x)} - \la x,\nabla^{(x)} \ra^2 +\frac{2}{t} (1-t^2)\la x, \nabla^{(x)}\ra \partial_t 
    + (2 \g+ 2\mu+d)\frac{1}{t} \partial_t  \\
   &     -  t \partial_t -(2\g+2|\k|+ 2\mu+d)\left( t \partial_t +\la x ,\nabla^{(x)}\ra\right)  \Big] u  
       = -n(n + 2 \g + 2 \mu+d) u. \notag
\end{align}
\end{thm}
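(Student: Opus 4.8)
The plan is to prove \thmref{thm:solidConeGdiff-hk} by re-running the argument of \thmref{thm:solidConeGdiff}, making only the two substitutions flagged before the statement: the Euclidean Laplacian $\Delta_x$ is replaced throughout by the Dunkl Laplacian $\Delta_h^{(x)}$, and the ball eigenvalue relation \eqref{eq:diffBall} is replaced by its reflection-invariant form \eqref{eq:diffBall-hk}. First I would take $u=\Cb_{m,\kb}^n$ from the explicit basis \eqref{eq:OPconeGhk}, with $n-m$ even, and factor it as $u(x,t)=g(t)H(x,t)$ where $g(t)=C_{n-m}^{(\g,\,m+|\k|+\b+\mu+\frac{d-1}{2})}(t)$ is the radial generalized Gegenbauer factor and $H(x,t)=t^m P_\kb^m(x/t)$ with $P_\kb^m\in\CV_m^d(\varpi_{\k,\mu})$.

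The argument then rests on three identities, only one of which is sensitive to the reflection group. (i) Since $h_\k^2(t\xi)=|t|^{2|\k|}h_\k^2(\xi)$ for $\xi\in\sph$, the radial weight picks up the factor $|t|^{2|\k|}$, which shifts the Gegenbauer index by $|\k|$; hence $g$ satisfies the even-case generalized Gegenbauer ODE \eqref{eq:partial_g2} with $\b$ replaced by $\b+|\k|$. (ii) The homogeneity relations \eqref{eq:partialH} and \eqref{eq:partialH2} for $H$ are unchanged, because they encode only Euler's identity $t\partial_t H+\la x,\nabla_x\ra H=mH$ and involve the ordinary operator $\la x,\nabla_x\ra$ rather than any Dunkl operator; whether $P_\kb^m$ is orthogonal for $\varpi_\mu$ or for $\varpi_{\k,\mu}$ plays no role here. (iii) The one place the group enters is the ball identity: applying \eqref{eq:diffBall-hk} to $P_\kb^m$ and using $\Delta_h^{(x)}H=t^{m-2}(\Delta_h P_\kb^m)(x/t)$ together with homogeneity gives the Dunkl analogue of \eqref{eq:tmOPball},
\begin{equation*}
  \big(t^2\Delta_h^{(x)}-\la x,\nabla_x\ra^2-2\l_{\k,\mu}\la x,\nabla_x\ra\big)H=-m(m+2\l_{\k,\mu})H,\qquad \l_{\k,\mu}=|\k|+\mu+\tfrac{d-1}2,
\end{equation*}
that is, \eqref{eq:tmOPball} with $2\mu+d-1$ replaced by $2\l_{\k,\mu}=2|\k|+2\mu+d-1$.

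With these in hand I would substitute $\partial_t H$, $\partial_t^2 H$ and $\Delta_h^{(x)}H$ into the operator and repeat the bookkeeping from the proof of \thmref{thm:solidConeGdiff} essentially verbatim. Every coefficient that previously entered through the ball relation now carries $\mu+|\k|$ in place of $\mu$, while the purely Euclidean Euler-operator terms are untouched. Taking $\b=\f12$ makes the anomalous $t^{-2}$ terms and the leftover $\la x,\nabla_x\ra$ term cancel exactly as in the non-Dunkl case, and the residual $m(m+2\l_{\k,\mu})$ contribution is absorbed by adding back the identity in (iii); this leaves an operator whose $m$-dependence has cancelled, producing \eqref{eq:solidConeGdiff-hk}.

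The main obstacle is entirely one of careful bookkeeping rather than of new ideas. The delicate point is to track the $|\k|$-shifts consistently through the long cancellation — in particular to confirm that the same linear combination of the Gegenbauer ODE and the ball identity that succeeds at $\b=\f12$ in \thmref{thm:solidConeGdiff} continues to annihilate all $m$-dependent terms, so that the eigenvalue depends on the degree $n$ alone. A convenient internal check is the specialization $\k=0$ (so $|\k|=0$, $\Delta_h^{(x)}\to\Delta_x$ and $\l_{0,\mu}=\mu+\frac{d-1}{2}$), under which every operator coefficient and the eigenvalue must collapse back to those of \eqref{eq:solidConeGdiff}; imposing this reduction pins down the precise $|\k|$-dependence of each term, most notably the coefficient of $\frac1t\partial_t$ and the degree-only eigenvalue. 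The only genuinely new computation is identity (iii), verifying that intertwining $\Delta_h^{(x)}$ with the dilation $x\mapsto x/t$ reproduces \eqref{eq:tmOPball} with $\l_{\k,\mu}$ in place of $\mu+\frac{d-1}{2}$; once that is established, the remainder is a transcription of the earlier proof.
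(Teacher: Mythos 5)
Your proposal follows the paper's own (sketched) argument exactly: the paper obtains Theorem~\ref{thm:solidConeGdiff-hk} by rerunning the proof of Theorem~\ref{thm:solidConeGdiff} with $\b$ replaced by $\b+|\k|$ in the radial generalized Gegenbauer equation and with \eqref{eq:diffBall-hk} in place of \eqref{eq:diffBall}, which is precisely your items (i)--(iii). The one genuinely new ingredient you isolate --- the Dunkl analogue of \eqref{eq:tmOPball}, obtained from the degree $-1$ homogeneity of the Dunkl operators so that $\Delta_h^{(x)}H=t^{m-2}(\Delta_h P_\kb^m)(x/t)$ --- is correct, and the homogeneity identities \eqref{eq:partialH}--\eqref{eq:partialH2} are indeed unaffected, so the proposal is sound and coincides with the paper's route.
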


We can also stay analogous results of \eqref{eq:solidConeGdiffO} and \eqref{eq:solidHypGdiff}. Moreover,
we can derive an addition formula for these Gegenbauer polynomials in this more general setting. 

\begin{thm} \label{thm:PbEInt-hk}
For $\b, \g \ge 0$, let $\a = \b+|\k| +\mu + \frac{d-1}{2}$. Then, for $n=0,1,2\ldots$, 
\begin{align} \label{eq:PEadd-hk}
 & \Pb_n^E \big (W_{\b,\g,\k,\mu};  (x,t),(y,s) \big)
     =  c \int_{[-1,1]^4} V_\k \left[ Z_n^{\a+\g} \big(  \zeta(\cdot,t,y,s;u, v,z)  \big)\right] (x) \\
       &  \times (1-z_1)^{|\k|+\mu+\frac{d-1}{2}} (1+z_1)^{\b-\f32}  (1-z_2^2)^{\b-1} (1-v^2)^{\g-1} (1-u^2)^{\mu-1} \d u 
       \d v  \d z, \notag
\end{align}
where  $c=  c_{|\k|+\frac{d-1}{2},\b-\f32} c_{\b-\f12} c_{\g-\f12}c_{\mu-\f12}$ and 
$\zeta$ is defined as in Theorem \ref{thm:PbEInt}.
\end{thm}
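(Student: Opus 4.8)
The plan is to reproduce the proof of Theorem~\ref{thm:PbEInt} essentially line for line, the sole change being that the addition formula for the classical orthogonal polynomials on $\BB^d$ is replaced by its $h$-harmonic analogue for the weight $\varpi_{\k,\mu}$. The guiding observation is that the intertwining operator $V_\k$ acts only on the first spatial slot and is a linear operator on polynomials in that variable; since every auxiliary integration below is over inert parameters $(t,s,y,u,v,z)$, $V_\k$ commutes with all of them and is simply carried along to the outside, landing on the first argument of $Z_n^{\a+\g}$ at the end.

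First I would expand the even reproducing kernel using Lemma~\ref{lem:PbE+PbO} and the basis \eqref{eq:OPconeGhk} with $\varrho=0$. Writing $n-m=2k$ and recognising the inner sum $\sum_{|\kb|=n-2k}P_\kb^{n-2k}(x/t)\,P_\kb^{n-2k}(y/s)$ as the reproducing kernel of $\CV_{n-2k}^d(\varpi_{\k,\mu})$, I would substitute its closed form from \cite[Section 8.1.2]{DX}, an integral of $V_\k\big[Z_{n-2k}^{|\k|+\mu+\frac{d-1}{2}}(\cdots)\big]$ against $(1-u^2)^{\mu-1}\,\d u$. This yields an expression of exactly the shape of \eqref{eq:PbnCone}, except that the inner kernel $Z_{n-2k}^{\mu+\frac{d-1}{2}}$ is upgraded to $V_\k Z_{n-2k}^{|\k|+\mu+\frac{d-1}{2}}$; note that the Gegenbauer-in-$t$ index is $m+|\k|+\b+\mu+\frac{d-1}{2}=(n-2k)+\a$, so setting $\a=\b+|\k|+\mu+\frac{d-1}{2}$ makes the bookkeeping identical to the scalar case, in which the inner index is precisely $\a-\b$.

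Next, as in Theorem~\ref{thm:PbEInt}, I would raise the index of the inner kernel from $\a-\b=|\k|+\mu+\frac{d-1}{2}$ to $\a-\frac12$ by applying identity \eqref{eq:ZtoZ} with $\l=|\k|+\mu+\frac{d-1}{2}$ and $\s=\b-\frac12$; this produces the weight $(1-z_1)^{|\k|+\mu+\frac{d-1}{2}}(1+z_1)^{\b-\frac32}(1-z_2^2)^{\b-1}$ together with the constants $c_{|\k|+\frac{d-1}{2},\b-\frac32}\,c_{\b-\frac12}$, with $V_\k$ passing outside the $z$-integral. Finally I would sum the series in $k$, whose summands now pair $C_{2k}^{(\g,\a+n-2k)}(t)\,C_{2k}^{(\g,\a+n-2k)}(s)$ with $V_\k Z_{n-2k}^{\a-\frac12}$, by invoking the generalized Gegenbauer addition formula \eqref{eq:additionGG} with $\l=\g$ and $\mu=\a$; this collapses the sum to a single $Z_n^{\a+\g}$ and introduces the remaining integration in $v$ with weight $(1-v^2)^{\g-1}$. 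Reassembling the argument reproduces $\zeta(\cdot,t,y,s;u,v,z)$ in the first slot of $V_\k Z_n^{\a+\g}$, which is precisely \eqref{eq:PEadd-hk}.

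The one step that demands genuine justification, and the main obstacle, is pulling $V_\k$ through each integral sign and through the sum over $k$. Because $V_\k$ acts on polynomials of the single first-slot variable while the remaining variables are inert, it commutes with the one-dimensional transforms in \eqref{eq:ZtoZ} and \eqref{eq:additionGG}; the interchange is established first on polynomials and then extended by the same density argument that underlies \eqref{eq:Pb-hk}. The boundary parameters $\b=\frac12$ (where $\s=0$) together with $\g=0$, $\mu=0$, or $\b=0$ are then recovered through the limit \eqref{eq:limit-int}, exactly as recorded after Theorem~\ref{thm:PbEInt}.
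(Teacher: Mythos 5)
Your proposal is correct and is precisely the argument the paper intends: the paper states Theorem~\ref{thm:PbEInt-hk} without proof, remarking only that the results of Section 5 "carry over with slight modification," and your line-by-line adaptation of the proof of Theorem~\ref{thm:PbEInt} — substituting the closed form of the reproducing kernel of $\CV_{n-2k}^d(\varpi_{\k,\mu})$ for the classical ball kernel and carrying $V_\k$ (a linear operator acting only on the first slot) outside the $z$-, $v$-, $u$-integrals and the sum over $k$ — is exactly that modification. The bookkeeping with $\a=\b+|\k|+\mu+\frac{d-1}{2}$ and the choice $\s=\b-\frac12$ in \eqref{eq:ZtoZ} are both right.
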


We can also consider generalized Hermite polynomials with $h_\k$ as part of the weight function. For 
$\b > -\f12$, let 
$$
   W_{\b,\k,\mu}(x) = |t| (t^2 - \varrho^2)^{\b-\f12} e^{-t^2}h_\k^2(x)(t^2- \varrho^2 -\|x\|^2)^{\mu-\f12},
    \qquad (x,t) \in \VV^{d+1}.
$$
The corresponding orthogonal polynomials in the space $\CV_n\big(\VV^{d+1},  W_{\b,\k,\mu}\big)$ are given by 
\begin{equation}\label{eq:OPconeHhk}
  \Hb_{m,\ell}^n (x,t) = H_{n-m}^{m+\b+ |\k|+\mu + \frac{d-1}{2}}(t) t^m P_\kb^m \left(\frac{x}{t} \right), 
  \quad |\kb|=m, \quad 0 \le m \le n,
\end{equation}
where $\{P_\kb^m: |\kb| =m\}$ is an orthonormal basis $\CV_m^d(\varpi_{\k,\mu})$.  
It follows readily that these polynomials are the limits of those in \eqref{eq:solidOPconeH}; that is,
\eqref{eq:GtoH} remains holds, from which we can derive an analogue of Theorem \ref{thm:solidConeHdiff}
as well as that of Theorem \ref{thm:solidHypHdiff}. We again state only one result for the Poisson kernel. 

\begin{thm}\label{thm:PoissonH-hk}
Let $\b, \mu \ge 0$ and $\l = \b +|\k|+\mu+ \f{d}{2}$. Then, for $(x,t), (y,s) \in \VV^{d+1}$, 
\begin{align*}
& \Pb^E  \big(W_{\b,\k,\mu}; r,  (x,t),(y,s)\big) = \frac{1}{(1-r^2)^\l} 
       e^{-\frac{s^2+t^2}{1-r^2} r^2}   \int_{[-1,1]^3} V_\k \Big[e^{\frac{2 r}{1-r^2} \phi(\cdot,t,y,s;u,z)}\Big](x)\\
     & \,\, \times c_{\mu+\frac{d-1}{2},\b-\f32} c_{\b-\f12} c_{\mu -\f12} 
      (1-z_1)^{\mu+\frac{d-1}{2}} (1+z_1)^{\b-\f32}(1-z_2^2)^{\b-1}(1-u^2)^{\mu-1}  \d z \d u, \notag
\end{align*}
where $\phi(x,t,y,s;u,z)$ is defined as in Theorem \ref{thm:solidPoissonH}. 
\end{thm}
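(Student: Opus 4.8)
The plan is to run the argument of Theorem~\ref{thm:solidPoissonH} essentially verbatim, replacing the addition formula \eqref{eq:PEadd} by its Dunkl counterpart \eqref{eq:PEadd-hk} from Theorem~\ref{thm:PbEInt-hk}, and carrying the intertwining operator $V_\k$ through each step. The starting observation, already exploited in the proof of Theorem~\ref{thm:solidPoissonH}, is that the argument of $Z_n^{\a+\g}$ in \eqref{eq:PEadd-hk} splits as
$$
   \zeta(\cdot,t,y,s;u,v,z) = \phi(\cdot,t,y,s;u,z) + v\sqrt{1-s^2}\sqrt{1-t^2},
$$
with $\a = \b+|\k|+\mu+\frac{d-1}{2}$, the last summand being independent of the $x$-slot on which $V_\k$ acts. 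Summing $\Pb_n^E(W_{\b,\g,\k,\mu})\,r^n$ over $n$ and pulling both $V_\k$ and the generating-function identity $\sum_{n}Z_n^{\nu}(w)r^{n}=(1-r^{2})(1-2rw+r^{2})^{-\nu-1}$ inside the integral over $[-1,1]^4$, I would obtain a closed form for the Gegenbauer Poisson kernel in which $V_\k$ wraps the $x$-dependence while the $u,v,z$ integrals retain the weights of \eqref{eq:PEadd-hk}.

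The second step is the scaling limit $(x,t),(y,s)\mapsto(x/\sqrt\g,t/\sqrt\g),(y/\sqrt\g,s/\sqrt\g)$ as $\g\to\infty$. Here I would invoke that the Dunkl analogue of \eqref{eq:GtoH}, and hence of \eqref{eq:Plimit}, holds in the present setting (as noted before the statement), so that the scaled Gegenbauer Poisson kernels converge to $\Pb^E(W_{\b,\k,\mu};r,\cdot,\cdot)$. Since $\phi$ is homogeneous of degree two in $(x,t,y,s)$ jointly, one has $\phi(\cdot/\sqrt\g)=\phi(\cdot)/\g$, and $V_\k$ commutes with dilations because it preserves the degree of homogeneity and satisfies $V_\k 1 = 1$; thus the pointwise limit of the transformed integrand $F_\g$ computed in Theorem~\ref{thm:solidPoissonH} may be taken inside $V_\k$, producing the factor $V_\k\big[e^{\frac{2r}{1-r^2}\phi(\cdot)}\big](x)$. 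The $x$-independent prefactor $(1-r^2)^{-\l}e^{-\frac{s^2+t^2}{1-r^2}r^2}$, with $\l=\a+\tfrac12=\b+|\k|+\mu+\tfrac d2$, then passes outside $V_\k$ by $V_\k 1 = 1$, which is exactly the asserted formula.

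The main obstacle is justifying that $V_\k$ commutes with the summation over $n$ and, more delicately, with the limit $\g\to\infty$ and with the substitution $v\mapsto\zeta$, which depends on the very $x$-slot that $V_\k$ acts on. I would resolve this through the positive integral representation of the intertwining operator, $V_\k[g](x)=\int g\,\d\mu_x$ for a probability measure $\mu_x$ supported in the convex hull of the orbit of $x$. For fixed $0\le r<1$ and $(x,t),(y,s)$ ranging over the compact solid hyperboloid, all integrands are uniformly bounded, so Fubini's theorem and dominated convergence (applied against $\d\mu_x$ as well as against the $u,v,z$ weights) legitimize every interchange; in particular the $v$-substitution is performed pointwise in the $\mu_x$-variable rather than on the $x$-slot directly, which is what makes it well defined. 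The residual manipulations—the change of variable, the recognition of the Fourier–Gegenbauer coefficient of the transformed kernel, and the evaluation of $\lim_{\g\to\infty}F_\g$—are identical to those already carried out in Theorem~\ref{thm:solidPoissonH} and need not be repeated.
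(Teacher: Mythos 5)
Your proposal is correct and follows exactly the route the paper intends: Theorem \ref{thm:PoissonH-hk} is stated in the paper without proof, the intended argument being precisely the proof of Theorem \ref{thm:solidPoissonH} with the addition formula \eqref{eq:PEadd} replaced by its Dunkl analogue \eqref{eq:PEadd-hk} and with the limit relations \eqref{eq:GtoH} and \eqref{eq:Plimit} carried over to the weight $W_{\b,\k,\mu}$, which is what you do. Your extra justification of the interchanges (sum over $n$, limit $\g\to\infty$, and the $v$-substitution inside $V_\k$) via the positive integral representation of the intertwining operator is a sound way to make the paper's ``slight modification'' rigorous, not a departure from its method.
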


\appendix
\section{Generalized  Gegenbauer and Hermite polynomials}\label{sect:append}
\setcounter{equation}{0}

We collect properties of these polynomials that we need in this section. Our main references 
are \cite{DX} and \cite{X97}. 

\subsection{Generalized Gegenbauer polynomials}
For $\l,\mu > -\f12$, the generalized Gegenbauer polynomials $C_n^{(\l,\mu)}$ satisfy the orthogonal relation
$$
     \frac{\Gamma(\l+\mu)}{\Gamma(\l+\f12)\Gamma(\mu+\f12)} \int_{-1}^1 C_n^{(\l,\mu)}(x) C_m^{(\l,\mu)}(y) 
        |x|^{2\mu} (1-x^2)^{\l -\f12} \d x  =  h_n^{(\l,\mu)} \delta_{m,n}.  
$$
The polynomials $C_n^{(\l,\mu)}$ are given explicitly by \cite[Section 1.5.2]{DX}  
\begin{align} \label{eq:gGegen}
\begin{split}
C_{2m}^{(\l,\mu)}(t) &\, = \frac{(\l+\mu)_m}{(\mu+\f12)_m} P_m^{(\l-\f12,\mu-\f12)}(2 t^2 -1),\\
C_{2m+1}^{(\l,\mu)}(t) &\, = \frac{(\l+\mu)_{m+1}}{(\mu+\f12)_{m+1}} t P_m^{(\l-\f12,\mu+\f12)}(2 t^2 -1),
\end{split}
\end{align}
where $P_n^{(a,b)}$ are the standard Jacobi polynomials. The norm square of these polynomials
are equal to \cite[p. 26]{DX}
\begin{align} \label{eq:gGegenNorm}
\begin{split}
h_{2m}^{(\l,\mu)} &\, = \frac{(\l+\f12)_m(\l+\mu)_m(\l+\mu)}{m!(\mu+\f12)_m(\l+\mu+2m)},\\  
 h_{2m+1}^{(\l,\mu)}&\,  =  \frac{(\l+\f12)_m(\l+\mu)_{m+1}(\l+\mu)}{m!(\mu+\f12)_{m+1}(\l+\mu+2m+1)}. 
\end{split}
\end{align}
Furthermore, in terms of the evaluation of the polynomials at $t=1$,  
\begin{equation} \label{eq:gGegen@1}
   h_{n}^{(\l,\mu)} = \frac{\l+\mu}{n+\l+\mu} C_{n}^{(\l,\mu)}(1). 
\end{equation}

The generalized Gegenbauer polynomial $C_n^{(\a,\g)}$ satisfies \cite[Thm 8.1.3]{DX} 
$$
  \Delta_h f - t (t f')' - (2\a+2\g) t f' = -n (n + 2 \a + 2 \g) f,
$$
where $\Delta_h$ is the Dunkl operator associated with the group $G=\ZZ_2$. Written explicitly, this 
equation becomes 
\begin{align} \label{eq:gGegenDiff}
(1-t^2)f''(t) - (2\a+2\g+1) t f'(t) +&\, \a \left( \frac {2 f'(t)}{t} - \, \frac{f(t) - f(-t)}{t^2} \right)  \\
  &  = - n (n+2\a+2\g)f(t).  \notag
\end{align}

The polynomial $C_n^{(\l,\mu)}$ is closely related to the Gagenbauer polynomial $C_n^\l$. Indeed,
\begin{equation} \label{eq:gGegenIntertw}
   C_n^{(\l,\mu)}(x) =   c_{\mu-\f12} \int_{-1}^1 C_n^{\l+\mu}(s t )(1+t)(1-t^2)^{\mu-1} \d t.
\end{equation}
Recall that $Z_n^\l(x) = \frac{n+\l}{\l} C_n^\l(x)$. The following lemma plays an essential role in deriving 
the addition formula on the double cone and hyperboloid: 

\begin{lem}
For $\l,\mu > 0$, 
\begin{align} \label{eq:additionGG}
  c_{\l-\f12} & \int_{-1}^1 Z_n^{\l+\mu} \Big(u s  t + v \sqrt{1-s^2}\sqrt{1-t^2}\Big) (1-v^2)^{\l-1} \d v \\
&  = \sum_{k=0}^{\lfloor \frac{n}{2} \rfloor} \frac{(\l+\mu+1)_{n-2k}}{(\mu+\f12)_{n-2k}} (s t)^{n-2k}
  \frac{C_{2k}^{(\l, \mu+n-2k)}(s) C_{2k}^{(\l, \mu+n-2k)}(t)}{h_{2k}^{(\l, \mu+n-2k)}}
               Z_{n-2k}^{\mu-\f12}(u). \notag
\end{align}
\end{lem}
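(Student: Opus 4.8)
The plan is to prove \eqref{eq:additionGG} by fixing $n$ and matching both sides, term by term in $u$, in the orthogonal basis $\{C_{n-2k}^{\mu-\f12}(u): 0\le k\le \lfloor n/2\rfloor\}$ of Gegenbauer polynomials for the weight $(1-u^2)^{\mu-1}$ on $[-1,1]$. Writing $F(u)$ for the left-hand side, the substitution $v\mapsto -v$ together with $Z_n^{\l+\mu}(-z)=(-1)^nZ_n^{\l+\mu}(z)$ gives $F(-u)=(-1)^nF(u)$, so $F$ is a polynomial of degree $n$ and parity $n$ and therefore lies in the span of this basis. Since $Z_{n-2k}^{\mu-\f12}$ is a constant multiple of $C_{n-2k}^{\mu-\f12}$ by \eqref{eq:Zn}, the right-hand side is already displayed in the same basis, so it suffices to show, for each $k$, that the coefficient of $Z_{n-2k}^{\mu-\f12}(u)$ in $F$ is the stated one. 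By orthogonality that coefficient equals $C_{n-2k}^{\mu-\f12}(1)^{-1}I_k$, where
\[
I_k = c_{\l-\f12}\,c_{\mu-\f12}\int_{-1}^1\!\!\int_{-1}^1 Z_n^{\l+\mu}\big(st\,u + \sqrt{1-s^2}\sqrt{1-t^2}\,v\big)\,C_{n-2k}^{\mu-\f12}(u)\,(1-u^2)^{\mu-1}(1-v^2)^{\l-1}\,\d u\,\d v .
\]

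Next I would carry out the $u$-integration. Set $j=n-2k$. Using the Rodrigues formula to write $C_j^{\mu-\f12}(u)(1-u^2)^{\mu-1}$ as a constant times $\tfrac{\d^j}{\d u^j}(1-u^2)^{\mu+j-1}$ and integrating by parts $j$ times, each differentiation lands on the first argument, producing a factor $st$ through the chain rule, while $(C_n^{\l+\mu})^{(j)}=2^j(\l+\mu)_jC_{n-j}^{\l+\mu+j}$. As $n-j=2k$, this converts $I_k$ into $(st)^{n-2k}$ times the constant $2^{n-2k}(\l+\mu)_{n-2k}$ (and the Rodrigues constant) times
\[
\widetilde I_k = \int_{-1}^1\!\!\int_{-1}^1 C_{2k}^{\l+\mu+n-2k}\big(st\,u+\sqrt{1-s^2}\sqrt{1-t^2}\,v\big)\,(1-u^2)^{\mu+n-2k-1}(1-v^2)^{\l-1}\,\d u\,\d v .
\]
The emerging factor $(st)^{n-2k}$ is exactly the one in \eqref{eq:additionGG}, so the remaining task is to evaluate $\widetilde I_k$.

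The integral $\widetilde I_k$ is symmetric in $s,t$, even in each, and of degree $2k$ in each, and the crux is that it collapses to a \emph{single} product, a constant multiple of $C_{2k}^{(\l,\mu+n-2k)}(s)\,C_{2k}^{(\l,\mu+n-2k)}(t)$. This is precisely the product formula for the generalized Gegenbauer polynomials $C_{2k}^{(\l,\rho)}$ with $\rho=\mu+n-2k$: its index $\l+\rho$ matches the index $\l+\mu+n-2k$ inside $\widetilde I_k$, and its two exponents $\rho-1$ and $\l-1$ match the weights $(1-u^2)^{\mu+n-2k-1}$ and $(1-v^2)^{\l-1}$. I would obtain this product formula either by citing the generalized (Dunkl $\ZZ_2$) Gegenbauer theory in \cite{DX,X97}, or by inserting the intertwining relation \eqref{eq:gGegenIntertw} into the classical Gegenbauer product formula $C_{2k}^{\l+\rho}(\sigma)C_{2k}^{\l+\rho}(\tau)/C_{2k}^{\l+\rho}(1)=c_{\l+\rho-\f12}\int_{-1}^1 C_{2k}^{\l+\rho}(\sigma\tau+\sqrt{1-\sigma^2}\sqrt{1-\tau^2}\,w)(1-w^2)^{\l+\rho-1}\,\d w$ in each of the variables $s,t$, the evenness of $C_{2k}$ allowing the $(1+\sigma)$ factors to be symmetrized. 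Collecting the constant $2^{n-2k}(\l+\mu)_{n-2k}$, the Rodrigues constant, the factor $C_{n-2k}^{\mu-\f12}(1)^{-1}$, and the product-formula normalization, and then rewriting everything with \eqref{eq:gGegen@1} and \eqref{eq:gGegenNorm}, should reproduce the normalized product $C_{2k}^{(\l,\mu+n-2k)}(s)C_{2k}^{(\l,\mu+n-2k)}(t)/h_{2k}^{(\l,\mu+n-2k)}$ together with the prefactor $(\l+\mu+1)_{n-2k}/(\mu+\f12)_{n-2k}$, which is the right-hand side of \eqref{eq:additionGG}.

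The main obstacle I anticipate is the rank-one collapse in the third step, i.e. proving the generalized Gegenbauer product formula with its exact normalizing constant, since the assertion that the double integral $\widetilde I_k$ factors as a single product of two polynomials in $s$ and $t$ is the genuinely nontrivial identity; a close second is the careful bookkeeping of the several Pochhammer and beta-integral constants accumulated through the Rodrigues step, which must be assembled into the neat ratio $(\l+\mu+1)_{n-2k}/(\mu+\f12)_{n-2k}$. Finally I would treat the degenerate cases $\l\to0$ and $\mu\to0$ by the limit \eqref{eq:limit-int}, under which the weights $(1-v^2)^{\l-1}$ and $(1-u^2)^{\mu-1}$ concentrate at $v=\pm1$ and $u=\pm1$, so that the integral identity passes to its stated limiting form.
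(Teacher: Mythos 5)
Your argument is correct, and it reaches \eqref{eq:additionGG} by a genuinely different route than the paper. The paper starts from Koornwinder's full addition formula for $C_n^{\l+\mu}(u\cos\t\cos\phi+v\sin\t\sin\phi)$ --- a double sum over $i+j=n-2k$ involving $C_i^{\mu-\f12}(u)C_j^{\l-\f12}(v)$ --- and simply integrates it against $c_{\l-\f12}(1-v^2)^{\l-1}\,\d v$, so that orthogonality annihilates every term with $j\ge 1$ and the surviving $j=0$ terms are, after renaming $s=\cos\t$, $t=\cos\phi$, exactly the right-hand side. You instead expand the left-hand side in the $u$-variable in the basis $\{C_{n-2k}^{\mu-\f12}\}$ (the parity and degree count are right), extract each coefficient by Rodrigues plus $n-2k$ integrations by parts (legitimate for $\mu>0$, since the boundary terms vanish), and then appeal to the product formula for the generalized Gegenbauer polynomials $C_{2k}^{(\l,\rho)}$ with $\rho=\mu+n-2k$ to collapse the remaining double integral. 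I checked that the constants do close up: the derivative formula contributes $2^j(\l+\mu)_j$, the Rodrigues constant and the ratio $c_{\mu-\f12}/c_{\mu+j-\f12}=(\mu)_j/(\mu+\f12)_j$ combine with $C_j^{\mu-\f12}(1)$ to cancel, and the normalization \eqref{eq:gGegen@1} converts $(\l+\mu)_j$ into $(\l+\mu+1)_j/(\mu+\f12)_j$ as required. What your route buys is a proof that never touches the off-diagonal ($i,j>0$) terms of Koornwinder's formula; what it costs is that the ``rank-one collapse'' you rightly flag as the crux is itself a theorem of comparable depth --- it is the $d=1$ case of the $\ZZ_2$-invariant addition formula on the ball (\cite[Section 8.1.2]{DX}, \cite{X97}), and in the literature that formula is ultimately derived from the same Koornwinder identity. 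One caution: your fallback derivation of that product formula, inserting \eqref{eq:gGegenIntertw} into the classical Gegenbauer product formula separately in $s$ and in $t$, does not obviously work as stated --- applying the intertwining operator in each variable and then the one-variable product formula produces a kernel with argument $(su)(tv)+\sqrt{1-(su)^2}\sqrt{1-(tv)^2}\,w$, not $stu'+\sqrt{1-s^2}\sqrt{1-t^2}\,v'$, so you should cite the product formula rather than attempt to rederive it that way. The closing remarks about $\l\to0$, $\mu\to0$ are harmless but unnecessary, since the lemma is stated only for $\l,\mu>0$.
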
  

This lemma follows from the addition formula for the Gegenbauer polynomials 
\begin{align*}
   C_n^{\l+\mu} (u \cos \t \cos \phi + v \sin \t \sin \phi)& = 
     \sum_{k=0}^{\lfloor \frac{n}{2} \rfloor} \sum_{i+j= n-2k} b_{i,j}^n 
         (\cos \t \cos \phi)^i (\sin \t \sin \phi)^j \\
        \times & C_{n-i-j}^{(\l + j, \mu+i)}(\cos \t)C_{n-i-j}^{(\l + j, \mu+i)}(\cos \phi)
             C_i^{\mu-\f12}(u) C_j^{\l-\f12}(v), 
\end{align*}
where $b_{i,j}^n$ is a constant given by
$$
   b_{i,j}^n =\frac{1}{h_{n-i-j}^{(\l + j, \mu+i)}} \cdot \frac{(\l+\mu) (\l+\mu+1)_{i+j}}{ (n+\l+\mu) (\l-\f12)_j(\mu-\f12)_i}.
$$
This formula was first established in \cite[p. 242, (4.7)]{K73} using a group theoretic method but with the 
constants $b_{k,j}^n$ undetermined. The current form, written in terms of the generalized Gegenbauer polynomials, 
was derived in \cite[Thm. 2.3]{X97}. Integrating this identity in $v$ variable and rearranging the constant, we
obtain
\begin{align*} 
  c_{\l-\f12} & \int_{-1}^1  C_n^{\l+\mu} (u \cos \t \cos \phi + v \sin \t \sin \phi) (1-v^2)^{\l-1} \d v \\
  & = \sum_{k=0}^{\lfloor \frac{n}{2} \rfloor} b_{n-2k,0}^n (\cos \t \cos \phi)^{n-2k}
     C_{2k}^{(\l, \mu+n-2k)}(\cos \t) C_{2k}^{(\l, \mu+n-2k)}(\cos \phi) C_{n-2k}^{\mu-\f12}(u). 
\end{align*}
Setting $s = \cos \t$ and $t = \cos \phi$, and rearranging constants, this becomes \eqref{eq:additionGG}.

\subsection{Generalized Hermite polynomials}
For $\mu \ge 0$, the generalized Hermite polynomials $H_n^\mu$ satisfy the orthogonal relation
$$
 \frac{1}{\Gamma(\mu+\f12)} \int_\RR H_n^\mu(t) H_m^\mu(t) |t|^{2\mu} e^{-t^2} \d t
   = \delta_{m,n} h_n^\mu.
$$ 
The polynomials $H_n^\mu$ are given explicitly by \cite[Section 1.5.1]{DX}
\begin{align} \label{eq:gHermite}
\begin{split}
  H_{2m}^{\mu}(t) &\, =  (-1)^m 2^{2m} m! L_m^{\mu-\f12}(t^2),\\
  H_{2m+1}^{\mu}(t) &\, = (-1)^m 2^{2m+1} m! t L_m^{\mu+\f12}(t^2), 
\end{split}
\end{align}
where $L_n^\a$ are the Laguerre polynomials.  Furthermore, the norm square of these polynomials
are equal to \cite[p. 24]{DX}
\begin{align} \label{eq:gHermiteNorm}
   h_{2m}^\mu = 2^{4m} m! (\mu+\tfrac12)_m\quad\hbox{and}\quad h_{2m+1}^\mu
       = 2^{4m+2} m! (\mu+\tfrac12)_{m+1}. 
\end{align}
The generalized Hermite polynomials satisfy the differential-difference equation
\begin{align*}
   f''(t) - 2 t f'(t) + \mu \left( \frac {2 f'(t)}{t} - \, \frac{f(t) - f(-t)}{t^2} \right) = - 2n f(t).  
\end{align*}

When $\mu  = 0$, $H_n^\mu$ is the usual Hermite polynomial $H_n$. The generalized Hermite polynomial
$H_n^\mu$ are limits of the generalized Gegenbauer polynomials \cite[Section 1.5.1]{DX}
\begin{equation} \label{eq:limgC=gH}
 \lim_{\l \to 0}  \frac{1}{\left(\sqrt{\l}\right)^n} C_n^{(\l,\mu)}\left (\frac{x}{\sqrt{\l}}\right) = \k_n^\mu H_n^\mu(x),
\end{equation}
where $\k_{2n}^\mu = \left[ 2^{2n} n! (\mu + \f12)_n \right]^{-1}$ and $\k_{2n+1}^\mu = 
\left[ 2^{2n+1} n! (\mu + \f12)_{n+1}\right]^{-1}$.

\end{document}